%
%
%
%
\documentclass{amsart}

\usepackage{amssymb}
\usepackage[latin1]{inputenc}
\usepackage{graphicx}

\newtheorem{theorem}{Theorem}[section]
\newtheorem{lemma}[theorem]{Lemma}

\newtheorem{corollary}[theorem]{Corollary}
\newtheorem{proposition}[theorem]{Proposition}

\theoremstyle{definition}

\theoremstyle{remark}
\newtheorem{remark}[theorem]{Remark}

\numberwithin{equation}{section}



\begin{document}

\title{Analytic invariants for the $1:-1$ resonance}

\author{José Pedro Gaivão}
\address{Cemapre, Rua do Quelhas 6 1200-781 Lisboa Portugal}

\email{jpgaivao@iseg.utl.pt}
\thanks{}


\subjclass[2010]{Primary 37J20, 34M40; Secondary 34M30, 34E99}

\date{}

\dedicatory{}

\keywords{}

\begin{abstract}
Associated to analytic Hamiltonian vector fields in $\mathbb{C}^4$ having an equilibrium point satisfying a non semisimple $1:-1$ resonance, we construct two universal constants that are invariant with respect to local analytic symplectic changes of coordinates. These invariants vanish when the Hamiltonian is integrable. We also prove that one of these invariants does not vanish on an open and dense set.  
\end{abstract}

\maketitle
 \tableofcontents 
\section{Introduction}
Let $X_H:(\mathbb{C}^4,0)\rightarrow (\mathbb{C}^4,0)$ be an analytic Hamiltonian vector field, i.e. there exists an analytic function $H:(\mathbb{C}^4,0)\rightarrow(\mathbb{C},0)$ called the Hamiltonian such that $\Omega(X_H,\mathbf{v})=\mathrm{d}H(\mathbf{v})$ for every $\mathbf{v}\in\mathbb{C}^4$ where $\Omega$ is a symplectic form in $\mathbb{C}^4$. For definiteness we assume that $\Omega$ is the standard symplectic form,
\begin{equation}\label{1:Omegadef}
\Omega(\mathbf{x},\mathbf{y})=\mathbf{x}^{T}J\mathbf{y},\quad \mathbf{x},\mathbf{y}\in\mathbb{C}^4,\quad\text{where}\quad J=\begin{pmatrix}0&\mathrm{Id}\\-\mathrm{Id}&0\end{pmatrix}.
\end{equation}
The matrix $J$ is known as the standard symplectic matrix. In this setting, the Hamiltonian vector field $X_H$ written in coordinates reads,
$$
X_H(\mathbf{q},\mathbf{p})=\left(\frac{\partial H}{\partial \mathbf{p}}(\mathbf{q},\mathbf{p}),-\frac{\partial H}{\partial \mathbf{q}}(\mathbf{q},\mathbf{p})\right),\quad (\mathbf{q},\mathbf{p})\in\mathbb{C}^2\times\mathbb{C}^2.
$$
In this paper we study a Hamiltonian vector field $X_{H}$ with an equilibrium point $X_H(0)=0$ in a $1:-1$ resonance, i.e. the matrix $DX_H(0)$ is not diagonalizable and has a pair of double imaginary eigenvalues $\pm i\alpha$, $\alpha>0$. 

Our study is motivated by the problem of estimating the size of the chaotic zone near a Hamiltonian-Hopf bifurcation \cite{GG:11,LLLB:04,PMRM:03}. This is a codimension one bifurcation of an equilibrium point in a two degrees of freedom Hamiltonian system in $\mathbb{R}^4$. 
More precisely, let $H_\epsilon$ be a real analytic family of Hamiltonian functions defined in a neighbourhood of the origin in $\mathbb{R}^4$. Suppose that the origin is an equilibrium point of $X_{H_\epsilon}$, i.e., $X_{H_\epsilon}(0)=0$ for every $\epsilon$, and that as $\epsilon\rightarrow0^{+}$ the equilibrium goes through a Hamiltonian-Hopf bifurcation: for $\epsilon>0$ the matrix $DX_{H_\epsilon}(0)$ has two pairs of complex conjugate eigenvalues $\pm\beta_\epsilon\pm i\alpha_\epsilon$, $\alpha_\epsilon,\beta_\epsilon>0$ that approach the imaginary axis as $\epsilon\rightarrow0^{+}$ yielding a pair of double imaginary eigenvalues $\pm i\alpha_0$, $\alpha_0>0$ for $DX_{H_0}(0)$. At the critical value $\epsilon=0$ the equilibrium is at a $1:-1$ resonance. This bifurcation has been extensively studied \cite{MJ:85} and it is known that there are two main bifurcation scenarios. In one of these scenarios, for $\epsilon>0$ there are two dimensional stable $W_\epsilon^s$ and unstable $W_\epsilon^u$ manifolds that live inside the three dimensional energy level set $\left\{H_\epsilon=H_\epsilon(0)\right\}$ and shrink to the equilibrium as the bifurcation parameter approaches the critical value. Points in the manifold $W_\epsilon^s$ (resp. $W_\epsilon^u$) converge to the equilibrium forward (resp. backward) in time under the action of the flow. The intersection $W_\epsilon^s\cap W^u_\epsilon$ if not empty consists of homoclinic orbits, thus is at least one-dimensional. It is well known that the existence of a transverse homoclinic orbit is a route to the onset of chaotic dynamics in a neighborhood of the equilibrium point \cite{RD:76,Le:00}. 

In \cite{GG:11} a quantity $\omega$ known as \textit{homoclinic invariant} was introduced to measure the size of the splitting of stable and unstable manifolds. Roughly speaking, it is defined to be the symplectic area formed by a pair of normalized tangent vectors at a homoclinic point. Let us show how to define it precisely. In a neighborhood of the equilibrium, the unstable manifold $W^{u}_\epsilon$ can be locally parameterized by a $C^1$ function,
$$
\mathbf{\Gamma}^u:\{(\varphi,z):\varphi\in \mathbb{T},z<z_0\}\to\mathbb R^4
$$
for some $z_0\in\mathbb{R}$ where $\mathbb{T}=\mathbb{R}/2\pi\mathbb{Z}$. Moreover, $\mathbf{\Gamma}^u$ is a solution of the nonlinear PDE,
\begin{equation}\label{2:PDE}
\alpha_\epsilon\partial_\varphi\mathbf{\Gamma}^u+\beta_\epsilon\partial_z\mathbf{\Gamma}^u=X_{H_\epsilon}(\mathbf{\Gamma}^u)\,,
\end{equation}
with the following asymptotic condition,
$$
\lim_{z\to-\infty}\mathbf{\Gamma}^u(\varphi,z)=0\,.
$$
Such parameterization is said to be a \textsl{natural parameterization} of $W^u_{\epsilon}$. Since it satisfies the PDE \eqref{2:PDE}, $\mathbf{\Gamma}^u$ conjugates the motion on the unstable manifold in a neighborhood of the equilibrium to the linear motion on the cylinder $\mathbb{T}\times (-\infty,z_0)$. That is,
\begin{equation}\label{2:EqGammau}
\mathbf{\Gamma}^u(\varphi+\alpha_\epsilon t,z+\beta_\epsilon t)=\Phi^t_{H_\epsilon}\circ \mathbf{\Gamma}^u(\varphi,z)\,,
\end{equation}
where $\Phi^t_{H_\epsilon}$ is the Hamiltonian flow. The derivatives $\partial_z\mathbf{\Gamma}^u$ and $\partial_\varphi\mathbf{\Gamma}^u$ define a basis of tangent vectors at each point of $W^u_\epsilon$. To obtain a natural parametrization for the stable manifold we can reverse the time and repeat the same reasoning, or equivalently consider $-H_\epsilon$. For simplicity, suppose that $X_{H_\epsilon}$ is time-reversible, i.e., $\mathcal{S}_{*}X_{H_\epsilon}=-X_{H_\epsilon}$, where $\mathcal{S}\neq\pm\mathrm{Id}$ is some linear involution. In the reversible setting it is convenient to define a local parameterization for the stable manifold as
\begin{equation*}
\mathbf{\Gamma}^s(\varphi,z):=\mathcal{S}\circ\mathbf{\Gamma}^u(-\varphi,-z),
\end{equation*}
which satisfies the same PDE \eqref{2:PDE}. The freedom in the definition of the parameterizations is reduced to translations in their arguments. Let $\mathrm{Fix}(\mathcal{S})$ denote the set of fixed points of the involution. Given an orbit $\gamma$ of the vector field $X_{H_\epsilon}$ we call it symmetric if $\gamma\cap\mathrm{Fix}(\mathcal{S})\neq\emptyset$. In \cite{GL:95} the existence of two primary symmetric homoclinic orbits is proved. Roughly, they correspond to the ``first intersection'' of both $W^{s,u}_\epsilon$ with $\mathrm{Fix}(\mathcal{S})$. Let $\gamma_h$ denote one these homoclinic orbits. Due to the freedom in the definition of the parameterizations we can suppose that $\gamma_h(t_0)=\mathbf{\Gamma}^{u}(\varphi_0,z_0)=\mathbf{\Gamma}^{s}(\varphi_0,z_0)$ for some $t_0\in\mathbb{R}$ and $(\varphi_0,z_0)\in\mathbb{T}\times\mathbb{R}$. The homoclinic invariant of $\gamma_h$ is defined in the following way,
$$
\omega=\Omega(\partial_\varphi\mathbf{\Gamma}^s(\varphi_0,z_0),\partial_\varphi\mathbf{\Gamma}^u(\varphi_0,z_0)). 
$$
Clearly, $\omega$ takes the same value along the homoclinic orbit $\gamma_h$. Moreover, if $\omega\neq0$ then $\gamma_h$ is a transverse homoclinic orbit. Thus, $\omega$ measures the splitting of the stable and unstable manifolds along the homoclinic orbit $\gamma_h$. Based on analytical and numerical evidence, in \cite{GG:11} it is conjectured that the homoclinic invariant has the following asymptotic expansion,
\begin{equation}\label{2:asymptoticformulaomega}
\omega \sim \pm e^{-\frac{\pi\alpha_\epsilon}{2\beta_\epsilon}}\sum_{k\geq0}\omega_k\epsilon^{k}\quad\text{as}\quad\epsilon\rightarrow 0^{+}\,.
\end{equation} 
The symbol $\sim$ in \eqref{2:asymptoticformulaomega} means that if we truncate the series then the error in the approximation is of the order of the first missing term. Recall that $\beta_\epsilon$ is the absolute value of the real part of the eigenvalues and that $\beta_\epsilon\rightarrow 0$  as $\epsilon\rightarrow 0^{+}$. Thus \eqref{2:asymptoticformulaomega} implies that $\omega$ is exponentially small with respect to $\epsilon$. The leading term $\omega_0$ in the asymptotic expansion is called the \textsl{splitting constant} since $\omega_0\neq0$ implies that $\omega\neq0$ for $\epsilon$ sufficiently small. The splitting constant is defined at the moment of bifurcation, i.e., it only depends on the Hamiltonian with a $1:-1$ resonance. Moreover, $\omega_0=2\sqrt{\left|\mathcal{K}\right|}$ where $\mathcal{K}$ is one of the invariants studied in the present paper. 

Proving \eqref{2:asymptoticformulaomega} is a highly non-trivial problem comparable to the problem of the splitting of the separatrices of the standard map that started with the work of V. Lazutkin \cite{VL:84} and ended with a complete proof given by V. Gelfreich in \cite{VG:99}. Based on the results of \cite{Ga:10} and on the results of the present paper the author has an unpublished proof of \eqref{2:asymptoticformulaomega} that will send for publication as a separate paper. 

Also related to this work is the study of the so-called inner equation \cite{BS:08,MSS:11,OSS:03}. In most problems of exponentially small splitting of separatrices, the leading constant of an asymptotic formula that measures the splitting comes from the study of an inner equation which, roughly speaking, contains the most singular behavior of the problem \cite{VG:97}. 

The study of exponentially small splitting of invariant manifolds in Hamiltonian systems of higher dimensions can be found in \cite{LMS:03,Sa:01}. In these works, the authors have devised a geometrical method to study the splitting of stable and unstable manifolds of a partially hyperbolic invariant torus (known as ``whiskered torus'') in near-integrable Hamiltonian systems. 

The combination of geometrical and analytical methods to study the exponentially small splitting of separatrices has proved fruitful and still today, it follows closely the original ideas introduced by V. Lazutkin in \cite{VL:84}.  

Finally, let us mention that the invariants found in this paper have a parallel to the analytic invariants found in \cite{GN:08} which are defined for diffeomorphisms in $\mathbb{C}^2$ with a parabolic fixed point. One of these invariants also plays a role in the splitting of separatrices near a saddle-center bifurcation \cite{Ge:00}. In particular, for the Hénon map the same study was carried out in \cite{GS:01} where a connection with the resurgent theory of J. Écalle was established. For a more recent treatment on the connection between resurgence and splitting of separatrices the reader is referred to \cite{Sa:06}. See also \cite{E:92,MR:82,So:96} for related studies in analytic classification of germs of vector fields.

To conclude the introduction let us outline the structure of the rest of paper. In Section \ref{2:preliminaries} we setup the problem and recall some well known facts about normal forms. The main results of this paper are presented in Section \ref{2:mainresults}. In Section \ref{2:sectionformalseries} we construct formal solutions of certain differential equations. Section \ref{2:sectionlinearop} develops a theory to invert a type of linear operators. In Section \ref{2:sectionVariationalEq} we study a variational equation and Sections \ref{2:sectionunstableseparatrix} and \ref{2:theoremasympdelta} contain the proofs of our main results. 

\section{Preliminaries}\label{2:preliminaries}
Let $X_H$ be defined as in the introduction. The well known normal form theory for quadratic Hamiltonians \cite{NBRC:74} provides a symplectic linear change of variables that transforms the quadratic part of the Hamiltonian $H$ into the following normal form,  
\begin{equation*}
H(\mathbf{q},\mathbf{p})=-\alpha\left(q_2 p_1 - q_1 p_2\right) + \frac{\iota}{2}\left(q_1^2 + q_2^2\right)+ \mathrm{high}\ \mathrm{order}\ \mathrm{terms},
\end{equation*}
where $\mathbf{q}=(q_1,q_2)$, $\mathbf{p}=(p_1,p_2)$, $\iota^2 = 1$ and $\alpha>0$. Without lost of generality we can assume that $\alpha = 1$ and $\iota=1$. Indeed, by a re-parametrization of time or equivalently by scaling the Hamiltonian $H$ by $\iota\alpha^{-1}$ and performing the symplectic linear change of variables,
\begin{equation*}
(q_1,q_2,p_1,p_2)\mapsto\left(\iota\frac{\alpha}{\sqrt{\alpha}}q_1,\sqrt{\alpha}q_2,\iota\frac{\sqrt{\alpha}}{\alpha}p_1,\frac{1}{\sqrt{\alpha}}p_2\right),
\end{equation*}
we obtain the desired normalization of $\alpha$ and $\iota$. It is also possible to normalize the higher order terms of $H$. The normal form of $H$ is attributed to Sokol'ski{\u\i} who derive it when studying the formal stability of $H$. \begin{theorem}[Sokol'ski{\u\i} \cite{So:74}]\label{1:TheoremNormalform11}
There is a formal near identity symplectic change of coordinates $\Phi$ such that,
\begin{equation*}
H^{\sharp}=H\circ\Phi=-I_1+I_2+ \sum_{l+k\geq 2}a_{l,k}I_1^lI_3^k,
\end{equation*}
where
\begin{equation}\label{2:defI1I2I3}
I_1=q_2p_1-q_1p_2,\quad I_2=\frac{q_1^2+q_2^2}{2},\quad I_3=\frac{p_1^2+p_2^2}{2}\,.
\end{equation}
The normal form coefficients $a_{l,k}\in\mathbb{C}$ are uniquely defined, forming an infinite set of invariants for the Hamiltonian $H$.
\end{theorem}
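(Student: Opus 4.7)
My plan is to construct $\Phi$ order by order via the Lie series method. Writing $H = H_2 + \sum_{n\geq 3} H_n$ in homogeneous components in $(\mathbf{q},\mathbf{p})$, with $H_2 = -I_1 + I_2$, I seek $\Phi$ as an ordered composition $\cdots \circ \Phi^1_{F_4} \circ \Phi^1_{F_3}$ of time-one flows of Hamiltonian vector fields $X_{F_n}$ generated by homogeneous polynomials $F_n$ of degree $n$. Since $\{H_2,F_n\}$ is again homogeneous of degree $n$ and all other contributions to the degree-$n$ part of $H\circ\Phi^1_{F_n}$ come from previously normalized lower orders, the iteration reduces, at order $n$, to the homological equation
\[
\mathrm{ad}_{H_2}(F_n) := \{H_2,F_n\} = R_n - \widetilde H_n,\qquad R_n \in \mathbb{C}[I_1,I_3]\cap\mathcal{P}_n,
\]
where $\mathcal{P}_n$ is the space of homogeneous polynomials of degree $n$ in $(\mathbf{q},\mathbf{p})$ and $\widetilde H_n$ is the current degree-$n$ remainder. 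The theorem thus reduces to establishing the direct-sum decomposition
\[
\mathcal{P}_n = \bigl(\mathbb{C}[I_1,I_3]\cap\mathcal{P}_n\bigr)\oplus\mathrm{Im}\bigl(\mathrm{ad}_{H_2}|_{\mathcal{P}_n}\bigr),
\]
from which both existence of $F_n$ and uniqueness of $R_n$ (hence of the $a_{l,k}$) follow.

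To establish this decomposition I exploit the Jordan structure of $\mathrm{ad}_{H_2}$ together with an $\mathfrak{sl}_2$-action. Split $H_2 = S+N$ with $S = -I_1$ semisimple and $N = I_2$ nilpotent; a direct check gives $\{I_1,I_2\} = 0$, so $\mathrm{ad}_S$ and $\mathrm{ad}_N$ commute. Further computation shows that
\[
\{E,I_2\} = -2I_2,\qquad \{E,I_3\} = 2I_3,\qquad \{I_2,I_3\} = E,
\]
where $E := q_1p_1 + q_2p_2$, so that $(I_3,E,-I_2)$ is a Jacobson--Morozov $\mathfrak{sl}_2$-triple under the Poisson bracket; moreover $\{I_1,E\} = \{I_1,I_2\} = \{I_1,I_3\} = 0$ shows that this triple commutes with $\mathrm{ad}_S$. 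Decomposing $\mathcal{P}_n$ into $\mathrm{ad}_S$-eigenspaces $V_\lambda$, each $V_\lambda$ is $\mathfrak{sl}_2$-invariant. On $V_\lambda$ with $\lambda\neq 0$ the operator $\mathrm{ad}_{H_2} = \lambda\,\mathrm{Id} + \mathrm{ad}_N$ is invertible (the nonzero eigenvalue $\lambda$ is disjoint from the spectrum $\{0\}$ of $\mathrm{ad}_N$). On $V_0$ the finite-dimensional $\mathfrak{sl}_2$ representation theory yields $V_0 = \ker(\mathrm{ad}_{I_3}|_{V_0}) \oplus \mathrm{Im}(\mathrm{ad}_{I_2}|_{V_0})$. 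Combining these gives the decomposition with normal-form space $\ker\mathrm{ad}_{I_3}\cap\ker\mathrm{ad}_{I_1}$.

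A brief characteristics computation identifies $\ker\mathrm{ad}_{I_3}$ with $\mathbb{C}[I_1,p_1,p_2]$, and intersecting with $\ker\mathrm{ad}_{I_1}$ (the Hamiltonian flow of $I_1$ being a simultaneous rotation of $(q_1,q_2)$ and $(p_1,p_2)$, so that rotation invariants in $(p_1,p_2)$ are polynomials in $2I_3 = p_1^2+p_2^2$) selects exactly $\mathbb{C}[I_1,I_3]$. Uniqueness of the coefficients $a_{l,k}$ is immediate from the transversality: any two formal normalizing transformations produce normal forms whose difference lies simultaneously in $\mathbb{C}[I_1,I_3]$ and in $\mathrm{Im}(\mathrm{ad}_{H_2})$, hence vanishes. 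I expect the main obstacle to be the simultaneous handling of the commuting $\mathrm{ad}_S$ and $\mathrm{ad}_N$ parts; this is overcome by first verifying that the Jacobson--Morozov triple commutes with the semisimple generator $I_1$, which permits a weight-by-weight $\mathfrak{sl}_2$ analysis on each $V_\lambda$. The rest is a standard induction on the degree $n$, with symplecticity of $\Phi$ guaranteed by construction as a composition of Hamiltonian time-one flows.
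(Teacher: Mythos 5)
The paper does not actually prove this theorem; it is imported from Sokol'ski\u{\i}'s 1974 paper (reference \cite{So:74}), so I am assessing your argument on its own merits rather than against an in-paper proof.

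Your \emph{existence} argument is correct and is the standard $\mathfrak{sl}_2$-style approach. The bracket computations check out: $\{I_1,I_2\}=\{I_1,I_3\}=\{I_1,E\}=0$, and $(I_3,E,-I_2)$ with $E=q_1p_1+q_2p_2$ is indeed an $\mathfrak{sl}_2$-triple commuting with $\mathrm{ad}_{I_1}$. Splitting $\mathcal{P}_n$ into $\mathrm{ad}_{I_1}$-weight spaces $V_\lambda$, inverting $\lambda\,\mathrm{Id}+\mathrm{ad}_{I_2}$ on $V_\lambda$ for $\lambda\neq 0$, and using the finite-dimensional $\mathfrak{sl}_2$-decomposition $V_0=\ker(\mathrm{ad}_{I_3})\oplus\mathrm{Im}(\mathrm{ad}_{I_2})$ on $V_0$, together with the identification $\ker(\mathrm{ad}_{I_3})\cap\ker(\mathrm{ad}_{I_1})=\mathbb{C}[I_1,I_3]$, establishes the transversality and hence the normal form at each degree.

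The \emph{uniqueness} argument, however, has a genuine gap. You claim that the difference of two normal forms lies in $\mathrm{Im}(\mathrm{ad}_{H_2})$ and is therefore zero, but this is only immediate at the \emph{lowest} degree $m$ at which the connecting generator $W$ first differs from zero, where indeed $R^{(1)}_m-R^{(2)}_m=-\mathrm{ad}_{H_2}(W_m)$. At that order you conclude only that $W_m\in\ker(\mathrm{ad}_{H_2})=\mathbb{C}[I_1,I_2]$, and such $W_m$ may be \emph{nonzero}: the homological equation leaves the $\ker(\mathrm{ad}_{H_2})$ part of each $F_n$ completely undetermined. At degree $m+j$ (for $j\geq 2$) the difference picks up contributions $\{W_m,R^{(2)}_{j+2}\}$ that are \emph{not} of the form $\mathrm{ad}_{H_2}(\text{something})$, so your transversality argument does not directly apply there. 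Since $\{I_1^aI_2^b,\,I_1^cI_3^d\}=bd\,I_1^{a+c}I_2^{b-1}I_3^{d-1}E$, for any $W_m\in\mathbb{C}[I_1,I_2]\setminus\mathbb{C}[I_1]$ the term $\{W_m,R^{(2)}\}$ genuinely contains the rotation invariant $E$, hence is not in $\mathbb{C}[I_1,I_3]$; one must use this (and an induction that simultaneously tracks the kernel part of the generator) to force $W_m\in\mathbb{C}[I_1]$, i.e.\ to act trivially on $\mathbb{C}[I_1,I_3]$. This is precisely the nontrivial content behind uniqueness for this resonance, and it cannot be waved away as \textquotedblleft immediate from the transversality.\textquotedblright\ The conclusion is correct (and is part of what Sokol'ski\u{\i} and van der Meer establish), but your proof of it is incomplete as written.
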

The normal form $H^{\sharp}$ is obtained inductively by constructing a near identity symplectic changes of variables that normalizes each order of $H$ at a time without affecting the previous orders. Moreover, it is constructed in such a way that has an additional $\mathbb{S}^1$ symmetry induced by the integral of motion $I_1$, i.e. $\Omega(X_{H^{\sharp}},X_{I_1})=0$. There is a convenient way of rewriting the normal form that takes into account the different contributions of the higher order monomials. More precisely, we define a new order for a monomial in $\mathbb{C}[q_1,q_2,p_1,p_2]$: for $i=1,2$ we let $q_i$ have order $2$ while $p_i$ have order $1$. For example, using this new ordering we say that the monomial $p_1p_2$ has order $2$ while $q_1p_2$ has order $3$. Reordering the terms of $H^{\sharp}$ according to this new order we get,
\begin{equation*}
H^{\sharp}=H\circ\Phi=-I_1+I_2+\eta I_3^2+ \sum_{3l+2k\geq 5}a_{l,k}I_1^lI_3^k,
\end{equation*}
where the coefficient $\eta$ is equal to $a_{2,0}$. In general, the limit of the normal form procedure produces a normal form transformation $\Phi$ that is divergent. However the normal form is rather useful and can be used to approximate at any order the original $H$ by an integrable one. Thus we can assume that $H$ is in the general form,
\begin{equation}\label{2:H}
H=-I_1 + I_2+\eta I_3^2 + F,
\end{equation}
where $\eta\in\mathbb{C}$ and $F:\mathcal{U}\rightarrow\mathbb{C}$ is a bounded analytic function defined in an open neighborhood $\mathcal{U}$ of the origin in $\mathbb{C}^4$ and containing monomials of order greater or equal than $5$. 

In the real analytic setting, the normal form coefficients are real and $\eta$ determines the stability type of the equilibrium of $X_H$. According to \cite{Le:09}, when $\eta>0$ the equilibrium is Lyapunov stable and it becomes unstable when $\eta<0$. The degenerate case corresponds to $\eta=0$. 

Throughout this paper we will only consider the case of a non-degenerate elliptic equilibrium,
\begin{equation}\label{2:nondegeneratecondition}
\eta\neq0.
\end{equation}
This is a generic condition. In the degenerate case, one has to include in the leading order \eqref{2:H} the next term $a_{0,k}I_3^{k}$ of the normal form for which $a_{0,k}\neq0$.  

Although the equilibrium point of $X_H$ is elliptic, we will show that it has a stable (resp. unstable) immersed complex manifold by constructing a stable (resp. unstable) parametrization $\mathbf{\Gamma}^{+}(\varphi,\tau)$ (resp. $\mathbf{\Gamma}^{-}(\varphi,\tau)$) defined in certain regions of $\mathbb{C}^2$, with some prescribed asymptotics at infinity and satisfying the nonlinear PDE:
\begin{equation}\label{2:DH}
\mathcal{D}\mathbf{\Gamma}^{\pm}=X_{H}(\mathbf{\Gamma}^{\pm}),\qquad\mathrm{where}\quad \mathcal{D}=\partial_\varphi+\partial_\tau.
\end{equation}
In a common domain of intersection, the stable and unstable parameterizations are described by a single asymptotic expansion, implying that their difference is beyond all algebraic orders. We will obtain a refined estimate for the difference of parameterizations and prove that it has an asymptotic expansion with an exponentially small prefactor. Moreover, in the four dimensional space $\mathbb{C}^4$ the difference of the parameterizations can be locally described by four constants that can be used to define two local analytic invariants for the Hamiltonian $H$. 


Let us precisely state our results.

\section{Main results}\label{2:mainresults}
\subsection{Parameterizations}
First we will study formal solutions of equation \eqref{2:DH}. Denote by $\mathsf{T}$ the space of trigonometric polynomials with complex coefficients, i.e., the space of functions of the form,
\begin{equation*}
a_0+\sum_{k=1}^na_k\cos(k\varphi)+\sum_{k=1}^nb_k\sin(k\varphi),\quad a_k,b_k\in\mathbb{C},\, n\in\mathbb{N}_0.
\end{equation*}
We solve equation \eqref{2:DH} in the space of formal power series $\mathsf{T}^4[[\tau^{-1}]]$, i.e., we substitute a series into the equation, collect coefficients at each order of $\tau^{-1}$ in both sides and then solve an infinite system of equations in $\mathsf{T}$. Then we obtain the following result.
\begin{theorem}[Formal Separatrix]\label{2:Tformalseparatrix}
Equation \eqref{2:DH} has a non-zero formal solution $\hat{\mathbf{\Gamma}}$ having the form,
\begin{equation}\label{2:formhatgamma}
\hat{\mathbf{\Gamma}}(\varphi,\tau) =\begin{pmatrix}\tau^{-2}\hat{\Gamma}_1(\varphi,\tau)\\\tau^{-2}\hat{\Gamma}_2(\varphi,\tau)\\\tau^{-1}\hat{\Gamma}_3(\varphi,\tau)\\\tau^{-1}\hat{\Gamma}_4(\varphi,\tau)\end{pmatrix},\quad\text{where}\quad \hat{\Gamma}_i\in\mathsf{T}[[\tau^{-1}]],\quad i=1,\ldots,4,
\end{equation}
with the leading orders,
\begin{equation*}\begin{split}
\hat{\Gamma}_1(\varphi,\tau)=\kappa\cos\varphi +\frac{\kappa a_{1,1}}{\eta}\tau^{-1}+ \cdots&\quad \hat{\Gamma}_2(\varphi,\tau)=\kappa\sin\varphi -\frac{\kappa a_{1,1}}{\eta}\tau^{-1}+ \cdots\\
\hat{\Gamma}_3(\varphi,\tau)=\kappa\cos\varphi +\frac{\kappa a_{1,1}}{\eta}\tau^{-1}+ \cdots&\quad \hat{\Gamma}_4(\varphi,\tau)=\kappa\sin\varphi -\frac{\kappa a_{1,1}}{\eta}\tau^{-1}+ \cdots
\end{split}
\end{equation*}
where $\kappa^2=-\frac{2}{\eta}$ and the ellipsis mean higher order terms in $\tau^{-1}$.
Moreover, for any other non-zero formal solution $\hat{\tilde{\mathbf{\Gamma}}}$ of \eqref{2:DH} having the same form \eqref{2:formhatgamma} there exist $(\varphi_0,\tau_0) \in \mathbb{C}^2$ such that $\hat{\tilde{\mathbf{\Gamma}}}(\varphi,\tau)= \hat{\mathbf{\Gamma}}(\varphi+\varphi_0,\tau+\tau_0)$. 
\end{theorem}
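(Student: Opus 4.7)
The plan is to construct the formal series inductively in powers of $\tau^{-1}$. First I substitute the ansatz \eqref{2:formhatgamma} into the PDE \eqref{2:DH} and expand both sides in powers of $\tau^{-1}$. Collecting the leading terms (of order $\tau^{-2}$ in the $q$-equations and $\tau^{-1}$ in the $p$-equations) yields a homogeneous $4\times 4$ first-order linear system in $\varphi$ whose solutions in $\mathsf{T}$ are linear combinations of $\cos\varphi$ and $\sin\varphi$. The next-to-leading order ($\tau^{-3}$ in the $q$-equations) is where the nonlinearity $2\eta I_3 P_1$ first enters; matching the resonant $\cos\varphi$ and $\sin\varphi$ forcing produces the condition $\kappa^2=-2/\eta$ and, after fixing a phase by the $\varphi$-translation freedom, gives the leading profile $\hat{\mathbf{\Gamma}}^{(0)}=\kappa(\cos\varphi,\sin\varphi,\cos\varphi,\sin\varphi)$. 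This profile is in fact an exact formal solution of $\mathcal{D}\mathbf{\Gamma}=X_{H_0}(\mathbf{\Gamma})$ for the integrable truncation $H_0=-I_1+I_2+\eta I_3^2$ and lies on the formal level set $\{I_1=0,\,H_0=0\}$.

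For the inductive step I write $\hat{\Gamma}_i=\sum_{n\geq 0}\hat{\Gamma}_i^{(n)}\tau^{-n}$ with $\hat{\Gamma}_i^{(n)}\in\mathsf{T}$ and collect powers of $\tau^{-1}$. At order $n\geq 1$ the recursion takes the form $\mathcal{L}\hat{\mathbf{\Gamma}}^{(n)}=\mathbf{R}_n$, where $\mathcal{L}$ is a fixed linear operator on $\mathsf{T}^4$ coming from linearizing $\mathcal{D}-X_H$ at the leading profile, and $\mathbf{R}_n$ is a polynomial expression in $\hat{\mathbf{\Gamma}}^{(0)},\ldots,\hat{\mathbf{\Gamma}}^{(n-1)}$ and finitely many Taylor coefficients of $H$. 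A direct Fourier-mode analysis in $\varphi$ shows that $\mathcal{L}$ is Fredholm with a two-dimensional kernel and a cokernel of the same dimension, reflecting the two translation symmetries $(\varphi,\tau)\mapsto(\varphi+\varphi_0,\tau+\tau_0)$ of \eqref{2:DH}.

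The hard part will be verifying the solvability condition at each order, namely that $\mathbf{R}_n$ annihilates the cokernel of $\mathcal{L}$. My plan is to exploit the Hamiltonian structure: $H$ and $I_1$ are (formal) first integrals of $X_H$, the latter because of the $\mathbb{S}^1$-symmetry of the normal form, so substituting the ansatz into the identities $\mathcal{D}(H\circ\hat{\mathbf{\Gamma}})=0$ and $\mathcal{D}(I_1\circ\hat{\mathbf{\Gamma}})=0$ yields two scalar formal identities that, order by order in $\tau^{-1}$, coincide with the two cokernel conditions on $\mathbf{R}_n$. Thus the Fredholm obstructions vanish automatically, although the two conditions at order $n+1$ will in general fix the previously free parameters in $\ker\mathcal{L}$ at order $n$. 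The explicit inversion of $\mathcal{L}$ modulo its kernel is then carried out by mode-by-mode Fourier analysis in $\varphi$, producing the coefficients $\hat{\mathbf{\Gamma}}^{(n)}$.

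For uniqueness, let $\hat{\tilde{\mathbf{\Gamma}}}$ be another non-zero formal solution of the form \eqref{2:formhatgamma}. Its leading term must also satisfy the leading system with $\kappa^2=-2/\eta$, so there exists $\varphi_0\in\mathbb{C}$ such that $\hat{\tilde{\mathbf{\Gamma}}}^{(0)}(\varphi)=\hat{\mathbf{\Gamma}}^{(0)}(\varphi+\varphi_0)$ (the sign ambiguity in $\kappa$ is absorbed by a $\pi$-shift). After applying this $\varphi$-translation, the inductive recursion produces identical equations at every subsequent order, and the residual one-parameter freedom in $\ker\mathcal{L}$ at each step accumulates into a single translation $\tau_0\in\mathbb{C}$, giving $\hat{\tilde{\mathbf{\Gamma}}}(\varphi,\tau)=\hat{\mathbf{\Gamma}}(\varphi+\varphi_0,\tau+\tau_0)$.
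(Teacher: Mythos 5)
Your approach is genuinely different from the paper's: you propose a direct Lyapunov--Schmidt-type recursion on the original PDE \eqref{2:DH}, whereas the paper first conjugates $H$ to its Sokol'ski\u{\i} normal form $H^\sharp$ via the formal symplectic map $\Phi$, exploits the $\mathbb{S}^1$-symmetry to reduce the PDE to an ODE $\partial_\tau\xi = X_{H^\sharp+I_1}(\xi)$, and then passes to polar coordinates where the resulting scalar recursion for $r$ has an invertible coefficient $(k+1)(k+2)-6$ at every order $k\geq2$, so that no cokernel obstruction ever appears. The paper then pulls $\hat{\mathbf{Z}}$ back by $\Phi$ to get $\hat{\mathbf{\Gamma}}=\Phi\circ\hat{\mathbf{Z}}$. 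Your direct scheme is attractive because it never invokes the normal form, but it faces a solvability condition at each order, and that is where there is a genuine gap.

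The claim that $I_1$ is a formal first integral of $X_H$ is false. The $\mathbb{S}^1$-symmetry belongs to the \emph{normal form} $H^\sharp=H\circ\Phi$, not to $H$ itself: one has $\{I_1,H^\sharp\}=0$, but
\[
\{I_1,H\}=\{I_1,F\}
\]
which does not vanish for a generic $F$ of order $\geq 5$. Consequently $\mathcal{D}(I_1\circ\hat{\mathbf{\Gamma}})=\{I_1,H\}(\hat{\mathbf{\Gamma}})$ is not zero, and it cannot be used to discharge the second cokernel condition. Energy conservation $\mathcal{D}(H\circ\hat{\mathbf{\Gamma}})=0$ gives you one scalar identity, so you are left one condition short at every order. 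The natural fix is to replace $I_1$ by the genuine formal integral $J:=I_1\circ\Phi^{-1}$ of $X_H$, but computing $J$ already requires constructing the normal form transformation $\Phi$ order by order, at which point your argument collapses into the paper's normal-form argument. Without this fix, the inductive step does not close, so as written the proposal does not prove the theorem. (The uniqueness argument is also sketched rather than proved, but it would follow in the same spirit as the existence step once the recursion is made rigorous; the decisive gap is the mislabelled first integral.)
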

This theorem is proved in Section \ref{2:sectionformalseries}. We call $\hat{\mathbf{\Gamma}}$ a \textit{formal separatrix}. In general, these formal series do not converge (see Corollary \ref{2:corollaryintegrability}). According to the previous theorem, the freedom in the choice of formal solutions is given by translations in the $(\varphi,z)$-plane. We can eliminate this freedom by fixing the first two coefficients of the formal series $\hat{\Gamma}_i$. This freedom can not be eliminated in a coordinate independent way, unless the Hamiltonian vector field has some extra properties, such as being time-reversible (see Remark \ref{2:remarkseparatrixreversible}).
 
In the following we construct analytic solutions of equation \eqref{2:DH} with prescribed asymptotics $\hat{\mathbf{\Gamma}}$ in certain regions of $\mathbb{C}^2$. Fix $h>0$ and let
$$
\mathbb{T}_h = \left\{\varphi \in \mathbb{C}/2\pi\mathbb{Z}\:\colon\: \left|\mathrm{Im}\,\varphi\right|<h \right\}.
$$ 
In order to state our results we need to introduce the notion of asymptotic expansion. Let $X$ be a subset of $\mathbb{C}$ that contains a limit point $a$, possibly the point at infinity. A sequence of functions $\left\{\xi_n\right\}_{n\in\mathbb{N}}$ defined in $X$ and taking values in $\mathbb{C}$ is called as \textit{asymptotic sequence} as $\tau\rightarrow a$ if none of the functions $\xi_n$ vanish in a neighborhood of $a$ (except the point $a$) and if for every $n\in\mathbb{N}$ we have,
$$
\lim_{x\rightarrow a}\frac{\xi_{n+1}(\tau)}{\xi_n(\tau)}=0.
$$
For example, $\left\{\tau^{-n}\right\}_{n\in\mathbb{N}}$ is an asymptotic sequence as $\tau\rightarrow \infty$. Given two functions $f,g:\mathbb{T}_h\times X\rightarrow\mathbb{C}$ we shall frequently use the big-O notation $f=O(g)$ meaning that there exists a constant $C>0$ such that $\left|f(\varphi,\tau)\right|\leq C \left|g(\varphi,\tau)\right|$ for all $(\varphi,\tau)\in \mathbb{T}_h\times X$ or we write $f=O(g)$ as $(\tau\rightarrow a)$ meaning that there exists a constant $C>0$ and a neighborhood $U$ of $a$ such that $\left|f(\varphi,\tau)\right|\leq C \left|g(\varphi,\tau)\right|$ for all $(\varphi,\tau)\in \mathbb{T}_h\times (X\cap U)$. Finally, given a function $f:\mathbb{T}_h\times X\rightarrow \mathbb{C}$ we say that it has an \textit{asymptotic expansion} with respect to the asymptotic sequence $\left\{\xi_n\right\}$ and write,
$$
f(\varphi,\tau)\sim \sum_{n\geq1}c_n(\varphi)\xi_n(\tau),
$$
if for every $N\in\mathbb{N}$ the following holds,
$$
f(\varphi,\tau)-\sum_{n=1}^N c_n(\varphi)\xi_n(\tau)=O(\xi_{N+1}(\tau))\quad\text{as}\quad (\tau\rightarrow a).
$$
It is easy to see that the asymptotic expansion of $f$ is unique. Moreover, the definition of the big-O notation and of asymptotic expansion easily extends to functions taking values in $\mathbb{C}^k$ for any $k\in\mathbb{N}$.

Given $r>0$ and $0<\theta<\frac{\pi}{4}$ consider the following sector,
\begin{equation}\label{2:defDrminus}
D^{-}_r = \left\{\tau \in \mathbb{C}\:\colon\: \left| \arg{(\tau+r)} \right| > \theta\right\},
\end{equation}
which can be visualized in Figure \ref{1:Dr}.
\begin{figure}[t]
  \begin{center}
    \includegraphics[width=4in]{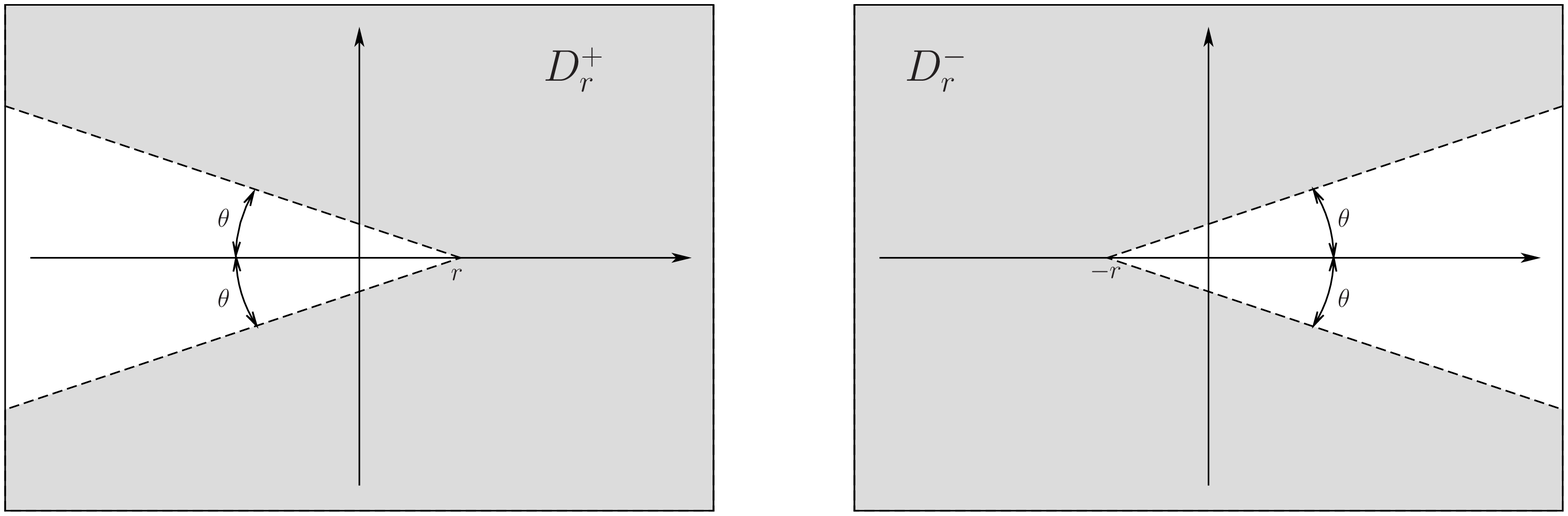}
  \end{center}
  \caption{Domains $D_r^{\pm}$.} 
  \label{1:Dr}
\end{figure}
We shall leave the parameters $\theta$ and $h$ fixed throughout this paper. The next theorem gives the existence of an analytic solution of equation \eqref{2:DH} having the formal separatrix as an asymptotic expansion in the sector $D^{-}_r$. The proof of the theorem can be found in Section \ref{2:sectionunstableseparatrix}.
\begin{theorem}[Unstable Parameterization]\label{2:theoremunstableseparatrix}
Given a formal separatrix $\hat{\mathbf{\Gamma}}$ there exist $r_{-}>0$ and a unique analytic function $\mathbf{\Gamma}^{-}:\mathbb{T}_h\times D^{-}_{r_{-}}\rightarrow\mathbb{C}^4$ solving equation \eqref{2:DH} such that $\mathbf{\Gamma}^{-}(\varphi,\tau)\sim\hat{\mathbf{\Gamma}}(\varphi,\tau)$ as $\tau\rightarrow\infty$ in $D^{-}_{r_{-}}$.
\end{theorem}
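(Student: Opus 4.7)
The plan is a standard truncation-plus-perturbation argument that reduces the nonlinear PDE to a fixed-point problem, using the theory of Sections \ref{2:sectionlinearop} and \ref{2:sectionVariationalEq} to invert the linearization around an approximate solution.

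Fix a large integer $N$ and let $\mathbf{\Gamma}^{-}_{N}$ be the truncation of the formal separatrix $\hat{\mathbf{\Gamma}}$ at order $\tau^{-N}$ produced by Theorem \ref{2:Tformalseparatrix}. Each component is a finite sum of terms of the form $c_k(\varphi)\tau^{-k}$ with $c_k\in\mathsf{T}$, so $\mathbf{\Gamma}^{-}_{N}$ is analytic on $\mathbb{T}_h\times\{|\tau|>r\}$ for any $r>0$. Writing the desired solution as $\mathbf{\Gamma}^{-}=\mathbf{\Gamma}^{-}_{N}+\mathbf{\Xi}$ and substituting into \eqref{2:DH}, one obtains
\[
\mathcal{L}_{N}\mathbf{\Xi}\;:=\;\mathcal{D}\mathbf{\Xi}-DX_{H}(\mathbf{\Gamma}^{-}_{N})\mathbf{\Xi}\;=\;R_{N}+Q_{N}(\mathbf{\Xi}),
\]
where $R_{N}:=X_{H}(\mathbf{\Gamma}^{-}_{N})-\mathcal{D}\mathbf{\Gamma}^{-}_{N}$ is the residue from the non-convergent tail and $Q_{N}(\mathbf{\Xi}):=X_{H}(\mathbf{\Gamma}^{-}_{N}+\mathbf{\Xi})-X_{H}(\mathbf{\Gamma}^{-}_{N})-DX_{H}(\mathbf{\Gamma}^{-}_{N})\mathbf{\Xi}$ is the nonlinear remainder, quadratic in $\mathbf{\Xi}$. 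Since $\hat{\mathbf{\Gamma}}$ formally solves \eqref{2:DH}, each component of $R_N$ is $O(\tau^{-N-1})$ on $\mathbb{T}_h\times D^{-}_r$ uniformly in $r$ large.

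Next, introduce a scale of Banach spaces $\mathcal{B}_{\mu}$ of analytic functions $f:\mathbb{T}_{h}\times D^{-}_{r}\to\mathbb{C}$ weighted so that $\|f\|_{\mu}:=\sup|\tau^{\mu}f(\varphi,\tau)|$ is finite, with component-wise weights matching the leading orders in \eqref{2:formhatgamma}. The main input from Section \ref{2:sectionlinearop} is a right-inverse $\mathcal{L}_{N}^{-1}:\mathcal{B}_{\mu+1}\to\mathcal{B}_{\mu}$ with operator norm bounded independently of $r$, for $\mu$ in a suitable range. This is obtained by freezing the coefficients at $\infty$ (so that $\mathcal{L}_{N}$ becomes the constant-coefficient operator $\mathcal{D}-DX_{H}(0)$ plus a perturbation decaying as $\tau\to\infty$), inverting the constant-coefficient part explicitly via integration along the characteristics $\varphi-\tau=\text{const}$ inside $D^{-}_{r}$, and absorbing the perturbation by enlarging $r=r_{-}$. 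Combined with the Lipschitz estimate $\|Q_{N}(\mathbf{\Xi})-Q_{N}(\tilde{\mathbf{\Xi}})\|_{\mu+1}\lesssim(\|\mathbf{\Xi}\|_{\mu}+\|\tilde{\mathbf{\Xi}}\|_{\mu})\|\mathbf{\Xi}-\tilde{\mathbf{\Xi}}\|_{\mu}$, a standard contraction mapping argument applied to $\mathbf{\Xi}\mapsto\mathcal{L}_{N}^{-1}(R_{N}+Q_{N}(\mathbf{\Xi}))$ in a small ball of $\mathcal{B}_{\mu}$ yields a unique fixed point $\mathbf{\Xi}_{N}$, and $\mathbf{\Gamma}^{-}:=\mathbf{\Gamma}^{-}_{N}+\mathbf{\Xi}_{N}$ is an analytic solution of \eqref{2:DH} on $\mathbb{T}_{h}\times D^{-}_{r_{-}}$.

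To identify the asymptotic expansion and prove uniqueness, observe that the construction yields $\mathbf{\Gamma}^{-}-\mathbf{\Gamma}^{-}_{N}\in\mathcal{B}_{N+1-\delta}$ for any $\delta>0$; since $N$ is arbitrary and the fixed-point balls are nested (raising $N$ only modifies the correction by something of higher order), the solution $\mathbf{\Gamma}^{-}$ does not depend on $N$ once $r_{-}$ is large enough, and so $\mathbf{\Gamma}^{-}(\varphi,\tau)\sim\hat{\mathbf{\Gamma}}(\varphi,\tau)$. For uniqueness, the difference $\mathbf{\Delta}$ of any two such analytic solutions is asymptotic to zero beyond every algebraic order, hence lies in $\bigcap_{\mu}\mathcal{B}_{\mu}$, and satisfies $\mathcal{L}_{N}\mathbf{\Delta}=Q_{N}(\mathbf{\Gamma}^{-}+\mathbf{\Delta})-Q_{N}(\mathbf{\Gamma}^{-})$; a standard Gronwall-type estimate for $\mathcal{L}_{N}$ along the characteristics, or equivalently the trivial kernel of $\mathcal{L}_{N}$ on $\mathcal{B}_{\mu}$, forces $\mathbf{\Delta}\equiv 0$.

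The main obstacle is precisely the construction of $\mathcal{L}_{N}^{-1}$ with $r$-uniform bounds. Because the $1:-1$ resonance is non-semisimple, $DX_{H}(0)$ has Jordan blocks and its resolvent along the characteristic direction $\mathcal{D}$ does not decay exponentially but only polynomially; this forces a delicate choice of componentwise weights in $\mathcal{B}_{\mu}$ (consistent with the different orders $\tau^{-1}$ and $\tau^{-2}$ visible in \eqref{2:formhatgamma}) and a careful contour analysis inside $D^{-}_{r}$, which is where the geometry of the sector---in particular the opening $\theta<\pi/4$---is crucially used.
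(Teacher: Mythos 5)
Your overall strategy—truncate the formal separatrix at order $N$, write $\mathbf{\Gamma}^{-}=\mathbf{\Gamma}^{-}_N+\mathbf{\Xi}$, reduce to a fixed-point equation $\mathbf{\Xi}=\mathcal{L}_N^{-1}(R_N+Q_N(\mathbf{\Xi}))$ in graded Banach spaces on $\mathbb{T}_h\times D_r^-$, and conclude independence of $N$ and uniqueness—is indeed the paper's strategy. But there is a genuine gap in the construction of $\mathcal{L}_N^{-1}$, and it is exactly at the point you yourself flag as ``the main obstacle.''

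You propose to invert $\mathcal{L}_N$ by treating it as the constant-coefficient operator $\mathcal{D}-DX_H(0)$ plus a decaying perturbation, inverting the constant-coefficient part by integration along characteristics, and ``absorbing the perturbation by enlarging $r$.'' This does not work. The perturbation you would need to absorb contains the term $DX_{\eta I_3^2}(\mathbf{\Gamma}^-_N)$, whose entries are $O(\tau^{-2})$ (quadratic in the $\mathbf{p}$-components of $\mathbf{\Gamma}^-_N$, which decay like $\tau^{-1}$), sitting in the $(1,3),(1,4),(2,3),(2,4)$ slots. With the componentwise weights forced by \eqref{2:formhatgamma} (the paper's $\mathfrak{X}_p$ spaces), this $\tau^{-2}$ block defines a bounded operator $\mathfrak{X}_n\to\mathfrak{X}_{n+1}$ whose norm is $O(1)$, not $O(r^{-1})$: a $\tau^{-2}$ entry multiplying a $\tau^{-n}$ component and measured with the $\tau^{n+2}$ weight gives a bounded but not small quantity. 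Since the right inverse of $\mathcal{D}-DX_H(0)$ also has $O(1)$ norm $\mathfrak{X}_{p+1}\to\mathfrak{X}_p$, the composition does not become a contraction by taking $r$ large, and your iteration has no reason to converge. The underlying reason is dynamical, not merely technical: the variational equation along $\mathbf{\Gamma}_0$ reduces, in polar coordinates, to an Euler equation $\partial_\tau^2\rho=-6\,\tau^{-2}\rho$ whose solutions grow and decay like $\tau^3$ and $\tau^{-2}$, whereas $e^{\tau DX_H(0)}$ grows only like $\tau$ (the Jordan block). These asymptotic regimes are genuinely different, so the $\eta I_3^2$ contribution along the separatrix is not a small perturbation of the linearization at the origin.

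The paper avoids this by choosing a different reference operator: $\mathcal{L}_0(\xi)=\mathcal{D}\xi-A_0\xi$ with $A_0=DX_{H_0}(\mathbf{\Gamma}_0)$ (eq.~\eqref{2:A0}), which already includes the $\tau^{-2}$ block. Its fundamental matrix $\mathbf{U}_0$ (eq.~\eqref{2:U0}) has columns with the correct polynomial orders $\tau^2,\tau^3,\tau^{-2},\tau^{-3}$, and the inversion theorem (Theorem~\ref{2:TheoremLminus}) exploits the reciprocal scaling between $\mathbf{U}_0$ and $\mathbf{U}_0^{-1}$ to lose only one power of $\tau$. A fundamental matrix for $\mathcal{L}_N$ is then built in Lemma~\ref{2:FundamentalMatrix} as a fixed point around the truncated formal fundamental matrix $\mathbf{U}_n$, using $\mathcal{L}_0^{-1}$; there the remaining perturbation $\mathbf{B}=DX_H(\mathbf{\Gamma}_{n+3}+\xi)-A_0$ does have operator norm $O(r^{-1})$ (its $(1,3),(1,4),(2,3),(2,4)$ entries are $O(\tau^{-3})$), so the contraction closes. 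Without this explicit $\mathbf{U}_0$ and the corresponding decomposition, your scheme cannot establish invertibility of $\mathcal{L}_N$ with $r$-uniform bounds, and the rest of the argument does not get off the ground. (As a minor aside, the restriction $\theta<\pi/4$ on the sector opening is not what is ``crucially used'' here; it is needed later so that $D_r^{+}\cap D_r^{-}$ has the two-component wedge geometry used in Theorem~\ref{2:theoremdeltaasymptoticformula}.)
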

It follows from the asymptotics of $\mathbf{\Gamma}^{-}$ that for $r>0$ sufficiently large the set $\mathbf{\Gamma}^{-}(\mathbb{T}_h\times D^{-}_{r})$ is a two dimensional immersed complex manifold. Points in this manifold converge to the equilibrium under the flow, i.e. $\Phi_H^t(\mathbf{\Gamma}^{-}(\varphi,\tau))\rightarrow 0$ as $\mathrm{Re}\,t\rightarrow -\infty$. Thus $\mathbf{\Gamma}^{-}$ is an analytic parameterization of a local unstable manifold of the equilibrium of $X_H$. An analogous result is valid for the stable manifold. More precisely, for $r>0$ let $D_r^{+}$ be the symmetric sector,
$$
D^{+}_r=\left\{\tau\in\mathbb{C}\:|\: -\tau\in D^{-}_r\right\}.
$$ 
By properly modifying the arguments in the proof of Theorem \ref{2:theoremunstableseparatrix}, we can prove that given a formal separatrix $\hat{\mathbf{\Gamma}}$ there exist $r_{+}>0$ and an analytic function $\mathbf{\Gamma}^{+}:\mathbb{T}_h\times D^{+}_{r_{+}}\rightarrow\mathbb{C}^4$ solving the same equation \eqref{2:DH} such that $\mathbf{\Gamma}^{+}(\varphi,\tau)\sim\hat{\mathbf{\Gamma}}(\varphi,\tau)$ as $\tau\rightarrow\infty$ in $D^{+}_{r_{+}}$. 
\subsection{The difference $\mathbf{\Gamma}^{+}-\mathbf{\Gamma}^{-}$}
Therefore, equation \eqref{2:DH} has two analytic solutions $\mathbf{\Gamma}^{\pm}$ both defined in symmetric sectors $D_r^{\pm}$ for $r=\max\left\{r_{-},r_{+}\right\}$ whose intersection in the $\tau$-plane consists of two connected components (see Figure \ref{2:Drwedge}). Since both functions have the same asymptotic expansion $\hat{\mathbf{\Gamma}}$ then, 
$$
\mathbf{\Gamma}^{+}(\varphi,\tau)-\mathbf{\Gamma}^{-}(\varphi,\tau)\sim 0\quad\text{as}\quad\tau\rightarrow\infty\quad\text{in}\quad D^{+}_r\cap D^{-}_r.
$$ 
\begin{figure}[t]
  \begin{center}
    \includegraphics[width=2in]{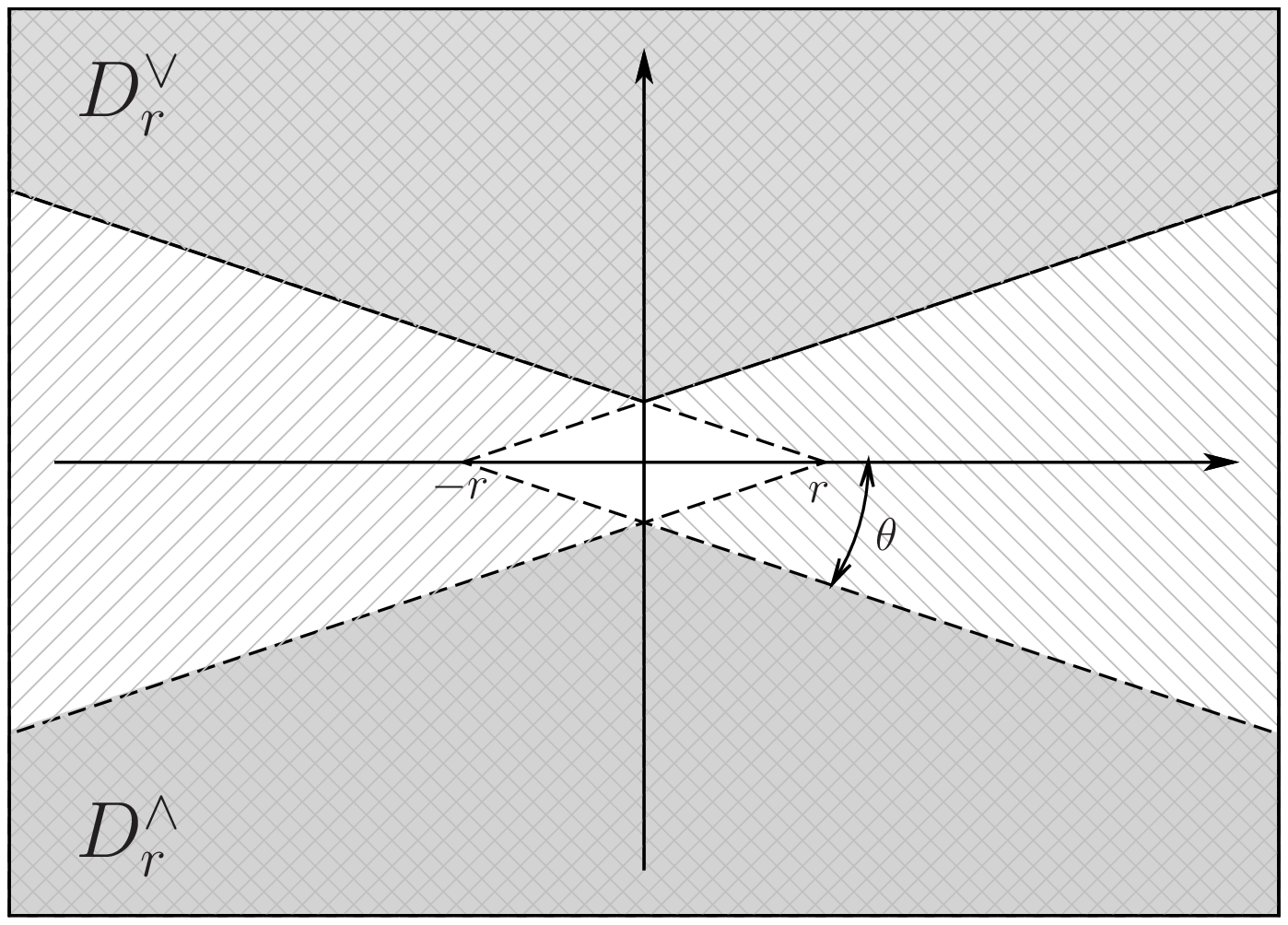}
  \end{center}
  \caption[Domains $D_r^{\wedge}$ and $D_r^{\vee}$.]{The intersection of the domains $D_{r}^{\pm}$.} 
  \label{2:Drwedge}
\end{figure}
Thus, their difference is said to be \textsl{beyond all algebraic orders}. We shall obtain a more precise estimate for the difference of the parameterizations on the lower component of the set $D^{+}_r\cap D^{-}_r$ which we denote by $D_{r}^{\wedge}$. Similar considerations work for the upper connected component $D_{r}^{\vee}$. In order to obtain such estimate we will use the fact that $\mathbf{\Gamma}^{+}-\mathbf{\Gamma}^{-}$ is approximately a solution of the variational equation of $X_{H}$ along the unstable solution $\mathbf{\Gamma}^{-}$. Therefore, we study the analytic solutions of the variational equation,
\begin{equation}\label{2:VariationaleqGammaminus}
\mathcal{D}\mathbf{u}=DX_H(\mathbf{\Gamma}^{-}(\varphi,\tau))\mathbf{u}.
\end{equation}
Since both $\partial_\varphi\mathbf{\Gamma}^{-}$ and $\partial_\tau\mathbf{\Gamma}^{-}$ solve equation \eqref{2:VariationaleqGammaminus} we shall construct a matrix solution $\mathbf{U}$ of equation \eqref{2:VariationaleqGammaminus} satisfying the following properties:
\begin{enumerate}
	\item The matrix-valued function $\mathbf{U}:\mathbb{T}_h\times D_r^{-}\rightarrow\mathbb{C}^{4\times 4}$ is analytic and continuous on the closure of its domain.
	\item The third and fourth columns of $\mathbf{U}$ are the known solutions $\partial_\varphi\mathbf{\Gamma}^{-}$ and $\partial_\tau\mathbf{\Gamma}^{-}$ respectively.
	\item $\mathbf{U}$ is symplectic, i.e. $\mathbf{U}^TJ\mathbf{U}=J$ where $J$ is the standard symplectic matrix \eqref{1:Omegadef}. 
\end{enumerate}
A matrix $\mathbf{U}$ satisfying the above conditions is said to be a \textsl{normalized fundamental solution} of equation \eqref{2:VariationaleqGammaminus}. We will also construct asymptotic expansions for these fundamental solutions as formal solutions of the formal variational equation,
\begin{equation}\label{2:formalVariationaleqGammaminus}
\mathcal{D}\mathbf{u}=DX_H(\hat{\mathbf{\Gamma}}(\varphi,\tau))\mathbf{u},
\end{equation}
where $\hat{\mathbf{\Gamma}}$ is a formal separatrix. The existence of such formal solutions is provided by the next proposition whose proof can be found in Section \ref{2:sectionformalseries}. 
\begin{proposition}\label{2:Thformalnormalizedfundmatrix}
Given a formal separatrix $\hat{\mathbf{\Gamma}}$, the corresponding formal variational equation \eqref{2:formalVariationaleqGammaminus} has a formal fundamental solution $\hat{\mathbf{U}}$ of the following form,
\begin{equation*}\hat{\mathbf{U}}=\begin{pmatrix}
\tau^{1}\hat{u}_{1,1}&\tau^{2}\hat{u}_{1,2}&\tau^{-2}\hat{u}_{1,3}&\tau^{-3}\hat{u}_{1,4}\\
\tau^{1}\hat{u}_{2,1}&\tau^{2}\hat{u}_{2,2}&\tau^{-2}\hat{u}_{2,3}&\tau^{-3}\hat{u}_{2,4}\\
\tau^{2}\hat{u}_{3,1}&\tau^{3}\hat{u}_{3,2}&\tau^{-1}\hat{u}_{3,3}&\tau^{-2}\hat{u}_{3,4}\\
\tau^{2}\hat{u}_{4,1}&\tau^{3}\hat{u}_{4,2}&\tau^{-1}\hat{u}_{4,3}&\tau^{-2}\hat{u}_{4,4}
\end{pmatrix},
\end{equation*}
where $\hat{u}_{i,j}\in\mathsf{T}[[\tau^{-1}]]$, for $i,j=1,\ldots,4$ such that the third and fourth columns of $\hat{\mathbf{U}}$ are $\partial_\varphi\hat{\mathbf{\Gamma}}$ and $\partial_\tau\hat{\mathbf{\Gamma}}$ respectively and $\hat{\mathbf{U}}^TJ\hat{\mathbf{U}}=J$. Moreover for any other formal fundamental solution $\hat{\tilde{\mathbf{U}}}$ of the same form of $\hat{\mathbf{U}}$ there exists $C\in \mathbb{C}^{2\times 2}$ symmetric matrix ($C^T=C$) such that $\hat{\tilde{\mathbf{U}}}=\hat{\mathbf{U}}E_C$ where,
\begin{equation}\label{2:matrixE}
E_C=\begin{pmatrix}
\mathrm{Id}&0\\
C&\mathrm{Id}\\
\end{pmatrix}.
\end{equation}  
\end{proposition}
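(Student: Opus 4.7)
The plan is to construct $\hat{\mathbf{U}}$ column by column, verify its symplecticity, and then derive the uniqueness statement. The third and fourth columns are immediate: differentiating the formal equation $\mathcal{D}\hat{\mathbf{\Gamma}}=X_H(\hat{\mathbf{\Gamma}})$ with respect to $\varphi$ and $\tau$ shows that $C_3:=\partial_\varphi\hat{\mathbf{\Gamma}}$ and $C_4:=\partial_\tau\hat{\mathbf{\Gamma}}$ both satisfy the formal variational equation \eqref{2:formalVariationaleqGammaminus}, and the $\tau$-orders of their entries, read off from the expansion of $\hat{\mathbf{\Gamma}}$ in Theorem \ref{2:Tformalseparatrix}, match those required of the third and fourth columns of $\hat{\mathbf{U}}$.

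For the first and second columns I would substitute the prescribed ansatz into \eqref{2:formalVariationaleqGammaminus} and solve the resulting system order by order in $\tau^{-1}$, in the same spirit as the construction of the formal separatrix. At the leading order, each column coefficient must lie in the kernel of $\partial_\varphi-DX_H(0)$ acting on $\mathsf{T}^4$; a direct computation using the Jordan structure of $DX_H(0)$ shows that this kernel is exactly two dimensional and consists of vectors of the form $(0,0,c\cos\varphi+d\sin\varphi,c\sin\varphi-d\cos\varphi)^T$, which is compatible with the leading orders prescribed for $C_1$ and $C_2$. At each subsequent order the recursion becomes inhomogeneous, and I would appeal to the inversion theory developed in Section \ref{2:sectionformalseries}: any potential resonant obstructions are absorbed by adjusting the free parameters at previous orders, equivalently by adding scalar multiples of $C_3,C_4$ to the partial sums.

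The symplecticity is then settled by an order count. Because $DX_H(\hat{\mathbf{\Gamma}})$ is infinitesimally symplectic, for any two formal solutions $u,v$ of \eqref{2:formalVariationaleqGammaminus} one has $\mathcal{D}\bigl[\Omega(u,v)\bigr]=0$; the same argument used in Theorem \ref{2:Tformalseparatrix} shows that any element of the formal series ring annihilated by $\mathcal{D}$ is a constant in $\mathbb{C}$. Therefore each entry of $\hat{\mathbf{U}}^{T}J\hat{\mathbf{U}}$ is a constant. A direct count of $\tau$-orders in the pairings yields apparent leading orders $\tau^{4}$ for $\Omega(C_1,C_2)$, $\tau^{-1}$ for $\Omega(C_1,C_4)$ and $\tau^{1}$ for $\Omega(C_2,C_3)$, so all three must vanish identically. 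For $\Omega(C_3,C_4)$ I would use that $H\circ\hat{\mathbf{\Gamma}}$ is $\mathcal{D}$-closed (because $\mathcal{D}(H\circ\hat{\mathbf{\Gamma}})=\Omega(X_H,X_H)\circ\hat{\mathbf{\Gamma}}=0$) and vanishes at infinity, hence is identically zero, yielding $\Omega(C_3,C_4)=-\partial_\varphi(H\circ\hat{\mathbf{\Gamma}})=0$. The two remaining pairings $\Omega(C_1,C_3)$ and $\Omega(C_2,C_4)$ have apparent leading order $\tau^{0}$; their leading constants are nonzero by a short calculation using $\kappa^2=-2/\eta$, so I would rescale $C_1$ and $C_2$ by suitable scalars to normalize them to $1$.

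For uniqueness, let $\hat{\tilde{\mathbf{U}}}$ be a second formal fundamental solution of the same form. Symplecticity forces $\hat{\mathbf{U}}$ to be invertible in the formal ring, so $M:=\hat{\mathbf{U}}^{-1}\hat{\tilde{\mathbf{U}}}$ is a well defined matrix of formal series. Differentiating gives $\mathcal{D}M=0$, and since $\mathcal{D}$-closed entries are constants, $M\in\mathbb{C}^{4\times 4}$. Equality of the last two columns forces the last two columns of $M$ to equal $e_3,e_4$, so that $M$ has the block form $\bigl(\begin{smallmatrix}A&0\\B&\mathrm{Id}\end{smallmatrix}\bigr)$; the condition $M^{T}JM=J$ then yields $A=\mathrm{Id}$ and $B^{T}=B$, i.e.\ $M=E_C$ with $C=B$ symmetric. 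The main technical obstacle in the whole argument is the order by order construction of $C_1$ and $C_2$: at each step one must verify that the inhomogeneous right-hand side lies in the image of $\partial_\varphi-DX_H(0)$ on $\mathsf{T}^4$ after arranging the free parameters at lower orders appropriately, a linear-algebraic resonance analysis that is the technical heart of the proof and closely parallels the analogous step in the proof of Theorem \ref{2:Tformalseparatrix}.
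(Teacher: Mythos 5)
Your overall plan differs from the paper's: you propose to solve the variational equation directly in the original coordinates by a weighted order-by-order recursion, whereas the paper first transforms to Sokol'ski\u{\i} normal form, uses the rotational symmetry to reduce the PDE to an ODE in $\tau$, and then passes to polar coordinates where the equation becomes a scalar second-order linear ODE (Lemma~\ref{2:sublemmaw}) that is solved explicitly; the symplectic relations are then verified by direct computation in those coordinates. Your uniqueness argument, on the other hand, is essentially the same as the paper's and is correct. There are, however, two genuine gaps in the construction.

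The more serious gap is in the symplecticity argument by order counting. Lemma~\ref{2:Lemmag0} tells you that a $\mathcal{D}$-closed element of $\tau^{j}\mathsf{T}[[\tau^{-1}]]$ is a constant, and that it vanishes only when $j\leq-1$. You apply this correctly to $\Omega(C_1,C_4)$ (apparent order $\tau^{-1}$), but for $\Omega(C_1,C_2)$ (apparent order $\tau^{4}$) and $\Omega(C_2,C_3)$ (apparent order $\tau^{1}$) the same reasoning does \emph{not} yield vanishing: a constant living in $\tau^{4}\mathsf{T}[[\tau^{-1}]]$ can be any nonzero scalar, because being constant already forces the coefficients of $\tau^{4},\ldots,\tau^{1}$ and $\tau^{-1},\tau^{-2},\ldots$ to vanish while saying nothing about the $\tau^{0}$ coefficient. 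The paper closes this gap by computing these pairings explicitly in polar coordinates (using the formulas \eqref{2:eqdefw20}, \eqref{2:eqdefw10} and the Wronskian identity \eqref{2:determinantcondition}) and showing that there is a nontrivial cancellation which pushes the pairing into $\tau^{-1}\mathbb{C}[[\tau^{-1}]]$; only then does Lemma~\ref{2:Lemmag0} force it to be zero. Without that computation your argument does not establish $\hat{\mathbf{U}}^{T}J\hat{\mathbf{U}}=J$. (Rescaling $C_1,C_2$ can normalize $\Omega(C_1,C_3)$ and $\Omega(C_2,C_4)$, but it cannot kill a nonzero $\Omega(C_1,C_2)$ without an additional Gram--Schmidt correction, which you do not describe.)

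The second gap is in the order-by-order construction of $C_1,C_2$. You identify the leading-order kernel correctly, but the assertion that ``any potential resonant obstructions are absorbed by adjusting the free parameters at previous orders'' is precisely the technical content that needs to be proved, and there is no general inversion theory in Section~\ref{2:sectionformalseries} to appeal to (the operator theory is developed in Section~\ref{2:sectionlinearop} for analytic, not formal, functions). The recursion in the original coordinates is a weighted system in which components $3,4$ carry one extra power of $\tau$ relative to components $1,2$, and verifying solvability at each step requires controlling a nontrivial resonance condition. The paper sidesteps this by passing to normal form and polar coordinates, where the problem collapses to a scalar equation $\partial_\tau^2 w_2=(\cdots)w_2+(\cdots)$ whose indicial roots are explicit (giving solutions in $\tau^{-2}$ and $\tau^{3}$) and where no resonance obstruction can arise beyond the indicial orders. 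Your approach could in principle be made to work, but as written it leaves the key solvability step unjustified.
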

The existence of a normalized fundamental solution of equation \eqref{2:VariationaleqGammaminus} with asymptotic expansion $\hat{\mathbf{U}}$ is given by the following proposition whose proof is placed in Section \ref{2:sectionVariationalEq}.
\begin{proposition}\label{2:normalizedFundamentalMatrix}
Given an unstable parameterization $\mathbf{\Gamma}^{-}\sim\hat{\mathbf{\Gamma}}$ and a formal fundamental solution $\hat{\mathbf{U}}$ there exists $r>0$ such that the variational equation \eqref{2:VariationaleqGammaminus} has an unique normalized fundamental solution $\mathbf{U}:\mathbb{T}_h\times D_{r}^{-}\rightarrow\mathbb{C}^{4\times 4}$ such that $\mathbf{U}\sim\hat{\mathbf{U}}$ as $\tau\rightarrow\infty$ in $D^{-}_{r}$.
\end{proposition}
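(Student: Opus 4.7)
The plan is to fix the last two columns of $\mathbf{U}$ to be $\partial_\varphi\mathbf{\Gamma}^{-}$ and $\partial_\tau\mathbf{\Gamma}^{-}$: these solve \eqref{2:VariationaleqGammaminus} automatically (differentiate the PDE \eqref{2:DH} applied to $\mathbf{\Gamma}^{-}$) and, by Theorem \ref{2:theoremunstableseparatrix}, are respectively asymptotic to $\partial_\varphi\hat{\mathbf{\Gamma}}$ and $\partial_\tau\hat{\mathbf{\Gamma}}$, matching the last two columns of $\hat{\mathbf{U}}$. The core of the proof is then to construct the first two columns, which have polynomially growing asymptotics, and to verify symplecticity.

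For the first two columns $\mathbf{u}_j$ ($j=1,2$), of respective growth orders $\tau^{p_1}=\tau$ and $\tau^{p_2}=\tau^{2}$, fix a large integer $N$ and let $\mathbf{u}_j^{[N]}$ be the truncation of the formal column $\hat{\mathbf{u}}_j$ at order $\tau^{p_j-N}$. Since $\hat{\mathbf{U}}$ formally solves the variational equation along $\hat{\mathbf{\Gamma}}$ and $\mathbf{\Gamma}^{-}\sim\hat{\mathbf{\Gamma}}$, the residual
\begin{equation*}
\mathbf{E}_j^{[N]}:=\mathcal{D}\mathbf{u}_j^{[N]}-DX_H\bigl(\mathbf{\Gamma}^{-}(\varphi,\tau)\bigr)\mathbf{u}_j^{[N]}
\end{equation*}
satisfies $\mathbf{E}_j^{[N]}=O(\tau^{p_j-N-1})$ on $\mathbb{T}_h\times D^{-}_r$, with a gain of at least one power of $\tau^{-1}$ over $\mathbf{u}_j^{[N]}$. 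Seeking $\mathbf{u}_j=\mathbf{u}_j^{[N]}+\mathbf{v}_j$, existence reduces to solving
\begin{equation*}
\mathcal{L}\mathbf{v}_j=-\mathbf{E}_j^{[N]},\qquad \mathcal{L}:=\mathcal{D}-DX_H\bigl(\mathbf{\Gamma}^{-}\bigr),
\end{equation*}
with $\mathbf{v}_j=O(\tau^{p_j-N})$. The right inverse of $\mathcal{L}$ supplied by the theory of Section \ref{2:sectionlinearop}, acting between weighted Banach spaces of analytic functions on $\mathbb{T}_h\times D^{-}_r$ tailored to the growth rates of the four columns, provides the correction; since $N$ can be taken arbitrarily large, one reads off the asymptotic expansion $\mathbf{u}_j\sim\hat{\mathbf{u}}_j$.

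Assemble $\mathbf{U}=(\mathbf{u}_1,\mathbf{u}_2,\partial_\varphi\mathbf{\Gamma}^{-},\partial_\tau\mathbf{\Gamma}^{-})$. Symplecticity follows from the Hamiltonian identity $JDX_H=\mathrm{Hess}(H)$ (symmetric), which gives $DX_H^{T}J+JDX_H=0$ and hence
\begin{equation*}
\mathcal{D}\bigl(\mathbf{U}^{T}J\mathbf{U}\bigr)=\mathbf{U}^T\bigl(DX_H^{T}J+JDX_H\bigr)\mathbf{U}=0,
\end{equation*}
so that $\mathbf{U}^{T}J\mathbf{U}-J$ is a first integral of $\mathcal{D}$. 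Given $(\varphi,\tau)\in\mathbb{T}_h\times D^{-}_r$ and taking $s\in[0,\infty)$, the characteristic $(\varphi-s,\tau-s)$ remains in $\mathbb{T}_h\times D^{-}_r$ (the imaginary part of $\varphi$ is preserved, and the condition $|\arg(\tau+r)|>\theta$ is preserved when subtracting positive reals) and $\tau-s\to-\infty$; along it $\mathbf{U}^{T}J\mathbf{U}$ is constant and, by the formal identity $\hat{\mathbf{U}}^{T}J\hat{\mathbf{U}}=J$ and the asymptotic $\mathbf{U}\sim\hat{\mathbf{U}}$, tends to $J$. Hence $\mathbf{U}^{T}J\mathbf{U}=J$ throughout the domain. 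Uniqueness is obtained by the same argument: if $\tilde{\mathbf{U}}$ is another normalized fundamental solution with $\tilde{\mathbf{U}}\sim\hat{\mathbf{U}}$, write $\tilde{\mathbf{U}}=\mathbf{U}M$; then $\mathcal{D}M=0$, $M^{T}JM=J$, and $M\sim\mathrm{Id}$, so letting $s\to+\infty$ along the characteristic forces $M=\mathrm{Id}$.

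The main obstacle is the second step: inverting $\mathcal{L}$ on function spaces that simultaneously accommodate the polynomial growth $\tau^{p_j}$ of the first two columns and produce corrections with the prescribed decay relative to the residual. This is precisely the situation the weighted-norm theory of Section \ref{2:sectionlinearop} is designed for, so once that machinery is in place the construction reduces to checking that $\mathbf{E}_j^{[N]}$ lies in the appropriate source space and that its inverse yields $\mathbf{v}_j$ in the appropriate target space, together with the routine asymptotic bookkeeping.
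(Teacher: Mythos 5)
Your reduction to solving $\mathcal{L}\mathbf{v}_j=-\mathbf{E}_j^{[N]}$ with $\mathcal{L}=\mathcal{D}-DX_H(\mathbf{\Gamma}^{-})$ is circular: the inversion theory of Section \ref{2:sectionlinearop} (Theorems \ref{2:TheoremLminus} and \ref{2:InverseLD1p}) produces $\mathcal{L}^{-1}$ by the method of variation of constants, and therefore \emph{presupposes} that $\mathcal{L}$ already has a fundamental matrix $\mathbf{U}$ with the prescribed column growth rates --- which is exactly the object you are trying to construct. Nothing in Section \ref{2:sectionlinearop} inverts a general $\mathcal{L}$ from scratch; the only operator that comes equipped with an explicit fundamental matrix is the model operator $\mathcal{L}_0=\mathcal{D}-A_0$ with $A_0=DX_{H_0}(\mathbf{\Gamma}_0)$ and explicit fundamental matrix $\mathbf{U}_0$ given in \eqref{2:U0}. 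The paper's proof (Lemma \ref{2:FundamentalMatrix}) therefore writes $\mathbf{U}=\mathbf{U}_n+\mathbf{V}$ and recasts the equation for $\mathbf{V}$ as $\mathcal{L}_0(\mathbf{V})=\mathbf{B}\mathbf{V}+\mathbf{R}_n$, where $\mathbf{B}=DX_H(\mathbf{\Gamma}^{-})-A_0$ is small (it gains one power of $\tau^{-1}$) on $D_r^{-}$ for $r$ large. One then runs a contraction mapping argument for the fixed-point map $\mathbf{V}\mapsto\mathcal{L}_0^{-1}(\mathbf{B}\mathbf{V})+\mathcal{L}_0^{-1}(\mathbf{R}_n)$, the smallness of $\|\mathcal{L}_0^{-1}\circ\mathcal{B}\|$ for large $r$ being what makes the argument close. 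Your remaining pieces are fine --- fixing the last two columns to be $\partial_\varphi\mathbf{\Gamma}^{-}$ and $\partial_\tau\mathbf{\Gamma}^{-}$ is legitimate (the paper gets this from uniqueness of the fixed point instead), the symplecticity argument via $\mathcal{D}(\mathbf{U}^TJ\mathbf{U})=0$ plus the asymptotics is correct, and your uniqueness argument via $\tilde{\mathbf{U}}=\mathbf{U}M$, $\mathcal{D}M=0$, $M\sim\mathrm{Id}$ works (using that a $2\pi$-periodic analytic function of $\tau-\varphi$ tending to $\mathrm{Id}$ along characteristics must be constant). But the existence step, as stated, does not go through without replacing $\mathcal{L}^{-1}$ by $\mathcal{L}_0^{-1}$ and a perturbation/fixed-point argument.
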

Using these fundamental solutions for the variational equation \eqref{2:VariationaleqGammaminus} we obtain an exponentially small estimate for the difference of stable and unstable parameterizations.
\begin{theorem}\label{2:theoremdeltaasymptoticformula}
Given $\epsilon>0$ and a normalized fundamental solution $\mathbf{U}$ there exists a vector $\mathbf{\Theta}^{-} \in \mathbb{C}^4$ such that the following asymptotic formula holds,
\begin{equation}\label{2:asymptoticformuladeltatheorem}
\mathbf{\Gamma}^{+}(\varphi,\tau)-\mathbf{\Gamma}^{-}(\varphi,\tau)= e^{-i(\tau-\varphi)}\mathbf{U}(\varphi,\tau)\mathbf{\Theta}^{-} + O(e^{-(2-\epsilon)i(\tau-\varphi)}),
\end{equation}
as $\tau\rightarrow\infty$ in $D_r^{\wedge}$.
\end{theorem}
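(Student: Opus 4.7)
The approach is to write the difference $\Delta := \mathbf{\Gamma}^{+}-\mathbf{\Gamma}^{-}$ in terms of the fundamental matrix $\mathbf{U}$ of the variational equation and then isolate the leading Fourier mode in $\varphi$. Since $\mathbf{\Gamma}^{\pm}$ both solve \eqref{2:DH}, Taylor expansion of $X_H$ around $\mathbf{\Gamma}^{-}$ yields
\begin{equation*}
\mathcal{D}\Delta = DX_H(\mathbf{\Gamma}^{-})\Delta + R(\Delta), \qquad R(\Delta)=O(|\Delta|^{2}).
\end{equation*}
Setting $\mathbf{C}:=\mathbf{U}^{-1}\Delta$ and using $\mathcal{D}\mathbf{U}=DX_H(\mathbf{\Gamma}^{-})\mathbf{U}$ together with the symplectic identity $\mathbf{U}^{-1}=-J\mathbf{U}^{T}J$ (which provides polynomial-in-$\tau$ control on $\mathbf{U}^{-1}$), we get
\begin{equation*}
\mathcal{D}\mathbf{C} = \mathbf{U}^{-1}R(\Delta).
\end{equation*}
Because $\mathbf{\Gamma}^{+}$ and $\mathbf{\Gamma}^{-}$ share the asymptotic expansion $\hat{\mathbf{\Gamma}}$ in $D_{r}^{\wedge}$, we have $\Delta\sim 0$, so the right-hand side is beyond all algebraic orders, meaning $\mathbf{C}$ is essentially constant along the characteristics $s\mapsto(\varphi+s,\tau+s)$ of $\mathcal{D}$.

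The next step is Fourier analysis in $\varphi$: expanding $\mathbf{C}(\varphi,\tau)=\sum_{k\in\mathbb{Z}}\mathbf{C}_k(\tau)e^{ik\varphi}$ decouples the equation into scalar ODEs
\begin{equation*}
\mathbf{C}_k'(\tau)+ik\,\mathbf{C}_k(\tau)=G_k(\tau),
\end{equation*}
where $G_k$ is the $k$-th Fourier coefficient of $\mathbf{U}^{-1}R(\Delta)$. Shifting the $\varphi$-contour inside the strip $\mathbb{T}_h$ and invoking Cauchy estimates endow each $G_k$ with an extra damping factor $e^{-|k|h}$ on top of its beyond-all-algebraic-orders decay. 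Integrating along a path going to infinity inside $D_{r}^{\wedge}$ gives
\begin{equation*}
\mathbf{C}_k(\tau)=e^{-ik\tau}\mathbf{\Theta}_k+e^{-ik\tau}\int_{\infty}^{\tau}e^{iks}G_k(s)\,ds,
\end{equation*}
with $\mathbf{\Theta}_k\in\mathbb{C}^{4}$ a constant of integration.

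For $\Delta=\mathbf{U}\mathbf{C}$ to be bounded and asymptotic to zero in $D_{r}^{\wedge}$, the constants $\mathbf{\Theta}_k$ must vanish for $k\leq 0$: when $k<0$ the term $e^{-ik\tau}$ blows up exponentially in the lower wedge, and when $k=0$ it is a non-decaying constant, both incompatible with $\Delta\sim 0$. The surviving dominant mode is $k=1$; setting $\mathbf{\Theta}^{-}:=\mathbf{\Theta}_1$, its contribution to $\Delta$ is precisely
\begin{equation*}
\mathbf{U}(\varphi,\tau)\mathbf{\Theta}^{-}e^{i(\varphi-\tau)}=e^{-i(\tau-\varphi)}\mathbf{U}(\varphi,\tau)\mathbf{\Theta}^{-}.
\end{equation*}
The modes $k\geq 2$ yield contributions bounded by $e^{-2i(\tau-\varphi)}$, and once combined with the polynomial growth of $\mathbf{U}$ they fit into $O(e^{-(2-\epsilon)i(\tau-\varphi)})$ for any $\epsilon>0$ (the $\epsilon$-loss absorbs the polynomial prefactors).

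The main technical obstacle is twofold. First, one must justify convergence of the improper integrals defining $\mathbf{C}_k$ along characteristics staying inside $D_{r}^{\wedge}$, which requires careful choice of integration paths avoiding the boundary of the sector. Second, since the quadratic remainder $R(\Delta)$ depends on the very quantity we wish to estimate, a bootstrap is needed: a crude preliminary bound $\Delta=O(e^{-(1-\epsilon)i(\tau-\varphi)})$ obtained from the $k=1$ analysis promotes $R(\Delta)$ to $O(e^{-(2-2\epsilon)i(\tau-\varphi)})$, which then feeds back through the Fourier integrals to close the estimate with the advertised remainder.
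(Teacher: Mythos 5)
Your Step-3-type argument (Fourier-expand $\mathbf{C}=\mathbf{U}^{-1}\Delta$ in $\varphi$, kill the modes $k\leq0$ by decay, identify $\mathbf{\Theta}^{-}$ with the coefficient of $e^{i(\varphi-\tau)}$, and absorb the higher modes and polynomial prefactors into the $O(e^{-(2-\epsilon)i(\tau-\varphi)})$ error) is essentially what the paper does at the end of its proof. The gap is earlier, and it is serious: your bootstrap has no legitimate starting point.

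The integral representation
\begin{equation*}
\mathbf{C}_k(\tau)=e^{-ik\tau}\mathbf{\Theta}_k+e^{-ik\tau}\int_{\infty}^{\tau}e^{iks}G_k(s)\,ds,
\end{equation*}
for $k\geq1$ only makes sense if the improper integral converges along a path escaping to infinity inside $D_r^{\wedge}$. On such a path $\mathrm{Im}\,s\to-\infty$, so $|e^{iks}|=e^{-k\,\mathrm{Im}\,s}$ grows exponentially, and convergence forces $|G_k(s)|\lesssim e^{(k+\delta)\,\mathrm{Im}\,s}$ for some $\delta>0$, i.e. \emph{exponential} decay of $G_k$. But a priori all you know about $\Delta$ (hence about $G_k\sim|\Delta|^2$) is that it is beyond all algebraic orders — super-polynomial, not exponential. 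Super-polynomial decay does not tame $e^{-k\,\mathrm{Im}\,s}$; the integral simply diverges. So the claim that a ``crude preliminary bound $\Delta=O(e^{-(1-\epsilon)i(\tau-\varphi)})$ is obtained from the $k=1$ analysis'' is circular: you need exponential decay of $G_1$ (hence of $\Delta$) to even write down the $k=1$ analysis, but the exponential decay of $\Delta$ is precisely what the $k=1$ analysis was supposed to yield. Replacing $\infty$ by a fixed $\tau_0$ does not help either: then one only gets $\mathbf{C}_k=O(1)$, not decay. A second, smaller, gap is the inversion of $\mathcal{D}$ on $D_r^{\wedge}$ itself: the wedge contains no infinite horizontal rays, so the naive formulas $\int_{\pm\infty}^{0}f(\varphi+s,\tau+s)\,ds$ do not apply; the paper needs the Lazutkin/Cauchy-integral decomposition (Proposition \ref{2:propositionDxf1}) for this, which you wave at but do not supply.

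The paper circumvents the circularity by never bootstrapping on $\Delta$ directly. It defines $\xi_0=\mathbf{U}\mathbf{c}_0$ where $\mathbf{c}_0=\mathbf{c}_0(\tau-\varphi)$ is the kernel piece — a $2\pi$-periodic analytic function on a lower half-plane that tends to $0$. For such a function the maximum modulus principle (Lemma \ref{2:lemmaestimatecs}) gives \emph{for free} the exponential bound $|\mathbf{c}_0(z)|\leq\operatorname{const}\cdot e^{\mathrm{Im}\,z}$. This supplies the initial exponential rate $\alpha=1$ without any circularity. The contraction-mapping iterates $\xi_k$ are then controlled by the recursive inequality $C_{k+1}\leq (M_1+M_2)e^{-(r-h)/2}C_k^2+C_0$ on $C_k:=\|e^{i(\tau-\varphi)}\mathbf{U}^{-1}\xi_k\|$, using the operator $\mathcal{L}_\mu^{-1}:\mathfrak{Y}_\mu\to\mathfrak{Y}_{\mu'}$ and again Lemma \ref{2:lemmaestimatecs} for the kernel corrections $\mathbf{c}_k$. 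Passing to the limit gives the needed exponential bound on $\xi_*=\Delta$, and only then does the Fourier step you describe become legitimate. Your proposal therefore has the right target and the right final step, but it omits the mechanism — periodicity plus the maximum modulus principle, applied to the kernel terms of the fixed-point iteration — that converts super-polynomial smallness into the first exponential estimate.
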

We prove this theorem is Section \ref{2:theoremasympdelta}. As an immediate corollary of Theorem \ref{2:theoremdeltaasymptoticformula} and taking into account the asymptotic expansion of $\mathbf{U}$ we obtain the following asymptotic expansion for the difference,
\begin{equation*}
e^{i(\tau-\varphi)}\left(\mathbf{\Gamma}^{+}(\varphi,\tau)-\mathbf{\Gamma}^{-}(\varphi,\tau)\right)\sim \hat{\mathbf{U}}(\varphi,\tau)\Theta^{-}\quad\text{as}\quad\tau\rightarrow\infty\quad\text{in}\quad D_{r}^{\wedge}.
\end{equation*}
Using the leading orders of $\hat{\mathbf{U}}$ (see Proposition \ref{2:Thformalnormalizedfundmatrix}) it is possible to obtain an exponentially small upper bound for the difference of stable and unstable parameterizations in the lower connected component $D_r^{\wedge}$. Indeed, since for every $\tau\in D_r^{\wedge}$ and $\sigma>0$ the vertical segment $[\tau,\tau-i\sigma]$ is contained in $D_r^{\wedge}$ then there exists $C>0$ such that for every $\sigma>0$,
\begin{equation*}
\left|\mathbf{\Gamma}^{+}(\varphi,\tau)-\mathbf{\Gamma}^{-}(\varphi,\tau)\right|\leq C \sigma^3 e^{-\sigma},
\end{equation*}
for all $\varphi\in \mathbb{T}_h$ and $\tau\in D_r^{\wedge}$ with $\mathrm{Im}\tau<-\sigma$.

As mentioned before, it is possible to use the previous arguments \textsl{mutatis mutandis} to study the difference $\mathbf{\Gamma}^{+}-\mathbf{\Gamma}^{-}$ in the upper connected component $D^{\vee}_r$. Similar to Theorem \ref{2:theoremdeltaasymptoticformula} one can prove the existence of $\Theta^+\in\mathbb{C}^4$ such that,
\begin{equation*}
e^{-i(\tau-\varphi)}\left(\mathbf{\Gamma}^{+}(\varphi,\tau)-\mathbf{\Gamma}^{-}(\varphi,\tau)\right)\sim \hat{\mathbf{U}}(\varphi,\tau)\Theta^{+}\quad\text{as}\quad\tau\rightarrow\infty\quad\text{in}\quad D_{r}^{\vee}.
\end{equation*}
\subsection{Analytic invariants}
In this section we use the asymptotic formula of Theorem \ref{2:theoremdeltaasymptoticformula} to construct two analytic invariants for the Hamiltonian $H$. On of these invariants measures the splitting distance of the complex manifolds parametrized by $\mathbf{\Gamma}^{\pm}$. This invariant is also related to the Stokes phenomenon which is observed in solutions of certain differential equations where the same solution possesses different asymptotic expansions at infinity in different sectors of the complex plane \cite{CKA:98}. 

In order to define these invariants, let $\mathbf{\Gamma}^{\pm}\sim \hat{\mathbf{\Gamma}}$ be a stable and unstable parameterization and $\mathbf{U}\sim\hat{\mathbf{U}}$ a normalized fundamental solution of the variational equation around $\mathbf{\Gamma}^{-}$. Moreover, let $$\Delta(\varphi,\tau)=\mathbf{\Gamma}^{+}(\varphi,\tau)-\mathbf{\Gamma}^{-}(\varphi,\tau)\,.$$ According to Theorem \ref{2:theoremdeltaasymptoticformula} we have the following asymptotics,
\begin{equation}\label{2:asymptoticexpdelta}
e^{\mp i(\tau-\varphi)}\Delta(\varphi,\tau)\sim \hat{\mathbf{U}}(\varphi,\tau)\Theta^{\pm}\quad\text{as}\quad\mathrm{Im}\tau\rightarrow\pm \infty.
\end{equation}
We call the first two components of $\Theta^{\pm}=\left(\Theta^{\pm}_1,\Theta^{\pm}_2,\Theta^{\pm}_3,\Theta^{\pm}_4\right)$ the \textsl{normal components} and the last two the \textit{tangent components}. The following limit provides a way to compute the components of the vectors $\Theta^{\pm}$,
\begin{equation}\label{2:limitTheta}
\Omega(\mathbf{\Theta}^{\pm},\mathbf{v})=\lim_{\mathrm{Im}\tau\rightarrow\pm\infty}\Omega(\Delta(\varphi,\tau),\mathbf{U}(\varphi,\tau)\mathbf{v})e^{\mp i(\tau-\varphi)},\quad \mathbf{v}\in\mathbb{C}^4,
\end{equation}
where $\Omega$ is the standard symplectic form and the convergence of the limit is uniform with respect to $\varphi\in \mathbb{T}_h$. The proof of \eqref{2:limitTheta} is straightforward. Indeed, it follows from the asymptotics \eqref{2:asymptoticexpdelta} and the fact that $\hat{\mathbf{U}}^{T}J\hat{\mathbf{U}}=J$. Moreover, the previous formula is useful from the computational point of view, since to compute the normal components of $\Theta^{\pm}$ it only requires knowing the stable and unstable parameterizations $\mathbf{\Gamma}^{\pm}$. In fact $\Theta_1^{\pm}=\Omega(\Theta^{\pm},e_3)$ where $e_3=(0,0,1,0)$. Since $\mathbf{U}e_3=\partial_\varphi\mathbf{\Gamma}^{-}$ we conclude that,
\begin{equation}\label{2:limitdefTheta1}
\Theta_1^{\pm}=\lim_{\mathrm{Im}\tau\rightarrow\pm\infty}\Omega(\Delta(\varphi,\tau),\partial_\varphi\mathbf{\Gamma}^{-}(\varphi,\tau))e^{\mp i(\tau-\varphi)}.
\end{equation}
A similar formula is valid for the normal component $\Theta^{\pm}_2$, where the tangent vector field $\partial_\varphi\mathbf{\Gamma}^{-}$ is replaced by $\partial_\tau\mathbf{\Gamma}^{-}$. The components of the vector $\Theta^{\pm}$ are not independent and due to the freedom in the choice of the parameterizations they are not uniquely defined. 

\begin{lemma}\label{2:lemmatranslationTheta}
Given any stable (resp. unstable) parameterizations $\mathbf{\Gamma}^\pm\sim\hat{\mathbf{\Gamma}}$ and normalized fundamental solution $\mathbf{U}\sim\hat{\mathbf{U}}$, the following holds:
\begin{enumerate}
	\item\label{2:lemmaitem1} $\Theta^{\pm}_1+\Theta^{\pm}_2=0$.
	\item\label{2:lemmaitem2} If $\tilde{\mathbf{\Gamma}}^\pm\sim\hat{\tilde{\mathbf{\Gamma}}}$ is another stable (resp. unstable) parameterization with normalized fundamental solution $\tilde{\mathbf{U}}\sim\hat{\tilde{\mathbf{U}}}$ then there exist $(\varphi_0,\tau_0)\in\mathbb{C}^2$ and a symmetric matrix $C\in\mathbb{C}^{2\times 2}$ such that $$\tilde{\Theta}^{\pm}=E_C\Theta^{\pm}e^{\pm i(\tau_0-\varphi_0)}$$
\end{enumerate}
\end{lemma}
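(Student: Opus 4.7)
The plan is to deduce both parts from three facts: $H\circ\mathbf{\Gamma}^\pm$ is a first integral of $\mathcal{D}=\partial_\varphi+\partial_\tau$, the formal separatrix is unique up to translation (Theorem \ref{2:Tformalseparatrix}), and formal fundamental solutions are unique up to right multiplication by $E_C$ with $C$ symmetric (Proposition \ref{2:Thformalnormalizedfundmatrix}).

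For item \eqref{2:lemmaitem1}, I would begin by noting that $\mathcal{D}(H\circ\mathbf{\Gamma}^\pm)=\mathrm{d}H(\mathbf{\Gamma}^\pm)\,X_H(\mathbf{\Gamma}^\pm)=\Omega(X_H(\mathbf{\Gamma}^\pm),X_H(\mathbf{\Gamma}^\pm))=0$, so $H\circ\mathbf{\Gamma}^-$ is constant along the lines $s\mapsto(\varphi-s,\tau-s)$. Letting $s\to+\infty$ along the real axis sends $\tau-s\to-\infty$ inside $D_{r_-}^-$ while keeping $\varphi-s\in\mathbb{T}_h$, so $\mathbf{\Gamma}^-\to 0$ along this line and $H(0)=0$ forces $H\circ\mathbf{\Gamma}^-\equiv0$ on $\mathbb{T}_h\times D_{r_-}^-$. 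The analogous argument with $s\to-\infty$ gives $H\circ\mathbf{\Gamma}^+\equiv0$. Taylor-expanding the identity $H(\mathbf{\Gamma}^+)-H(\mathbf{\Gamma}^-)=0$ about $\mathbf{\Gamma}^-$, combined with $\mathrm{d}H(\mathbf{v})\mathbf{w}=\Omega(X_H(\mathbf{v}),\mathbf{w})$ and $X_H(\mathbf{\Gamma}^-)=\partial_\varphi\mathbf{\Gamma}^-+\partial_\tau\mathbf{\Gamma}^-$ gives
\begin{equation*}
\Omega(\Delta,\partial_\varphi\mathbf{\Gamma}^-)+\Omega(\Delta,\partial_\tau\mathbf{\Gamma}^-)=O(|\Delta|^2).
\end{equation*}
Multiplying by $e^{\mp i(\tau-\varphi)}$, the right-hand side tends to zero as $\mathrm{Im}\,\tau\to\pm\infty$ in the corresponding component of $D_r^\wedge\cup D_r^\vee$, since by Theorem \ref{2:theoremdeltaasymptoticformula} and Proposition \ref{2:Thformalnormalizedfundmatrix} the bound $|\Delta|=O(|e^{\pm i(\tau-\varphi)}|\,|\tau|^M)$ leaves a full exponential factor of margin. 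Comparison with \eqref{2:limitdefTheta1} and its analogue for $\Theta^\pm_2$ yields $\Theta^\pm_1+\Theta^\pm_2=0$.

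For item \eqref{2:lemmaitem2}, Theorem \ref{2:Tformalseparatrix} produces $(\varphi_0,\tau_0)\in\mathbb{C}^2$ with $\hat{\tilde{\mathbf{\Gamma}}}(\varphi,\tau)=\hat{\mathbf{\Gamma}}(\varphi+\varphi_0,\tau+\tau_0)$. Because \eqref{2:DH} is autonomous, $\mathbf{\Gamma}^\pm(\cdot+\varphi_0,\cdot+\tau_0)$ solves the PDE on the translated sectors with asymptotic $\hat{\tilde{\mathbf{\Gamma}}}$, so the uniqueness in Theorem \ref{2:theoremunstableseparatrix} (and its stable counterpart) forces $\tilde{\mathbf{\Gamma}}^\pm(\varphi,\tau)=\mathbf{\Gamma}^\pm(\varphi+\varphi_0,\tau+\tau_0)$. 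The same translation argument makes $\mathbf{U}(\cdot+\varphi_0,\cdot+\tau_0)$ a normalized fundamental solution of the variational equation along $\tilde{\mathbf{\Gamma}}^-$, so Proposition \ref{2:Thformalnormalizedfundmatrix} together with the uniqueness in Proposition \ref{2:normalizedFundamentalMatrix} yields a symmetric matrix $C'\in\mathbb{C}^{2\times 2}$ with $\tilde{\mathbf{U}}(\varphi,\tau)=\mathbf{U}(\varphi+\varphi_0,\tau+\tau_0)E_{C'}$. A direct block-matrix calculation confirms $E_{C'}^{T}JE_{C'}=J$, so $E_{C'}$ is symplectic.

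Substituting these identities into \eqref{2:limitTheta} for $\tilde{\Theta}^\pm$, performing the change of variables $(\varphi',\tau')=(\varphi+\varphi_0,\tau+\tau_0)$, and factoring the phase as $e^{\mp i(\tau-\varphi)}=e^{\pm i(\tau_0-\varphi_0)}e^{\mp i(\tau'-\varphi')}$ leads to
\begin{equation*}
\Omega(\tilde{\Theta}^\pm,\mathbf{v})=e^{\pm i(\tau_0-\varphi_0)}\Omega(\Theta^\pm,E_{C'}\mathbf{v})=e^{\pm i(\tau_0-\varphi_0)}\Omega(E_{C'}^{-1}\Theta^\pm,\mathbf{v}),
\end{equation*}
where the second equality uses symplecticity of $E_{C'}$. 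Since $E_{C'}^{-1}=E_{-C'}$, setting $C:=-C'$ (which is still symmetric) gives $\tilde{\Theta}^\pm=E_{C}\Theta^\pm\,e^{\pm i(\tau_0-\varphi_0)}$, as claimed. The main obstacle I anticipate is the flow-limit argument forcing $H\circ\mathbf{\Gamma}^\pm\equiv 0$ on the whole sectorial domain: the mere formal identity $H\circ\hat{\mathbf{\Gamma}}=0$ would not suffice here, since the remainder after Taylor expansion must dominate the exponential factor $e^{\mp i(\tau-\varphi)}$ rather than merely any power of $\tau$.
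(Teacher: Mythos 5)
Your proof is correct and follows essentially the same route as the paper. Item~(1): you expand $H(\mathbf{\Gamma}^+)-H(\mathbf{\Gamma}^-)$ to second order, identify $\nabla H(\mathbf{\Gamma}^-)\Delta$ with $-\Omega(\Delta,\partial_\varphi\mathbf{\Gamma}^-)-\Omega(\Delta,\partial_\tau\mathbf{\Gamma}^-)$, and kill the quadratic remainder after multiplying by $e^{\mp i(\tau-\varphi)}$ — exactly the paper's argument, except you spell out the justification that $H\circ\mathbf{\Gamma}^\pm\equiv 0$ (via $\mathcal{D}(H\circ\mathbf{\Gamma}^\pm)=0$ and letting $\tau\to\infty$ along a real line), whereas the paper simply invokes conservation of energy. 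Item~(2): you use the translation uniqueness of $\hat{\mathbf{\Gamma}}$ and $\hat{\mathbf{U}}$ plus the uniqueness parts of Theorem~\ref{2:theoremunstableseparatrix} and Proposition~\ref{2:normalizedFundamentalMatrix} to get $\tilde{\mathbf{\Gamma}}^\pm(\varphi,\tau)=\mathbf{\Gamma}^\pm(\varphi+\varphi_0,\tau+\tau_0)$ and $\tilde{\mathbf{U}}(\varphi,\tau)=\mathbf{U}(\varphi+\varphi_0,\tau+\tau_0)E_{C'}$, as does the paper. The only cosmetic difference is at the very end: you push the result through the limit formula \eqref{2:limitTheta} and use symplecticity of $E_{C'}$ to move it across $\Omega$, while the paper compares asymptotic expansions directly and then inverts $E_C$; both give $\tilde{\Theta}^{\pm}=E_{-C'}\Theta^{\pm}e^{\pm i(\tau_0-\varphi_0)}$ and land on the statement after relabeling $C=-C'$. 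No gap.
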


\begin{proof}
To prove item \eqref{2:lemmaitem1} it is enough to show the equality for the $-$ case, since the $+$ case is completely analogous. 

Note that \eqref{2:asymptoticexpdelta} implies,
\begin{equation*}
H(\mathbf{\Gamma}^{+}(\varphi,\tau))=H(\mathbf{\Gamma}^{-}(\varphi,\tau))+\nabla H(\mathbf{\Gamma}^{-}(\varphi,\tau))\Delta(\varphi,\tau)+O(e^{-(2-\epsilon)i(\tau-\varphi)}),
\end{equation*}
as $\mathrm{Im}\tau\rightarrow -\infty$ for some $\epsilon>0$ arbitrarily small. Due to the conservation of energy we have that $H(\mathbf{\Gamma}^{\pm}(\varphi,\tau))=0$. Thus,
\begin{equation}\label{2:limitnablaHgammaminus}
\lim_{\mathrm{Im}\tau\rightarrow-\infty}\nabla H(\mathbf{\Gamma}^{-}(\varphi,\tau))\Delta(\varphi,\tau)e^{i(\tau-\varphi)}=0.
\end{equation}
Moreover,
\begin{equation*}\begin{split}
\nabla H(\mathbf{\Gamma}^{-})\Delta&=\Omega(X_H(\mathbf{\Gamma}^{-}),\Delta)\\
&=\Omega(\mathcal{D}\mathbf{\Gamma}^{-},\Delta)\\
&=-\Omega(\Delta,\partial_\varphi\mathbf{\Gamma}^{-})-\Omega(\Delta,\partial_\tau\mathbf{\Gamma}^{-}).
\end{split}
\end{equation*}
Thus, \eqref{2:limitnablaHgammaminus} implies that,
\begin{equation*}
\lim_{\mathrm{Im}\tau\rightarrow-\infty}\left(\Omega(\Delta(\varphi,\tau),\partial_\varphi\mathbf{\Gamma}^{-}(\varphi,\tau))+\Omega(\Delta(\varphi,\tau),\partial_\tau\mathbf{\Gamma}^{-}(\varphi,\tau))\right)e^{i(\tau-\varphi)}=0
\end{equation*}
which proves the desired equality. 

To prove item \eqref{2:lemmaitem2}, let $\tilde{\Delta}=\tilde{\mathbf{\Gamma}}^{+}-\tilde{\mathbf{\Gamma}}^{-}$. Similar to \eqref{2:asymptoticexpdelta} there exists $\tilde{\Theta}^{\pm}\in\mathbb{C}^4$ such that,
\begin{equation}\label{2:aympformuladeltatilde}
e^{\mp i(\tau-\varphi)}\tilde{\Delta}(\varphi,\tau)\sim \hat{\tilde{\mathbf{U}}}(\varphi,\tau)\tilde{\Theta}^{\pm}\quad\text{as}\quad\mathrm{Im}\tau\rightarrow\pm \infty.
\end{equation}
According to Theorem \ref{2:Tformalseparatrix} there exists $(\varphi_0,\tau_0)\in\mathbb{C}^2$ such that $\hat{\tilde{\mathbf{\Gamma}}}(\varphi,\tau)=\hat{\mathbf{\Gamma}}(\varphi+\varphi_0,\tau+\tau_0)$. Thus, the uniqueness of solutions in Theorem \ref{2:theoremunstableseparatrix} implies that $\tilde{\mathbf{\Gamma}}(\varphi,\tau)=\mathbf{\Gamma}(\varphi+\varphi_0,\tau+\tau_0)$. Moreover, since $\hat{\mathbf{U}}(\varphi+\varphi_0,\tau+\tau_0)$ is a formal normalized fundamental solution of the formal variational equation around $\hat{\tilde{\mathbf{\Gamma}}}$, then by Proposition \ref{2:Thformalnormalizedfundmatrix} there exists a $2\times 2$ symmetric matrix $C$ such that $\hat{\tilde{\mathbf{U}}}(\varphi,\tau)=\hat{\mathbf{U}}(\varphi+\varphi_0,\tau+\tau_0)E_C$. Again, by uniqueness of solutions in Proposition \ref{2:normalizedFundamentalMatrix} we conclude that $\tilde{\mathbf{U}}(\varphi,\tau)=\mathbf{U}(\varphi+\varphi_0,\tau+\tau_0)E_C$. Thus, we can rewrite \eqref{2:aympformuladeltatilde} as follows,
\begin{equation*}
e^{\mp i(\tau-\varphi)}\Delta(\varphi+\varphi_0,\tau+\tau_0)\sim \hat{\mathbf{U}}(\varphi+\varphi_0,\tau+\tau_0)E_C\tilde{\Theta}^{\pm}\quad\text{as}\quad\tau\rightarrow\pm i\infty,
\end{equation*}
which is equivalent to,
\begin{equation*}
e^{\mp i(\tau+\tau_0-(\varphi+\varphi_0))}\Delta(\varphi+\varphi_0,\tau+\tau_0)\sim \hat{\mathbf{U}}(\varphi+\varphi_0,\tau+\tau_0)E_C\tilde{\Theta}^{\pm}e^{\mp i (\tau_0-\varphi_0)},
\end{equation*}
as $\tau\rightarrow\pm i\infty$. On the other hand, taking into account \eqref{2:asymptoticexpdelta} we have that,
\begin{equation*}
e^{\mp i(\tau+\tau_0-(\varphi+\varphi_0))}\Delta(\varphi+\varphi_0,\tau+\tau_0)\sim \hat{\mathbf{U}}(\varphi+\varphi_0,\tau+\tau_0)\Theta^{\pm},
\end{equation*}
as $\tau\rightarrow\pm i\infty$. Finally, the uniqueness of the asymptotic expansions implies that $\Theta^{\pm}=E_C\tilde{\Theta}^{\pm}e^{\mp i (\tau_0-\varphi_0)}$. Rearranging terms and noting that $E_C^{-1}=E_{-C}$ we conclude the proof of the lemma. 
\end{proof}

Using this result and the definition of the constants $\Theta^{\pm}$ we construct the following analytic invariants.  

\begin{theorem}[Analytic Invariants]\label{2:TheoremStokesconstant}
The following numbers,
\begin{equation*} \mathcal{K}=\Theta_1^{+}\Theta_1^{-}\quad\text{and}\quad\mathcal{J}=\Omega(\Theta^{+},\Theta^{-}),\end{equation*}
do not depend on the choice of parameterizations and are invariant under symplectic changes of coordinates fixing the origin. Moreover, if $H$ is real analytic then
\begin{equation*}
\mathcal{K}=-\mathrm{sgn}(\eta)\left|\Theta^{-}_1\right|^2\in\mathbb{R}\quad\text{and}\quad\mathcal{J}=-\mathrm{sgn}(\eta)\Omega(\overline{\Theta^{-}},\Theta^{-})\in i\mathbb{R}.
\end{equation*}
\end{theorem}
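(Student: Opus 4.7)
My plan begins with the invariance of $\mathcal{K}$ and $\mathcal{J}$ under the choice of parameterizations, which follows directly from Lemma \ref{2:lemmatranslationTheta}. Item \eqref{2:lemmaitem2} gives $\tilde{\Theta}^{\pm}=E_C\Theta^{\pm}e^{\pm i(\tau_0-\varphi_0)}$ for some symmetric $C\in\mathbb{C}^{2\times 2}$ and $(\varphi_0,\tau_0)\in\mathbb{C}^2$. I will observe that the top two rows of $E_C$ equal $(\mathrm{Id},0)$, so $E_C$ fixes the first two components of every vector, whence $\tilde{\Theta}_1^{\pm}=\Theta_1^{\pm}e^{\pm i(\tau_0-\varphi_0)}$ and the exponentials cancel in the product, giving $\tilde{\mathcal K}=\mathcal K$. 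A direct computation using $C^T=C$ verifies $E_C^TJE_C=J$, i.e.\ $E_C$ is symplectic, so $\Omega(\tilde{\Theta}^+,\tilde{\Theta}^-)=\Omega(\Theta^+,\Theta^-)$ and again the exponentials cancel, giving $\tilde{\mathcal J}=\mathcal J$.

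Next, for invariance under a symplectic change $\Phi:(\mathbb{C}^4,0)\to(\mathbb{C}^4,0)$, I would set $\tilde H=H\circ\Phi$ and pull back the parameterizations as $\tilde{\mathbf{\Gamma}}^\pm=\Phi^{-1}\circ\mathbf{\Gamma}^\pm$ (solutions of \eqref{2:DH} for $X_{\tilde H}$). Taylor-expanding $\Phi^{-1}$ at $\mathbf{\Gamma}^-$ gives $\tilde\Delta=D\Phi^{-1}(\mathbf{\Gamma}^-)\Delta+O(|\Delta|^2)$, and the fundamental solution transforms as $\tilde{\mathbf{U}}=D\Phi^{-1}(\mathbf{\Gamma}^-)\mathbf{U}$, which remains a normalized fundamental solution because $D\Phi^{-1}$ is symplectic. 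Substituting into the limit \eqref{2:limitTheta} and using that $D\Phi^{-1}$ preserves $\Omega$, the quadratic error drops out and I obtain $\tilde{\Theta}^\pm=\Theta^\pm$, modulo the reparameterization freedom already handled in the previous paragraph. Hence $\tilde{\mathcal K}=\mathcal K$ and $\tilde{\mathcal J}=\mathcal J$.

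For the real-analytic case, I will exploit that the PDE \eqref{2:DH} is equivariant under the involution $(\varphi,\tau,\mathbf{z})\mapsto(\bar\varphi,\bar\tau,\bar{\mathbf{z}})$, which preserves $\mathbb{T}_h$ and both sectors $D_r^\pm$. Since $\kappa^2=-2/\eta\in\mathbb{R}$, $\bar\kappa=\kappa$ if $\eta<0$ and $\bar\kappa=-\kappa$ if $\eta>0$; inspecting the leading orders of $\hat{\mathbf{\Gamma}}$ and, if necessary, re-normalizing by a purely imaginary translation in $\tau$, I may assume $\overline{\hat{\mathbf{\Gamma}}(\bar\varphi,\bar\tau)}=\hat{\mathbf{\Gamma}}(\varphi+\varphi_\eta,\tau)$ with $\varphi_\eta=0$ if $\eta<0$ and $\varphi_\eta=\pi$ if $\eta>0$ (so $e^{-i\varphi_\eta}=-\mathrm{sgn}(\eta)$). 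Uniqueness in Theorem \ref{2:theoremunstableseparatrix} and its stable analogue transfers this to the analytic parameterizations as $\overline{\mathbf{\Gamma}^\pm(\bar\varphi,\bar\tau)}=\mathbf{\Gamma}^\pm(\varphi+\varphi_\eta,\tau)$, and Proposition \ref{2:Thformalnormalizedfundmatrix} yields $\overline{\hat{\mathbf{U}}(\bar\varphi,\bar\tau)}=\hat{\mathbf{U}}(\varphi+\varphi_\eta,\tau)\,E_{C_\eta}$ for some symmetric $C_\eta$. Iterating this identity and using $2\pi$-periodicity of $\hat{\mathbf{U}}$ in $\varphi$ forces $C_\eta+\overline{C_\eta}=0$, so $C_\eta$ is purely imaginary; since the gauge $\hat{\mathbf{U}}\mapsto\hat{\mathbf{U}}E_D$ shifts $C_\eta$ by $\overline{D}-D$, which sweeps all purely imaginary symmetric matrices as $D$ varies, I can arrange $C_\eta=0$.

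Finally, with $C_\eta=0$ in hand, conjugating the asymptotic expansion \eqref{2:asymptoticexpdelta} on $D_r^\vee$ and transferring via the involution to $D_r^\wedge$ yields $e^{i(\tau-\varphi)}\Delta(\varphi,\tau)\sim\hat{\mathbf{U}}(\varphi,\tau)\,e^{-i\varphi_\eta}\overline{\Theta^+}$; comparison with the original asymptotics on $D_r^\wedge$ gives $\Theta^-=-\mathrm{sgn}(\eta)\overline{\Theta^+}$, equivalently $\Theta^+=-\mathrm{sgn}(\eta)\overline{\Theta^-}$. Taking first components, $\Theta_1^+=-\mathrm{sgn}(\eta)\overline{\Theta_1^-}$, so $\mathcal K=-\mathrm{sgn}(\eta)|\Theta_1^-|^2\in\mathbb{R}$. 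Substituting into $\mathcal J$ gives $\mathcal J=-\mathrm{sgn}(\eta)\Omega(\overline{\Theta^-},\Theta^-)$; reality of $J$ and antisymmetry of $\Omega$ imply $\overline{\Omega(\overline{\Theta^-},\Theta^-)}=-\Omega(\overline{\Theta^-},\Theta^-)$, so $\mathcal J\in i\mathbb{R}$. The principal obstacle I expect is the real-analytic analysis, specifically trivializing the cocycle $C_\eta$ via the gauge freedom on $\hat{\mathbf{U}}$; the remaining computations are routine bookkeeping of the symplectic and translation actions.
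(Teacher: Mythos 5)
Your proposal is correct and follows the paper's proof for the first two parts: invariance under the choice of parameterizations via Lemma \ref{2:lemmatranslationTheta} and the observation that $E_C$ is symplectic and fixes the first two coordinates, then invariance under symplectic coordinate changes by pushing forward the parameterizations and fundamental solution (you use $\Phi^{-1}$ where the paper uses a map $\Psi$; same idea). In the paper's argument the ``quadratic error drops out'' step is made precise by introducing a remainder $\mathbf{g}$ with $\mathbf{g}(\varphi,\tau)e^{\mp i(1+\mu)(\tau-\varphi)}\to 0$, so you would want to spell this out, but the approach is identical.

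Where you genuinely diverge from the paper is the real-analytic case. The paper cites Remarks \ref{2:remarkrealcase} and \ref{2:remarkrealcase2}, which assert that the \emph{specific} formal series $\hat{\mathbf{\Gamma}}$ and $\hat{\mathbf{U}}$ constructed in Proposition \ref{2:lemmaformalseparatrix} and Proposition \ref{2:Thformalnormalizedfundmatrix} already satisfy $\overline{\hat{\mathbf{\Gamma}}(\bar\varphi,\bar\tau)}=\hat{\mathbf{\Gamma}}(\varphi+\varphi_\eta,\tau)$ and $\overline{\hat{\mathbf{U}}(\bar\varphi,\bar\tau)}=\hat{\mathbf{U}}(\varphi+\varphi_\eta,\tau)$ with no $E_C$ correction; these remarks are justified by ``tracing the proof.'' You instead take the reality of $\hat{\mathbf{\Gamma}}$ as given and derive the reality of $\hat{\mathbf{U}}$ abstractly: Proposition \ref{2:Thformalnormalizedfundmatrix} gives $\overline{\hat{\mathbf{U}}(\bar\varphi,\bar\tau)}=\hat{\mathbf{U}}(\varphi+\varphi_\eta,\tau)E_{C_\eta}$ for some symmetric $C_\eta$, iterating the involution together with $2\pi$-periodicity forces $C_\eta+\overline{C_\eta}=0$, and the gauge $\hat{\mathbf{U}}\mapsto\hat{\mathbf{U}}E_D$ shifts $C_\eta\mapsto C_\eta+\overline{D}-D$, sweeping all purely imaginary symmetric matrices, so $C_\eta$ can be normalized to zero. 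This is correct and has the virtue of being self-contained: it requires no re-inspection of the normal-form construction, only the uniqueness statements of Propositions \ref{2:Thformalnormalizedfundmatrix} and \ref{2:normalizedFundamentalMatrix}, whereas the paper's route is shorter but leaves the cocycle-vanishing implicit in the cited remarks. Your final comparison of asymptotics on $D_r^\vee$ and $D_r^\wedge$ to extract $\Theta^+=-\mathrm{sgn}(\eta)\overline{\Theta^-}$ differs superficially from the paper's limit along $\tau_n=-i\sigma_n$ but yields the same identity.

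Two small points worth tightening. First, your remark about ``re-normalizing by a purely imaginary translation in $\tau$'' is unnecessary: once you invoke Remark \ref{2:remarkrealcase} (or recompute from Proposition \ref{2:lemmaformalseparatrix}) the reality of $\hat{\mathbf{\Gamma}}$ holds on the nose. Second, you should record explicitly that the conjugation map $(\varphi,\tau)\mapsto(\bar\varphi,\bar\tau)$ preserves each of $D_r^\pm$ and exchanges $D_r^\wedge$ with $D_r^\vee$, and that the uniqueness in Theorem \ref{2:theoremunstableseparatrix} and Proposition \ref{2:normalizedFundamentalMatrix} applies separately on each sector; this is what lets you transfer the formal reality to the analytic functions $\mathbf{\Gamma}^\pm$, $\mathbf{U}$ and then to $\Delta$.
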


\begin{proof}
First, we prove that $\mathcal{K}$ and $\mathcal{J}$ do not depend on the choice of the parameterizations. Given two parameterizations $\mathbf{\Gamma}^{\pm}$ and $\tilde{\mathbf{\Gamma}}^{\pm}$ we know by Lemma \ref{2:lemmatranslationTheta} that there exist $(\varphi_0\,\tau_0)\in\mathbb{C}^2$ and $C\in\mathbb{C}^{4\times 4}$ ($C^T=C$) such that $\tilde{\Theta}^{\pm}=E_C\Theta^{\pm}e^{\pm i(\tau_0-\varphi_0)}$. Thus,
$$
\tilde{\mathcal{K}}=\tilde{\Theta}^{+}_1\tilde{\Theta}^{-}_1=\Theta^{+}_1e^{i(\tau_0-\varphi_0)}\Theta^{-}_1e^{-i(\tau_0-\varphi_0)}=\Theta^{+}_1\Theta^{-}_1=\mathcal{K},
$$
and
\begin{equation*}\begin{split}
\tilde{\mathcal{J}}=\Omega(\tilde{\Theta}^{+},\tilde{\Theta}^{-})=\Omega(E_C&\Theta^{+}e^{i(\tau_0-\varphi_0)},E_C\Theta^{-}e^{- i(\tau_0-\varphi_0)})\\&=\Omega(E_C\Theta^{+},E_C\Theta^{-})=\Omega(\Theta^{+},\Theta^{-})=\mathcal{J}.
\end{split}
\end{equation*}
Next we prove that $\mathcal{K}$ and $\mathcal{J}$ are invariant under symplectic changes of coordinates fixing the origin. Let $\Psi:(\mathbb{C}^4,0)\rightarrow (\mathbb{C}^4,0)$ be an analytic symplectic map. Define
\begin{equation*}
\tilde{\mathbf{\Gamma}}^{\pm}(\varphi,\tau):=\Psi(\mathbf{\Gamma}^{\pm}(\varphi,\tau))\quad\text{and}\quad\tilde{\mathbf{U}}(\varphi,\tau):=D\Psi(\mathbf{\Gamma}^{-}(\varphi,\tau))\mathbf{U}(\varphi,\tau).
\end{equation*}
It is enough to prove that $\Omega(\tilde{\Theta}^{\pm},\mathbf{v})=\Omega(\Theta^{\pm},\mathbf{v})$ for all $\mathbf{v}\in\mathbb{C}^4$. Taking into account \eqref{2:asymptoticexpdelta} we can write $\tilde{\Delta}:=\tilde{\mathbf{\Gamma}}^{+}-\tilde{\mathbf{\Gamma}}^{-}$ as follows,
\begin{equation*}
\tilde{\Delta}(\varphi,\tau)=D\Psi(\mathbf{\Gamma}^{-}(\varphi,\tau))\Delta(\varphi,\tau)+\mathbf{g}(\varphi,\tau),
\end{equation*}
where $\mathbf{g}$ is analytic in $\mathbb{T}_h\times (D_r^{+}\cap D_r^{-})$ such that,
\begin{equation}\label{2:limitg}
\lim_{\mathrm{Im}\tau\rightarrow \pm\infty}\mathbf{g}(\varphi,\tau)e^{\mp i(1+\mu)(\tau-\varphi)}=0,
\end{equation}
for any $\mu>0$ arbitrarily small. Moreover, for $\mathbf{v}\in\mathbb{C}^4$ we have that,
\begin{equation}\label{2:omegatildeequalities}\begin{split}
\Omega(\tilde{\Delta},\tilde{\mathbf{U}}\mathbf{v})&=\Omega(D\Psi(\mathbf{\Gamma}^{-})\Delta+\mathbf{g},D\Psi(\mathbf{\Gamma}^{-})\mathbf{U}\mathbf{v})\\
&=\Omega(\Delta,\mathbf{U}\mathbf{v})+\Omega(\mathbf{g},D\Psi(\mathbf{\Gamma}^{-})\mathbf{U}\mathbf{v}),
\end{split}
\end{equation}
where the last equality follows from the fact that $\Psi$ is symplectic. From the asymptotics of $\mathbf{\Gamma}^{-}$ and $\mathbf{U}$ we know that $\mathbf{\Gamma}^{-}(\varphi,\tau)=O(\tau^{-1})$ and $\mathbf{U}(\varphi,\tau)=O(\tau^3)$ as $\tau\rightarrow \infty$ in $D_r^{-}$. Thus, for every $\mu>0$,
\begin{equation*}
\lim_{\mathrm{Im}\tau\rightarrow\pm\infty}D\Psi(\mathbf{\Gamma}^{-}(\varphi,\tau))\mathbf{U}(\varphi,\tau)\mathbf{v}e^{\pm i\mu(\tau-\varphi)}=0,
\end{equation*}
and taking into account \eqref{2:limitg} we get that,
\begin{equation*}
\lim_{\mathrm{Im}\tau\rightarrow\pm\infty}\Omega(\mathbf{g}(\varphi,\tau),D\Psi(\mathbf{\Gamma}^{-}(\varphi,\tau))\mathbf{U}(\varphi,\tau)\mathbf{v})e^{\mp i(\tau-\varphi)}=0.
\end{equation*}
Finally, the previous limit and \eqref{2:omegatildeequalities} gives,
\begin{equation*}\begin{split}
\Omega(\tilde{\Theta}^{\pm},\mathbf{v})&=\lim_{\mathrm{Im}\tau\rightarrow\pm\infty}\Omega(\tilde{\Delta}(\varphi,\tau),\tilde{\mathbf{U}}(\varphi,\tau)\mathbf{v})e^{\mp i(\tau-\varphi)}\\
&=\lim_{\mathrm{Im}\tau\rightarrow\pm\infty}\Omega(\Delta(\varphi,\tau),\mathbf{U}(\varphi,\tau)\mathbf{v})e^{\mp i(\tau-\varphi)}=\Omega(\Theta^{\pm},\mathbf{v})\,.
\end{split}
\end{equation*}

To conclude the proof of the theorem suppose that $H$ is real analytic. It is sufficient to prove that for any $\mathbf{v}\in\mathbb{R}^4$ we have,
\begin{equation}\label{2:equalityomegatheta}
\overline{\Omega(\Theta^{-},\mathbf{v})}=-\mathrm{sgn}(\eta)\Omega(\Theta^{+},\mathbf{v}).
\end{equation}
Indeed, it follows from the previous equality that $\overline{\Theta^-}=-\mathrm{sgn}(\eta)\Theta^+$, from which we obtain
\begin{equation*}
\mathcal{K}=-\mathrm{sgn}(\eta)\left|\Theta^{-}_1\right|^2\quad\text{and}\quad\mathcal{J}=-\mathrm{sgn}(\eta)\Omega(\overline{\Theta^{-}},\Theta^{-}).
\end{equation*}

We prove \eqref{2:equalityomegatheta} considering $\eta>0$. The $\eta<0$ case is proved analogously. According to \eqref{2:limitTheta} we can take $\tau_n=-i\sigma_n$ where $\sigma_n\to+\infty$ is an increasing sequence of real numbers such that for every $\mathbf{v}\in\mathbb{R}^4$ we have,
\begin{equation}\label{2:equationcorollaryomega}
\Omega(\Theta^{-},\mathbf{v})=\lim_{n\rightarrow+\infty}\Omega(\Delta(0,-i\sigma_n),\mathbf{U}(0,-i\sigma_n)\mathbf{v})e^{\sigma_n}.
\end{equation}
Remarks \ref{2:remarkrealcase} and \ref{2:remarkrealcase2} imply that,
\begin{equation*}
\overline{\Delta(0,-i\sigma_n)}=\Delta(\pi,i\sigma_n)\quad\text{and}\quad\overline{\mathbf{U}(0,-i\sigma_n)}=\mathbf{U}(\pi,i\sigma_n).
\end{equation*}
Thus, taking complex conjugation in \eqref{2:equationcorollaryomega} we get,
\begin{equation*}\begin{split}
\overline{\Omega(\Theta^{-},\mathbf{v})}&=\lim_{n\rightarrow+\infty}\Omega(\overline{\Delta(0,-i\sigma_n)},\overline{\mathbf{U}(0,-i\sigma_n)})e^{\sigma_n}\\
&=\lim_{n\rightarrow+\infty}\Omega(\Delta(\pi,i\sigma_n),\mathbf{U}(\pi,i\sigma_n))e^{i(-i\sigma_n-\pi)}e^{-i\pi}\\
&=-\Omega(\Theta^{+},\mathbf{v}),
\end{split}
\end{equation*}
as we wanted to show.

\end{proof}
The invariant $\mathcal{K}$ is known as the \textsl{Stokes constant}. If $\mathcal{K}$ does not vanish then the asymptotic expansion \eqref{2:asymptoticexpdelta} provides an exponentially small lower bound for the splitting distance $\left|\mathbf{\Gamma}^{+}(\varphi,\tau)-\mathbf{\Gamma}^{-}(\varphi,\tau)\right|$, which implies that $H$ is non-integrable and the normal form transformation diverges \cite{Z:05}.
\begin{corollary}\label{2:corollaryintegrability}
If $\mathcal{K}\neq0$ then $H$ is non-integrable. 
\end{corollary}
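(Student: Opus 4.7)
The plan is to prove the contrapositive: if $H$ is analytically Liouville-integrable near the equilibrium, then $\mathcal{K}=0$. The strategy has three moves: reduce to the normal form by a convergence theorem, write the separatrix down explicitly, and then invoke the uniqueness clauses already proved to conclude coincidence of $\mathbf{\Gamma}^{+}$ and $\mathbf{\Gamma}^{-}$.

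Under the standing assumption $\eta\neq 0$, I would first invoke Zung's convergence theorem \cite{Z:05}, which asserts that analytic integrability of $H$ forces the Sokol'ski{\u\i} normalizing transformation $\Phi$ of Theorem \ref{1:TheoremNormalform11} to be analytically convergent. Thus $H$ is symplectically analytically equivalent to
\begin{equation*}
H^{\sharp}=-I_1+I_2+g(I_1,I_3),\qquad g(I_1,I_3)=\eta I_3^2+\sum_{l+k\geq 3}a_{l,k}I_1^lI_3^k.
\end{equation*}
By the invariance statement in Theorem \ref{2:TheoremStokesconstant}, one has $\mathcal{K}(H)=\mathcal{K}(H^{\sharp})$, so it suffices to prove $\mathcal{K}(H^{\sharp})=0$.

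For $H^{\sharp}$ the dynamics are explicitly integrable: the function $I_1$ Poisson-commutes with $H^{\sharp}$, and the auxiliary quantity $I_4=q_1p_1+q_2p_2$ satisfies the Casimir-type identity $I_1^2+I_4^2=4I_2I_3$ together with $\dot I_3=-I_4$ along the flow. On the level set $\{H^{\sharp}=0,\,I_1=0\}$ containing the origin this gives $I_2=-g(0,I_3)$ and
\begin{equation*}
\dot I_3=\mp 2\sqrt{-\eta I_3^3+O(I_3^4)},
\end{equation*}
which is integrable by quadrature, producing an analytic function $I_3(\tau)\sim -2/(\eta\tau^2)$ as $\tau\to\infty$. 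Lifting by the $S^1$-action generated by the flow of $I_1$, which is precisely translation in $\varphi$, one assembles a single analytic parameterization $\mathbf{\Gamma}\colon\mathbb{T}_h\times\{\tau\in\mathbb{C}:|\tau|>r\}\to\mathbb{C}^4$ of the common stable/unstable manifold. It solves \eqref{2:DH}, its $\tau\to\infty$ asymptotics yield a formal separatrix in the sense of Theorem \ref{2:Tformalseparatrix}, and in particular $\hat{\mathbf{\Gamma}}$ converges in the integrable case.

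By the uniqueness clauses of Theorem \ref{2:theoremunstableseparatrix} and its stable analogue, both $\mathbf{\Gamma}^{-}$ and $\mathbf{\Gamma}^{+}$ must coincide with the restrictions of $\mathbf{\Gamma}$ to $D_{r}^{-}$ and $D_{r}^{+}$ respectively. Hence $\Delta\equiv 0$ on $D_{r}^{+}\cap D_{r}^{-}$, and the characterization \eqref{2:limitTheta} forces $\Theta^{\pm}=0$, so $\mathcal{K}(H^{\sharp})=\Theta_{1}^{+}\Theta_{1}^{-}=0$. The main obstacle in this argument is the first step: one needs to verify that Zung's theorem \cite{Z:05} applies literally to the $1:-1$ resonant situation with the non-degeneracy $\eta\neq 0$. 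If that is not immediate, the fallback is to argue intrinsically that any analytic first integral independent of $H$ forces $\mathbf{\Gamma}^{+}(\mathbb{T}_h\times D_r^{+})$ and $\mathbf{\Gamma}^{-}(\mathbb{T}_h\times D_r^{-})$ to lie in a common two-dimensional analytic invariant variety and hence to coincide, still yielding $\Theta^{\pm}=0$.
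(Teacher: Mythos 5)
Your argument is correct and matches the paper's own (terse) justification preceding the corollary: both deduce $\mathcal{K}=0$ for integrable $H$ by invoking Zung's convergence theorem \cite{Z:05} together with the uniqueness clauses of Theorem \ref{2:theoremunstableseparatrix} and its stable analogue. Your version spells out the intermediate step the paper leaves implicit—that a convergent normal form yields a single analytic separatrix of which $\mathbf{\Gamma}^{\pm}$ are restrictions, forcing $\Delta\equiv 0$—and the small slip $I_3(\tau)\sim -2/(\eta\tau^2)$ (it should be $-1/(\eta\tau^2)$, since $I_3\approx\kappa^2/(2\tau^2)$ with $\kappa^2=-2/\eta$) is immaterial to the conclusion.
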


\subsection{Parametrized families}
Let $\mathcal{U}\subseteq\mathbb{C}^4$ be an open neighborhood of the origin and denote by $\mathbb{D}_\delta\subseteq\mathbb{C}$ the open disc of radius $\delta$ centered at the origin. In this section we consider analytic one-parameter families of Hamiltonians $H_\nu$ with a generic $1:-1$ resonance. We say that $H_\nu$ is an \textsl{analytic family} if,
\begin{equation*}
H_\nu=-I_1 + I_2+\eta I_3^2 + F_\nu,
\end{equation*}
where $\nu\in\mathbb{D}_\delta$ and $F_\nu:\mathcal{U}\rightarrow\mathbb{C}$ is analytic. We also suppose that $F_\nu$ is analytic with respect to $\nu$ and for each $\nu\in \mathbb{D}_\delta$, $F_\nu$ contains only monomials of order greater or equal than $5$. Moreover, the elliptic equilibrium satisfies the non-degenerate condition $\eta\neq0$.

For each $\nu\in\mathbb{D}_\delta$ the Hamiltonian vector field $X_{H_\nu}$ satisfies the assumptions of the previous theorems. In particular the function $\nu\mapsto \mathcal{K}(\nu)$ is well defined, where $\mathcal{K}(\nu)$ is the Stokes constant of the Hamiltonian $H_\nu$. The next result shows that the Stokes constant varies analytically with $\nu$. 
\begin{theorem}\label{2:analyticdependenceofK0}There exist parameterizations $\mathbf{\Gamma}^{\pm}_\nu$ and a normalized fundamental solution $\mathbf{U}_\nu$ both analytic with respect to $\nu\in\mathbb{D}_\delta$ such that $\Theta^{\pm}:\mathbb{D}_\delta\rightarrow\mathbb{C}^4$ is analytic.
\end{theorem}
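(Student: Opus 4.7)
The plan is to track analytic dependence on $\nu$ through every stage of the construction of the objects that appear in the definition of $\Theta^{\pm}$, namely: the formal separatrix $\hat{\mathbf{\Gamma}}_\nu$, the formal fundamental solution $\hat{\mathbf{U}}_\nu$, the analytic parameterizations $\mathbf{\Gamma}^{\pm}_\nu$, and the analytic normalized fundamental solution $\mathbf{U}_\nu$. Once these are shown to be jointly analytic in $(\varphi,\tau,\nu)$ on the appropriate product domain, analyticity of $\Theta^{\pm}(\nu)$ will follow from a limit-exchange argument using the defining formula \eqref{2:limitTheta} together with uniform estimates.

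First I would revisit the construction in Section \ref{2:sectionformalseries}. Since the Taylor coefficients of $F_\nu$ depend analytically on $\nu$ and the coefficients of $\hat{\mathbf{\Gamma}}_\nu$ are determined recursively by linear equations in $\mathsf{T}$ whose right-hand sides are polynomial expressions in previously computed coefficients and in the data of $H_\nu$, induction shows that each coefficient of $\hat{\mathbf{\Gamma}}_\nu$ (with the normalization fixing the freedom of translation) is an analytic function of $\nu$ on $\mathbb{D}_\delta$. The same inductive argument applied to the construction of Proposition \ref{2:Thformalnormalizedfundmatrix} gives analytic $\nu$-dependence of the coefficients of $\hat{\mathbf{U}}_\nu$ once the symmetric matrix $C$ fixing the freedom is chosen independently of $\nu$ (for instance, $C=0$).

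Next, I would promote analyticity from the formal to the analytic setting. Theorems \ref{2:theoremunstableseparatrix} and (by the mutatis mutandis remark) its stable counterpart, as well as Proposition \ref{2:normalizedFundamentalMatrix}, are proved by a Banach fixed-point argument in a space of analytic functions on $\mathbb{T}_h\times D^{\pm}_r$ with weighted sup-norms encoding the asymptotic decay. The contraction constants and the input data of the fixed-point equation depend on the coefficients of $H_\nu$ and on finitely many coefficients of $\hat{\mathbf{\Gamma}}_\nu$ and $\hat{\mathbf{U}}_\nu$, all of which are analytic in $\nu$. Shrinking $\delta$ if necessary and possibly enlarging $r$ uniformly in $\nu$, the contraction constant can be bounded uniformly on $\mathbb{D}_\delta$. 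The analytic implicit function theorem in Banach spaces (or equivalently, term-by-term analyticity of the Picard iterates together with uniform convergence) then yields $\mathbf{\Gamma}^{\pm}_\nu$ and $\mathbf{U}_\nu$ that are analytic in $\nu$ with values in the Banach space, hence jointly analytic in $(\varphi,\tau,\nu)$.

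Finally, to conclude that $\Theta^{\pm}(\nu)$ is analytic, I would use the formula \eqref{2:limitTheta}. For each $\nu$, pairing $\Delta_\nu$ with $\mathbf{U}_\nu\mathbf{v}$ and multiplying by $e^{\mp i(\tau-\varphi)}$ yields a family of analytic functions of $(\varphi,\tau,\nu)$ on $\mathbb{T}_h\times D_r^{\wedge}\times \mathbb{D}_\delta$ (resp.\ $D_r^{\vee}$). By Theorem \ref{2:theoremdeltaasymptoticformula}, and by tracking the $\nu$-dependence of the constants in its proof, the remainder $O(e^{-(2-\epsilon)i(\tau-\varphi)})$ can be bounded uniformly on compact subsets of $\mathbb{D}_\delta$, so the limit along $\mathrm{Im}\,\tau\to\pm\infty$ is uniform in $\nu$ on compacts. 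Hence $\Omega(\Theta^{\pm}(\nu),\mathbf{v})$ is a locally uniform limit of analytic functions of $\nu$ and therefore analytic; taking $\mathbf{v}$ to range over a basis of $\mathbb{C}^4$ gives analyticity of $\Theta^{\pm}:\mathbb{D}_\delta\to\mathbb{C}^4$. The main obstacle I expect is the bookkeeping needed to verify that the estimates entering the fixed-point argument and the remainder estimate in Theorem \ref{2:theoremdeltaasymptoticformula} are genuinely uniform in $\nu$ on compact subsets of $\mathbb{D}_\delta$; every other step is either a routine analytic implicit function application or an elementary limit-exchange.
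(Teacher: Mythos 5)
Your proposal follows the paper's own proof in all essentials: it traces analytic $\nu$-dependence through the recursive construction of the formal series, then through the Banach fixed-point constructions of $\mathbf{\Gamma}^\pm_\nu$ and $\mathbf{U}_\nu$ (the paper singles out the useful fact that $\mathbf{U}_0$ and hence $\mathcal{L}_0^{-1}$ are $\nu$-independent, which is what makes the contraction constant uniform on $\mathbb{D}_\delta$), and finally to $\Theta^\pm(\nu)$. The one small divergence is the last step: you extract $\Theta^\pm(\nu)$ from the limit formula \eqref{2:limitTheta} plus local uniformity in $\nu$, whereas the paper writes $\Delta_\nu=\mathbf{U}_\nu\mathbf{c}_\nu(\tau-\varphi)+\mathbf{R}_\nu$ with $\mathbf{c}_\nu$ analytic in $\nu$ and identifies $\Theta^-(\nu)$ as the first Fourier coefficient of $\mathbf{c}_\nu$, given by an explicit contour integral, which makes analyticity in $\nu$ immediate and sidesteps the limit-exchange; both are correct.
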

According to the definition of $\mathcal{K}$ (see Theorem \ref{2:TheoremStokesconstant}) we conclude that $\mathcal{K}:\mathbb{D}_\delta\rightarrow\mathbb{C}$ is analytic. 
\begin{proof}[Proof of Theorem \ref{2:analyticdependenceofK0}] Tracing the proofs of Theorems \ref{2:Tformalseparatrix} and \ref{2:Thformalnormalizedfundmatrix} we see that there exist formal series $\hat{\mathbf{\Gamma}}_\nu$ and $\hat{\mathbf{U}}_\nu$ such that the coefficients of the these formal series depend polynomially on a finite number of coefficients of $H_\nu$, which are assumed to be analytic with respect to $\nu$. Thus, the coefficients of both $\hat{\mathbf{\Gamma}}_\nu$ and $\hat{\mathbf{U}}_\nu$ are analytic with respect to $\nu$. Note that the theory developed in Section \ref{2:sectionlinearop} can be generalized to functions that are also analytic with respect to a parameter. Following the proofs of Theorems \ref{2:theoremunstableseparatrix} and \ref{2:normalizedFundamentalMatrix} and the fact that the fundamental matrix $\mathbf{U}_0$ defined in \eqref{2:U0} does not depend on $\nu$ we conclude that there exist a normalized fundamental solution $\mathbf{U}_\nu$ and analytic parameterizations $\mathbf{\Gamma}^{\pm}_\nu$, all of which are analytic with respect to $\nu$ such that $\mathbf{U}_\nu\sim\hat{\mathbf{U}}_\nu$ and $\mathbf{\Gamma}^{\pm}_\nu\sim\hat{\mathbf{\Gamma}}_\nu$. Let $\Delta_\nu=\mathbf{\Gamma}^{+}_\nu-\mathbf{\Gamma}^{-}_\nu$. A closer look at the proof of Theorem \ref{2:theoremdeltaasymptoticformula} reveals that,
\begin{equation*}
\Delta_\nu(\varphi,\tau)=\mathbf{U}_\nu(\varphi,\tau)\mathbf{c}_{\nu}(\tau-\varphi)+\mathbf{R}_\nu(\varphi,\tau),
\end{equation*}
where $\mathbf{c}_\nu$ is an analytic $2\pi$-periodic vector-valued function defined in a lower half complex plane, analytic with respect to $\nu$ and $\mathbf{c}_\nu(z)\rightarrow 0$ as $\mathrm{Im}\,z\rightarrow-\infty$. Moreover $\mathbf{R}_\nu(\varphi,\tau)=O(e^{-(2-\epsilon)i(\tau-\varphi)})$ where the upper bound is uniform with respect to $\nu$ and $\epsilon$ is an arbitrarily small positive real number. As in the proof of Theorem \ref{2:theoremdeltaasymptoticformula} we can write $\mathbf{c}_\nu$ in Fourier series:
\begin{equation*}
\mathbf{c}_\nu(z)=\Theta^{-}(\nu) e^{-iz}+O(e^{-2iz}),\quad\text{as}\quad \mathrm{Im}\,z\rightarrow -\infty,
\end{equation*}
where again the bound is uniform with respect to $\nu$. The first Fourier coefficient $\Theta^-(\nu)$ is given by the well known integral,
\begin{equation*}
\Theta^-(\nu)=\frac{1}{2\pi}\int_{-i\sigma}^{2\pi-i\sigma}\mathbf{c}_\nu(s)e^{is}ds,
\end{equation*}
for some $\sigma>0$. Clearly $\Theta^-(\nu)$ is analytic with respect to $\nu$. Arguing in a similar way one can also prove that $\Theta^+(\nu)$ is analytic.  
\end{proof}
\subsubsection{Example}\label{2:ExampleH}
We shall give an example of a Hamiltonian having non-zero Stokes constant. Consider the following analytic family $H_\nu$ of Hamiltonians,
\begin{equation*}
H_\nu=-I_1+I_2+\eta I_3^2+\nu q_2^5,
\end{equation*}
where $\eta\in\mathbb{C}^*$ and $\nu\in\mathbb{C}$. Notice that $H_0=-I_1+I_2+\eta I_3^2$ is integrable since $I_1$ is a first integral of $H_0$. 

According to Theorem \ref{2:theoremunstableseparatrix} there exist $r>0$ and analytic parameterizations $\mathbf{\Gamma}^{\pm}_\nu:\mathbb{T}_h\times D_r^{\pm}\rightarrow\mathbb{C}^4$. Following the arguments in the proof of Theorem \ref{2:analyticdependenceofK0} these parameterizations are also analytic with respect to $\nu$. Thus we can write them as follows,
\begin{equation}\label{2:formulaGammapm0}
\mathbf{\Gamma}^{\pm}_{\nu}=\mathbf{\Gamma}_0+\nu\xi_0^{\pm}+O(\nu^2),
\end{equation} 
where $\mathbf{\Gamma}_0$ is the parameterization of $H_0$ (see \eqref{2:Gamma0}) and $\xi_0^{\pm}$ satisfies the following equation,
\begin{equation}\label{2:eqxi0pm}
\mathcal{D}\xi_0^{\pm}=A_0(\varphi,\tau)\xi_0^{\pm}+X_{q_2^5}(\mathbf{\Gamma}_0),
\end{equation}
where $A_0(\varphi,\tau):=DX_{H_0}(\mathbf{\Gamma}_0(\varphi,\tau))$. For our convenience, let us write (see \eqref{2:Gamma0}) the expression for $\mathbf{\Gamma}_0$,
\begin{equation*}
\mathbf{\Gamma}_0(\varphi,\tau)=\left(\kappa\tau^{-2}\cos\varphi,\kappa\tau^{-2}\sin\varphi,\kappa\tau^{-1}\cos\varphi,\kappa\tau^{-1}\sin\varphi\right)^{T}.
\end{equation*}
The homogeneous equation in \eqref{2:eqxi0pm} has a fundamental solution $\mathbf{U}_0(\varphi,\tau)$ given by \eqref{2:U0} and having the following properties: it is symplectic, i.e., $\mathbf{U}_0^T J \mathbf{U}_0=J$ and its last two columns are $\partial_\varphi\mathbf{\Gamma}_0$ and $\partial_\tau\mathbf{\Gamma}_0$ respectively. Thus, by the method of variation of constants we can write some integral formulae for $\xi_0^{\pm}$,
\begin{equation}\label{2:formulaexi0pm}\begin{split}
\xi_0^{-}(\varphi,\tau)&=\mathbf{U}_0(\varphi,\tau)\int_{-\infty}^0\mathbf{U}_0^{-1}(\varphi+s,\tau+s)X_{q_2^5}(\mathbf{\Gamma}_0(\varphi+s,\tau+s))ds,\\
\xi_0^{+}(\varphi,\tau)&=-\mathbf{U}_0(\varphi,\tau)\int_{0}^{+\infty}\mathbf{U}_0^{-1}(\varphi+s,\tau+s)X_{q_2^5}(\mathbf{\Gamma}_0(\varphi+s,\tau+s))ds.
\end{split}
\end{equation}
The integrals above converge uniformly for $\tau\in D_r^{\pm}$. Indeed a simple computation shows that,
\begin{equation}\label{2:Xq25}
X_{q_2^5}(\mathbf{\Gamma}_0)=\left(0,0,0,-\frac{5\kappa^4\sin^4\varphi}{\tau^8}\right)^{T}\,.
\end{equation}
Taking into account the leading orders of $\mathbf{U}_0$ (see \eqref{2:U0}), we can bound from above the integral in the first formula of \eqref{2:formulaexi0pm} using the following integral,
\begin{equation*}
\int_{-\infty}^{0}\frac{1}{\left|\tau+s\right|^2}ds
\end{equation*}
which converges uniformly with respect to $\tau\in D^{-}_r,$. A similar estimate shows that the second integral in \eqref{2:formulaexi0pm} converges uniformly. 

Our goal is to compute the Stokes constant $\mathcal{K}(\nu)$ of $H_\nu$. According to the results of the previous section $\mathcal{K}(\nu)$ is analytic with respect to $\nu$ and by definition $\mathcal{K}(\nu)=\Theta_1^{-}(\nu)\Theta_1^{+}(\nu)$ where $\Theta_1^{\pm}(\nu)$ are defined by the limits,
\begin{equation*}
\Theta_1^{\pm}(\nu)=\lim_{\mathrm{Im}\tau\rightarrow\pm\infty}\Omega(\Delta_\nu(\varphi,\tau),\partial_\varphi\mathbf{\Gamma}^{-}_\nu(\varphi,\tau))e^{\mp i(\tau-\varphi)},
\end{equation*}
where $\Delta_\nu:=\mathbf{\Gamma}^{+}_\nu-\mathbf{\Gamma}^{-}_\nu$. 
Since $H_0$ is integrable we know that $\mathcal{K}(0)=0$. So in order to prove that $\mathcal{K}(\nu)$ is non-zero for a certain $\nu$ it is sufficient to prove that the derivative of $\Theta_1^{\pm}(\nu)$ at $\nu=0$ does not vanish. The following lemma provides a formula for computing this derivative,
\begin{lemma}\label{2:Lemmaformulatheta0minus}Let $\Delta_0=\xi_0^{+}-\xi_0^{-}$. Then,
\begin{equation*}
\frac{d\Theta_1^{\pm}}{d\nu}(0)=\lim_{\mathrm{Im}\,\tau\rightarrow\pm\infty}\Omega(\Delta_0(\varphi,\tau),\partial_\varphi\mathbf{\Gamma}_0(\varphi,\tau))e^{\mp i(\tau-\varphi)}.
\end{equation*}
\end{lemma}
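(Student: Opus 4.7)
The plan is to regard $\Theta_1^{\pm}(\nu)$ as the pointwise limit, as $\mathrm{Im}\,\tau \to \pm\infty$, of holomorphic functions of $\nu$, and then to justify differentiating termwise by establishing locally uniform convergence on a disc around $\nu = 0$. Two inputs are clean and immediate. First, $H_0 = -I_1 + I_2 + \eta I_3^2$ is integrable (since $I_1$ is a first integral), so its stable and unstable manifolds coincide with the explicit $\mathbf{\Gamma}_0$; hence $\Delta_\nu\big|_{\nu = 0} \equiv 0$ and formula \eqref{2:limitdefTheta1} yields $\Theta^{\pm}(0) = 0$. Second, Theorem \ref{2:analyticdependenceofK0} gives that $\mathbf{\Gamma}^{\pm}_\nu$ and the normalized fundamental solution $\mathbf{U}_\nu$ depend analytically on $\nu$. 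Taylor expanding,
$$\mathbf{\Gamma}^{\pm}_\nu = \mathbf{\Gamma}_0 + \nu\,\xi_0^{\pm} + O(\nu^2), \qquad \partial_\varphi \mathbf{\Gamma}^{-}_\nu = \partial_\varphi \mathbf{\Gamma}_0 + O(\nu),$$
so that $\Delta_\nu = \nu\,\Delta_0 + O(\nu^2)$ uniformly on compact subsets of $\mathbb{T}_h \times D_r^{\pm}$.

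Next I would set $F^{\pm}(\nu, \tau) := \Omega(\Delta_\nu, \partial_\varphi \mathbf{\Gamma}^{-}_\nu)\, e^{\mp i(\tau-\varphi)}$, so that $\Theta_1^{\pm}(\nu) = \lim_{\mathrm{Im}\,\tau \to \pm\infty} F^{\pm}(\nu,\tau)$ by \eqref{2:limitdefTheta1}. From the proof of Theorem \ref{2:analyticdependenceofK0} I extract the decomposition $\Delta_\nu = \mathbf{U}_\nu \mathbf{c}_\nu(\tau - \varphi) + \mathbf{R}_\nu$ with $\mathbf{R}_\nu = O(e^{\mp (2-\epsilon) i (\tau - \varphi)})$ and $\mathbf{c}_\nu(z) = \Theta^{\pm}(\nu) e^{\mp i z} + O(e^{\mp 2 i z})$, both bounds holding uniformly for $\nu$ in some disc $\mathbb{D}_{\delta_0}$. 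Using $\partial_\varphi \mathbf{\Gamma}^{-}_\nu = \mathbf{U}_\nu e_3$ and the symplecticity $\mathbf{U}_\nu^{T} J \mathbf{U}_\nu = J$, the inner product simplifies to
$$F^{\pm}(\nu, \tau) = (\mathbf{c}_\nu)_1(\tau - \varphi)\, e^{\mp i (\tau - \varphi)} + \Omega(\mathbf{R}_\nu, \mathbf{U}_\nu e_3)\, e^{\mp i (\tau - \varphi)},$$
and each summand converges (respectively to $\Theta_1^{\pm}(\nu)$ and to $0$) uniformly in $\nu \in \mathbb{D}_{\delta_0}$ as $\mathrm{Im}\,\tau \to \pm\infty$, the polynomial growth of $\mathbf{U}_\nu$ being absorbed by the remaining factor $e^{\mp (1-\epsilon) i (\tau-\varphi)}$.

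Since each $F^{\pm}(\cdot, \tau)$ is holomorphic in $\nu$ and converges locally uniformly to $\Theta_1^{\pm}$ on $\mathbb{D}_{\delta_0}$, Weierstrass' convergence theorem ensures locally uniform convergence of the $\nu$-derivatives. Differentiating $F^{\pm}$ at $\nu = 0$ and using $\Delta_\nu|_{\nu=0} \equiv 0$ together with the linearization of the first paragraph gives
$$\partial_\nu F^{\pm}(\nu,\tau)\big|_{\nu=0} = \Omega\bigl(\Delta_0(\varphi,\tau), \partial_\varphi \mathbf{\Gamma}_0(\varphi,\tau)\bigr)\, e^{\mp i(\tau-\varphi)},$$
and passing to the limit $\mathrm{Im}\,\tau \to \pm\infty$ delivers the stated formula. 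The principal obstacle is establishing the uniform control invoked in the second paragraph: one must verify that the decomposition $\Delta_\nu = \mathbf{U}_\nu \mathbf{c}_\nu + \mathbf{R}_\nu$ and the remainder estimates carved out of the proof of Theorem \ref{2:analyticdependenceofK0} depend analytically on $\nu$ with bounds independent of $\nu$ on a small disc. This is essentially bookkeeping --- the contraction scheme that produces $\mathbf{c}_\nu$ and $\mathbf{R}_\nu$ runs on parameter-dependent Banach spaces, and Cauchy estimates transfer the uniform bounds to $\nu$-derivatives --- but it is the only non-formal point in the argument.
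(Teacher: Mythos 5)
Your argument is correct and follows essentially the same strategy as the paper: interchange the $\nu$-derivative with the limit $\mathrm{Im}\,\tau\to\pm\infty$, justified by local uniform convergence with respect to $\nu$ of the analytic family $F^\pm(\nu,\cdot)$. The paper packages the same idea slightly differently: it introduces the auxiliary function $R(\varphi,\tau,\nu)=\{\Omega(\Delta_\nu,\partial_\varphi\mathbf{\Gamma}^-_\nu)-\Omega(\Delta_0,\partial_\varphi\mathbf{\Gamma}_0)\nu\}e^{i(\tau-\varphi)}$, whose $\nu$-derivative vanishes at $\nu=0$ by the linearization $\Delta_\nu=\nu\Delta_0+O(\nu^2)$, and then passes to the limit using the same uniform-convergence interchange; whereas you work directly with $F^\pm$ and invoke Weierstrass on the decomposition $\Delta_\nu=\mathbf{U}_\nu\mathbf{c}_\nu+\mathbf{R}_\nu$. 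Both routes hinge on the identical uniformity claim, and in fact the paper states this uniformity just as tersely as you do, so your caveat at the end ("bookkeeping with parameter-dependent Banach spaces and Cauchy estimates") accurately pinpoints the one non-formal step in the paper's proof as well as yours. One thing the paper does that you could add explicitly is to observe first that the limit defining $F_0$ exists and is finite (it cites the residue computation \eqref{2:formulac03}); you effectively obtain this from the uniform bounds on $\mathbf{c}_\nu$, $\mathbf{R}_\nu$, so this is a matter of presentation rather than a gap.
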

Let us postpone the proof of this lemma. In order to use the formula of the previous lemma we have to compute the difference $\Delta_0=\xi_0^{+}-\xi_0^{-}$. It follows from \eqref{2:formulaexi0pm} that,
\begin{equation}\label{2:Delta0integral}
\begin{split}
\Delta_0(\varphi,\tau)&=\mathbf{U}_0(\varphi,\tau)\int_{-\infty}^{+\infty}\mathbf{F}_0(\varphi+s,\tau+s)ds,\\
\text{where}\quad \mathbf{F}_0&(\varphi,\tau):=-\mathbf{U}_0^{-1}(\varphi,\tau)X_{q_2^5}(\mathbf{\Gamma}_0(\varphi,\tau)).
\end{split}
\end{equation}
Again, taking into account the expressions for $\mathbf{U}_0$ and \eqref{2:Xq25} a simple computation shows that,
\begin{equation}\label{2:F0exp}
\mathbf{F}_0(\varphi,\tau)=\left(\frac{5\kappa^5\cos\varphi\sin^4\varphi}{\tau^{10}},-\frac{10\kappa^5\sin^5\varphi}{\tau^{11}},-\frac{10\kappa^3\cos\varphi\sin^4\varphi}{3\tau^7},\frac{3\kappa^3\sin^5\varphi}{\tau^6}\right)^{T}.
\end{equation}
Since $\mathbf{U}_0$ is symplectic,  \eqref{2:Delta0integral} and \eqref{2:F0exp} imply that,
\begin{equation*}\begin{split}
\Omega(\Delta_0(\varphi,\tau),\partial_\varphi\mathbf{\Gamma}_0(\varphi,\tau))&=\Omega\left(\mathbf{U}_0(\varphi,\tau)\int_{-\infty}^{+\infty}\mathbf{F}_0(\varphi+s,\tau+s)ds,\partial_\varphi\mathbf{\Gamma}_0(\varphi,\tau)\right)\\
&=\int_{-\infty}^{+\infty}\frac{5\kappa^5\cos(\varphi+s)\sin^4(\varphi+s)}{(\tau+s)^{10}}ds\,.
\end{split}
\end{equation*}
Let us denote the integral above by $I(\varphi,\tau)$. Using the calculus of residues to compute this integral we obtain,
\begin{equation}\label{2:formulac03}
I(\varphi,\tau)=\delta\left(\frac{5\kappa^5\pi}{2^39!}e^{\delta i(\tau-\varphi)}-\frac{3^{10}5\kappa^5\pi}{2^49!}e^{3\delta i(\tau-\varphi)}+\frac{5^{10}\kappa^5\pi}{2^49!}e^{ 5\delta i(\tau-\varphi)}\right),
\end{equation}
where $\delta=\mathrm{sgn}(\mathrm{Im}\,\tau)$.
Finally, Lemma \ref{2:Lemmaformulatheta0minus} and \eqref{2:formulac03} give,
\begin{equation*}
\frac{d\Theta_1^{\pm}}{d\nu}(0)=\lim_{\mathrm{Im}\,\tau\rightarrow\pm\infty}\Omega(\Delta_0(\varphi,\tau),\partial_\varphi\mathbf{\Gamma}_0(\varphi,\tau))e^{\mp i(\tau-\varphi)}=\pm\frac{5\kappa^5\pi}{2^39!}.
\end{equation*}
Recall that $\kappa^2=-\frac{2}{\eta}$. Since $\eta\neq0$, the previous equality implies that $\frac{d\Theta_1^{\pm}}{d\nu}(0)\neq0$. Consequently $\mathcal{K}(\nu)$ is non-zero for $\left|\nu\right|\neq0$ sufficiently small.
\begin{proof}[Proof of Lemma \ref{2:Lemmaformulatheta0minus}]
We prove the lemma for the $-$ case, omitting the $+$ case as it is completely analogous.
According to the definition of $\Theta_1^{-}(\nu)$ we have that,
\begin{equation}\label{2:eqtheta0minuslemma}
\Theta_1^{-}(\nu)=\lim_{\mathrm{Im}\,\tau\rightarrow-\infty}\Omega(\Delta_\nu(\varphi,\tau),\partial_\varphi\mathbf{\Gamma}^{-}_\nu(\varphi,\tau))e^{i(\tau-\varphi)},
\end{equation}
where $\Delta_\nu=\mathbf{\Gamma}^{+}_\nu-\mathbf{\Gamma}^{-}_\nu$. Moreover, it follows from \eqref{2:formulac03} that,
\begin{equation}\label{2:F0star}
F_0:=\lim_{\mathrm{Im}\,\tau\rightarrow-\infty}\Omega(\Delta_0(\varphi,\tau),\partial_\varphi\mathbf{\Gamma}_0(\varphi,\tau))e^{i(\tau-\varphi)}<\infty.
\end{equation}
Define the following auxiliary function,
\begin{equation*}
R(\varphi,\tau,\nu)=\left\{\Omega(\Delta_\nu(\varphi,\tau),\partial_\varphi\mathbf{\Gamma}^{-}_\nu(\varphi,\tau))-\Omega(\Delta_0(\varphi,\tau),\partial_\varphi\mathbf{\Gamma}_0(\varphi,\tau))\nu\right\}e^{i(\tau-\varphi)}.
\end{equation*}
Note that $R$ is analytic in $\mathbb{T}_h\times D_r^{1}\times \mathbb{C}$ and $\frac{dR}{d\nu}(\varphi,\tau,0)=0$. Moreover, it follows from \eqref{2:eqtheta0minuslemma} and \eqref{2:F0star} that,
\begin{equation*}
\lim_{\mathrm{Im}\,\tau\rightarrow-\infty}R(\varphi,\tau,\nu)=\Theta_0^{-}(\nu)-F_0\nu.
\end{equation*}
Due to the uniform convergence of the limit we get at once,
\begin{equation*}
0=\left.\frac{d}{d\nu}\lim_{\mathrm{Im}\,\tau\rightarrow-\infty}R(\varphi,\tau,\nu)\right|_{\nu=0}=\frac{d\Theta_0^{-}}{d\nu}(0)-F_0.
\end{equation*}   
\end{proof}
\begin{corollary}
Let $G_\nu$ by an analytic family. For every $\epsilon>0$ there exists an $\epsilon$-close analytic family $H_\nu$, i.e.,
  \begin{equation*}
  \sup_{\nu\in\mathbb{D}_\delta}\left\|H_\nu-G_\nu\right\|<\epsilon,
  \end{equation*}
such that the Stokes constant of $H_\nu$ does not vanish on an open and dense subset of $\mathbb{D}_\delta$.  
\end{corollary}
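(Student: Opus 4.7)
The strategy is to interpolate $G_\nu$ with the family studied in Section \ref{2:ExampleH}. Let $\eta$ be the normal-form coefficient of $G_\nu=-I_1+I_2+\eta I_3^2+F_\nu$, set $L_\nu:=-I_1+I_2+\eta I_3^2+\nu q_2^5$, and consider the two-parameter family
\begin{equation*}
H_{\nu,t}:=-I_1+I_2+\eta I_3^2+(1-t)F_\nu+t\,\nu q_2^5,\qquad(\nu,t)\in\mathbb{D}_\delta\times\mathbb{C}.
\end{equation*}
Then $H_{\nu,0}=G_\nu$ and $H_{\nu,1}=L_\nu$, and by the computation in Section \ref{2:ExampleH} we have $\mathcal{K}(L_\nu)=-\bigl(\tfrac{5\kappa^5\pi}{2^3\,9!}\bigr)^2\nu^2+O(\nu^3)$, which is nonzero for all small $\nu\neq 0$. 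Each $H_{\nu,t}$ is admissible in the sense of Section \ref{2:preliminaries}: its higher-order part is a bounded analytic function on $\mathcal{U}$ containing only monomials of order $\geq 5$, and the non-degeneracy $\eta\neq 0$ is preserved.

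Next I would establish a two-parameter analog of Theorem \ref{2:analyticdependenceofK0}. Tracing through the proofs of Theorems \ref{2:Tformalseparatrix}, \ref{2:theoremunstableseparatrix}, \ref{2:Thformalnormalizedfundmatrix}, \ref{2:normalizedFundamentalMatrix}, and \ref{2:analyticdependenceofK0}, the formal series, the analytic parameterizations $\mathbf{\Gamma}^{\pm}$, and the normalized fundamental solution $\mathbf{U}$ can all be constructed so as to depend analytically on any auxiliary parameter entering $H$ analytically, and the extraction of $\Theta^{\pm}$ via its Fourier-integral formula is manifestly analytic in such parameters. Consequently $K(\nu,t):=\mathcal{K}(H_{\nu,t})=\Theta_1^{+}(\nu,t)\,\Theta_1^{-}(\nu,t)$ is jointly analytic on $\mathbb{D}_\delta\times\mathbb{D}_R$ for any $R\geq 1$, and $K(\cdot,1)\not\equiv 0$ by the example, so $K$ is not identically zero on the product.

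Let $S:=\{t\in\mathbb{D}_R:K(\cdot,t)\equiv 0\text{ on }\mathbb{D}_\delta\}$. If $S$ had an accumulation point then, for every fixed $\nu\in\mathbb{D}_\delta$, the analytic function $t\mapsto K(\nu,t)$ would vanish identically by the identity theorem, forcing $K\equiv 0$ globally and contradicting $K(\cdot,1)\not\equiv 0$. Hence $S$ is discrete. Put $C:=\sup_{\nu\in\mathbb{D}_\delta}\|L_\nu-G_\nu\|<\infty$, pick $t_0\in\mathbb{D}_R\setminus S$ with $|t_0|<\epsilon/C$, and define $H_\nu:=H_{\nu,t_0}$. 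Then $\sup_\nu\|H_\nu-G_\nu\|=|t_0|\,C<\epsilon$, while $\mathcal{K}(H_\nu)=K(\cdot,t_0)$ is a nonzero analytic function on $\mathbb{D}_\delta$, whose zero set is therefore discrete. Its complement is the desired open and dense set.

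The main obstacle is the two-parameter refinement of Theorem \ref{2:analyticdependenceofK0}. However, the argument given there is already functorial in the parameter, and the linear-operator bounds of Section \ref{2:sectionlinearop} are uniform on any compact set of auxiliary parameters, so the upgrade from one to two parameters is a routine bookkeeping exercise. Everything else reduces to the identity theorem for analytic functions of several complex variables.
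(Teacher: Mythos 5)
Your proof is correct, and the overall strategy---interpolate with the example of Section \ref{2:ExampleH} and exploit analyticity of $\mathcal{K}$ in a second parameter---is the same as the paper's. The details differ in two places. First, the interpolation: the paper sets $H_{\nu,\lambda}=G_\nu+\lambda(H_*-G_{\nu_*})$ with $H_*=-I_1+I_2+\eta I_3^2+\nu_* q_2^5$ and $\nu_*$ fixed from the example, so the perturbation $\lambda(H_*-G_{\nu_*})$ is a single function, independent of $\nu$, and $\sup_\nu\|H_{\nu,\lambda}-G_\nu\|=|\lambda|\,\|H_*-G_{\nu_*}\|$ with no uniformity-in-$\nu$ assumption needed; your $(1-t)F_\nu+t\nu q_2^5$ changes the $\nu$-dependence of the family, and your constant $C=\sup_\nu\|L_\nu-G_\nu\|$ tacitly requires $F_\nu$ to be uniformly bounded over $\mathbb{D}_\delta$, a point the paper sidesteps. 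Second, the extraction of $t_0$: you prove discreteness of the set $S=\{t:K(\cdot,t)\equiv 0\}$ via the identity theorem, which needs (separate or joint) two-variable analyticity; the paper does something marginally more economical---fix $\nu=\nu_*$, apply Theorem \ref{2:analyticdependenceofK0} once to the one-parameter family $\lambda\mapsto H_{\nu_*,\lambda}$ to get $\lambda\mapsto\mathcal{K}(\nu_*,\lambda)$ analytic and nonzero at $\lambda=1$, choose small $\lambda_*$ with $\mathcal{K}(\nu_*,\lambda_*)\neq 0$, and then apply Theorem \ref{2:analyticdependenceofK0} a second time in $\nu$. That avoids the two-parameter extension you flag as an obstacle. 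Both routes are sound; the paper's just never needs to leave the one-parameter setting of Theorem \ref{2:analyticdependenceofK0}.
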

\begin{proof}
By assumption $G_\nu$ is in the general form,
\begin{equation*}
G_\nu=-I_1 + I_2+\eta I_3^2 + F_\nu,
\end{equation*}
where $F_\nu$ is analytic and contains only monomials of order greater or equal than $5$. 
According to Example \ref{2:ExampleH} there exists $\nu_*\in\mathbb{D}_\delta$ such that the Stokes constant of the Hamiltonian $H_*=-I_1+I_2+\eta I_3^2 + \nu_* q_2^5$ is non-zero. Let,
\begin{equation*}
H_{\nu,\lambda}= G_\nu+\lambda(H_*-G_{\nu_*}),\quad \lambda\in\mathbb{C}\,.
\end{equation*}
Denote by $K(\nu,\lambda)$ the Stokes constant of $H_{\nu,\lambda}$. It follows from Theorem \ref{2:analyticdependenceofK0} that $\mathcal{K}(\nu_*,\lambda)$ is analytic with respect to $\lambda$. Moreover, since $H_{\nu_*,1}=H_*$ then $\mathcal{K}(\nu_*,1)\neq0$. Thus, for any $\epsilon>0$ we can choose,
\begin{equation*}
\gamma<\left\|H_*-G_{\nu_*}\right\|^{-1}\epsilon,
\end{equation*}
such that there exists $\lambda_{*}\in\mathbb{C}$ with $\left|\lambda_*\right|<\gamma$ and $\mathcal{K}(\nu_*,\lambda_*)\neq0$. Thus, $H_{\nu,\lambda_*}$ is the desired family.
\end{proof}

\section{Asymptotic series}\label{2:sectionformalseries}

In this section we prove Theorem \ref{2:Tformalseparatrix} and Proposition \ref{2:Thformalnormalizedfundmatrix}. These results deal with formal series, therefore we do not care about the convergence of the power series involved. 

We will look for formal solutions of equation \eqref{2:DH} in the class of formal power series in the variable $\tau^{-1}$ with coefficients in $\mathsf{T}$. To that end, it is convenient to transform $H$ into its normal form and compute a formal solution in the normal form coordinates. Then using the normal form transformation we pullback the formal solution to the original coordinates. 

According to Theorem \ref{1:TheoremNormalform11} there is a formal near identity symplectic change of variables $\mathbf{x}=\Phi(\mathbf{z})$ that transforms the Hamiltonian $H$ into its normal form, 
\begin{equation}\label{2:HNF}
H^{\sharp}=H\circ\Phi=-I_1+ I_2+\eta I_3^2+\sum_{3l+2k\geq 5}a_{l,k}I_1^lI_3^k,
\end{equation}
where $I_1$, $I_2$ and $I_3$ are given in \eqref{2:defI1I2I3} and $a_{l,k}\in\mathbb{C}$. Note that the normal form \eqref{2:HNF} is rotationally symmetric, i.e., it commutes with the one parameter group of rotations $\mathcal{R}_\varphi$,
\begin{equation}\label{2:defR}
\mathcal{R}_\varphi=\begin{pmatrix}R_\varphi&0\\0&R_\varphi\end{pmatrix}\quad\text{where}\quad R_\varphi=\begin{pmatrix}\cos\varphi&-\sin\varphi\\\sin\varphi&\cos\varphi\end{pmatrix}.
\end{equation}
In the following we look for formal solutions of,
\begin{equation}\label{2:eqDHNF}
\mathcal{D} \mathbf{z}=X_{H^{\sharp}}(\mathbf{z}),
\end{equation}
in the class of formal power series $\tau^{-1}\mathsf{T}^4[[\tau^{-1}]]$ (which is to be understood as the formal power series in $\mathsf{T}^4[[\tau^{-1}]]$ without the constant term). 
\begin{proposition}\label{2:lemmaformalseparatrix}
Equation \eqref{2:eqDHNF} has a formal solution $\hat{\mathbf{Z}}\in\tau^{-1}\mathsf{T}^4[[\tau^{-1}]]$ having the form $\hat{\mathbf{Z}}(\varphi,\tau)=\mathcal{R}_\varphi\xi(\tau)$ where $\xi\in\tau^{-1}\mathbb{C}^4[[\tau^{-1}]]$. The components of $\xi$ satisfy,
\begin{align*}
\xi_1(\tau)&=-\partial_\tau r(\tau)\cos \theta(\tau),&\xi_3(\tau)&=r(\tau)\cos \theta(\tau),\\
\xi_2(\tau)&=-\partial_\tau r(\tau)\sin \theta(\tau),&\xi_4(\tau)&=r(\tau)\sin \theta(\tau).
\end{align*}
where $\theta,r\in\mathbb{C}[[\tau^{-1}]]$ are odd formal power series having the leading orders,
\begin{equation*}
r(\tau)=\kappa\tau^{-1}+\cdots,\quad\theta=-\frac{a_{1,1}}{\eta}\tau^{-1}+\cdots,
\end{equation*}
where $\kappa^2=\frac{2}{\eta}$. The formal solution $\hat{\mathbf{Z}}$ is unique up to a rotation $\mathcal{R}_\pi$, i.e. $\hat{\mathbf{Z}}$ and $\mathcal{R}_\pi\hat{\mathbf{Z}}$ are the only formal solutions satisfying the properties stated above. Moreover, for any other formal solution $\hat{\mathbf{Y}}\in\tau^{-1}\mathsf{T}^4[[\tau^{-1}]]$ there exist $(\varphi_0,\tau_0)\in\mathbb{C}^2$ such that $\hat{\mathbf{Y}}(\varphi,\tau)=\hat{\mathbf{Z}}(\varphi+\varphi_0,\tau+\tau_0)$.
\end{proposition}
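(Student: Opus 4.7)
The plan is to exploit the rotational symmetry of the normal form to collapse the PDE \eqref{2:eqDHNF} to a small ODE system. Since $H^\sharp$ depends only on the $\mathcal{R}_\varphi$-invariants $I_1$ and $I_3$, its vector field is $\mathcal{R}_\varphi$-equivariant, and $X_{-I_1}$ is precisely the infinitesimal generator of $\mathcal{R}_\varphi$, so $\partial_\varphi(\mathcal{R}_\varphi \xi) = \mathcal{R}_\varphi X_{-I_1}(\xi)$. Substituting the ansatz $\hat{\mathbf{Z}}(\varphi,\tau) = \mathcal{R}_\varphi \xi(\tau)$ into \eqref{2:eqDHNF}, the $X_{-I_1}$ contributions cancel and the equation reduces to the autonomous ODE
\begin{equation*}
\partial_\tau \xi = X_{H^\sharp + I_1}(\xi) = X_{I_2 + \eta I_3^2 + \sum a_{l,k} I_1^l I_3^k}(\xi).
\end{equation*}

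Both $I_1$ and $H^\sharp$ Poisson-commute with $H^\sharp + I_1$ and are constants of motion of this reduced flow; the asymptotic condition $\xi(\tau)\to 0$ forces $I_1(\xi) = H^\sharp(\xi) = 0$. The constraint $I_1(\xi) = \xi_2\xi_3 - \xi_1\xi_4 = 0$ forces $(\xi_1,\xi_2)$ parallel to $(\xi_3,\xi_4)$, so I introduce $r,\theta$ by $(\xi_3,\xi_4) = r(\cos\theta,\sin\theta)$ and $(\xi_1,\xi_2) = \mu(\cos\theta,\sin\theta)$. Matching the third and fourth components of the reduced ODE (only the $X_{I_2}$ and $X_{a_{1,k}I_1 I_3^k}$ terms contribute on $I_1=0$) fixes $\mu = -\partial_\tau r$ together with the quadrature $\partial_\tau\theta = -\sum_{k\geq 1} a_{1,k}(r^2/2)^k$, while the first two components collapse to the scalar equation
\begin{equation*}
\partial_\tau^2 r + \eta r^3 + \sum_{k\geq 3} k\, a_{0,k}(r^2/2)^{k-1} r = 0.
\end{equation*}
Writing $r = \sum_{n\geq 1} r_n \tau^{-n}$, the leading balance determines $r_1^2$ in terms of $\eta$, so $r_1 = \pm\kappa$ (the two signs corresponding to the $\mathcal{R}_\pi$-ambiguity), and each subsequent order is fixed by a linear recursion with nonvanishing pivot; the quadrature for $\theta$ integrates formally with zero integration constant (since we require $\theta\in\tau^{-1}\mathbb{C}[[\tau^{-1}]]$), yielding $\theta = -(a_{1,1}/\eta)\tau^{-1}+\cdots$. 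Oddness of $r,\theta$ follows from the invariance of the reduced system under $(\tau,r,\theta)\mapsto(-\tau,-r,-\theta)$.

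For the final statement, given any formal solution $\hat{\mathbf{Y}} = \sum_{n\geq 1}\tau^{-n}\mathbf{y}_n(\varphi) \in \tau^{-1}\mathsf{T}^4[[\tau^{-1}]]$, the lowest-order equation is $\partial_\varphi \mathbf{y}_1 = A\mathbf{y}_1$, where $A = DX_{H^\sharp}(0)$ has eigenvalues $\pm i$ each with a single Jordan block of size two. Trigonometric polynomial solutions exclude the generalized eigenvectors (which would produce nonperiodic $\varphi e^{\pm i\varphi}$ terms), leaving $\mathbf{y}_1$ in the two-dimensional complex span of $e^{\pm i\varphi}(0,0,1,\mp i)$; a suitable shift $\varphi\mapsto\varphi+\varphi_0$ aligns $\mathbf{y}_1$ with the leading term of $\hat{\mathbf{Z}}$. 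At each subsequent order, $(\partial_\varphi - A)\mathbf{y}_n$ is prescribed in terms of $\mathbf{y}_1,\ldots,\mathbf{y}_{n-1}$, and the two-dimensional kernel of $\partial_\varphi - A$ in $\mathsf{T}^4$ is exhausted by a solvability condition coming from the first integrals together with the remaining $\tau\mapsto\tau+\tau_0$ freedom. The main obstacle I expect is this induction: verifying at each order that the cohomological equation is solvable in $\mathsf{T}^4$ and that the residual kernel is exactly accounted for by the $\tau_0$-translation, which requires careful Fourier-mode bookkeeping together with the observation that the only $\mathcal{D}$-invariants in $\mathsf{T}[[\tau^{-1}]]$ are constants (so that $I_1(\hat{\mathbf{Y}})$ and $H^\sharp(\hat{\mathbf{Y}})$ vanish identically).
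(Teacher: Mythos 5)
Your existence argument follows the paper's proof essentially step for step: pass to the rotating frame via the ansatz $\hat{\mathbf{Z}}=\mathcal{R}_\varphi\xi(\tau)$, kill the resulting autonomous ODE's first integral $I_1$, introduce polar/radial variables, and solve a second-order scalar ODE for $r$ and a quadrature for $\theta$ order by order. Two small remarks on that part: (i) the phrase ``the asymptotic condition $\xi(\tau)\to 0$ forces $I_1(\xi)=0$'' is not available in the purely formal setting; the correct argument, which the paper uses, is that $I_1(\xi)$ is a \emph{constant} formal series (because $\partial_\tau I_1(\xi)=0$) that nevertheless lies in $\tau^{-2}\mathbb{C}[[\tau^{-1}]]$ since each $\xi_i\in\tau^{-1}\mathbb{C}[[\tau^{-1}]]$, hence it vanishes; (ii) your oddness argument via the symmetry $(\tau,r,\theta)\mapsto(-\tau,-r,-\theta)$ is valid, but only after you fix the free coefficient $r_2$ by a symmetry-respecting normalization (the paper sets $r_2=0$, equivalent to pinning down the $\tau_0$-translation). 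Neither of these is a genuine obstruction.

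The real gap is in the last paragraph. The paper reduces uniqueness-up-to-translation to the ODE in the rotating frame, asserting that any $\hat{\mathbf{Y}}\in\tau^{-1}\mathsf{T}^4[[\tau^{-1}]]$ solving \eqref{2:eqDHNF} must actually be of the form $\mathcal{R}_\varphi\tilde\xi(\tau)$ with $\tilde\xi$ $\varphi$-independent, and then invokes the uniqueness already established for $r$ and $\theta$ (Lemma~\ref{2:claimr} and \eqref{2:definitiongeneraltheta}). You instead propose to analyze $\hat{\mathbf{Y}}$ directly in the original frame, order by order, against the linear operator $\partial_\varphi-A$ with $A=DX_{H^\sharp}(0)$; since $A$ has the resonant spectrum $\{\pm i\}$ with nontrivial Jordan blocks, this forces you to track solvability of a cohomological equation at \emph{every} order in $\tau^{-1}$ and to match the two-dimensional kernel against the translation freedom and the vanishing of $I_1$ and $H^\sharp$. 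You correctly identify this as the obstacle, but you do not carry it out, and it is genuinely delicate along this route. The cleaner path, and the one implicit in the paper, is to set $\tilde\xi(\varphi,\tau)=\mathcal{R}_{-\varphi}\hat{\mathbf{Y}}(\varphi,\tau)$ and observe that $\tilde\xi$ solves $\mathcal{D}\tilde\xi=X_{H^\sharp+I_1}(\tilde\xi)$; the linear part of $X_{H^\sharp+I_1}$ is $X_{I_2}$, which is \emph{nilpotent} with $\ker=\operatorname{ran}$, and a short induction then shows that requiring each coefficient of $\tilde\xi$ to lie in $\mathsf{T}^4$ forces it to be constant in $\varphi$. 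That one change of frame eliminates the resonance bookkeeping entirely and delivers the precise statement the paper asserts. As written, your proposal leaves this step open, so the uniqueness/translation part is not complete.
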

\begin{proof}
Setting $\mathbf{z}(\varphi,\tau)=\mathcal{R}_\varphi\xi(\tau)$ and taking into account that $X_{H^{\sharp}}$ commutes with $\mathcal{R}_\varphi$ (which has infinitesimal generator $-X_{I_1}$) then equation \eqref{2:eqDHNF} reduces to,
\begin{equation}\label{2:eqxi}
\partial_\tau\xi=X_{H^{\sharp}+I_1}(\xi).
\end{equation}
It is convenient to change to polar coordinates given by,
\begin{equation}\label{2:changeofvariablestheoremformalsep}
\begin{array}{cc}
\xi_1=R \cos\theta-\frac{\Theta}{r}\sin\theta ,& \xi_3=r \cos\theta, \\&\\
\xi_2=R \sin\theta+\frac{\Theta}{r}\cos\theta ,& \xi_4=r \sin\theta,
\end{array}
\end{equation}  
where $\xi=(\xi_1,\xi_2,\xi_3,\xi_4)$. Note that $I_1=\Theta$. In these new variables equation \eqref{2:eqxi} takes the form,
\begin{align}\label{2:XHdeltaNF}
\partial_\tau\theta=-\frac{\Theta}{r^2}-\sum_{3i+2j\geq5}\frac{i a_{i,j}}{2^{j}}\Theta^{i-1}r^{2j},\quad\partial_\tau r=- R,\quad\partial_\tau \Theta &=0,\\\label{2:XHdeltaNFb}
\partial_\tau R=\left(-\frac{\Theta^2}{r^3}+\eta r^3\right) + \sum_{3i+2j\geq5}\frac{2ja_{i,j}}{2^j}\Theta^ir^{2j-1}.
\end{align}
We solve these equations formally in $\mathbb{C}[[\tau^{-1}]]$. Let us start with the third equation of \eqref{2:XHdeltaNF}. Taking $\Theta\in\mathbb{C}[[\tau^{-1}]]$ and substitute into the equation we get immediately that $\Theta(\tau)=\Theta_0$ with $\Theta_0\in\mathbb{C}$. Since $\Theta(\tau)=\xi_2(\tau)\xi_3(\tau)-\xi_1(\tau)\xi_4(\tau)$ and each $\xi_i$ must be in $\tau^{-1}\mathbb{C}[[\tau^{-1}]]$ we conclude that $\Theta\in\tau^{-2}\mathbb{C}[[\tau^{-1}]]$. Thus $\Theta(\tau)=0$. 

We consider now the second equation of \eqref{2:XHdeltaNF} and equation \eqref{2:XHdeltaNFb}. Setting $\Theta=0$, these two equations are equivalent to the following single equation,
\begin{equation}\label{2:eqr1}
\partial^2_\tau r =-\eta r^3-\sum_{j\geq2}\frac{2(j+1)a_{0,j+1}}{2^{j+1}}r^{2j+1}.
\end{equation}
\begin{lemma}\label{2:claimr}
Equation \eqref{2:eqr1} has a non-zero formal solution $r$ having only odd powers of $\tau^{-1}$. Moreover,
\begin{equation}\label{2:leadingordersr}
r(\tau)=\kappa\tau^{-1}-\frac{1}{8}a_{0,3}\kappa^5\tau^{-3}+\cdots,
\end{equation}
where $\kappa^2=-\frac{2}{\eta}$. The solution is unique if we fix one of the two values for $\kappa$. Moreover, for any other non-zero formal solution $\tilde{r}\in\tau^{-1}\mathbb{C}[[\tau^{-1}]]$ of equation \eqref{2:eqr1} there exists $\tau_0$ such that $\tilde{r}(\tau)=\pm r(\tau+\tau_0)$.
\end{lemma}
\begin{proof}
Let us take a formal series $r(\tau)=\sum_{k\geq 1}r_k\tau^{-k}$ and substitute into equation \eqref{2:eqr1}. After collecting terms of the same order in $\tau^{-k-2}$ we obtain an equation which we can solve for the coefficient $r_k$. Let us present the details. At order $\tau^{-3}$ we get the following equation for $r_1$,
\begin{equation}\label{2:relationr1}
2r_{1}=-\eta r_1^3,
\end{equation}
which implies that $r_1^2=-\frac{2}{\eta}$ (the other solution is trivially $r_1$ which leads to the zero formal solution $r=0$). Hence we let $r_1=\kappa$ where $\kappa^2=-\frac{2}{\eta}$. Note that $\kappa$ can take two distinct values. We choose one value for $\kappa$ and move to the next order. At order $\tau^{-4}$ we obtain,
\begin{equation*}
6r_2=-3\eta r_1^2r_2.
\end{equation*}
Note that this equation is linear with respect to $r_2$. Taking into account that $r_1=\kappa$ we can simplify the previous equation and conclude that it holds for every $r_2\in\mathbb{C}$. Hence $r_2$ is a free coefficient. Since we are considering only odd powers of $r$ we set this coefficient to zero. 

At this stage, we have determined $r_1=\kappa$ and $r_2=0$. Now we proceed by induction on $k$. First let us determine $r_3$. It is not difficult to write the equation for $r_3$ which reads,
\begin{equation*}
6r_3=-\frac{6}{8}a_{0,3}r_1^5.
\end{equation*}
Thus $r_3=-\frac{1}{8}a_{0,3}\kappa^5$. Now suppose that all coefficients $r_l$, $3\leq l \leq k$ have been defined uniquely such that for $l$ even we have $r_l=0$ and for $l$ odd we have $r_l=p(\kappa)$ where $p\in\mathbb{C}[\kappa]$ and contains only odd powers in $\kappa$. Due to the induction hypothesis, at the order $\tau^{-k-3}$ we have the following equation for $r_{k+1}$,
\begin{equation*}
((k+1)(k+2)-6)r_{k+1}=f_{k+1}(r_1,\ldots,r_{k})
\end{equation*}
where $f_{k+1}$ is a polynomial depending on a finite number of coefficients $a_{0,j+1}$ for $j\geq2$. Note that it is always possible to solve the previous equation with respect to $r_{k+1}$ for $k\geq 2$ since $(k+1)(k+2)-6=0$ only if $k=1$ or $k=-4$. Now we have to distinguish two cases. First consider the case when $k+1$ is even. Since the right hand side of equation \eqref{2:eqr1} has only odd powers of $r$ and according to the induction hypothesis $r_l=0$ for even $l$ then $f_{k+1}=0$. Thus $r_{k+1}=0$. On the other hand, when $k+1$ is odd then by the same reasoning as above it is not difficult to see that $f_{k+1}$ is a polynomial in $\mathbb{C}[\kappa]$, having only odd powers of $\kappa$, and $r_{k+1}$ is determined uniquely by the formula $r_{k+1}=((k+1)(k+2)-6)^{-1}f_{k+1}$. This completes the induction. 
Finally let $\tilde{r}\in\tau^{-1}\mathbb{C}[[\tau^{-1}]]$ be a non-zero formal solution of equation \eqref{2:eqr1}. We can write $\tilde{r}=\sum_{k\geq1}\tilde{r}_k\tau^{-k}$. As before, we conclude that $\tilde{r}_1^2=\kappa^2$ thus, $\tilde{r}_1=\pm\kappa$. Now for $\tau_0\in\mathbb{C}$ we have that,
\begin{equation*}
r(\tau+\tau_0)=\frac{\kappa}{\tau+\tau_0}+\cdots=\frac{\kappa}{\tau}-\frac{\tau_0\kappa}{\tau^2}+\cdots.
\end{equation*}
is also a formal solution of equation \eqref{2:eqr1}. Comparing the second order coefficient $-\tau_0\kappa$ with the coefficient $\tilde{r}_2$ we conclude by the uniqueness of $r$ that if $\tau_0=-\frac{\tilde{r}_2}{\kappa}$ then $\tilde{r}(\tau)=\pm r(\tau+\tau_0)$ and the claim is proved.
\end{proof}

Using the formal solutions $\Theta(\tau)$ and $r(\tau)$ we simplify the first equation of \eqref{2:XHdeltaNF} to obtain,
\begin{equation}\label{2:eqtheta}
\partial_\tau \theta =-\sum_{j\geq 1}\frac{a_{1,j}}{2^j}\left(\sum_{k\geq1}r_{k}\tau^{-k}\right)^{2j}.
\end{equation}
Note that $\left(\sum_{k\geq1}r_{k}\tau^{-k}\right)^{2j}\in\tau^{-2j}\mathbb{C}[[\tau^{-1}]]$ and contains only even powers in $\tau^{-1}$. 
Thus equation \eqref{2:eqtheta} can be further simplified,
\begin{equation*}
\partial_\tau \theta =\sum_{k\geq1}b_k\tau^{-2k},
\end{equation*}
where $b_k$ depend on a finite number of coefficients of $r(\tau)$ and $a_{1,j}$ for $j\geq1$. Thus,
\begin{equation}\label{2:definitiongeneraltheta}
\theta(\tau)=\theta_0+\sum_{k\geq 1}\frac{b_k}{-2k+1}\tau^{-2k+1},
\end{equation}
where $\theta_0\in\mathbb{C}$. We set $\theta_0=0$. To conclude the proof, we show how to come back to the variable $\xi$. First observe that,
\begin{equation*}
\cos \theta(\tau)=\sum_{i\geq0}\frac{(-1)^i}{(2i)!}\left(\sum_{k\geq 1}\frac{b_k}{-2k+1}\tau^{-2k+1}\right)^{2i},
\end{equation*}
and taking into account that the formal series inside the parenthesis of the right hand side of the previous formula is an even formal series in $\tau^{-1}$ starting with the term $\tau^{-2i}$ we conclude that,
\begin{equation}\label{2:eqcos}
\cos \theta(\tau)=\sum_{k\geq0}w_k\tau^{-2k},
\end{equation}
where $w_k$ depend on a finite number of coefficients of $\theta(\tau)$. A similar formula holds for the sine which reads,
\begin{equation}\label{2:eqsin}
\sin \theta(\tau)=\sum_{k\geq0}z_k\tau^{-2k+1},
\end{equation}
where $z_k$ depend on a finite number of coefficients of $\theta(\tau)$. Now according to the change of variables \eqref{2:changeofvariablestheoremformalsep} the formal power series $\hat{\mathbf{Z}}(\tau):=\mathcal{R}_\varphi\xi(\tau)$ is the desired formal solution of equation \eqref{2:eqDHNF} where the components of $\xi$ are given by,
\begin{align*}
\xi_1(\tau)&=-\partial_\tau r(\tau)\cos \theta(\tau),&\xi_3(\tau)&=r(\tau)\cos \theta(\tau),\\
\xi_2(\tau)&=-\partial_\tau r(\tau)\sin \theta(\tau),&\xi_4(\tau)&=r(\tau)\sin \theta(\tau).
\end{align*}
The expressions \eqref{2:eqcos} and \eqref{2:eqsin} imply that $\xi_i\in\tau^{-1}\mathbb{C}[[\tau^{-1}]]$ for $i=1,\ldots,4$, thus proving the first part of the proposition. Any other formal solution satisfying the same properties of $\hat{\mathbf{Z}}$ (as stated in the proposition) will have the form,
\begin{equation*}
\mathcal{R}_\varphi\mathcal{R}_{\theta(\tau)+\theta_0}(-\partial_\tau r(\tau+\tau_0),0,r(\tau+\tau_0),0)^{T},
\end{equation*}
for some $\tau_0,\theta_0\in\mathbb{C}$. Clearly for $\tau_0\neq 0$, $r(\tau+\tau_0)$ will be no longer an odd power series in $\tau^{-1}$. Thus $\tau_0$ must be zero. Moreover, equation \eqref{2:relationr1} implies that $\theta_0=0$ or $\theta_0=\pi$. Therefore, $\hat{\mathbf{Z}}$ is uniquely defined up to a rotation $\mathcal{R}_\pi$. Moreover, if $\hat{\mathbf{Y}}\in\tau^{-1}\mathsf{T}^4[[\tau^{-1}]]$ is another formal solution then there exists $\tilde{\xi}\in\tau^{-1}\mathbb{C}^4[[\tau^{-1}]]$ such that $\hat{\mathbf{Y}}(\varphi,\tau)=\mathcal{R}_\varphi\tilde{\xi}(\tau)$. Taking into account Lemma \ref{2:claimr} and equation \eqref{2:definitiongeneraltheta} we conclude that 
$$
\tilde{\xi}(\tau)=\mathcal{R}_{\theta(\tau)+\varphi_0}(-\partial_\tau r(\tau+\tau_0),0,r(\tau+\tau_0),0)^{T},
$$
for some $(\varphi_0,\tau_0)\in\mathbb{C}^2$. This completes the proof of the proposition. 
\end{proof}
\begin{remark}\label{2:RemarkrealanalyticH}
If the Hamiltonian $H$ is real analytic then its normal form $H^{\sharp}$ is a formal series with real coefficients, i.e. $\overline{H^{\sharp}(\mathbf{z})}=H^{\sharp}(\overline{\mathbf{z}})$. In particular, the normal form coefficient $\eta$ is real. Depending on the sign of $\eta$ we can say more about the structure of the formal solutions of \eqref{2:eqDHNF}. If $\eta<0$ then one can trace the proof of the previous proposition and conclude that the coefficients of $\hat{\mathbf{Z}}$ are real, i.e., $\hat{\mathbf{Z}}(\varphi,\tau)=\mathcal{R}_\varphi\xi(\tau)$ where $\xi\in \tau^{-1}\mathbb{R}^4[[\tau^{-1}]]$. Thus, $\hat{\mathbf{Z}}(\varphi,\tau)=\overline{\hat{\mathbf{Z}}(\overline{\varphi},\overline{\tau})}$ when $\eta<0$. On the other hand, when $\eta>0$ then the coefficients of $\hat{\mathbf{Z}}$ are imaginary numbers, i.e. $\hat{\mathbf{Z}}(\varphi,\tau)=i\mathcal{R}_\varphi\xi(\tau)$ where $\xi\in\tau^{-1}\mathbb{R}^4[[\tau^{-1}]]$. Thus, $\hat{\mathbf{Z}}(\varphi,\tau)=\overline{\hat{\mathbf{Z}}(\overline{\varphi}+\pi,\overline{\tau})}$ when $\eta>0$.
\end{remark}
\begin{remark}\label{2:RemarkreversibilityofHNF}
The normal form Hamiltonian vector field $X_{H^{\sharp}}$ is time-reversible with respect to the linear involution,
\begin{equation}\label{2:LinearinvolutionS}
\mathcal{S}(q_1,q_2,p_1,p_2)=(-q_1,q_2,p_1,-p_2).
\end{equation}
If the Hamiltonian $H$ is real analytic then the formal solution $\hat{\mathbf{Z}}$ satisfies,
\begin{equation*}
\hat{\mathbf{Z}}(\varphi,\tau)=\overline{\mathcal{S}(\hat{\mathbf{Z}}(-\overline{\varphi},-\overline{\tau}))}\,.
\end{equation*}
The formal solution $\hat{\mathbf{Z}}$ is said to be symmetric and this condition defines the solution uniquely (up to a rotation $\mathcal{R}_\pi$) in a coordinate independent way.
\end{remark}
\subsection{Proof of Theorem \ref{2:Tformalseparatrix}}

By the normal form theory there exists a (non-unique) near identity formal symplectic change of variables $\mathbf{x}=\Phi(\mathbf{z})$ that transforms the Hamiltonian $H$ into its normal form $H^{\sharp}=H\circ\Phi$. Let $\mathbf{z}=(\mathbf{q},\mathbf{p})\in\mathbb{C}^2\times\mathbb{C}^2$. For our purposes, we can suppose that $\Phi$ is in the general form,
\begin{equation}\label{2:Psinormalform}
(\mathbf{q},\mathbf{p})\mapsto \left(\mathbf{q}+\sum_{2\left|i\right|+\left|j\right|\geq 4}c_{i,j}\mathbf{q}^i\mathbf{p}^j,\mathbf{p}+\sum_{2\left|i\right|+\left|j\right|\geq 3}d_{i,j}\mathbf{q}^i\mathbf{p}^j\right)\,,
\end{equation}
written in multi-index notation, for some $c_{i,j},d_{i,j}\in\mathbb{C}^2$. According to Proposition \ref{2:lemmaformalseparatrix} there exists a formal series $\hat{\mathbf{Z}}\in\tau^{-1}\mathsf{T}^4[[\tau^{-1}]]$ such that $\mathcal{D}\hat{\mathbf{Z}}=X_{H^{\sharp}}(\hat{\mathbf{Z}})$. Thus,
\begin{equation*}
\hat{\mathbf{\Gamma}}(\varphi,\tau):=\Phi\circ\hat{\mathbf{Z}}(\varphi,\tau)),
\end{equation*}
is a formal solution of equation \eqref{2:DH}. Note that $\hat{\mathbf{Z}}$ starts with terms of order $\tau^{-1}$. Thus, $\Phi\circ\hat{\mathbf{Z}}$ belongs to the same class of $\hat{\mathbf{Z}}$ since its coefficients can be computed from a finite number of coefficients of $\hat{\mathbf{Z}}$ and $\Phi$. Moreover, we know that $\hat{\mathbf{Z}}(\varphi,\tau)=\mathcal{R}_\varphi\xi(\tau)$ where the components of $\xi$ have the leading orders,
\begin{equation*}\begin{split}
\xi_1(\tau)=\kappa\tau^{-2}+\cdots\,,&\quad \xi_2(\tau)=-\frac{\kappa a_{1,1}}{\eta}\tau^{-3}+\cdots\,,\\
\xi_3(\tau)=\kappa\tau^{-1}+\cdots\,,&\quad \xi_2(\tau)=-\frac{\kappa a_{1,1}}{\eta}\tau^{-2}+\cdots\,.
\end{split}
\end{equation*}
Taking into account \eqref{2:Psinormalform} we obtain the leading orders of $\hat{\mathbf{\Gamma}}$ as stated in the theorem. Moreover, if $\tilde{\hat{\mathbf{\Gamma}}}\in\tau^{-1}\mathsf{T}^4[[\tau^{-1}]]$ is another formal solution of \eqref{2:DH} then it is clear from Proposition \ref{2:lemmaformalseparatrix} that there exist $(\varphi_0,\tau_0)\in\mathbb{C}^2$ such that $\tilde{\hat{\mathbf{\Gamma}}}(\varphi,\tau)=\hat{\mathbf{\Gamma}}(\varphi+\varphi_0,\tau+\tau_0)$. 
\qed
%
%
\begin{remark}\label{2:remarkrealcase}
If the original Hamiltonian $H$ is real analytic then $\overline{\hat{\mathbf{\Gamma}}(\overline{\varphi},\overline{\tau})}$ is also a formal solution of equation \eqref{2:H}. Indeed,
\begin{equation*}
\mathcal{D}\overline{\hat{\mathbf{\Gamma}}(\bar{\varphi},\bar{\tau})}=\overline{\overline{\mathcal{D}}\hat{\mathbf{\Gamma}}(\bar{\varphi},\bar{\tau})}=\overline{X_{H}(\hat{\mathbf{\Gamma}}(\bar{\varphi},\bar{\tau}))}=X_{H}\left(\overline{\hat{\mathbf{\Gamma}}(\bar{\varphi},\bar{\tau})}\right),
\end{equation*}
where $\overline{\mathcal{D}}=\partial_{\bar{\varphi}}+\partial_{\bar{\tau}}$. Moreover, since in the real analytic case the normal form transformation $\Phi$ has real coefficients then Remark \ref{2:RemarkrealanalyticH} implies that,
\begin{align*}
\overline{\hat{\mathbf{\Gamma}}(\bar{\varphi},\bar{\tau})}&=\Phi(\overline{\hat{\mathbf{Z}}(\bar{\varphi},\bar{\tau})})=\Phi(\hat{\mathbf{Z}}(\varphi+\pi,\tau))=\hat{\mathbf{\Gamma}}(\varphi+\pi,\tau),&\quad\mathrm{for}\quad\eta>0,\\
\overline{\hat{\mathbf{\Gamma}}(\bar{\varphi},\bar{\tau})}&=\Phi(\overline{\hat{\mathbf{Z}}(\bar{\varphi},\bar{\tau})})=\Phi(\hat{\mathbf{Z}}(\varphi,\tau))=\hat{\mathbf{\Gamma}}(\varphi,\tau),&\quad\mathrm{for}\quad\eta<0.
\end{align*}
\end{remark}

\begin{remark}\label{2:remarkseparatrixreversible}
If the original Hamiltonian $H$ is real analytic and $X_H$ is reversible with respect to the involution \eqref{2:LinearinvolutionS} then the normal form preserves the reversibility. By Remark \ref{2:RemarkreversibilityofHNF} the formal solution $\hat{\mathbf{\Gamma}}$ uniquely defined (up to a translation $\hat{\mathbf{\Gamma}}(\varphi+\pi,\tau)$) by the following condition,
\begin{equation*}
\hat{\mathbf{\Gamma}}(\varphi,\tau)=\overline{\mathcal{S}(\hat{\mathbf{\Gamma}}(-\bar{\varphi},-\bar{\tau}))}.
\end{equation*}
\end{remark}

\begin{remark}\label{2:remarkGamman}
Let $n\geq 1$ and $\mathbf{\Gamma}_n$ be a partial sum of the formal series $\hat{\mathbf{\Gamma}}$ up to order $\tau^{-n-1}$ in the first two components and up to order $\tau^{-n}$ in the last two.
Then,
\begin{equation}\label{2:eqestimateGamman}
\mathcal{D}\mathbf{\Gamma}_n-X_H(\mathbf{\Gamma}_n)=\left(\tau^{-(n+2)}R_{1,n},\tau^{-(n+2)}R_{2,n},\tau^{-(n+1)}R_{3,n},\tau^{-(n+1)}R_{4,n}\right),
\end{equation}
for some $R_{i,n} \in \mathsf{T}^4[[\tau^{-1}]]$, $i=1,\ldots,4$. Indeed, for a formal series $\hat{\mathbf{\Gamma}}=\sum_{k\geq1}\Gamma_k\tau^{-k}$ to solve formally equation \eqref{2:DH}, then the coefficients $\Gamma_k$ must solve the infinite system of equations,
\begin{equation}\label{2:eqsystemGamman}
\partial_\varphi\Gamma_k-X_{-I_1+I_2}(\Gamma_{k})=(k-1)\Gamma_{k-1}+G_k(\Gamma_{1},\ldots,\Gamma_{k-2}),\quad k\in\mathbb{N},
\end{equation}
obtained from substituting the formal series into equation \eqref{2:DH} and collecting terms of the same order in $\tau^{-k}$. The $G_k$'s are polynomials in $k-2$ variables and can be defined in a recursive way.

Since the first $n$ coefficients of the sum $\mathbf{\Gamma}_n$ solve \eqref{2:eqsystemGamman} for $k=1,\ldots,n$ then in order to get \eqref{2:eqestimateGamman} we consider the equation \eqref{2:eqsystemGamman} for $k=n+1$.
Note that the left hand side of equation \eqref{2:eqsystemGamman} depends only on the $k$th coefficient of the formal series $\hat{\mathbf{\Gamma}}$. Moreover, due to the form of the vector field $X_{-I_1+I_2}$, the first two components of the expression in the left hand side of \eqref{2:eqsystemGamman} only depend on the first two components of $\Gamma_{k}$. These observations allow us to conclude \eqref{2:eqestimateGamman}.
\end{remark}

\subsection{Formal variational equation}
In this subsection we prove Proposition \ref{2:Thformalnormalizedfundmatrix}. Consider the formal variational equation of $X_H$ around the formal separatrix $\hat{\mathbf{\Gamma}}$,
\begin{equation}\label{2:eqdH}
\mathcal{D}\mathbf{u}=DX_{H}(\hat{\mathbf{\Gamma}})\mathbf{u}.
\end{equation}
Our goal is to construct a convenient basis for the space of formal solutions of equation \eqref{2:eqdH}. These formal solutions provide asymptotic expansions for certain analytic solutions of equation \eqref{2:VariationaleqGammaminus}. We know already two formal solutions of the previous equation: $\partial_\varphi\hat{\mathbf{\Gamma}}$ and $\partial_\tau\hat{\mathbf{\Gamma}}$. Note that these formal solutions are linearly independent as formal series in $\mathsf{T}^4[[\tau^{-1}]]$. Moreover,
\begin{equation}\label{2:Lagrangiancondition}
\Omega(\partial_\varphi\hat{\mathbf{\Gamma}},\partial_\tau\hat{\mathbf{\Gamma}})=0,
\end{equation}
where $\Omega$ is the standard symplectic form \eqref{1:Omegadef}. The previous equality follows from a more general fact: if $\mathbf{u}_1$ and $\mathbf{u}_2$ are two formal solutions of \eqref{2:eqdH}, then $\Omega(\mathbf{u}_1,\mathbf{u}_2)\in\mathbb{C}$. To prove this, note that
\begin{equation}\label{2:proofD0}\begin{split}
\mathcal{D}\Omega(\mathbf{u}_1,\mathbf{u}_2)&=\Omega(\mathcal{D}\mathbf{u}_1,\mathbf{u}_2)+\Omega(\mathbf{u}_1,\mathcal{D}\mathbf{u}_2)\\
&=\Omega(DX_{H}(\hat{\mathbf{\Gamma}})\mathbf{u}_1,\mathbf{u}_2)+\Omega(\mathbf{u}_1,DX_{H}(\hat{\mathbf{\Gamma}})\mathbf{u}_2)\\
&=0.
\end{split}
\end{equation}
In particular, $\mathcal{D}\Omega(\partial_\varphi\hat{\mathbf{\Gamma}},\partial_\tau\hat{\mathbf{\Gamma}})=0$. Now we apply the next Lemma to get the desired equality. 
\begin{lemma}\label{2:Lemmag0}
Let $g\in\tau^j\mathsf{T}^4[[\tau^{-1}]]$ for some $j\in\mathbb{Z}$ and suppose that $\mathcal{D}g=0$. Then $g=g_0\in\mathbb{C}$. In addition, if $j\leq -1$ then $g=0$.
\end{lemma}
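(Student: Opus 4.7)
The strategy is to expand $g$ as a formal power series in $\tau^{-1}$ and extract the coefficient of each power. Writing $g = \sum_{k \geq -j} g_k(\varphi)\tau^{-k}$ with $g_k \in \mathsf{T}^4$ (and the convention $g_k = 0$ for $k < -j$), a direct computation shifting the index in $\partial_\tau g = \sum_k -k\, g_k(\varphi)\tau^{-k-1}$ gives
\begin{equation*}
\mathcal{D}g = \sum_{k \geq -j}\bigl[g_k'(\varphi) - (k-1)g_{k-1}(\varphi)\bigr]\tau^{-k}.
\end{equation*}
Setting this to zero produces the cascade of equations $g_k'(\varphi) = (k-1)g_{k-1}(\varphi)$ for every $k \geq -j$, which I will exploit from the lowest order upward.

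Suppose $g \not\equiv 0$ and let $m$ be the smallest integer with $g_m \neq 0$. By minimality, $g_{m-1} = 0$, so the equation at order $\tau^{-m}$ reads $g_m'(\varphi) = 0$; hence $g_m$ is a constant. The equation at the next order becomes $g_{m+1}'(\varphi) = m\, g_m$, whose right-hand side is a constant. The crucial observation is that the antiderivative of a nonzero constant $c$ is the affine function $c\varphi + \text{const}$, which lies in $\mathsf{T}$ only when $c = 0$, since trigonometric polynomials are $2\pi$-periodic. Therefore $m\, g_m = 0$, and since $g_m \neq 0$ we must have $m = 0$.

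To upgrade this conclusion to $g \equiv g_0$, apply the same dichotomy to $h := g - g_0$, which still satisfies $\mathcal{D}h = 0$ because $\mathcal{D}g_0 = 0$. If $h \neq 0$, its smallest nonzero index $m'$ would satisfy $m' \geq 1$, contradicting the conclusion $m' = 0$ forced by the argument above. Hence $h = 0$ and $g = g_0$. For the addendum, when $j \leq -1$ the series $g$ contains no $\tau^0$ term, so $g_0 = 0$ and consequently $g \equiv 0$.

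The only substantive step is the periodicity argument that rules $c\varphi$ out of $\mathsf{T}$ when $c\neq 0$; everything else is bookkeeping in the recurrence, and no obstacle is anticipated. The same componentwise reasoning works whether $g$ is scalar- or $\mathbb{C}^4$-valued, which matches the typical use of this lemma on expressions such as $\Omega(\mathbf{u}_1,\mathbf{u}_2)$ appearing in \eqref{2:proofD0}.
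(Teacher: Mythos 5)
Your proof is correct and takes essentially the same route as the paper's: both expand $g$ as a formal series, collect the coefficient cascade relating $\partial_\varphi g_k$ to the adjacent coefficient, and use the $2\pi$-periodicity of trigonometric polynomials to force the constants $m\,g_m$ to vanish. The only cosmetic difference is that you organize the argument around the smallest nonzero index followed by a reduction to $g-g_0$, whereas the paper runs a direct downward induction from the leading term.
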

\begin{proof}
Let $g=\sum_{k\leq j}g_k\tau^k$ where $g_k\in\mathsf{T}^4$. Substituting $g$ into the equation $\mathcal{D}g=0$ and collecting terms of the same order in $\tau^k$ we get the following system of equations,
\begin{equation}\label{2:syslemmag}\begin{split}
&\partial_\varphi g_j=0,\\
&\partial_\varphi g_k+(k+1)g_{k+1}=0,\quad k\leq j-1.
\end{split}
\end{equation}
The first equation of \eqref{2:syslemmag} implies that $g_j\in\mathbb{C}$. Now using the second equation we can solve for $g_k$. Taking into account that $g_k\in\mathsf{T}^4$ we conclude that $(k+1)g_{k+1}=0$ for all $k\leq j-1$. Note that when $k=-1$ we have no restriction on $g_0$ and the Lemma follows.
\end{proof}
\begin{proof}[Proof of Proposition \ref{2:Thformalnormalizedfundmatrix}]
In the proof of Theorem \ref{2:Tformalseparatrix} we have obtained the formal solution $\hat{\mathbf{\Gamma}}$ using the normal form Hamiltonian $H^{\sharp}$ by defining $\hat{\mathbf{\Gamma}}:=\Phi\circ\hat{\mathbf{Z}}$, where $\Phi$ is the normal form transformation and $\hat{\mathbf{Z}}$ is the formal solution of Proposition \ref{2:lemmaformalseparatrix}. Also from the same proposition we know that $\hat{\mathbf{Z}}=\mathcal{R}_\varphi\xi$ where $\mathcal{R}_\varphi$ is defined in \eqref{2:defR} and $\xi$ is a formal series having the form,
\begin{equation}\label{2:xipolarcoordinates}
\xi(\tau)=\left(-\partial_\tau r(\tau)\cos\theta(\tau),-\partial_\tau r(\tau)\sin\theta(\tau),r(\tau)\cos\theta(\tau),r(\tau)\sin\theta(\tau)\right)^{T},
\end{equation}
where $r$ and $\theta$ are the formal series \eqref{2:leadingordersr} and \eqref{2:definitiongeneraltheta} respectively. In the normal form coordinates equation \eqref{2:eqdH} reads,
\begin{equation}\label{2:eqdHb}
\mathcal{D}\mathbf{v}=DX_{H^{\sharp}}(\hat{\mathbf{Z}})\mathbf{v},
\end{equation}
where $\mathbf{u}=D\Phi(\hat{\mathbf{Z}})\mathbf{v}$. We seek for formal solutions of \eqref{2:eqdHb} in the form $\mathbf{v}=R_\varphi\zeta$ where $\zeta\in\tau^{j}\mathbb{C}^4[[\tau^{-1}]]$ for some $j\in\mathbb{Z}$. Similar to the proof of Pproposition \ref{2:lemmaformalseparatrix} the formal series $\zeta$ must satisfy the equation,
\begin{equation*}
\partial_\tau\zeta=DX_{H^{\sharp}+I_1}(\xi)\zeta.
\end{equation*}
Bearing in mind \eqref{2:xipolarcoordinates}, we now rewrite the previous equation in polar coordinates,
\begin{equation}\label{2:systemomega}\begin{split}
\partial_\tau w_1 &= -\sum_{l \geq 1} \frac{la_{1,l}}{2^{l-1}}r^{2l-1}w_2-\left(\frac{1}{r^2}+\sum_{l\geq0}\frac{a_{2,l}}{2^{l-1}}r^{2l}\right)w_3,\quad\partial_\tau w_2 = - w_4,\\
\partial_\tau w_3 &= 0,\quad\partial_\tau w_4 = \left(3\eta r^2+\sum_{l \geq 3}\frac{l(2l-1)a_{0,l}}{2^{l-1}}r^{2l-2}\right)w_2+\sum_{l \geq 1} \frac{la_{1,l}}{2^{l-1}}r^{2l-1}w_3,
\end{split}
\end{equation}
where $\hat{\mathbf{w}}=(w_i)$, $\zeta=D\Lambda(\theta,r,0,-\partial_\tau r)\hat{\mathbf{w}}$ and $\Lambda$ denotes the change of variables \eqref{2:changeofvariablestheoremformalsep}. Note that $\Lambda$ is symplectic with multiplier $-1$, i.e. $(D\Lambda)^T J D\Lambda =- J$. We know already two formal solutions of equation \eqref{2:systemomega}:
\begin{equation}\label{2:formalsolutionw1w4}
\hat{\mathbf{w}}_3=\left(1,0,0,0\right)^{T}\quad\mathrm{and}\quad\hat{\mathbf{w}}_4=\left(\partial_\tau\theta,\partial_\tau r,0,-\partial^2_\tau r\right)^{T}.
\end{equation}
In the original coordinates, these formal solutions correspond to $\partial_\varphi\hat{\mathbf{\Gamma}}$ and $\partial_\tau\hat{\mathbf{\Gamma}}$ respectively.
We now construct other two formal solutions that are formally independent of \eqref{2:formalsolutionw1w4} and belong to the class of formal series $\tau^{j}\mathbb{C}[[\tau^{-1}]]$ for some $j\in\mathbb{Z}$. Let us consider the second and fourth equations of \eqref{2:systemomega}. They are equivalent to the single equation,
\begin{equation}\label{2:eqforomega2}
\partial^2_\tau w_2 = -\left(3\eta r^2+\sum_{l \geq 3}\frac{l(2l-1)a_{0,l}}{2^{l-1}}r^{2l-2}\right)w_2-\sum_{l \geq 1} \frac{la_{1,l}}{2^{l-1}}r^{2l-1}w_3.
\end{equation}
In order to solve the previous equation, we first study the formal solutions of the homogeneous equation. 
\begin{lemma}\label{2:sublemmaw}
The linear homogeneous equation,
\begin{equation}\label{2:eqforomega2homogeneous}
\partial^2_\tau w_2 = -\left(3\eta r^2+\sum_{l \geq 3}\frac{l(2l-1)a_{0,l}}{2^{l-1}}r^{2l-2}\right)w_2,
\end{equation}
has two linearly independent formal solutions,
\begin{equation*}
w_{2,1}\in\tau^{-2}\mathbb{C}[[\tau^{-1}]]\quad\mathrm{and}\quad w_{2,2}\in\tau^3\mathbb{C}[[\tau^{-1}]]
\end{equation*}
such that $w_{2,1}$ is an even formal series and $w_{2,2}$ an odd formal series. Moreover $w_{2,1}=\partial_\tau r$, $w_{2,2}=\frac{\tau^3}{5\kappa}+\frac{7}{40}a_{0,3}\kappa^3\tau+\cdots$ and,
\begin{equation}\label{2:determinantcondition}
w_{2,2}\partial_\tau w_{2,1}-w_{2,1}\partial_\tau w_{2,2}=1.
\end{equation}
\end{lemma}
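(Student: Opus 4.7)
The plan is to produce $w_{2,1}$ for free from the equation that $r$ itself satisfies, and then obtain $w_{2,2}$ by reduction of order, with the Wronskian normalization $W=1$ fixing the single free constant.

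For the first solution, differentiate equation \eqref{2:eqr1}
\begin{equation*}
\partial_\tau^2 r = -\eta r^3 - \sum_{j\geq 2}\frac{2(j+1)a_{0,j+1}}{2^{j+1}}\,r^{2j+1}
\end{equation*}
with respect to $\tau$; the result is precisely \eqref{2:eqforomega2homogeneous} with $w_2$ replaced by $\partial_\tau r$. Hence $w_{2,1}:=\partial_\tau r$ is a formal solution. Since $r\in\tau^{-1}\mathbb{C}[[\tau^{-1}]]$ is odd with leading term $\kappa\tau^{-1}$ by Lemma \ref{2:claimr}, differentiation shifts parity: $w_{2,1}\in\tau^{-2}\mathbb{C}[[\tau^{-1}]]$ is an even formal series with leading term $-\kappa\tau^{-2}$.

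For the second solution, set $w_{2,2}=w_{2,1}\,v$ with $v$ a formal series to be determined. Equation \eqref{2:eqforomega2homogeneous} has no first-order term, so a computation parallel to \eqref{2:proofD0} shows that the Wronskian of any two formal solutions is $\mathcal{D}$-constant, hence (in the $\tau$ variable only) constant. The normalization \eqref{2:determinantcondition} enforces
\begin{equation*}
w_{2,2}\partial_\tau w_{2,1}-w_{2,1}\partial_\tau w_{2,2}=-w_{2,1}^{2}\,\partial_\tau v=1,
\end{equation*}
i.e.\ $\partial_\tau v = -w_{2,1}^{-2}$. Now $w_{2,1}^{-2}$ is the reciprocal square of an even formal series in $\tau^{-1}$ with nonvanishing leading coefficient $-\kappa$, so it lies in $\tau^{4}\mathbb{C}[[\tau^{-1}]]$ and is again even, with leading term $\kappa^{-2}\tau^{4}$. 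This is the crux of the argument: because $w_{2,1}^{-2}$ is even, it contains no $\tau^{-1}$ coefficient, so termwise antidifferentiation produces $v$ as an odd series in $\tau^{5}\mathbb{C}[[\tau^{-1}]]$ without any logarithmic term, up to one additive constant. That constant corresponds to adding a multiple of $w_{2,1}$ to $w_{2,2}$, and we fix it to zero; then $v=-\frac{\tau^{5}}{5\kappa^{2}}+\cdots$ and $w_{2,2}=w_{2,1}v$ is an odd series in $\tau^{3}\mathbb{C}[[\tau^{-1}]]$ with leading term $\tau^{3}/(5\kappa)$.

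The next-order coefficient is obtained by bookkeeping through the first two leading terms of $r$ from \eqref{2:leadingordersr}: $w_{2,1}=-\kappa\tau^{-2}+\tfrac{3}{8}a_{0,3}\kappa^{5}\tau^{-4}+\cdots$, whence $w_{2,1}^{-2}=\kappa^{-2}\tau^{4}+\tfrac{3}{4}a_{0,3}\kappa^{2}\tau^{2}+\cdots$, giving $v=-\tfrac{1}{5\kappa^{2}}\tau^{5}-\tfrac{a_{0,3}\kappa^{2}}{4}\tau^{3}+\cdots$, and multiplying out $w_{2,1}v$ yields the claimed $\tfrac{7}{40}a_{0,3}\kappa^{3}\tau$ (the two contributing cross terms combine as $\tfrac{10-3}{40}a_{0,3}\kappa^{3}$). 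Linear independence of $w_{2,1}$ and $w_{2,2}$ is automatic from $W=1\neq 0$. The only real obstacle is verifying the absence of a logarithmic term in the integration of $-w_{2,1}^{-2}$; this is handled once and for all by the parity observation above, which rules out a $\tau^{-1}$ term, the sole obstruction to termwise antidifferentiation in $\tau^{j}\mathbb{C}[[\tau^{-1}]]$.
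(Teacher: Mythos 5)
Your argument is correct, and it takes a genuinely different route from the paper. The paper obtains $w_{2,2}$ by substituting the formal ansatz $w_{2,2}=\sum_{k\leq 1}w_{2,2,k}\tau^{2k+1}$ directly into \eqref{2:eqforomega2homogeneous}, deriving the recursion $\left(2k(2k+1)-6\right)w_{2,2,k}=\sum_{j=k-2}^{-2}w_{2,2,k-j-1}b_j$, checking that the linear coefficient vanishes only at $k=1$ (which leaves $w_{2,2,1}$ free), and then normalizing $w_{2,2,1}=\frac{1}{5\kappa}$ after observing the Wronskian equals $5\kappa w_{2,2,1}$. You instead use reduction of order: $w_{2,2}=w_{2,1}v$ with $\partial_\tau v=-w_{2,1}^{-2}$, which builds the Wronskian normalization in from the start. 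What your route buys is a cleaner explanation of why no logarithm appears: in the paper's recursion this is encoded in the non-resonance fact that $2k(2k+1)-6\neq 0$ for integers $k\neq 1$, whereas in your approach it reduces to the single parity observation that $w_{2,1}^{-2}$ is an even Laurent series and hence has no $\tau^{-1}$ coefficient. What the paper's route buys is that it is self-contained (no appeal to the reduction-of-order formula) and makes the one-parameter freedom in $w_{2,2}$ explicit via the free coefficient $w_{2,2,1}$; you recover the same freedom as the additive constant of integration. Your numerics check out: $w_{2,1}=-\kappa\tau^{-2}+\tfrac{3}{8}a_{0,3}\kappa^5\tau^{-4}+\cdots$, $w_{2,1}^{-2}=\kappa^{-2}\tau^4+\tfrac{3}{4}a_{0,3}\kappa^2\tau^2+\cdots$, $v=-\tfrac{1}{5\kappa^2}\tau^5-\tfrac{1}{4}a_{0,3}\kappa^2\tau^3+\cdots$, and the $\tau^1$ coefficient of $w_{2,1}v$ is $(\tfrac{1}{4}-\tfrac{3}{40})a_{0,3}\kappa^3=\tfrac{7}{40}a_{0,3}\kappa^3$, matching the statement. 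One small presentational quibble: invoking \eqref{2:proofD0} for the constancy of the Wronskian is a slight mismatch, since that computation uses symplecticity of $DX_H$, whereas here the relevant fact is the far more elementary one that the companion matrix of $\partial_\tau^2 w=-qw$ is trace-free; you already say ``has no first-order term,'' which is the actual reason, so the citation is superfluous.
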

\begin{proof}
That $\partial_\tau r$ is a formal solution of the homogeneous equation is obvious. Moreover its properties follow from the properties of the formal series $r$. Now let us determine the second formal solution. It follows from the fact that the formal series $r\in\tau^{-1}\mathbb{C}[[\tau^{-1}]]$ is odd that the right hand side of the homogeneous equation \eqref{2:eqforomega2homogeneous} is a formal series of the form $b=\sum_{k\leq -1}b_k\tau^{2k}$ where $b_k$ depend on a finite number of coefficients of $r$ and $a_{0,l}$ for $l\geq3$. Moreover, according to \eqref{2:leadingordersr} we have,
\begin{equation*}
r(\tau)=\kappa\tau^{-1}-\frac{1}{8}a_{0,3}\kappa^5\tau^{-3}+\cdots,
\end{equation*}
where $\kappa^2=-\frac{2}{\eta}$. Using the leading orders of $r$, we compute the first few orders of the formal series $b$ for further reference,
\begin{equation}\label{2:leadingorderb}
b_{-1}=6\quad\mathrm{and}\quad b_{-2}=-\frac{21a_{0,3}}{\eta^2}.
\end{equation}
Now we are ready to solve equation \eqref{2:eqforomega2homogeneous} in the class of formal series. Thus, substituting the formal series $w_{2,2}=\sum_{k\leq 1}w_{2,2,k}\tau^{2k+1}$ into equation \eqref{2:eqforomega2homogeneous} and collecting terms of the same order in $\tau^k$ we obtain the following infinite system of linear equations,
\begin{equation*}
\left(2k(2k+1)-6\right)w_{2,2,k}=\sum_{j=k-2}^{-2}w_{2,2,k-j-1}b_j,\quad k=1,0,-1,\ldots
\end{equation*}
For $k=1$ we get no condition on the first coefficient, thus $w_{2,2,1}\in\mathbb{C}$. For $k=0$ we obtain $w_{2,2,0}=-\frac{1}{6}w_{2,2,1}b_{-2}$. When $k\leq-1$, a simple induction argument shows that we can determine the coefficients $w_{2,2,k}$ (which depend linearly on the coefficient $w_{2,2,1}$) in a recursive way by using the previous formula since $\left(2k(2k+1)-6\right)=0$ only if $k=1$ or $k=-\frac{3}{2}$.
Finally let us derive the equality \eqref{2:determinantcondition}. Since,
\begin{equation*}
\partial_\tau\left(w_{2,2}\partial_\tau w_{2,1}-w_{2,1}\partial_\tau w_{2,2}\right)=0,
\end{equation*}
due to the fact that both $w_{2,1}$ and $w_{2,2}$ solve the homogeneous equation \eqref{2:eqforomega2homogeneous}
we have that $w_{2,2}\partial_\tau w_{2,1}-w_{2,1}\partial_\tau w_{2,2}$ is equal to some constant. Taking into account the leading orders of the formal solutions $w_{2,1}$ and $w_{2,2}$ we conclude that $w_{2,2}\partial_\tau w_{2,1}-w_{2,1}\partial_\tau w_{2,2}=5\kappa w_{2,2,1}$. As $w_{2,2,1}$ is a free coefficient we can define $w_{2,2,1}:=\frac{1}{5\kappa}$ and obtain the desired equality. 
\end{proof}

Returning to the non-homogeneous equation \eqref{2:eqforomega2}, we see that the last term of the right hand side of the equation depends on $w_3$ from which we know that $\partial_\tau w_3=0$. Thus $w_3=w_{3,0}\in\mathbb{C}$ is a constant. Now, taking into account that $r$ is an odd formal power series we conclude that,
$$
g(\tau):=\sum_{l \geq 1} \frac{la_{1,l}}{2^{l-1}}r^{2l-1}\in\tau^{-1}\mathbb{C}[[\tau^{-1}]],
$$ 
is an odd formal series whose coefficients depend on a finite number of coefficients of $r$ and $a_{1,l}$ for $l\geq1$. Using the well known method of variation of constants we can write the general formal solution of \eqref{2:eqforomega2} as follows,
\begin{equation}\label{2:generalformalsolutionw2}
w_2=c_1w_{2,1}+c_2w_{2,2}+w_{2,2}\int^{\tau}w_{2,1}gw_{3,0}-w_{2,1}\int^{\tau}w_{2,2}gw_{3,0},
\end{equation}
where $w_{3,0},c_1,c_2\in\mathbb{C}$. Note that the integration in the previous formula is well defined in the class of formal series $\mathbb{C}[[\tau^{-1}]][[\tau]]$. Indeed, it can be easily checked that $w_{2,1}g\in\tau^{-3}\mathbb{C}[[\tau^{-1}]]$ is an odd formal series and $w_{2,2}g\in\tau^{2}\mathbb{C}[[\tau^{-1}]]$ is an even formal series. Hence both integrands do not contain the term $\tau^{-1}$. Next we define two particular formal solutions of \eqref{2:eqforomega2},
\begin{equation}\label{2:eqdefw20} 
w_2^0:=w_{2,2}\quad\mathrm{and}\quad w_2^{1}:=w_{2,2}\int^{\tau}w_{2,1}g-w_{2,1}\int^{\tau}w_{2,2}g.
\end{equation}
The first formal solution corresponds to setting $c_1=w_{3,0}=0$ and $c_2=1$ in the general solution \eqref{2:generalformalsolutionw2} and the second corresponds to $c_1=c_2=0$ and $w_{3,0}=1$. Note that $w_2^0\in\tau^{3}\mathbb{C}[[\tau^{-1}]]$ is an odd formal series and $w_2^{1}\in\tau\mathbb{C}[[\tau^{-1}]]$ is also odd formal series. 

Now coming back to the first equation of \eqref{2:systemomega}, we can rewrite it as follows,
\begin{equation*}
\partial_\tau w_1=-g w_2 + f w_{3,0},
\end{equation*}
where,
\begin{equation*}
f=-\frac{1}{r^2}-\sum_{l\geq0}\frac{a_{2,l}}{2^{l-1}}r^{2l}.
\end{equation*}
It is not difficult to see that $f\in\tau^2\mathbb{C}[[\tau^{-1}]]$ is an even formal series. Moreover both $gw_2^0\in\tau^2\mathbb{C}[[\tau^{-1}]]$ and $gw_2^{-1}\in\mathbb{C}[[\tau^{-1}]]$ are even formal series. These observations allow us to conclude that the following are formal solutions of \eqref{2:systemomega},
\begin{equation}\label{2:eqdefw10}
w_1^0=-\int^\tau gw_2^0\quad\mathrm{and}\quad w_1^{1}=-\int^\tau gw_2^{1}+\int^\tau f,
\end{equation}
which are well defined in the class of formal series $\mathbb{C}[[\tau^{-1}]][[\tau]]$ and moreover $w_1^0,w_1^{1}\in\tau^3\mathbb{C}[[\tau^{-1}]]$ are both odd formal series. Thus we obtain two formal solutions of \eqref{2:systemomega} defined as follows,
\begin{equation*}
\hat{\mathbf{w}}_1:=\left(w_1^{1},w_2^{1},1,-\partial_\tau w_2^{1}\right)^{T}\quad\mathrm{and}\quad\hat{\mathbf{w}}_2:=\left(w_1^0,w_2^0,0,-\partial_\tau w_2^0\right)^{T}.
\end{equation*}
Note that $\left\{\hat{\mathbf{w}}_i\right\}_{i=1,\ldots,4}$ is a set of linearly independent formal solutions of equation \eqref{2:systemomega} and that,
\begin{equation}\label{2:symplecticbasiscondition}\begin{split}
\Omega(\hat{\mathbf{w}}_1,\hat{\mathbf{w}}_2)=0,\quad\Omega(\hat{\mathbf{w}}_2,\hat{\mathbf{w}}_4)=-1,\quad\Omega(\hat{\mathbf{w}}_1,\hat{\mathbf{w}}_4)=0,\\
\Omega(\hat{\mathbf{w}}_2,\hat{\mathbf{w}}_3)=0,\quad\Omega(\hat{\mathbf{w}}_1,\hat{\mathbf{w}}_3)=-1,\quad\Omega(\hat{\mathbf{w}}_3,\hat{\mathbf{w}}_4)=0.
\end{split}
\end{equation}
where $\Omega$ is the canonical symplectic form in the polar coordinates, i.e., $\Omega=d\theta\wedge\Theta+dr\wedge dR$.
The bottom identities of \eqref{2:symplecticbasiscondition} are straightforward to prove using the definition of $\hat{\mathbf{w}}_i$. The ones on the top are harder to prove and so we handle them bellow. First note that similar arguments as in \eqref{2:proofD0} show that $\partial_\tau\Omega(\hat{\mathbf{w}}_i,\hat{\mathbf{w}}_j)=0$ for $i,j=1,\ldots,4$. Secondly, it follows from Lemma \ref{2:sublemmaw} and from \eqref{2:leadingordersr} that,
\begin{equation}\label{2:leadingorderswr}
w_{2,2}(\tau)=\frac{\tau^3}{5\kappa}+\frac{7}{40}a_{0,3}\kappa^3\tau+\cdots\quad\mathrm{and}\quad r(\tau)=\kappa\tau^{-1}-\frac{1}{8}a_{0,3}\kappa^5\tau^{-3}+\cdots.
\end{equation}
Now we compute $\Omega(\hat{\mathbf{w}}_1,\hat{\mathbf{w}}_2)$. Using the definition of both $\hat{\mathbf{w}}_1$ and $\hat{\mathbf{w}}_2$ we get
\begin{equation*}
\Omega(\hat{\mathbf{w}}_1,\hat{\mathbf{w}}_2)=-w^1_0-w_2^0\partial_\tau w_2^{1}+\partial_\tau w_0^2 w_2^{1}.
\end{equation*}
Bearing in mind \eqref{2:eqdefw20} and \eqref{2:eqdefw10} we can simplify the previous expression and rewrite it as follows,
\begin{equation*}
\Omega(\hat{\mathbf{w}}_1,\hat{\mathbf{w}}_2)=\left(1-w_{2,2}\partial_\tau^2 r+\partial_\tau w_{2,2}\partial_\tau r\right)\int^\tau gw_{2,2}.
\end{equation*}
Now using the leading orders \eqref{2:leadingorderswr} we conclude that the expression inside the parenthesis in the previous formula belongs to $\tau^{-4}\mathbb{C}[[\tau^{-1}]]$. Moreover $\int^\tau gw_{2,2}\in\tau^3\mathbb{C}[[\tau^{-1}]]$ and consequently $\Omega(\hat{\mathbf{w}}_1,\hat{\mathbf{w}}_2)\in\tau^{-1}\mathbb{C}[[\tau^{-1}]]$. Applying Lemma \ref{2:Lemmag0} we get $\Omega(\hat{\mathbf{w}}_1,\hat{\mathbf{w}}_2)=0$ as we wanted to show. 

Now we handle $\Omega(\hat{\mathbf{w}}_2,\hat{\mathbf{w}}_4)$. Again, it follows from the definitions \eqref{2:eqdefw20} that,
\begin{equation*}
\Omega(\hat{\mathbf{w}}_2,\hat{\mathbf{w}}_4)=w_{2,1}\partial_\tau w_{2,2}-w_{2,2}\partial_\tau w_{2,1}. 
\end{equation*}
The identity now follows from \eqref{2:determinantcondition}. 

At last, let us compute $\Omega(\hat{\mathbf{w}}_1,\hat{\mathbf{w}}_4)$. Again using the definitions of the power series $\hat{\mathbf{w}}_1$ and $\hat{\mathbf{w}}_4$ we get,
\begin{equation*}
\Omega(\hat{\mathbf{w}}_1,\hat{\mathbf{w}}_4)=\partial_\tau\theta+w_2^{-1}\partial_\tau R+\partial_\tau r\partial_\tau w_2^{-1}.
\end{equation*}
This last expression belongs to $\tau^{-2}\mathbb{C}[[\tau^{-2}]]$ and applying Lemma \ref{2:Lemmag0} we obtain the desired result.

Coming back to the coordinates of equation \eqref{2:eqdHb} we define,
\begin{equation*}
\hat{\mathbf{v}}_i(\varphi,\tau):=\mathcal{R}_\varphi D\Lambda(\theta(\tau),r(\tau),0,-\partial_\tau r(\tau))\hat{\mathbf{w}}_i(\tau).
\end{equation*}
Clearly the matrix $\hat{\mathbf{V}}=\left(\hat{\mathbf{v}}_i\right)_{i=1,\ldots,4}$ consists of linearly independent formal solutions of equation \eqref{2:eqdHb} such that $\hat{\mathbf{v}}_3=\partial_\varphi\hat{\mathbf{Z}}$ and $\hat{\mathbf{v}}_4=\partial_\tau\hat{\mathbf{Z}}$. Moreover, a simple computation shows that,
\begin{equation*}
D\Lambda(\theta,r,0,-\partial_\tau r)=\begin{pmatrix}\tau^{-3}\Lambda_{1}&0&\Lambda_{2}&\Lambda_{3}\\
\tau^{-2}\Lambda_{4}&0&\tau\Lambda_{5}&\tau^{-1}\Lambda_{6}\\
\tau^{-2}\Lambda_{7}&\Lambda_{8}&0&0\\
\tau^{-1}\Lambda_{9}&\tau^{-1}\Lambda_{10}&0&0
\end{pmatrix},
\end{equation*}
where $\Lambda_i\in\mathbb{C}[[\tau^{-1}]]$ for $i=1,\ldots,10$. Thus, taking into account the definition of $\hat{\mathbf{w}}_1$ and $\hat{\mathbf{w}}_2$ we conclude that,
\begin{equation*}\begin{split}
\hat{\mathbf{v}}_1&=\left(\tau\hat{v}_{1,1},\tau\hat{v}_{2,1},\tau^2\hat{v}_{3,1},\tau^2\hat{v}_{4,1}\right)^{T},\\
\hat{\mathbf{v}}_2&=\left(\tau^{2}\hat{v}_{1,2},\tau^2\hat{v}_{2,2},\tau^{3}\hat{v}_{3,2},\tau^{3}\hat{v}_{4,2}\right)^{T},
\end{split}
\end{equation*} 
for some $\hat{v}_{i,1},\hat{v}_{i,2}\in\mathsf{T}[[\tau^{-1}]]$, $i=1,\ldots,4$. Since $\Lambda$ is symplectic with multiplier $-1$ and taking into account the identities \eqref{2:symplecticbasiscondition} we get,
\begin{equation}\label{2:symplecticbasiscondition2}
\hat{\mathbf{V}}^T J \hat{\mathbf{V}}= J.
\end{equation}
Finally, pulling back the formal solutions $\hat{\mathbf{v}}_i$ by the normal form transformation $\Phi$ we obtain the desired formal fundamental solution $\hat{\mathbf{U}}:=D\Phi(\hat{\mathbf{Z}})\hat{\mathbf{V}}$. Similar to the proof of Theorem \ref{2:Tformalseparatrix}, $\hat{\mathbf{U}}$ belongs to the same class of formal series as $\hat{\mathbf{V}}$. Moreover, \eqref{2:symplecticbasiscondition2} implies that $\hat{\mathbf{U}}^T J \hat{\mathbf{U}}= J$. In order to conclude the proof of the proposition, note that by the method of variation of constants a general formal solution of equation \eqref{2:eqdH} is of the form $\hat{\mathbf{U}}\mathbf{c}$ where $\mathbf{c}$ is any formal series in $\tau^j\mathsf{T}^4[[\tau^{-1}]]$ for some $j\in\mathbb{Z}$, such that $\mathcal{D}\mathbf{c}=0$. It follows from Lemma \ref{2:Lemmag0} that $\mathbf{c}\in\mathbb{C}^4$. Thus, if $\hat{\tilde{\mathbf{U}}}$ is another formal fundamental solution of \eqref{2:eqdH} then there exists a matrix $E\in\mathbb{C}^{4\times 4}$ such that $\hat{\tilde{\mathbf{U}}}=\hat{\mathbf{U}}E$. 
Since $\hat{\tilde{\mathbf{U}}}$ and $\hat{\mathbf{U}}$ are symplectic it also follows that $E$ must be symplectic. Moreover, as the third and fourth columns of $\hat{\tilde{\mathbf{U}}}$ have to be the derivatives of $\hat{\mathbf{\Gamma}}$ then a simple computation shows that one can reduce the number of entries of $E$ to obtain \eqref{2:matrixE}. This concludes the proof of the proposition.
\end{proof}
\begin{remark}\label{2:remarkrealcase2}
Similar to Remark \ref{2:remarkrealcase}, one can trace the proof of the previous proposition and conclude that when $H$ is real analytic then,
\begin{equation*}
\overline{\hat{\mathbf{U}}(\overline{\varphi},\overline{\tau})}=\begin{cases}\hat{\mathbf{U}}(\varphi,\tau)&\text{if}\quad\eta<0\\\hat{\mathbf{U}}(\varphi+\pi,\tau)&\text{if}\quad\eta>0\end{cases}
\end{equation*}
\end{remark}
\begin{remark}\label{2:remarkUn}
For $n\geq1$ let $\mathbf{U}_n$ be a partial sum of the formal series $\hat{\mathbf{U}}$ up to order $\tau^{-n-1}$ in the first two components (of each column) and up to order $\tau^{-n}$ in the last two components. Similar to Remark \ref{2:remarkGamman} we have that,
\begin{equation*}
\mathcal{D}\mathbf{U}_n-DX_{H}(\mathbf{\Gamma}_{n+3})\mathbf{U}_n=
\begin{pmatrix}
\tau^{-n-2}\xi_{1,1}&\tau^{-n-2}\xi_{1,2}&\tau^{-n-2}\xi_{1,3}&\tau^{-n-2}\xi_{1,4}\\
\tau^{-n-2}\xi_{2,1}&\tau^{-n-2}\xi_{2,2}&\tau^{-n-2}\xi_{2,3}&\tau^{-n-2}\xi_{2,4}\\
\tau^{-n-1}\xi_{3,1}&\tau^{-n-1}\xi_{3,2}&\tau^{-n-1}\xi_{3,3}&\tau^{-n-1}\xi_{3,4}\\
\tau^{-n-1}\xi_{4,1}&\tau^{-n-1}\xi_{4,2}&\tau^{-n-1}\xi_{4,3}&\tau^{-n-1}\xi_{4,4}
\end{pmatrix}
\end{equation*}
for some $\xi_{i,j}\in\mathsf{T}[[\tau^{-1}]]$. 
\end{remark}

\section{Linear operators}\label{2:sectionlinearop}

In this section we define certain complex Banach spaces and study some linear operators acting on them. The linear operators and motivated by the study of the solutions of the nonlinear PDE \eqref{2:DH}. These technical results are at the core of the proofs of the main theorems of this paper. 
\subsection{Solutions of $\mathcal{D} u=f$}

Fix $h>0$ and let $\mathbb{T}_h = \left\{\varphi \in \mathbb{C}/2\pi\mathbb{Z}\,\colon\, \left|\mathrm{Im}\,\varphi\right|<h \right\}$. We consider the problem of solving the linear PDE,
\begin{equation}\label{2:Dxf}
\mathcal{D} u=f,
\end{equation}
where $\mathcal{D}=\partial_\varphi+\partial_\tau$ is a first order linear differential operator and $u$ and $f$ are analytic complex-valued functions defined in $\mathbb{T}_h\times B$ where $B$ is some domain of $\mathbb{C}$. 

The simplest case is when $f=0$. As one would expect, by using the method of characteristics, a solution of the homogeneous equation $\mathcal{D}u=0$ must be a function which is constant along the characteristics 
$$\dot{\varphi}=1\quad\text{and}\quad\dot{\tau}=1\,.
$$ 
Thus, $u$ is a function depending on a single variable, say $\tau-\varphi$. The next result determines such function and its domain of analyticity.
\begin{lemma}\label{2:LemmaDx0} Let $u:\mathbb{T}_h\times B\rightarrow\mathbb{C}$ be analytic and suppose that $\mathcal{D}u=~0$. Then there exists a unique analytic function $c:\bigcup_{\tau\in B}\tau+\mathbb{T}_h\rightarrow\mathbb{C}$ such that $u(\varphi,\tau)=c(\tau-\varphi)$.
\end{lemma}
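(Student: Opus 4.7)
The plan is to apply the method of characteristics. The characteristic curves of the operator $\mathcal{D}=\partial_\varphi+\partial_\tau$ are the complex lines $t\mapsto(\varphi_0+t,\tau_0+t)$, along which $\tau-\varphi$ is constant. The equation $\mathcal{D}u=0$ says exactly that $u$ is constant along these characteristics, so the value of $u$ at $(\varphi,\tau)$ should depend only on the characteristic parameter $z=\tau-\varphi$. This makes the natural candidate
\[
c(z):=u(\varphi,\varphi+z)
\]
for any $\varphi$ such that the right-hand side makes sense.

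First I would perform the biholomorphic change of variables $\Phi(\psi,z)=(\psi,\psi+z)$ and set $\tilde u:=u\circ\Phi$, which is analytic on the open set $\Phi^{-1}(\mathbb{T}_h\times B)$. A direct computation gives $\partial_\psi\tilde u=(\partial_\varphi u+\partial_\tau u)\circ\Phi=(\mathcal{D}u)\circ\Phi=0$, so $\tilde u$ is independent of $\psi$ on each connected component of its domain of definition. On that slice this produces an analytic function of $z$ alone, which is the sought $c$; analyticity is automatic, and $2\pi$-periodicity in $\varphi$ of $u$ translates into $2\pi$-periodicity of $c$ in $z$, so $c$ descends to the cylinder.

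Next I would check that $c$ is globally well-defined on the set $\bigcup_{\tau\in B}(\tau+\mathbb{T}_h)$, which is precisely the image of $\mathbb{T}_h\times B$ under $(\varphi,\tau)\mapsto\tau-\varphi$ (using that $\mathbb{T}_h$ is symmetric about $0$ modulo $2\pi$). If two preimages $(\varphi_0,\tau_0),(\varphi_1,\tau_1)\in\mathbb{T}_h\times B$ share the same value $z=\tau_i-\varphi_i$, then $\tau_1-\tau_0=\varphi_1-\varphi_0$, so the affine path $t\mapsto(\varphi_0+t(\varphi_1-\varphi_0),\tau_0+t(\varphi_1-\varphi_0))$ is a characteristic. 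Whenever this segment lies inside $\mathbb{T}_h\times B$, $u$ takes the same value at its endpoints; for general $B$ I would obtain the identity by analytic continuation along a chain of small characteristic segments that follow a path in $B$ joining $\tau_0$ to $\tau_1$ (which exists by connectedness of $B$, tacitly assumed for $c$ to be a single-valued function on the stated domain). Uniqueness is immediate: $c$ is forced everywhere on $\bigcup_{\tau\in B}(\tau+\mathbb{T}_h)$ by the identity $c(\tau-\varphi)=u(\varphi,\tau)$, so two candidates must agree.

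The main difficulty is not the calculation but the bookkeeping about domains: one needs the characteristic coordinate change to be a well-defined biholomorphism onto an open set, the projection onto $z$ to cover the full union $\bigcup_{\tau\in B}(\tau+\mathbb{T}_h)$, and the well-definedness of $c$ across characteristics that connect different preimages of the same $z$. All of these become transparent once the substitution $z=\tau-\varphi$ is in place and one keeps track of $2\pi$-periodicity in $\varphi$, which ensures that $c$ is a genuine function on the cylinder rather than a multi-valued lift.
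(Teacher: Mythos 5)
Your argument takes essentially the same route as the paper's: both reduce to constancy of $u$ along the characteristics (curves of constant $z=\tau-\varphi$) and then patch together the resulting function of $z$ by analytic continuation; you perform the reduction via the explicit substitution $\Phi(\psi,z)=(\psi,\psi+z)$ and the vanishing of $\partial_\psi\tilde u$, whereas the paper solves the IVP with data on $\{\tau=\tau_0\}$ and invokes the local uniqueness theorem for analytic PDEs on the sets $\Omega_{\tau_0}=\{(\varphi,\tau):\varphi-\tau+\tau_0\in\mathbb{T}_h\}$ — two implementations of the same characteristic method. One small imprecision in your well-definedness step: a chain of characteristic segments at fixed $z$ "following a path $\gamma$ in $B$ from $\tau_0$ to $\tau_1$" only stays inside $\mathbb{T}_h\times B$ if $\gamma(t)-z\in\mathbb{T}_h$ for all $t$, i.e.\ the path must lie in $B\cap\{\left|\mathrm{Im}\,\tau-\mathrm{Im}\,z\right|<h\}$, which connectedness of $B$ alone does not guarantee; the analytic continuation you appeal to must therefore be allowed to vary $z$ as well, which is exactly what the paper's final appeal to uniqueness of analytic continuation over $\bigcup_{\tau_0}\Omega_{\tau_0}$ accomplishes.
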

\begin{proof}
Given $\tau_0\in B$ let
\begin{equation*}
\Omega_{\tau_0}=\left\{(\varphi,\tau)\in \mathbb{T}_h\times B\:\colon\:\varphi-\tau+\tau_0\in \mathbb{T}_h\right\}.
\end{equation*}
Note that $\Omega_{\tau_0}$ is an open and connected set of $\mathbb{C}^2$. The initial value problem,
\begin{equation}\label{2:systemxiDx0}
\begin{cases}
\mathcal{D}\xi=0\\
\xi(\varphi,\tau_0)=u(\varphi,\tau_0)
\end{cases}\,,
\end{equation}
has a solution $\xi(\varphi,\tau)=u(\varphi-\tau+\tau_0,\tau_0)$. Hence $\xi$ is an analytic function of a single variable $\tau-\varphi$ and is defined in the translated horizontal strip $\tau_0+\mathbb{T}_h$. By the main local existence and uniqueness theorem for analytic partial differential equations (see \cite{Fo:95} for instance) we conclude that $u=\xi$ on $\Omega_{\tau_0}$. Thus $u(\varphi,\tau)=u(\varphi-\tau+\tau_0,\tau_0)$. 
Taking into account that $\mathbb{T}_h\times B=\bigcup_{\tau_0\in B}\Omega_{\tau_0}$ and the uniqueness of analytic continuation we get the desired result. 
\end{proof}
When $f$ is non-zero and defined in $\mathbb{T}_h\times D_r^{\pm}$, where the sets $D_r^{\pm}$ are depicted in Figure \ref{1:Dr}, then equation \eqref{2:Dxf} has two solutions,
\begin{equation*}
u^{-}(\varphi,\tau)=\int_{-\infty}^0f(\varphi+s,\tau+s)ds\quad\mathrm{and}\quad u^{+}(\varphi,\tau)=-\int_{0}^{+\infty}f(\varphi+s,\tau+s)ds,
\end{equation*}
provided the integrand in both functions is well defined in the domain of $f$ and the corresponding integral converges.
\begin{proposition}\label{2:propositionDxf}
Let $r>1$ and $f:\mathbb{T}_h\times D_r^{-}\rightarrow\mathbb{C}$ be analytic and continuous in the closure of its domain. Moreover, suppose that $\left|f(\varphi,\tau)\right|\leq \frac{K_f}{\left|\tau\right|^p}$ for some $K_f>0$ and $p\geq2$. Then,
\begin{equation*}
u^{-}(\varphi,\tau)=\int_{-\infty}^{0}f(\varphi+s,\tau+s)ds,
\end{equation*}
defines an analytic function in $\mathbb{T}_h\times D_r^{-}$, continuous in the closure of its domain. Moreover,
\begin{equation}\label{2:propositionDxfestimates}
\left|u^{-}(\varphi,\tau)\right|\leq \frac{K_{p-1}K_f}{\left|\tau\right|^{p-1}},
\end{equation}
for some $K_p>0$ independent of $r$. 
\end{proposition}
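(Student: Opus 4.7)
The plan is to reduce the bound on $u^{-}$ to a scalar integral estimate via the decay hypothesis, prove that scalar estimate by a case analysis on the position of $\tau=a+ib$ in $\overline{D_r^{-}}$, and then deduce analyticity together with boundary continuity from standard holomorphy-under-the-integral arguments, using the resulting estimate as a uniform majorant.

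First I would verify that $\overline{D_r^{-}}$ is stable under horizontal translations $\tau\mapsto\tau+s$ for $s\leq 0$: if $|\arg(\tau+r)|\geq\theta$ and $s\leq 0$, then $(\tau+s)+r=(\tau+r)+s$ has its real part strictly decreased and its imaginary part unchanged, so it cannot enter the forward wedge $\{|\arg(\cdot)|<\theta\}$. Hence $s\mapsto f(\varphi+s,\tau+s)$ is well-defined and continuous on $(-\infty,0]$, and the pointwise bound $|f|\leq K_f|\cdot|^{-p}$ reduces everything to proving
\begin{equation*}
I(\tau):=\int_{-\infty}^{0}\frac{ds}{|\tau+s|^{p}}\;\leq\;\frac{K_{p-1}}{|\tau|^{p-1}},\qquad \tau\in\overline{D_r^{-}},
\end{equation*}
with $K_{p-1}$ depending only on $p$ and $\theta$.

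\textbf{The scalar estimate.} Writing $\tau=a+ib$ and changing variables $t=a+s$, one has $I(\tau)=\int_{-\infty}^{a}(t^{2}+b^{2})^{-p/2}\,dt$. I would split into three cases. (i) If $|b|\geq|a|$, extending the integral to $\mathbb{R}$ and scaling $t=|b|u$ yields $I(\tau)\leq C_{p}|b|^{1-p}$ with $C_{p}=\int_{\mathbb{R}}(u^{2}+1)^{-p/2}\,du<\infty$ since $p\geq 2$; since $|\tau|^{2}\leq 2b^{2}$ the claim follows. (ii) If $a\leq 0$ and $|a|\geq|b|$, then for $t\leq a$ the bound $(t^{2}+b^{2})^{p/2}\geq|t|^{p}$ gives $I(\tau)\leq 1/\bigl((p-1)|a|^{p-1}\bigr)$, and $|\tau|^{2}\leq 2a^{2}$ concludes. (iii) If $a>0$ and $a\geq|b|$, the sector condition $|\arg(\tau+r)|\geq\theta$ forces $|b|\geq(a+r)\tan\theta\geq a\tan\theta$, so $(t^{2}+b^{2})\geq t^{2}+a^{2}\tan^{2}\theta$; applying case (i) with $|b|$ replaced by $a\tan\theta$ reduces to the first estimate with an extra $\theta$-dependent factor $\tan^{1-p}\theta$.

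\textbf{Analyticity, continuity, and main obstacle.} The uniform estimate above dominates the truncated integrals $\int_{-N}^{0}f(\varphi+s,\tau+s)\,ds$ on every compact subset of $\mathbb{T}_h\times D_r^{-}$, so the truncations converge uniformly on compacta to $u^{-}$; since each truncation is holomorphic in $(\varphi,\tau)$, Weierstrass' theorem gives analyticity of $u^{-}$ on $\mathbb{T}_h\times D_r^{-}$. Continuity on the closure follows from dominated convergence with the same majorant. The main obstacle is case (iii), where the horizontal ray from $\tau$ toward $-\infty$ enters the right half-plane and the integrand is no longer monotone along it; only the sector geometry rules out unbounded growth, and the crucial feature is that the derived inequality $|b|\geq a\tan\theta$ depends only on $\theta$ and not on $r$, which is what delivers an $r$-independent constant $K_{p-1}$.
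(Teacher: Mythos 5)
Your proof is correct. For the analyticity and continuity you argue exactly as the paper does: show uniform convergence of the truncated integrals on compacta and invoke Weierstrass' theorem, with the uniform smallness of the tails coming from translation stability of $\overline{D_r^{-}}$ under $\tau\mapsto\tau+s$, $s\leq 0$, combined with the scalar decay estimate applied at $\tau-N$. Where you diverge from the paper is in the proof of the scalar estimate (Lemma~\ref{2:desint}). The paper first normalizes by $|\tau|$ via $t=s/|\tau|$, reducing to $\int_{-\infty}^{0}|1+e^{-i\arg\tau}t|^{-p}\,dt$, and then splits at $t=-1$, using $|1+e^{-i\arg\tau}t|\geq\sin\theta$ near the endpoint and $|1+e^{-i\arg\tau}t|\geq|t+\cos\arg\tau|$ on the tail. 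You instead work in Cartesian coordinates $\tau=a+ib$ and split into three cases by the signs and relative sizes of $a$ and $b$; the sector condition is used only in case (iii), to produce the $r$-free inequality $|b|\geq a\tan\theta$ that rescues the estimate when the ray $(-\infty,a]$ crosses the imaginary axis. Both arguments are elementary and yield a constant depending only on $p$ and $\theta$. The paper's normalization is slightly more compact and treats the sector condition uniformly (it only needs $|\arg\tau|>\theta$), while your case split is more explicit about where the sector hypothesis is actually doing work, which clarifies why the bound is $r$-independent. One small point of exposition: where you say the estimate ``dominates the truncated integrals,'' what you actually need and implicitly use is that it dominates the tails $\int_{-\infty}^{-N}$ and forces them to vanish uniformly on compacta; stating this explicitly would close the argument cleanly.
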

In order to prove this proposition we need the following estimate,
\begin{lemma}\label{2:desint}
Let $p\geq1$, $\tau \in D^{+}_r$. Then there exists a constant $K_{p}>0$ such that,
\begin{equation}\label{2:E1}
\int_{-\infty}^{0} \frac{1}{\left|\tau+s\right|^{p+1}} ds \leq \frac{K_{p}}{\left|\tau\right|^p}.
\end{equation}
\end{lemma}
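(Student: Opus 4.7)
The plan is to scale out the factor $|\tau|^{-p}$ and then bound the remaining $\arg\tau$-dependent integral uniformly using the sector geometry. Writing $\tau=a+ib$ with $a,b\in\mathbb{R}$, so that $|\tau+s|^{2}=(s+a)^{2}+b^{2}$, the translation $u=s+a$ followed by the rescaling $u=|\tau|v$ yields
\begin{equation*}
\int_{-\infty}^{0}\frac{ds}{|\tau+s|^{p+1}} \;=\; \frac{1}{|\tau|^{p}}\int_{-\infty}^{\cos\phi}\frac{dv}{(v^{2}+\sin^{2}\phi)^{(p+1)/2}},
\end{equation*}
where $\phi=\arg\tau$. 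Hence the claim reduces to showing that the function $I(\phi):=\int_{-\infty}^{\cos\phi}(v^{2}+\sin^{2}\phi)^{-(p+1)/2}\,dv$ is bounded by a constant $K_p$ uniformly in $\phi$ over the set of arguments realised by points of the sector.

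The sector condition translates into the inequality $|\arg(\pm\tau+r)|>\theta$, and together with the lower bound $|\tau|\geq r\sin\theta$, which is a direct consequence of the sector definition, it keeps $\phi$ at positive distance from the unique critical value at which the upper limit $\cos\phi$ and the singularity $v=0$ of the integrand would coincide, namely the configuration $\sin\phi=0$ and $|\cos\phi|=1$. A short case split then finishes the bound. When $|\sin\phi|$ is bounded below by a constant depending only on $\theta$, the integrand is globally controlled by $(v^{2}+c_\theta^{2})^{-(p+1)/2}$ and decays like $|v|^{-(p+1)}$ at $-\infty$, so $I(\phi)$ is finite and uniformly bounded. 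When $|\sin\phi|$ is small, $|\cos\phi|$ is bounded below, so the pointwise estimate $(v^{2}+\sin^{2}\phi)^{(p+1)/2}\geq |v|^{p+1}$ together with the fact that the upper limit $\cos\phi$ stays at positive distance from $0$ reduces $I(\phi)$ to an integral $\int_{-\infty}^{\cos\phi}|v|^{-(p+1)}\,dv$ that is again uniformly finite.

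Combining the two regimes, continuity of $I(\phi)$ on the compact range of admissible arguments yields a uniform constant $K_{p}$ depending only on $p$ and $\theta$, which establishes \eqref{2:E1}. The main technical difficulty is the transition zone near the boundary of the sector, where the integrand changes character: the naive bound deteriorates as $\sin\phi\to 0$ since the integrand then develops a singularity at $v=0$. The case split above is designed precisely to bypass this difficulty, using the sector hypothesis to guarantee that the upper limit $\cos\phi$ stays on one definite side of the singularity; both regimes produce the same $|\tau|^{-p}$ scaling independently of how close $\phi$ sits to the boundary of the admissible angular range.
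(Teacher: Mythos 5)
The reduction you perform — scaling out $|\tau|^{-p}$ to obtain $I(\phi)=\int_{-\infty}^{\cos\phi}(v^{2}+\sin^{2}\phi)^{-(p+1)/2}\,dv$ and then bounding $I(\phi)$ uniformly over the admissible arguments $\phi=\arg\tau$ — is the same change of variable the paper uses ($t=s/|\tau|$, shifted by $v=t+\cos\phi$), so your starting point agrees with the paper's. The gap is in the second regime of your case split. You assert that $\int_{-\infty}^{\cos\phi}|v|^{-(p+1)}\,dv$ is "uniformly finite" because $\cos\phi$ "stays at positive distance from $0$," but that is not enough: this integral diverges whenever $\cos\phi>0$, since the range then contains the non-integrable singularity at $v=0$. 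What you actually need is that small $|\sin\phi|$ forces $\cos\phi$ to be \emph{negative} and bounded away from zero. That fact does follow from the sector geometry, but it must be argued: for $\tau\in D_r^{-}$ with $\mathrm{Re}\,\tau>0$, adding $r>0$ to the real part strictly decreases $|\arg|$, so $|\arg\tau|>|\arg(\tau+r)|>\theta$ and hence $|\sin\arg\tau|>\sin\theta$; consequently $|\sin\phi|\leq\sin\theta$ is only possible when $\mathrm{Re}\,\tau\leq0$, which then gives $\cos\phi\leq-\cos\theta<0$. This is the step where the sector hypothesis does its essential work, and your write-up only gestures at it.

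Related to this, your identification of the "critical configuration" is muddled. You say the upper limit $\cos\phi$ and the singularity $v=0$ "would coincide" at $\sin\phi=0$, $|\cos\phi|=1$; but they coincide when $\cos\phi=0$, i.e.\ $|\sin\phi|=1$, which is harmless, and $\phi=\pi$ (where $\cos\phi=-1$) is attained by real $\tau<-r$ and is also harmless. The only dangerous limit is $\phi\to0$, i.e.\ $\cos\phi\to+1$, precisely because then the singular point lies strictly inside the integration range. Once $\cos\phi\leq-\cos\theta$ is pinned down in the small-$|\sin\phi|$ regime, your two-case bound closes correctly and is a legitimate alternative organization to the paper's, which avoids the case split by cutting the $t$-integral at $t=-1$ and combining the uniform lower bound $|1+e^{-i\arg\tau}t|\geq\sin\theta$ on $[-1,0]$ with the pointwise inequality $|1+e^{-i\arg\tau}t|\geq|t+\cos\arg\tau|$ on the tail. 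Note finally that the lemma statement reads $D_r^{+}$ but the paper's own proof (and the application in Proposition~\ref{2:propositionDxf}) concern $D_r^{-}$, which is the domain your argument must address; the ambivalent "$\pm\tau+r$" in your write-up should be resolved in favor of $D_r^{-}$, for which the left endpoint $s\to-\infty$ keeps $\tau+s$ away from the origin.
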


\begin{figure}[t]
  \begin{center}
    \includegraphics[width=2.1in]{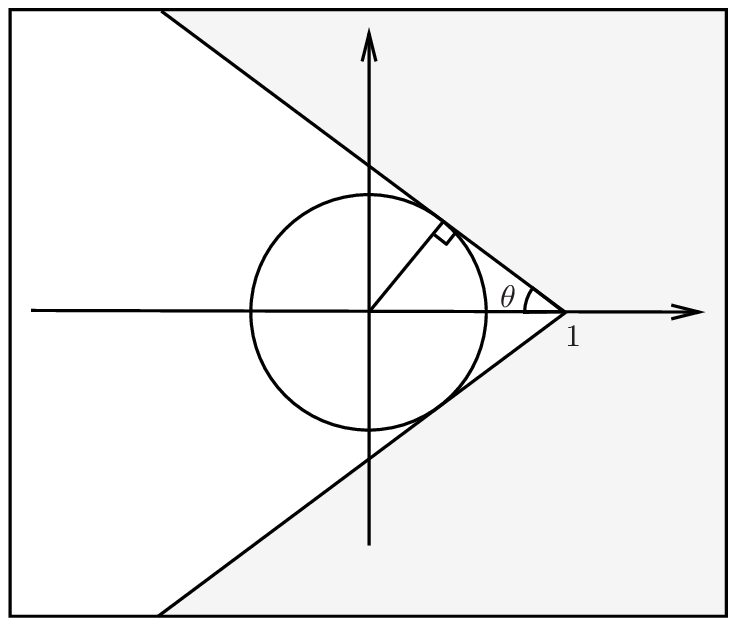}
  \end{center}
  \caption{The set $\left\{1+e^{-i \arg(\tau)}t\:\colon\: t\leq0\quad\text{and}\quad\tau\in D_r^-\right\}$.} 
  \label{2:figlemma53}
\end{figure}

\begin{proof}
The proof of this lemma follows from simple estimates. First, using a suitable change of variables we can write,
\begin{equation*}
\int_{-\infty}^{0}\frac{ds}{\left|\tau+s\right|^{p+1}}\underbrace{=}_{t=\frac{s}{\left|\tau\right|}}\frac{1}{\left|\tau\right|^p}\int_{-\infty}^{0}\frac{dt}{\left|1+e^{-i \arg(\tau)}t\right|^{p+1}}.
\end{equation*}
Now we show that the integral in the right-hand-side of the previous equation is bounded by a constant which only depends on $p$ and $\theta$ (see the definition of $D_r^+$ in \eqref{2:defDrminus}). To that end we split the integral,
\begin{equation*}
\int_{-\infty}^{0}\frac{dt}{\left|1+e^{-i \arg(\tau)}t\right|^{p+1}}=\int_{-1}^{0}\frac{dt}{\left|1+e^{-i \arg(\tau)}t\right|^{p+1}}+\int_{-\infty}^{-1}\frac{dt}{\left|1+e^{-i \arg(\tau)}t\right|^{p+1}}\,,
\end{equation*}
and estimate each term separately. Clearly $\left|1+e^{-i \arg(\tau)}t\right|\geq\sin\theta$ for all $t\leq0$ and $\tau \in D^{-}_r$ (see Figure \ref{2:figlemma53}). Thus
\begin{equation*}
\begin{split}
\int_{-1}^{0}\frac{dt}{\left|1+e^{-i \arg(\tau)}t\right|^{p+1}}&\leq\sup_{t\in \left[-1,0 \right]}\frac{1}{\left|1+e^{-i \arg(\tau)}t\right|^{p+1}}\\
&\leq \frac{1}{(\sin\theta)^{p+1}}\,.
\end{split}
\end{equation*}
On the other hand,
\begin{equation*}
\begin{split}
\left|1+e^{-i \arg(\tau)}t\right|^2&=1+2t\cos(\arg(\tau))+t^2\\
&\geq\cos^2(\arg(\tau))+2t\cos(\arg(\tau))+t^2\\
&=(\cos\arg(\tau)+t)^2\,.
\end{split}
\end{equation*}
Thus
\begin{equation*}
\left|1+e^{-i \arg(\tau)}t\right|\geq \left|t+\cos(\arg(\tau))\right|, \ \forall t\in\mathbb{R} \ \ \forall \tau \in D^{-}_r\,,
\end{equation*}
which implies that,
\begin{equation*}
\int_{-\infty}^{-1}\frac{dt}{\left|1+e^{-i \arg(\tau)}t\right|^{p+1}}\leq\frac{1}{p\left(1-\cos(\arg(\tau))\right)^p}\leq\frac{1}{p\left(1-\cos\theta\right)^p}\,,
\end{equation*} 
and the result follows.
\end{proof}

\begin{proof}[Proof of Proposition \ref{2:propositionDxf}]
Let $f:\mathbb{T}_h\times D_r^{-}\rightarrow\mathbb{C}$ be an analytic function as defined in the statement of the proposition. Moreover we know that $\left|f(\varphi,\tau)\right|\leq \frac{K_f}{\left|\tau\right|^p}$ for some $K_f>0$ and $p\geq2$. For $N\geq0$ we have $(\varphi-N,\tau-N)\in \mathbb{T}_h\times D^{-}_r$. Thus,
\begin{equation}\label{2:intunifconv}\begin{split}
\int_{-\infty}^{-N}\left|f(\varphi+s,\tau+s)\right|ds&\leq\int_{-\infty}^{0}\left|f(\varphi-N+s,\tau-N+s)\right|ds\\
&\leq\int_{-\infty}^0\frac{K_f}{\left|\tau-N+s\right|^p}ds\\
&\leq \frac{K_{p-1}K_f}{\left|\tau-N\right|^{p-1}},
\end{split}
\end{equation}
by Lemma \ref{2:desint}. Hence, the integral $\int_{-N}^{0}f(\varphi+s,\tau+s)ds$ converges uniformly in $\mathbb{T}_h\times D_r^{-}$ as $N\rightarrow + \infty$. We can apply a classical result of analysis (see for instance \cite{JD:69} on pag. 236) to deduce that, 
\begin{equation*}
u^{-}(\varphi,\tau)=\int_{-\infty}^{0}f(\varphi+s,\tau+s)ds,
\end{equation*}
is an analytic function in $\mathbb{T}_h\times D_r^{-}$. The continuity in the closure of its domain also follows from the continuity of $f$ and the uniform convergence of the integral \eqref{2:intunifconv}. The upper bound for $u^{-}$ follows from \eqref{2:intunifconv} with $N=~0$. 
\end{proof}
\begin{remark}
A similar proposition holds for the function,
\begin{equation*}
u^{+}(\varphi,\tau)=-\int_{0}^{+\infty}f(\varphi+s,\tau+s)ds,
\end{equation*}
which is defined in $\mathbb{T}_h\times D_r^{+}$.
\end{remark}

Now we consider the problem of solving equation \eqref{2:Dxf} but for functions defined in $\mathbb{T}_h\times D_r^{1}$ where,
\begin{equation*}
D^{1}_r =D^{+}_r \cap D^{-}_r \cap \left\{\tau \in \mathbb{C}\:\colon\: \mathrm{Im}\,\tau <-r\right\}.
\end{equation*}
Regarding this new domain $D_r^{1}$ we can not repeat the same arguments of Proposition \ref{2:propositionDxf} since $D_r^{1}$ does not contain an infinite horizontal segment. In order to overcome this difficulty, we construct an analytic solution of \eqref{2:Dxf} using a technique similar to the partition of unity, originally developed by V. F. Lazutkin in \cite{La:92}. This technique relies on a version of the Cauchy integral formula for analytic functions which we now describe in detail. 

Let $\mathcal{L}(\partial D_r^1)$ denote the set of bounded complex-valued Lipschitz functions $\chi:\partial D_r^1\to \mathbb{C}$ with the norm,
$$
\left\|\chi\right\|=\sup_x\left|\chi(x)\right|+\sup_{x\neq y}\left|\frac{\chi(x)-\chi(y)}{x-y}\right|\,.
$$
\begin{lemma}[Cauchy integral]\label{2:CauchyIntegral}
Let $\chi\in\mathcal{L}(\partial D_r^1)$ and $f\colon\mathbb{T}_h\times D_r^1\to \mathbb{C}$ be a bounded analytic function having a continuous extension to the closure of its domain. Moreover, suppose that
$$
J_f=\frac{1}{2\pi}\int_{\partial D_r^1}\left|f(\varphi,\tau)\right|\left|d\tau\right|<\infty\,,\quad\forall\varphi\in\mathbb{T}_h\,.
$$
Then
$$
h(\varphi,\tau)=\frac{1}{2\pi i}\int_{\partial D_r^1}\frac{\chi(\xi)f(\varphi,\tau)}{\xi-\tau}d\xi\,
$$
defines two analytic functions $h_{int}$ and $h_{ext}$ defined in $\mathbb{T}_h\times D_r^1$ and $\mathbb{T}_h\times\mathbb{C}\setminus \overline{D_r^1}$ respectively. Moreover, both functions extend continuously to the closure of its domains and 
$$\left|h_{int,ext}(\varphi,\tau)\right|\leq \left\|\chi\right\|(J_f+\sup\left|f\right|)\,.$$
\end{lemma}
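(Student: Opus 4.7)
The plan is to establish three things in parallel: (i) analyticity of $h_{int}$ on $\mathbb{T}_h\times D_r^1$ and of $h_{ext}$ on $\mathbb{T}_h\times(\mathbb{C}\setminus\overline{D_r^1})$, (ii) continuity up to the closure, and (iii) the uniform bound. The analyticity in (i) is routine: for $\tau$ at positive distance from $\partial D_r^1$ the integrand is analytic in $(\varphi,\tau)$ and dominated by $\|\chi\|_\infty |f(\varphi,\xi)|/\mathrm{dist}(\tau,\partial D_r^1)$, which is integrable by the hypothesis $J_f<\infty$; so differentiation under the integral sign applies and gives holomorphy in both variables.

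The heart of the proof is (iii), and I would use the classical ``add and subtract'' trick for Cauchy-type integrals with a Lipschitz density. Given $\tau$ not on $\partial D_r^1$, choose $\xi_0=\xi_0(\tau)\in\partial D_r^1$ with $|\tau-\xi_0|=\mathrm{dist}(\tau,\partial D_r^1)$ and split
\begin{equation*}
h(\varphi,\tau)=\frac{\chi(\xi_0)}{2\pi i}\int_{\partial D_r^1}\frac{f(\varphi,\xi)}{\xi-\tau}d\xi+\frac{1}{2\pi i}\int_{\partial D_r^1}\frac{(\chi(\xi)-\chi(\xi_0))f(\varphi,\xi)}{\xi-\tau}d\xi.
\end{equation*}
The first integral, by the Cauchy integral formula, equals $f(\varphi,\tau)$ when $\tau\in D_r^1$ and $0$ when $\tau\notin\overline{D_r^1}$; in either case it is bounded by $\sup|f|$, so the first term is at most $\|\chi\|\sup|f|$. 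For the second integral, the choice of $\xi_0$ as nearest boundary point gives $|\xi-\xi_0|\leq|\xi-\tau|+|\tau-\xi_0|\leq 2|\xi-\tau|$, so the Lipschitz estimate $|\chi(\xi)-\chi(\xi_0)|\leq\|\chi\||\xi-\xi_0|$ yields an integrand bounded by $2\|\chi\||f(\varphi,\xi)|$, and the integral is controlled by $\|\chi\|J_f$ up to an absolute constant. Summing gives the desired bound $\|\chi\|(J_f+\sup|f|)$ (up to a harmless numerical factor).

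Continuity up to the closure in (ii) follows the same decomposition: as $\tau\to\tau_*\in\partial D_r^1$, the second integral has a uniformly bounded integrand and is handled by dominated convergence, while the first integral equals $\chi(\xi_0(\tau))f(\varphi,\tau)$ in the interior case (manifestly continuous in $\tau$ by continuity of $f$ up to the boundary and of the nearest-point projection $\xi_0$) and vanishes in the exterior case. Analyticity in $\varphi$ at fixed $\tau$ is immediate from uniform convergence of the $\xi$-integral.

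The main technical obstacle is the application of the Cauchy integral formula for the unbounded domain $D_r^1$: one must verify that $\frac{1}{2\pi i}\int_{\partial D_r^1}\frac{f(\varphi,\xi)}{\xi-\tau}d\xi$ equals $f(\varphi,\tau)$ (resp.\ $0$) despite the contour running to infinity. This is where the hypothesis $J_f<\infty$ enters in an essential way: it forces $f$ to decay along the boundary, and together with the boundedness of $f$ in $\overline{D_r^1}$ one can intersect $D_r^1$ with a large disk $|\tau|<R$, apply the usual Cauchy formula there, and let $R\to\infty$, controlling the arc contributions by the integrability of $|f|$ along $\partial D_r^1$ and a Phragm\'en--Lindel\"of-type observation to rule out growth of $f$ in the interior. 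Once this is in hand, the rest of the proof is essentially the Lipschitz/Cauchy-trick calculation described above.
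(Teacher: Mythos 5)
The paper offers no proof of this lemma: it refers to Lemma 9.2 of \cite{VG:99} (``a parameterized version\ldots completely analogous'') and omits the details, so there is no internal argument to compare against. Your reconstruction via the nearest-boundary-point split $\chi(\xi)=\chi(\xi_0)+(\chi(\xi)-\chi(\xi_0))$, together with the estimate $|\xi-\xi_0|\leq 2|\xi-\tau|$, is the standard treatment of a Cauchy transform with Lipschitz density; the bound on the ``Lipschitz part'' is correct, and the factor of $2$ you pick up on $J_f$ is immaterial, as you note.

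The place where the argument is genuinely incomplete is exactly the one you flag at the end, but the fix you sketch is not quite right. You invoke Phragm\'en--Lindel\"of ``to rule out growth of $f$ in the interior,'' yet $f$ is bounded on $D_r^1$ by hypothesis, so there is no growth to rule out. What you actually need is \emph{decay}: for the arc contributions
\begin{equation*}
\frac{1}{2\pi i}\int_{D_r^1\cap\{|\xi|=R\}}\frac{f(\varphi,\xi)}{\xi-\tau}\,d\xi
\end{equation*}
to vanish as $R\to\infty$, since the arc has length of order $R$ while $|\xi-\tau|$ is also of order $R$, one needs $\sup_{|\xi|=R,\,\xi\in D_r^1}|f(\varphi,\xi)|\to 0$. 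That would follow from Lindel\"of's theorem once one knows $f\to 0$ along the two boundary rays, but $J_f<\infty$ is an integrability hypothesis and does not by itself yield pointwise decay of $f$ along the rays; so as written there is a gap. The gap is not fatal for the \emph{bound} --- the arc contribution is $O(\sup|f|)$ in any case (length $\sim \theta R$ divided by distance $\sim R$), so $|h_{int,ext}|\leq C\left\|\chi\right\|(J_f+\sup|f|)$ holds with a possibly worse constant even without the Cauchy formula --- but your argument for \emph{continuity up to the boundary} really does rely on identifying the first piece of the decomposition with $\chi(\xi_0(\tau))f(\varphi,\tau)$ (resp.\ $0$), and hence on the Cauchy formula for the unbounded domain. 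To close this you would need either $H^\infty$-type boundary-value theory to extract the Cauchy formula from boundedness of $f$ plus $J_f<\infty$, or (simpler, and harmless for the only application, Proposition~\ref{2:propositionDxf1}, where $\tilde{f}=O(\tau^{-2})$) a pointwise decay hypothesis on $f$ at infinity in $\overline{D_r^1}$.
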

\begin{proof}
This lemma is a parameterized version of Lemma 9.2 in \cite{VG:99}. Its proof is completely analogous and we shall omit the details. 
\end{proof}
\begin{remark}
If $\operatorname{supp}(\chi)\subsetneq\partial D_r^1$ then $h_{int}=h_{ext}$ on $\mathbb{C}\setminus\operatorname{supp}(\chi)$
\end{remark}

\begin{proposition}\label{2:propositionDxf1}
Let $\epsilon\geq0$, $p\geq4$ and $r>\max\left\{2,\frac{2\tan\theta}{1-\tan\theta}\right\}$. Suppose that $f\colon\mathbb{T}_h\times D_r^1\to\mathbb{C}$ is analytic, continuous on the closure of its domain and there exists $K_f>0$ such that
$$
\left|f(\varphi,\tau)\right|\leq \frac{K_f}{\left|\tau^{p}e^{i\epsilon(\tau-\varphi)}\right|}\,,\quad\forall (\varphi,\tau)\in\mathbb{T}_h\times D_r^1\,.
$$
Then equation $\mathcal{D}u=f$ has an analytic solution $u\colon\mathbb{T}_h\times D_r^1\to\mathbb{C}$, continuous on the closure of its domain, such that
\begin{equation*}
\left|u(\varphi,\tau)\right|\leq \frac{4 K_f K_{p-3}}{r}\frac{1}{\left|\tau^{p-3}e^{i\epsilon(\tau-\varphi)}\right|}
\end{equation*}
\end{proposition}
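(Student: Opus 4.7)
The plan is a partition-of-unity splitting of $f$ into two pieces, each extending analytically to one of the larger sectors $D_r^\pm$, followed by Proposition~\ref{2:propositionDxf} on each piece, and reconstitution on $D_r^1\subset D_r^-\cap D_r^+$. First, since $\mathcal{D}(\tau-\varphi)=0$ we have $e^{i\epsilon(\tau-\varphi)}\in\ker\mathcal{D}$; the substitution $u=e^{-i\epsilon(\tau-\varphi)}\tilde u$, $\tilde f=e^{i\epsilon(\tau-\varphi)}f$ reduces $\mathcal{D}u=f$ to $\mathcal{D}\tilde u=\tilde f$ with $|\tilde f|\le K_f/|\tau|^p$ on $D_r^1$, and it suffices to show $|\tilde u|\le 4K_fK_{p-3}/(r|\tau|^{p-3})$. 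The boundary $\partial D_r^1$ decomposes as $\Gamma_-\cup\Gamma_+$ with $\Gamma_\pm\subset\partial D_r^\pm$ (possibly joined by a short horizontal segment on $\{\mathrm{Im}\,\tau=-r\}$). I choose a Lipschitz partition of unity $1=\chi_-+\chi_+$ on $\partial D_r^1$ with $\operatorname{supp}\chi_\pm$ in a neighbourhood of $\Gamma_\pm$ and $\|\chi_\pm\|\le 2$. Using Lemma~\ref{2:CauchyIntegral} (whose hypothesis $J_f<\infty$ holds because $p\ge 4$), I set
\begin{equation*}
f_\pm(\varphi,\tau)=\frac{1}{2\pi i}\int_{\partial D_r^1}\frac{\chi_\pm(\xi)\,f(\varphi,\xi)}{\xi-\tau}\,d\xi;
\end{equation*}
the remark after that lemma shows $f_-$ extends analytically across $\Gamma_+$ to all of $D_r^-$ (and $f_+$ to $D_r^+$), while Cauchy's formula on $\partial D_r^1$ gives $f_-+f_+=f$ on $D_r^1$.

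Careful estimation of these Cauchy-type integrals, using $|f|\le K_f/|\xi|^p$ on $\partial D_r^1$ and the geometric fact $|\xi|\gtrsim r$ there, is intended to yield
\begin{equation*}
|f_\pm(\varphi,\tau)|\le \frac{CK_f}{r\,|\tau|^{p-2}},\qquad (\varphi,\tau)\in\mathbb{T}_h\times D_r^\pm,
\end{equation*}
for an absolute constant $C$, where the factor $1/r$ comes from $\int_{\partial D_r^1}|d\xi|/|\xi|^{p-1}\sim r^{2-p}$. Applying Proposition~\ref{2:propositionDxf} (with exponent $p-2\ge 2$) on $D_r^-$ and its mirror version on $D_r^+$ then delivers analytic $v_\pm\colon\mathbb{T}_h\times D_r^\pm\to\mathbb{C}$ with $\mathcal{D}v_\pm=f_\pm$ and $|v_\pm|\le K_{p-3}CK_f/(r|\tau|^{p-3})$. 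Setting $\tilde u=v_-+v_+$ on $D_r^1\subset D_r^-\cap D_r^+$ gives $\mathcal{D}\tilde u=f_-+f_+=f$ there; summing the two bounds and undoing the reduction of the first step yields the desired estimate, with the constant $4$ coming from the two pieces combined with the Lipschitz norm bound $\|\chi_\pm\|\le 2$.

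The main obstacle is reaching the sharp $|\tau|^{-(p-2)}$ decay for $f_\pm$. A crude estimate from the Cauchy formula only produces $|f_\pm|=O(1/|\tau|)$, which falls short of the input required by Proposition~\ref{2:propositionDxf}. The remedy I have in mind is to replace $1/(\xi-\tau)$ by the subtracted kernel $\tau^{p-2}/[\xi^{p-2}(\xi-\tau)]$, which equals $1/(\xi-\tau)$ up to a polynomial in $\tau$ of degree $\le p-3$, and then to redistribute the resulting polynomial corrections between $f_-$ and $f_+$ so as to preserve both the identity $f_-+f_+=f$ and the analytic extensions across $\Gamma_\mp$. Keeping the $1/r$ factor sharp enough through this bookkeeping to match the numerical constant $4$ is the main technical difficulty.
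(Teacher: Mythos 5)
Your plan has the right overall shape and shares the paper's two key ingredients: a Cauchy--integral/partition--of--unity splitting of the forcing term across the two boundary arcs of $D_r^1$, followed by integration along characteristics. But the object you choose to decompose is the wrong one, and that is exactly why you run into the decay obstacle you describe in your last paragraph. The paper avoids it entirely.

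You apply Lemma~\ref{2:CauchyIntegral} to $f$ itself and then need the pieces $f_\pm$ to decay like $|\tau|^{-(p-2)}$ on $D_r^\pm$ before handing them to Proposition~\ref{2:propositionDxf}. As you correctly observe, the Cauchy transform of data carried on $\partial D_r^1$ decays only like $1/|\tau|$, so $f_\pm$ do not have the needed rate. The subtracted kernel $\tau^{p-2}/\bigl(\xi^{p-2}(\xi-\tau)\bigr)$ is a plausible repair, but carrying it through (verifying $f_-+f_+=f$, the analytic continuation across the complementary arc, and the numerics, including the factor $4/r$) is precisely the nontrivial bookkeeping you flag as unresolved; as written, the proof is therefore incomplete.

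The paper's proof makes one small change that removes the difficulty. Set $\mu(\varphi,\tau)=\tau^{p-2}e^{i\epsilon(\tau-\varphi)}$ and apply the Cauchy splitting to $\tilde f=\mu f$, which by hypothesis is bounded (in fact $|\tilde f|\le K_f/|\tau|^2$, so $J_{\tilde f}<\infty$). The resulting pieces $\tilde f^{\pm}$ from Lemma~\ref{2:CauchyIntegral} are then merely \emph{bounded} on $\tilde D_r^{\pm}$, by $\|\chi^\pm\|(J_{\tilde f}+\sup|\tilde f|)\le 2K_f/r$; no further decay of $\tilde f^\pm$ is needed. The solution is defined directly as
\begin{equation*}
u(\varphi,\tau)=\int_{-\infty}^{0}\frac{\tilde f^{-}(\varphi+s,\tau+s)}{\mu(\varphi+s,\tau+s)}\,ds-\int_{0}^{+\infty}\frac{\tilde f^{+}(\varphi+s,\tau+s)}{\mu(\varphi+s,\tau+s)}\,ds\,,
\end{equation*}
and the decay that you were trying to produce in $f_\pm$ is instead reinstated \emph{under the integral sign}: $\mu(\varphi+s,\tau+s)=(\tau+s)^{p-2}e^{i\epsilon(\tau-\varphi)}$, so the exponential factor is constant in $s$ and factors out, while the remaining $|\tau+s|^{-(p-2)}$ is exactly what Lemma~\ref{2:desint} integrates, giving the stated $4K_fK_{p-3}/(r|\tau^{p-3}e^{i\epsilon(\tau-\varphi)}|)$. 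This also makes your preliminary reduction to $\epsilon=0$ unnecessary, since the $e^{i\epsilon(\tau-\varphi)}$ weight is simply absorbed into $\mu$. If you want to keep your proof's architecture, the fix is to replace the Cauchy splitting of $f$ by the Cauchy splitting of $\mu f$ and to perform the integration along characteristics on $\tilde f^\pm/\mu$ rather than invoking Proposition~\ref{2:propositionDxf} as a black box.
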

\begin{proof}
Following the ideas of \cite{VG:99} we define the domains,
\begin{equation*}
\begin{split}
\tilde{D}^{-}_r &= \left\{\tau \in \mathbb{C}\:\colon\: \left| \arg{(\tau+r)} \right| > \theta\quad\mathrm{and}\quad \mathrm{Im}\,\tau<-r\right\},\\
\tilde{D}^{+}_r &= \left\{\tau \in \mathbb{C}\:\colon\: -\tau \in \tilde{D}^{-}_r\right\}.\\
\end{split}
\end{equation*}
Note that $D^1_r=\tilde{D}^{+}_r\cap\tilde{D}^{-}_r$. Let $\mu(\varphi,\tau)=\tau^{p-2}e^{i\epsilon(\tau-\varphi)}$ and $\tilde{f}(\varphi,\tau)=\mu(\varphi,\tau)f(\varphi,\tau)$. We use the previous lemma on the Cauchy integral to write the function $\tilde{f}$ as a sum of two functions $\tilde{f}^{\pm}$ analytic in $\mathbb{T}_h\times \tilde{D}_r^{\pm}$ respectively. To that end, we define a partition of unity for the set $\partial D^1_r$ as follows. Let $\chi\colon\mathbb{R}\to[0,1]$ be a smooth function such that $\chi(t)=0$ for $t\leq-1$, $\chi(t)=1$ for $t\geq1$ and $\left|\chi'(t)\right|\leq1$ for all $t\in \mathbb{R}$. Define two functions $\chi^{\pm}\colon\partial D^1_r \to [0,1]$ by,
\begin{equation*}
\chi^{+}(\tau)=\chi\left(\mathrm{Re}(\tau)\right)\quad\text{and}\quad\chi^{-}(\tau)=1-\chi^{+}(\tau)\,.
\end{equation*}

\begin{figure}[t]
  \begin{center}
    \includegraphics[width=2.1in]{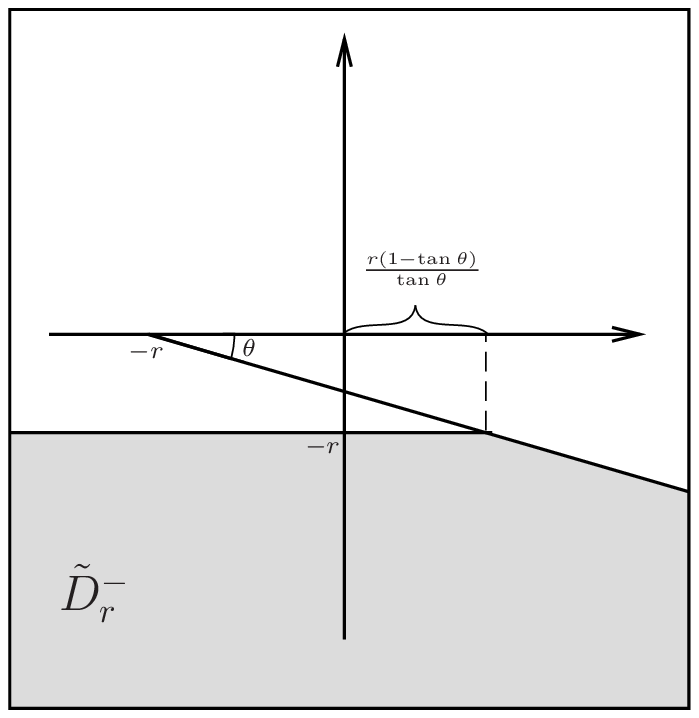}
  \end{center}
  \caption{The domain $\tilde{D}_r^-$} 
  \label{2:figprop57}
\end{figure}

Clearly $\chi^\pm\in\mathcal{L}(\partial D_r^1)$ and $\left\|\chi^\pm\right\|\leq2$. Since $r>\frac{2\tan\theta}{1-\tan\theta}$ (see Figure \ref{2:figprop57}), $\tilde{f}^\pm\colon \mathbb{T}_h\times \tilde{D}_r^\pm\to\mathbb{C}$ defined by
$$
\tilde{f}^\pm(\varphi,\tau)=\int_{\partial D_r^1}\frac{\chi^\pm(\xi)\tilde{f}(\varphi,\xi)}{\xi-\tau}d\xi
$$
is analytic, continuous on the closure of its domain. Moreover
\begin{equation*}
f(\varphi,\tau)=\frac{1}{\mu(\varphi,\tau)}\left(\tilde{f}^{-}(\varphi,\tau)+\tilde{f}^{+}(\varphi,\tau)\right).
\end{equation*}  
Hence,
\begin{equation}\label{2:E12}
u(\varphi,\tau)=\int_{-\infty}^0\frac{\tilde{f}^{-}(\varphi+s,\tau+s)}{\mu(\varphi+s,\tau+s)}ds-\int_{0}^{+\infty}\frac{\tilde{f}^{+}(\varphi+s,\tau+s)}{\mu(\varphi+s,\tau+s)}ds
\end{equation}
is a solution of equation $\mathcal{D}u=f$ provided the integrals in \eqref{2:E12} converge uniformly. Let us show that the first integral defines an analytic function in $\mathbb{T}_h\times D_r^1$. The second integral can be handled analogously. 

Applying Lemma \ref{2:desint} and the upper bound from Lemma \ref{2:CauchyIntegral} to the first term of \eqref{2:E12} we get,
\begin{equation*}
\begin{split}
\int_{-\infty}^{0}\left|\frac{\tilde{f}^{-}(\varphi+s,\tau+s)}{\mu(\varphi+s,\tau+s)}\right|ds&\leq 
\frac{\left\|\chi^{-}\right\|(J_{\tilde{f}}+\sup|\tilde{f}|)}{\left|e^{i\epsilon(\tau-\varphi)}\right|}\int_{-\infty}^0 \frac{1}{\left|\tau+s\right|^{p-2}}ds\\
&\leq \frac{\left\|\chi^{-}\right\|(J_{\tilde{f}}+\sup|\tilde{f}|)K_{p-3}}{\left|\tau^{p-3}e^{i\epsilon(\tau-\varphi)}\right|}\,.
\end{split}
\end{equation*}
Clearly $\left\|\chi^{-}\right\|\leq2$, $\sup|\tilde{f}|\leq K_f/r^2$ and $J_{\tilde{f}}\leq\frac{K_f}{2\pi r}$. Since $r>2$ we get,
$$
\left\|\chi^{-}\right\|(J_{\tilde{f}}+\sup|\tilde{f}|)\leq \frac{2 K_f}{r}\,,
$$
which implies that,
\begin{equation}\label{2:E15}
\int_{-\infty}^{0}\left|\frac{\tilde{f}^{-}(\varphi+s,\tau+s)}{\mu(\varphi+s,\tau+s)}\right|ds\leq \frac{2 K_f K_{p-3}}{r}\frac{1}{\left|\tau^{p-3}e^{i\epsilon(\tau-\varphi)}\right|}\,.
\end{equation}
Similar to the proof of Proposition \ref{2:propositionDxf}, for $p\geq4$ the integral converges uniformly in $\mathbb{T}_h\times\tilde{D}^{-}_r$. Hence, it defines an analytic function in $\mathbb{T}_h\times\tilde{D}^{-}_r$. The continuity on the closure of $\mathbb{T}_h\times\tilde{D}^{-}_r$ also follows from uniform convergence and continuity of $\tilde{f}^{-}$. In an analogous way we conclude that $$
(\varphi,\tau)\mapsto\int_{0}^{+\infty}\frac{\tilde{f}^{-}(\varphi+s,\tau+s)}{\mu(\varphi+s,\tau+s)}ds
$$ 
is analytic in $\mathbb{T}_h\times\tilde{D}^{+}_r$, continuous on the closure of $\mathbb{T}_h\times\tilde{D}^{+}_r$ and having the same upper bound \eqref{2:E15}. Putting these upper bounds together we obtain,
\begin{equation*}
\left|u(\varphi,\tau)\right|\leq \frac{4 K_f K_{p-3}}{r}\frac{1}{\left|\tau^{p-3}e^{i\epsilon(\tau-\varphi)}\right|}
\end{equation*}
and the proof is complete.
\end{proof}

\subsection{Linear operator $\mathcal{L}$}\label{1:linearoperators}
Let $B\subset\mathbb{C}$ be an open set which does not intersect a neighborhood of the origin. Both sets $D_r^{\pm}$ and their intersection satisfy this condition for $r$ sufficiently large. Let $p\in\mathbb{Z}$ and denote by $\mathfrak{X}_p\left(\mathbb{T}_h\times B\right)$ the space of analytic functions $f=(f_1,\ldots,f_4) : \mathbb{T}_h\times B \rightarrow \mathbb{C}^4$ which have continuous extension to the closure of its domain and have finite norm,
\begin{equation*}
\begin{split}
\left\|f\right\|_p = \sup_{(\varphi,\tau) \in \mathbb{T}_h\times B}\left(\left|\tau^{p+1}f_1(\varphi,\tau)\right|\right.&+\left|\tau^{p+1}f_2(\varphi,\tau)\right|\\
&\left.+\left|\tau^{p}f_3(\varphi,\tau)\right|+\left|\tau^{p}f_4(\varphi,\tau)\right|\right) < \infty.
\end{split}
\end{equation*}
The space $\mathfrak{X}_p\left(\mathbb{T}_h\times B\right)$ endowed with the norm $\left\|\cdot\right\|_p$ as defined above is a complex Banach space. When $f\in\mathfrak{X}_p\left(\mathbb{T}_h\times B\right)$ we occasionally write
$$
f(\varphi,\tau)=(\tau^{-p-1}f_1(\varphi,\tau),\tau^{-p-1}f_2(\varphi,\tau),\tau^{-p}f_3(\varphi,\tau),\tau^{-p}f_4(\varphi,\tau)),
$$ 
where the norm of $f$ is now $\displaystyle \left\|f\right\|_p=\sup_{(\varphi,\tau)}\sum_{i=1}^4\left|f_i(\varphi,\tau)\right|$.

For $\mu>0$ let $\mathfrak{Y}_\mu(\mathbb{T}_h\times B)$ be the space of analytic functions $\xi=(\xi_1,\ldots,\xi_4) : \mathbb{T}_h\times B \rightarrow \mathbb{C}^4$ which have continuous extension to the closure of its domain and have finite norm,
\begin{equation*}
\left\|\xi\right\|_\mu = \sup_{(\varphi,\tau) \in \mathbb{T}_h\times B}\sum_{i=1}^4\left|e^{\mu i(\tau-\varphi)}\xi_i(\varphi,\tau)\right| < \infty.
\end{equation*}

Given two Banach spaces $(\mathfrak{X},\left\|\cdot\right\|_{\mathfrak{X}})$ and $(\mathfrak{Y},\left\|\cdot\right\|_{\mathfrak{Y}})$ we define the usual norm on the space of linear operators $\mathcal{L}:\mathfrak{X}\rightarrow\mathfrak{Y}$ as follows,
\begin{equation*}
\left\|\mathcal{L}\right\|_{\mathfrak{Y},\mathfrak{X}}=\sup_{\xi\in\mathfrak{X}\backslash\left\{0\right\}}\frac{\left\|\mathcal{L}(\xi)\right\|_{\mathfrak{Y}}}{\left\|\xi\right\|_{\mathfrak{X}}}.
\end{equation*}

To simplify the notation we will not write, when it is clear from the context, the dependence of the Banach spaces from the domains where the functions are defined. Moreover, we will write the norm of a linear operator $\mathcal{L}:\mathfrak{X}_p\rightarrow\mathfrak{X}_q$ as $\left\|\mathcal{L}\right\|_{q,p}$ and the norm of a linear operator $\mathcal{L}:\mathfrak{Y}_\mu\rightarrow\mathfrak{Y}_{\mu'}$ as $\left\|\mathcal{L}\right\|_{\mu',\mu}$. 

Let $A:\mathbb{T}_h\times B\rightarrow\mathbb{C}^{4\times 4}$ be an analytic matrix-valued function and define $\mathcal{L}:\mathfrak{X}_p\rightarrow\mathfrak{X}_p$ according to,
\begin{equation}\label{2:linearoperatorLA}
\mathcal{L}(\xi)(\varphi,\tau)=\mathcal{D}\xi(\varphi,\tau)-A(\varphi,\tau)\xi(\varphi,\tau),
\end{equation}
where $\mathcal{D}=\partial_\varphi+\partial_\tau$ is the same differential operator defined in the previous section. 
We say that a 4-by-4 matrix-valued function $\mathbf{U}:\mathbb{T}_h\times B\rightarrow\mathbb{C}^{4\times 4}$ is a \textit{fundamental matrix} of $\mathcal{L}$ if $\mathcal{L}(\mathbf{U})=0$, $\det(\mathbf{U})=1$ and the columns $(\mathbf{u}_i)_i$ of $\mathbf{U}$ satisfy $\mathbf{u}_1\in\mathfrak{X}_{-2}$, $\mathbf{u}_2\in\mathfrak{X}_{-3}$, $\mathbf{u}_3\in\mathfrak{X}_{1}$ and $\mathbf{u}_4\in\mathfrak{X}_{2}$.
We also define,
\begin{equation}\label{2:KU}
K_{\mathbf{U}}:=\max\left\{\left\|\mathbf{u}_1\right\|_{-2},\left\|\mathbf{u}_2\right\|_{-3},\left\|\mathbf{u}_3\right\|_{1},\left\|\mathbf{u}_4\right\|_{2}\right\}.
\end{equation}
In the following we will be concerned with the problem of solving equation $\mathcal{L}(\xi)=f$ for a given analytic function $f:\mathbb{T}_h\times B\rightarrow\mathbb{C}^4$ with some prescribed behavior. In other words, we want to invert the linear operator $\mathcal{L}$ in the Banach spaces defined above. To that end, knowing a fundamental matrix $\mathbf{U}$ for $\mathcal{L}$ we can use the method of variation of constants as follows: let $\xi=\mathbf{U}\mathbf{c}$ where $\mathbf{c}:\mathbb{T}_h\times B\rightarrow\mathbb{C}^4$ is analytic. Substituting into $\mathcal{L}(\xi)$ we get,
\begin{equation*}
\begin{split}
\mathcal{L}(\xi)&=\mathcal{D}\left(\mathbf{U}\mathbf{c}\right)-A\mathbf{U}\mathbf{c}\\
&=(\mathcal{D}\mathbf{U})\mathbf{c}+\mathbf{U}\mathcal{D}\mathbf{c}-A\mathbf{U}\mathbf{c}\\
&=\left(\mathcal{D}\mathbf{U}-A\mathbf{U}\right)\mathbf{c} + \mathbf{U}\mathcal{D}\mathbf{c}\\
&=\mathbf{U}\mathcal{D}\mathbf{c}\,.
\end{split}
\end{equation*}
Note that $\mathbf{U}$ has determinant equal one, hence invertible. Thus $\xi=\mathbf{U}\mathbf{c}$ is a solution of equation $\mathcal{L}(\xi)=f$ provided $\mathbf{c}$ satisfies the equation,
\begin{equation}\label{2:equationDcUf}
\mathcal{D}\mathbf{c}=\mathbf{U}^{-1}f.
\end{equation}
A simple computation shows that we can write,
\begin{equation}\label{2:inverseofUbreve}
\mathbf{U}^{-1}=
\begin{pmatrix}
\tau^{-1} u_{1,1}&\tau^{-1} u_{1,2}&\tau^{-2} u_{1,3}&\tau^{-2} u_{1,4}\\
\tau^{-2} u_{2,1}&\tau^{-2} u_{2,2}&\tau^{-3} u_{2,3}&\tau^{-3} u_{2,4}\\
\tau^{2} u_{3,1}&\tau^{2} u_{3,2}&\tau u_{3,3}&\tau u_{3,4}\\
\tau^{3} u_{4,1}&\tau^{3} u_{4,2}&\tau^{2} u_{4,3}&\tau^{2} u_{4,4}
\end{pmatrix}
\end{equation}
for some functions $u_{i,j}:\mathbb{T}_h\times B\rightarrow\mathbb{C}$, analytic with continuous extension to the closure of $\mathbb{T}_h\times B$. Moreover,
\begin{equation}\label{2:KUminus}
K_{\mathbf{U}^{-1}}:=\max_{i,j}\left\{\sup_{(\varphi,\tau)\in \mathbb{T}_h\times B}\left|u_{i,j}(\varphi,\tau)\right|\right\}<\infty\,.
\end{equation}
Depending on the sets where $\mathbf{U}$ and $f$ are analytic we can use Propositions \ref{2:propositionDxf} and \ref{2:propositionDxf1} to obtain a solution of \eqref{2:equationDcUf}, thus constructing a right inverse for $\mathcal{L}$. Before stating and proving a couple of theorems that make the previous discussion precise, let us present an example that motivates the definition of $\mathcal{L}$ and its fundamental matrix.
\subsubsection{An example: $\mathcal{L}_0$}\label{1:subsectionL0}
Here we define a linear operator $\mathcal{L}_0$ in the form of \eqref{2:linearoperatorLA}. This linear operator plays an important role in the perturbation theory developed in the subsequent sections. Let us consider the following PDE,
\begin{equation}\label{2:eqDXH0}
\mathcal{D}\mathbf{x}=X_{H_0}(\mathbf{x}),
\end{equation}
where $H_0$ denotes the leading order of $H$ which we recall for convenience $$H_0=-I_1 + I_2+\eta I_3^2\,.$$ A direct computation shows that,
\begin{equation}\label{2:Gamma0}
\mathbf{\Gamma}_0(\varphi,\tau)=\left(\kappa\tau^{-2}\cos\varphi,\kappa\tau^{-2}\sin\varphi,\kappa\tau^{-1}\cos\varphi,\kappa\tau^{-1}\sin\varphi\right)^{T},
\end{equation}
solves equation \eqref{2:eqDXH0} where $\kappa^2=-\frac{2}{\eta}$. Indeed, using the polar coordinates,
\begin{equation*}
q_1=R \cos\theta,\quad p_1=r \cos\theta,\quad q_2=R \sin\theta,\quad p_2=r \sin\theta.
\end{equation*}
we see that equation \eqref{2:eqDXH0} reduces to the following equations,
\begin{equation*}
\mathcal{D}\theta=1,\quad \mathcal{D}r=-R,\quad \mathcal{D}R=\eta r^3.
\end{equation*}
The last two equations define a second order differential equation $\mathcal{D}^2r=-\eta r^3$ which has a solution $r(\varphi,\tau)=\frac{\kappa}{\tau}$. Thus $R(\varphi,\tau)=\frac{\kappa}{\tau^2}$. Now using $\theta(\varphi,\tau)=\varphi$ as a solution of the first equation we get the desired solution $\mathbf{\Gamma}_0$. The linearized Hamiltonian vector field $A_0:=DX_{H_0}(\mathbf{\Gamma}_0)$ evaluated at $\mathbf{\Gamma}_0$ reads,
\begin{equation}\label{2:A0}
A_0(\varphi,\tau)=\begin{pmatrix}
0&-1&-\frac{1+2\cos^2\varphi}{\tau^{2}}&-\frac{\sin(2\varphi)}{\tau^2}\\
1&0&-\frac{\sin(2\varphi)}{\tau^2}&-\frac{1+2\sin^2\varphi}{\tau^{2}}\\
-1&0&0&-1\\
0&-1&1&0
\end{pmatrix}.
\end{equation}
Note that $A_0$ does not depend on the choice of $\kappa$. Moreover $A_0\colon \mathbb{T}_h \times \mathbb{C}^*\to\mathbb{C}^{4\times 4}$ is analytic. Define $\mathcal{L}_0:\mathfrak{X}_{1}\rightarrow\mathfrak{X}_{1}$ by
\begin{equation}\label{2:LA0}
\mathcal{L}_0(\xi)(\varphi,\tau)=\mathcal{D}\xi(\varphi,\tau)-A_0(\varphi,\tau)\xi(\varphi,\tau)\,.
\end{equation}
It can be checked directly (or using the polar coordinates as before) that,
\begin{equation}\label{2:U0}
\mathbf{U}_0(\varphi,\tau)=\begin{pmatrix}-\frac{2\tau\sin\varphi}{3\kappa}&-\frac{3\tau^2\cos\varphi}{5\kappa}&-\frac{\kappa\sin\varphi}{\tau^2}&-\frac{2\kappa\cos\varphi}{\tau^3}\\
\frac{2\tau\cos\varphi}{3\kappa}&-\frac{3\tau^2\sin\varphi}{5\kappa}&\frac{\kappa\cos\varphi}{\tau^2}&-\frac{2\kappa\sin\varphi}{\tau^3}\\
\frac{\tau^2\sin\varphi}{3\kappa}&\frac{\tau^3\cos\varphi}{5\kappa}&-\frac{\kappa\sin\varphi}{\tau}&-\frac{\kappa\cos\varphi}{\tau^2}\\
-\frac{\tau^2\cos\varphi}{3\kappa}&\frac{\tau^3\sin\varphi}{5\kappa}&\frac{\kappa\cos\varphi}{\tau}&-\frac{\kappa\sin\varphi}{\tau^2}
\end{pmatrix},
\end{equation}
is a fundamental matrix for the linear operator $\mathcal{L}_0$. Moreover, $\mathbf{U}_0(\varphi,\tau)$ is symplectic for all $(\varphi,\tau)\in\mathbb{T}_h\times\mathbb{C}^*$. In particular, $\det(\mathbf{U}_0)=1$.

\subsubsection{Inverse theorems for the linear operator $\mathcal{L}$}

\begin{theorem}\label{2:TheoremLminus}
Let $p\geq3$, $r>1$ and suppose that the linear operator $\mathcal{L}:\mathfrak{X}_p(\mathbb{T}_h\times D_r^{-})\rightarrow\mathfrak{X}_p(\mathbb{T}_h\times D_r^{-})$ has a fundamental matrix $\mathbf{U}$. Then $\mathcal{L}$ has trivial kernel. Moreover there exists a unique bounded linear operator $\mathcal{L}^{-1}:\mathfrak{X}_{p+1}(\mathbb{T}_h\times D_r^{-})\rightarrow\mathfrak{X}_p(\mathbb{T}_h\times D_r^{-})$ such that $\mathcal{L}\mathcal{L}^{-1}=\mathrm{Id}$.
\end{theorem}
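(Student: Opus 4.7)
The plan is to invert $\mathcal L$ by the method of variation of constants: given $f\in\mathfrak X_{p+1}$, set $\xi=\mathbf U\mathbf c$, so that by the calculation preceding \eqref{2:equationDcUf} the equation $\mathcal L(\xi)=f$ reduces to $\mathcal D\mathbf c=\mathbf U^{-1}f$. I will produce $\mathbf c$ by applying Proposition \ref{2:propositionDxf} componentwise, and then verify that $\xi=\mathbf U\mathbf c$ belongs to $\mathfrak X_p$ with a bound depending linearly on $\|f\|_{p+1}$.

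First I would exploit the structural form \eqref{2:inverseofUbreve} of $\mathbf U^{-1}$ and the decay of $f\in\mathfrak X_{p+1}$ (first two components $O(|\tau|^{-p-2})$, last two $O(|\tau|^{-p-1})$) to read off that the components of $\mathbf U^{-1}f$ decay like $|\tau|^{-(p+3)}$, $|\tau|^{-(p+4)}$, $|\tau|^{-p}$ and $|\tau|^{-(p-1)}$ respectively, with constants controlled by $K_{\mathbf U^{-1}}\|f\|_{p+1}$. The assumption $p\geq 3$ is exactly what is needed so that every exponent is $\geq 2$, which is the hypothesis of Proposition \ref{2:propositionDxf}. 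Applying that proposition four times yields an analytic $\mathbf c\colon\mathbb T_h\times D_r^-\to\mathbb C^4$ with $\mathcal D\mathbf c=\mathbf U^{-1}f$ and componentwise bounds $|\mathbf c_k|\leq C_k K_{\mathbf U^{-1}}\|f\|_{p+1}|\tau|^{-\beta_k}$ with $\beta=(p+2,p+3,p-1,p-2)$.

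Next I would compute $\xi=\mathbf U\mathbf c$ using the column weights of $\mathbf U$ in the definition of a fundamental matrix: column $j$ of $\mathbf U$ belongs to $\mathfrak X_{\alpha_j}$ with $\alpha=(-2,-3,1,2)$, so row $i$ of $\mathbf U$ has entries of order $|\tau|^{r_i-\alpha_j}$ where $r_i=1$ for $i=1,2$ and $r_i=2$ for $i=3,4$. A direct check shows that every term $\mathbf U_{ij}\mathbf c_j$ contributing to $\xi_i$ has the same size, namely $|\tau|^{-p-1}$ for $i=1,2$ and $|\tau|^{-p}$ for $i=3,4$, so $\xi\in\mathfrak X_p$ with
\begin{equation*}
\|\xi\|_p \leq C\, K_{\mathbf U}\, K_{\mathbf U^{-1}}\,\|f\|_{p+1},
\end{equation*}
where $C$ depends only on $p$ and $\theta$ through the constants $K_q$ of Lemma \ref{2:desint}. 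This defines the required bounded right inverse $\mathcal L^{-1}$.

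Finally I would show $\ker\mathcal L=\{0\}$, which simultaneously gives uniqueness of $\mathcal L^{-1}$. If $\xi\in\mathfrak X_p$ solves $\mathcal L(\xi)=0$ then $\mathbf c:=\mathbf U^{-1}\xi$ satisfies $\mathcal D\mathbf c=0$, so Lemma \ref{2:LemmaDx0} gives an analytic $c=(c_1,\dots,c_4)$ with $\mathbf c(\varphi,\tau)=c(\tau-\varphi)$. The bookkeeping of the previous paragraph applied to $\mathbf c=\mathbf U^{-1}\xi$ (with $\xi\in\mathfrak X_p$ in place of $f\in\mathfrak X_{p+1}$) shows that each $c_k(\tau-\varphi)\to 0$ as $|\tau|\to\infty$ in $D_r^-$. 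The key observation is that since $\varphi\in\mathbb T_h=\mathbb C/2\pi\mathbb Z$, the identity $\mathbf c(\varphi+2\pi,\tau)=\mathbf c(\varphi,\tau)$ forces each $c_k$ to be $2\pi$-periodic on its domain. For fixed $z_0$ in the strip of periodicity, the periodicity gives $c_k(z_0)=c_k(z_0+2\pi n)$ for every $n\in\mathbb Z$; choosing $n\to-\infty$ moves the argument to infinity inside $D_r^-$ (after setting $\varphi=0$, $\tau=z_0+2\pi n$), and the decay of $\mathbf c_k$ forces $c_k(z_0)=0$. Hence $\mathbf c\equiv 0$ and $\xi=\mathbf U\mathbf c=0$.

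The routine part is the bookkeeping of exponents in Steps 2--3; the only mild obstacle is the kernel argument, where one must combine the analytic decay coming from Proposition \ref{2:propositionDxf} with the $2\pi$-periodicity inherited from $\mathbb T_h$ to kill the freedom allowed by Lemma \ref{2:LemmaDx0}.
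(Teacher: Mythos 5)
Your proof is correct and the inversion step (variation of constants, componentwise application of Proposition~\ref{2:propositionDxf} to $\mathcal D\mathbf c=\mathbf U^{-1}f$, and the resulting exponent bookkeeping giving $\|\xi\|_p\leq\bar K\|f\|_{p+1}$ with $\bar K$ a combination of the $K_q$, $K_{\mathbf U}$, $K_{\mathbf U^{-1}}$) matches the paper's argument. The kernel step takes a slightly different route: the paper observes that $\mathbf c_0$ extends to a bounded entire function (because $\bigcup_{\tau\in D_r^-}\tau+\mathbb T_h=\mathbb C$), invokes Liouville's theorem to get constancy, and then uses the decay as $\mathrm{Im}\,s\to\pm\infty$ to conclude $\mathbf c_0=0$; you instead combine the $2\pi$-periodicity of $c_k$ inherited from $\mathbb T_h$ with the decay of $\mathbf U^{-1}\xi$ along the horizontal ray $\tau=z_0+2\pi n$, $n\to-\infty$ (which does eventually enter $D_r^-$). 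Both are valid, and yours has the mild advantage of not needing Liouville nor the fact that $\mathbf c_0$ is entire. One bookkeeping slip: you write $\mathbf U_{ij}=O(|\tau|^{r_i-\alpha_j})$ with $r_{1}=r_2=1$, $r_3=r_4=2$, but for $\mathbf u_j\in\mathfrak X_{\alpha_j}$ the definition of the norm gives $\mathbf U_{1,j},\mathbf U_{2,j}=O(|\tau|^{-\alpha_j-1})$ and $\mathbf U_{3,j},\mathbf U_{4,j}=O(|\tau|^{-\alpha_j})$ (e.g.\ $\mathbf U_{1,1}=O(\tau)$, not $O(\tau^3)$), i.e.\ $r_1=r_2=-1$, $r_3=r_4=0$. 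The product sizes $|\tau|^{-p-1}$ and $|\tau|^{-p}$ you then report for $\mathbf U_{ij}\mathbf c_j$ are nevertheless the correct ones, so this is a typo in the formula rather than a flaw in the argument.
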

\begin{proof}
Let us prove the first assertion of the theorem: kernel of $\mathcal{L}$ is trivial. To that end, let $\xi\in\mathfrak{X}_p(\mathbb{T}_h\times D_r^{-})$ such that $\mathcal{L}(\xi)=0$. Then, according to \eqref{2:equationDcUf} we have that $\mathcal{D}\mathbf{c}=0$ where $\mathbf{c}=\mathbf{U}^{-1}\xi$. Applying Lemma \ref{2:LemmaDx0} to each component of $\mathbf{c}$ we conclude that $\mathbf{c}(\varphi,\tau)=\mathbf{c}_0(\tau-\varphi)$ where $\mathbf{c}_0:\mathbb{C}\rightarrow\mathbb{C}^4$ is a $2\pi$-periodic entire function. Moreover, since $\mathbf{c}_0=\mathbf{U}^{-1}\xi$ we can bound $\mathbf{c}_0$ as follows. Let $\xi=(\tau^{-p-1}\xi_1,\tau^{-p-1}\xi_2,\tau^{-p}\xi_3,\tau^{-p}\xi_4)^{T}$. Then \eqref{2:inverseofUbreve} implies that,
\begin{equation}\label{2:cexpressionD0}
\mathbf{c}_0=\left(\tau^{-p-2}\sum_{i=1}^{4}u_{1,i}\xi_i,\tau^{-p-3}\sum_{i=1}^{4}u_{2,i}\xi_i,\tau^{-p+1}\sum_{i=1}^{4}u_{3,i}\xi_i,\tau^{-p+2}\sum_{i=1}^{4}u_{4,i}\xi_i\right)^{T}.
\end{equation}
It follows from \eqref{2:KUminus} that the functions $u_{i,j}$ are bounded. Thus, $\mathbf{c}_0$ is bounded for $p\geq3$. An entire bounded function must be constant by Liouville's theorem. Moreover, since $\mathbf{c}_0(s)\rightarrow 0$ as $\mathrm{Im}\,s\rightarrow \pm\infty$ we conclude that $\mathbf{c}_0=0$, thus proving that the kernel of $\mathcal{L}$ is trivial.

Now let us construct an inverse of $\mathcal{L}$, i.e., solve equation $\mathcal{L}(\xi)=f$, where $f\in\mathfrak{X}_{p+1}(\mathbb{T}_h\times D_r^{-})$. Let $\xi=\mathbf{U}\mathbf{c}$. Then $\mathbf{c}$ must satisfy,
\begin{equation}\label{2:Lxif2}
\mathcal{D}\mathbf{c}=\mathbf{U}^{-1}f.
\end{equation}
Let $f=(\tau^{-p-2}f_1,\tau^{-p-2}f_2,\tau^{-p-1}f_3,\tau^{-p-1}f_4)^{T}$ and $g=\mathbf{U}^{-1}f$. Taking into account \eqref{2:inverseofUbreve} we can write
\begin{equation*}
g=\left(\tau^{-p-3}\sum_{i=1}^{4}u_{1,i}f_i,\tau^{-p-4}\sum_{i=1}^{4}u_{2,i}f_i,\tau^{-p}\sum_{i=1}^{4}u_{3,i}f_i,\tau^{-p+1}\sum_{i=1}^{4}u_{4,i}f_i\right)^{T}.
\end{equation*}
Bearing in mind that $\left\|f\right\|_{p+1}<\infty$ and \eqref{2:KUminus} we can bound the components of $g$ as follows,
\begin{equation*}\begin{aligned}
\left|g_1(\varphi,\tau)\right|&\leq\frac{K_{\mathbf{U}^{-1}}\left\|f\right\|_{p+1}}{\left|\tau\right|^{p+3}},&\quad \left|g_2(\varphi,\tau)\right|&\leq\frac{K_{\mathbf{U}^{-1}}\left\|f\right\|_{p+1}}{\left|\tau\right|^{p+4}},\\
\left|g_3(\varphi,\tau)\right|&\leq\frac{K_{\mathbf{U}^{-1}}\left\|f\right\|_{p+1}}{\left|\tau\right|^p},&\quad \left|g_4(\varphi,\tau)\right|&\leq\frac{K_{\mathbf{U}^{-1}}\left\|f\right\|_{p+1}}{\left|\tau\right|^{p-1}}.
\end{aligned}
\end{equation*}
For $p\geq3$ we can apply Proposition \ref{2:propositionDxf} to each component of equation \eqref{2:Lxif2} and conclude that there exists an analytic vector-valued function $\mathbf{c}=(c_1,c_2,c_3,c_4):\mathbb{T}_h\times D_r^{-}\rightarrow\mathbb{C}^4$, continuous in the closure of $\mathbb{T}_h\times D_r^{-}$ such that,
\begin{equation*}\begin{aligned}
\left|c_1(\varphi,\tau)\right|&\leq\frac{K_{p+2}K_{\mathbf{U}^{-1}}\left\|f\right\|_{p+1}}{\left|\tau\right|^{p+2}},&\quad \left|c_2(\varphi,\tau)\right|&\leq\frac{K_{p+3}K_{\mathbf{U}^{-1}}\left\|f\right\|_{p+1}}{\left|\tau\right|^{p+3}},\\
\left|c_3(\varphi,\tau)\right|&\leq\frac{K_{p-1}K_{\mathbf{U}^{-1}}\left\|f\right\|_{p+1}}{\left|\tau\right|^{p-1}},&\quad \left|c_4(\varphi,\tau)\right|&\leq\frac{K_{p-2}K_{\mathbf{U}^{-1}}\left\|f\right\|_{p+1}}{\left|\tau\right|^{p-2}}.
\end{aligned}
\end{equation*}
Finally, define the linear operator $\mathcal{L}^{-1}$ as $\mathcal{L}^{-1}(f)=\xi$ where $\xi=\mathbf{U}\mathbf{c}$. Using the previous estimates we obtain the following upper bounds for the components of $\xi$:
\begin{equation*}\begin{aligned}
\left|\xi_1(\varphi,\tau)\right|&\leq\frac{\bar{K}}{\left|\tau\right|^{p+1}}\left\|f\right\|_{p+1},&\quad \left|\xi_2(\varphi,\tau)\right|&\leq\frac{\bar{K}}{\left|\tau\right|^{p+1}}\left\|f\right\|_{p+1},\\
\left|\xi_3(\varphi,\tau)\right|&\leq\frac{\bar{K}}{\left|\tau\right|^{p}}\left\|f\right\|_{p+1},&\quad \left|\xi_4(\varphi,\tau)\right|&\leq\frac{\bar{K}}{\left|\tau\right|^{p}}\left\|f\right\|_{p+1},
\end{aligned}
\end{equation*}
where $\bar{K}=(K_{p-1}+K_{p+3}+K_{p+2}+K_{p-2})K_{\mathbf{U}}K_{\mathbf{U}^{-1}}$. Consequently $\left\|\xi\right\|_p\leq\bar{K}\left\|f\right\|_{p+1}$ yielding $\left\|\mathcal{L}^{-1}\right\|_{p,p+1}\leq\bar{K}$. Thus $$\mathcal{L}^{-1}:\mathfrak{X}_{p+1}(\mathbb{T}_h\times D_r^{-})\rightarrow\mathfrak{X}_p(\mathbb{T}_h\times D_r^{-})$$ is a bounded right inverse for $\mathcal{L}$. The uniqueness follows from the kernel of $\mathcal{L}$ being trivial.
\end{proof}

\begin{theorem}\label{2:InverseLD1p}
Let $p\geq3$, $r>\max\left\{2,\frac{2\tan\theta}{1-\tan\theta}\right\}$ and suppose that the linear operator $\mathcal{L}:\mathfrak{X}_p(\mathbb{T}_h\times D_r^{1})\rightarrow\mathfrak{X}_p(\mathbb{T}_h\times D_r^{1})$ has a fundamental matrix $\mathbf{U}$. Then the kernel of $\mathcal{L}$ consists of functions of the form
\begin{equation*}
\mathbf{U}(\varphi,\tau)\mathbf{c}(\tau-\varphi)
\end{equation*} 
where $\mathbf{c}:\left\{s \in \mathbb{C} : \mathrm{Im}\,s<h-r\right\}\rightarrow\mathbb{C}^4$ is analytic, $2\pi$-periodic, continuous in the closure of its domain and $\mathbf{c}(s)\rightarrow 0$ as $\mathrm{Im}\,s\rightarrow -\infty$.
Moreover,
\begin{enumerate}
	\item\label{2:InverseLD1pitem1} there exists a bounded linear operator $\mathcal{L}^{-1}:\mathfrak{X}_{p+3}(\mathbb{T}_h\times D_r^{1})\rightarrow\mathfrak{X}_p(\mathbb{T}_h\times D_r^{1})$ such that $\mathcal{L}\mathcal{L}^{-1}=\mathrm{Id}$, 
	\item\label{2:InverseLD1pitem2} for any $0<\mu'<\mu$ there exists a bounded linear operator $\mathcal{L}_{\mu}^{-1}:\mathfrak{Y}_{\mu}(\mathbb{T}_h\times D_r^{1})\rightarrow\mathfrak{Y}_{\mu'}(\mathbb{T}_h\times D_r^{1})$ such that $\mathcal{L}\mathcal{L}^{-1}_{\mu}=\mathrm{Id}$.
\end{enumerate}
\end{theorem}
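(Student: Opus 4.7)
The strategy parallels the proof of Theorem \ref{2:TheoremLminus}: make the variation-of-constants substitution $\xi = \mathbf{U}\mathbf{c}$, which reduces $\mathcal{L}\xi = f$ to the uncoupled system $\mathcal{D}\mathbf{c} = \mathbf{U}^{-1}f$, and solve the latter componentwise using Proposition \ref{2:propositionDxf1} (in place of Proposition \ref{2:propositionDxf}). The qualitatively new feature is that $D_r^1$ is bounded above in $\mathrm{Im}\,\tau$, so Liouville's argument no longer trivialises the kernel and one must describe it explicitly.

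For the kernel, if $\mathcal{L}\xi = 0$ with $\xi\in\mathfrak{X}_p$, then $\mathcal{D}(\mathbf{U}^{-1}\xi) = 0$; Lemma \ref{2:LemmaDx0} applied componentwise yields $\mathbf{U}^{-1}\xi(\varphi,\tau) = \mathbf{c}(\tau-\varphi)$ for some analytic $\mathbf{c}$ on $\bigcup_{\tau\in D_r^1}(\tau+\mathbb{T}_h)$. A direct inspection shows that this union equals $\{s\in\mathbb{C}:\mathrm{Im}\,s<h-r\}$, and the $2\pi$-periodicity in $\varphi$ forces $\mathbf{c}$ to be $2\pi$-periodic. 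Using \eqref{2:cexpressionD0} together with \eqref{2:KUminus} and the decay of $\xi\in\mathfrak{X}_p$, every component of $\mathbf{c}$ is bounded for $p\geq 3$ by at worst $|\tau|^{2-p}$, hence $\mathbf{c}(s)\to 0$ as $\mathrm{Im}\,s\to-\infty$. Conversely, for any such $\mathbf{c}$ (whose Fourier expansion contains only strictly negative harmonics in $s$, and which therefore decays exponentially), the product $\mathbf{U}\mathbf{c}$ lies in $\mathfrak{X}_p$ and solves $\mathcal{L}\xi=0$.

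For item \eqref{2:InverseLD1pitem1}, given $f\in\mathfrak{X}_{p+3}$, I compute $g=\mathbf{U}^{-1}f$ via \eqref{2:inverseofUbreve}; a short estimate shows that the four rows of $g$ decay at rates $|\tau|^{-p-5}$, $|\tau|^{-p-6}$, $|\tau|^{-p-2}$, $|\tau|^{-p-1}$ respectively. Since $p\geq 3$ each exponent is at least $4$, so Proposition \ref{2:propositionDxf1} with $\epsilon=0$ applies to every component of $\mathcal{D}\mathbf{c}=g$ and produces an analytic $\mathbf{c}$ on $\mathbb{T}_h\times D_r^1$ with decay rates three orders weaker: $|\tau|^{-p-2}$, $|\tau|^{-p-3}$, $|\tau|^{-p+1}$, $|\tau|^{-p+2}$. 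Multiplying by $\mathbf{U}$, whose columns belong respectively to $\mathfrak{X}_{-2}$, $\mathfrak{X}_{-3}$, $\mathfrak{X}_1$, $\mathfrak{X}_2$, the four contributions in every row of $\xi=\mathbf{U}\mathbf{c}$ align to exactly $|\tau|^{-p-1}$ in the first two rows and $|\tau|^{-p}$ in the last two; hence $\xi\in\mathfrak{X}_p$ with $\|\mathcal{L}^{-1}\|_{p,p+3}\leq C(p,K_{\mathbf{U}},K_{\mathbf{U}^{-1}})$.

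For item \eqref{2:InverseLD1pitem2} the same scheme applies, but $f\in\mathfrak{Y}_\mu$ now only supplies the exponential estimate $|f_i|\leq\|f\|_\mu\,|e^{-i\mu(\tau-\varphi)}|$. The key observation is that on $D_r^1$ one has $\mathrm{Im}(\tau-\varphi)\leq h-r<0$, so exponential decay in $|\mathrm{Im}\,\tau|$ dominates any polynomial growth in $|\tau|$: for every $N\geq 0$,
\begin{equation*}
|\tau|^N\bigl|e^{-i(\mu-\mu')(\tau-\varphi)}\bigr|\leq C_N(\mu-\mu',r,h,\theta).
\end{equation*}
Splitting $e^{-i\mu(\tau-\varphi)}=e^{-i(\mu-\mu')(\tau-\varphi)}\cdot e^{-i\mu'(\tau-\varphi)}$ and absorbing the first factor into an arbitrary power $|\tau|^{-N}$, the components of $g=\mathbf{U}^{-1}f$ admit, for every $N$, the bound $|g_i|\leq C'_N|\tau|^{-N}|e^{-i\mu'(\tau-\varphi)}|$. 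Choosing $N\geq 6$ so that after the three-order gain from Proposition \ref{2:propositionDxf1} enough algebraic decay remains to absorb the polynomial growth up to $|\tau|^3$ present in the columns of $\mathbf{U}$, and applying Proposition \ref{2:propositionDxf1} with $\epsilon=\mu'$, yields $\xi=\mathbf{U}\mathbf{c}\in\mathfrak{Y}_{\mu'}$, with an operator norm that depends on $\mu-\mu'$ and blows up as $\mu'\nearrow\mu$. The main technical difficulty is precisely this bookkeeping: the columns of $\mathbf{U}$ carry polynomial growth up to $|\tau|^3$ while Proposition \ref{2:propositionDxf1} only gives three orders of algebraic decay, so one must first extract enough algebraic decay from the exponential reserve $\mu-\mu'$ to compensate, which is the structural reason the loss $\mu'<\mu$ cannot be avoided in this approach.
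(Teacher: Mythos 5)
Your proof is correct and follows essentially the same route as the paper's: variation of constants $\xi=\mathbf{U}\mathbf{c}$, componentwise application of Proposition \ref{2:propositionDxf1} (with $\epsilon=0$ for item (1) and $\epsilon=\mu'$ for item (2)), and for item (2) the sacrifice $\mu'<\mu$ to trade exponential decay for enough algebraic decay to compensate the polynomial growth of $\mathbf{U}^{-1}$ and $\mathbf{U}$, exactly as the paper does with its factor $\sup|\tau^9 e^{-i(\mu-\mu')(\tau-\varphi)}|$. The only addition beyond the paper is that you explicitly record the converse inclusion for the kernel (using that $D_r^1$ is a downward-opening wedge so $\mathbf{c}(\tau-\varphi)$ in fact decays exponentially in $|\tau|$), which the paper states but does not argue; this is a welcome small completion, not a deviation.
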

\begin{proof}
The proof of the first part of this theorem is almost identical to the previous one except that the functions are now defined in $\mathbb{T}_h\times D_r^{1}$. As before, if $\xi\in\mathfrak{X}_p$ such that $\mathcal{L}(\xi)=0$ then by the method of variation of constants $\mathcal{D}\mathbf{c}=0$ where $\mathbf{c}=\mathbf{U}^{-1}\xi$. Applying Lemma \ref{2:LemmaDx0} to each component of the vector function $\mathbf{c}$, we conclude that $\mathbf{c}(\varphi,\tau)=\mathbf{c}_0(\tau-\varphi)$ where $\mathbf{c}_0:\left\{s \in \mathbb{C} : \mathrm{Im}\,s<h-r\right\}\rightarrow\mathbb{C}^4$ is an analytic, $2\pi$-periodic vector-valued function. Moreover, as in the proof of the previous theorem we conclude that $\mathbf{c}_0(s)\rightarrow0$ as $\mathrm{Im}\,s\rightarrow -\infty$, thus proving the first part of the theorem. For the second part, let us first prove item \eqref{2:InverseLD1pitem1}. We shall construct an inverse of $\mathcal{L}$ by solving the equation $\mathcal{L}(\xi)=f$ where $f\in\mathfrak{X}_{p+3}(\mathbb{T}_h\times D_r^{1})$. Again, we look for a solution using the method of variation of constants. Let $\xi=\mathbf{U}\mathbf{c}$. As before, $\mathbf{c}$ must satisfy
\begin{equation}\label{2:Lxif2b}
\mathcal{D}\mathbf{c}=\mathbf{U}^{-1}f.
\end{equation}
Let $f=(\tau^{-p-4}f_1,\tau^{-p-4}f_2,\tau^{-p-3}f_3,\tau^{-p-3}f_4)$ and $g=\mathbf{U}^{-1}f$. Taking into account \eqref{2:inverseofUbreve} we can write $g$ as follows,
\begin{equation*}
g=\left(\tau^{-p-5}\sum_{i=1}^{4}u_{1,i}f_i,\tau^{-p-6}\sum_{i=1}^{4}u_{2,i}f_i,\tau^{-p-2}\sum_{i=1}^{4}u_{3,i}f_i,\tau^{-p-1}\sum_{i=1}^{4}u_{4,i}f_i\right)^{T}.
\end{equation*}
Bearing in mind that $\left\|f\right\|_{p+3}<\infty$ and \eqref{2:KUminus} we can bound the components of $g$ as follows,
\begin{equation*}\begin{aligned}
\left|g_1(\varphi,\tau)\right|&\leq\frac{K_{\mathbf{U}^{-1}}\left\|f\right\|_{p+3}}{\left|\tau\right|^{p+5}},&\quad \left|g_2(\varphi,\tau)\right|&\leq\frac{K_{\mathbf{U}^{-1}}\left\|f\right\|_{p+3}}{\left|\tau\right|^{p+6}},\\
\left|g_3(\varphi,\tau)\right|&\leq\frac{K_{\mathbf{U}^{-1}}\left\|f\right\|_{p+3}}{\left|\tau\right|^{p+2}},&\quad \left|g_4(\varphi,\tau)\right|&\leq\frac{K_{\mathbf{U}^{-1}}\left\|f\right\|_{p+3}}{\left|\tau\right|^{p+1}}.
\end{aligned}
\end{equation*}
Since $r>\max\left\{2,\frac{2\tan\theta}{1-\tan\theta}\right\}$ we can apply Proposition \ref{2:propositionDxf1} with $\epsilon=0$ and $p\geq3$ to each component of equation \eqref{2:Lxif2b} and conclude that there exists a vector-valued function $\mathbf{c}=(c_1,c_2,c_3,c_4):\mathbb{T}_h\times D_r^{1}\rightarrow\mathbf{C}^4$ such that each $c_i$ is an analytic function in $\mathbb{T}_h\times D_r^{1}$, continuous in the closure of its domain and satisfying,
\begin{equation*}\begin{aligned}
\left|c_1(\varphi,\tau)\right|&\leq\frac{4K_{p+2}K_{\mathbf{U}^{-1}}\left\|f\right\|_{p+3}}{r\left|\tau\right|^{p+2}},&\quad \left|c_2(\varphi,\tau)\right|&\leq\frac{4K_{p+3}K_{\mathbf{U}^{-1}}\left\|f\right\|_{p+3}}{r\left|\tau\right|^{p+3}},\\
\left|c_3(\varphi,\tau)\right|&\leq\frac{4K_{p-1}K_{\mathbf{U}^{-1}}\left\|f\right\|_{p+3}}{r\left|\tau\right|^{p-1}},&\quad \left|c_4(\varphi,\tau)\right|&\leq\frac{4K_{p-2}K_{\mathbf{U}^{-1}}\left\|f\right\|_{p+3}}{r\left|\tau\right|^{p-2}}.
\end{aligned}
\end{equation*}
Finally, as in the proof of the previous theorem, we define the linear operator $\mathcal{L}^{-1}$ as $\mathcal{L}^{-1}(f)=\xi$ where $\xi=\mathbf{U}\mathbf{c}$. If $\xi_i$ denote the components of $\xi$ then $\xi_i$ can be bounded in $\mathbb{T}_h\times D_r^{1}$ in the following way,
\begin{equation*}\begin{aligned}
\left|\xi_1(\varphi,\tau)\right|&\leq\frac{\bar{K}}{\left|\tau\right|^{p}}\left\|f\right\|_{p+3},&\quad \left|\xi_2(\varphi,\tau)\right|&\leq\frac{\bar{K}}{\left|\tau\right|^{p+1}}\left\|f\right\|_{p+3},\\
\left|\xi_3(\varphi,\tau)\right|&\leq\frac{\bar{K}}{\left|\tau\right|^{p+1}}\left\|f\right\|_{p+3},&\quad \left|\xi_4(\varphi,\tau)\right|&\leq\frac{\bar{K}}{\left|\tau\right|^{p}}\left\|f\right\|_{p+3},
\end{aligned}
\end{equation*}
where $\bar{K}=\frac{4}{r}(K_{p-1}+K_{p+3}+K_{p+2}+K_{p-2})K_{\mathbf{U}}K_{\mathbf{U}^{-1}}$. Consequently $\left\|\xi\right\|_p\leq\bar{K}\left\|f\right\|_{p+3}$ yielding $\left\|\mathcal{L}^{-1}\right\|_{p,p+3}\leq\bar{K}$. Thus $\mathcal{L}^{-1}:\mathfrak{X}_{p+3}(\mathbb{T}_h\times D_r^{1})\rightarrow\mathfrak{X}_p(\mathbb{T}_h\times D_r^{1})$ is a bounded right inverse of $\mathcal{L}$. 

In order to prove item \eqref{2:InverseLD1pitem2} let $0<\mu'<\mu$ and consider the problem of solving equation $\mathcal{L}(\xi)=f$ but now with $f\in\mathfrak{Y}_{\mu}(\mathbb{T}_h\times D_r^{1})\subset\mathfrak{X}_p(\mathbb{T}_h\times D_r^{1})$ where the inclusion clearly holds for any $p\in\mathbb{Z}$. Again, we look for a solution using the method of variation of constants. Thus we have to solve equation \eqref{2:Lxif2b} where $f$ can now be written as $f(\varphi,\tau)=e^{-\mu i(\tau-\varphi)}\tilde{f}(\varphi,\tau)$ where $\tilde{f}$ is a bounded analytic function in $\mathbb{T}_h\times D_r^{1}$. Taking into account \eqref{2:inverseofUbreve} we can bound the components of $g:=\mathbf{U}^{-1}f$ in $\mathbb{T}_h\times D_r^1$  as follows,
\begin{equation*}
\left|g_i(\varphi,\tau)\right|\leq\sup_{(\varphi,\tau)\in \mathbb{T}_h\times D_r^1}\left|\tau^9e^{-(\mu-\mu')i(\tau-\varphi)}\right|\frac{\left\|f\right\|_{\mu}K_{\mathbf{U}^{-1}}}{\left|\tau^6e^{\mu'i(\tau-\varphi)}\right|},\quad i=1,\ldots,4\,.
\end{equation*}
Note that the supremum in the previous estimate is finite since $\mu-\mu'>0$. So we can again apply Proposition \ref{2:propositionDxf1} with $\epsilon=\mu'$ and $p=6$ to each component of equation \eqref{2:Lxif2b} and conclude that there exists an analytic vector-valued function $\mathbf{c}=(c_1,c_2,c_3,c_4):\mathbb{T}_h\times D_r^{1}\rightarrow\mathbb{C}^4$, continuous in the closure of its domain such that,
\begin{equation}\label{2:estimateforci}
\left|c_i(\varphi,\tau)\right|\leq \frac{K_{\mathbf{c}}}{\left|\tau^3e^{\mu'i(\tau-\varphi)}\right|}\left\|f\right\|_{\mu},\quad i=1,\ldots,4,
\end{equation}
where,
\begin{equation*}
K_{\mathbf{c}}=\frac{4\sup_{(\varphi,\tau)\in \mathbb{T}_h\times D_r^1}\left|\tau^9e^{-(\mu-\mu')i(\tau-\varphi)}\right|K_{\mathbf{U}^{-1}}K_3}{r}.
\end{equation*}
As before, we define the linear operator $\mathcal{L}_{\mu'}^{-1}$ as $\mathcal{L}_{\mu}^{-1}(f)=\xi$ where $\xi=\mathbf{U}\mathbf{c}$. Moreover, taking into account the estimate \eqref{2:estimateforci} the $\xi_i$'s  can be bounded in $\mathbb{T}_h\times D_r^{1}$ as follows,
\begin{equation*}
\left|\xi_i(\varphi,\tau)\right|\leq\frac{4K_{\mathbf{U}}K_{\mathbf{c}}}{\left|e^{\mu'i(\tau-\varphi)}\right|}\left\|f\right\|_{\mu},\quad i=1,\ldots,4\,.
\end{equation*}
Consequently $\left\|\xi\right\|_{\mu'}\leq16K_{\mathbf{U}}K_{\mathbf{c}}\left\|f\right\|_{\mu}$ yielding $\left\|\mathcal{L}_{\mu}^{-1}\right\|_{\mu,\mu'}\leq16K_{\mathbf{U}}K_{\mathbf{c}}$. Thus $\mathcal{L}_{\mu}^{-1}:\mathfrak{Y}_{\mu}\rightarrow\mathfrak{Y}_{\mu'}$ is the desired bounded right inverse of $\mathcal{L}$. 
\end{proof}

\section{Solutions of a variational equation}\label{2:sectionVariationalEq}
Let $n\geq3$, $\xi\in\mathfrak{X}_{n+4}(\mathbb{T}_h\times D_r^{-})$ and consider the following linear PDE,
\begin{equation}\label{2:variationalequationGammanxi}
\mathcal{D}\mathbf{u}=DX_{H}(\mathbf{\Gamma}_{n+3}+\xi)\mathbf{u}.
\end{equation}
where $\mathbf{\Gamma}_{n+3}$ is a partial sum of the formal separatrix as defined in Remark \ref{2:remarkGamman}. In the following lemma we prove the existence of a fundamental solution of equation \eqref{2:variationalequationGammanxi} that is close to a partial sum of a formal fundamental solution $\hat{\mathbf{U}}$ of the formal variational equation \eqref{2:formalVariationaleqGammaminus}. We shall use this result to prove Proposition \ref{2:normalizedFundamentalMatrix} at the end of the present section.
\begin{lemma}\label{2:FundamentalMatrix}
Let $n\geq 3$ and $\mathbf{U}_n$ be a partial sum of a formal fundamental solution $\hat{\mathbf{U}}$ as defined in Remark \ref{2:remarkUn}. Then there exists $r_0>0$ sufficiently large such that for every $r> r_0$ the equation \eqref{2:variationalequationGammanxi} has a unique analytic fundamental solution $\mathbf{U}:\mathbb{T}_h\times D_{r}^{-}\rightarrow\mathbb{C}^{4\times 4}$ having continuous extension to the closure of its domain, $\mathbf{U}^T J \mathbf{U}=J$ (symplectic) and $\mathbf{U}-\mathbf{U}_n \in \mathfrak{X}_{n+1}^4(\mathbb{T}_h\times D_{r}^{-})$.
\end{lemma}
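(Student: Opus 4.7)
\medskip

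\noindent\textbf{Proof plan.} The plan is to write $\mathbf{U}=\mathbf{U}_N+\mathbf{V}$ for a partial sum $\mathbf{U}_N$ of sufficiently high order $N\geq n+1$ (chosen below) and to find the correction $\mathbf{V}$ via a contraction in the Banach space $\mathfrak{X}_{n+1}^4(\mathbb{T}_h\times D_r^{-})$. Substituting into \eqref{2:variationalequationGammanxi} and subtracting $A_0\mathbf{V}$ from both sides, where $A_0=DX_{H_0}(\mathbf{\Gamma}_0)$ is as in \eqref{2:A0}, transforms the equation for $\mathbf{V}$ into
\begin{equation*}
\mathcal{L}_0\mathbf{V}=\bigl[DX_H(\mathbf{\Gamma}_{n+3}+\xi)-A_0\bigr]\mathbf{V}+R,
\qquad R:=DX_H(\mathbf{\Gamma}_{n+3}+\xi)\mathbf{U}_N-\mathcal{D}\mathbf{U}_N,
\end{equation*}
where $\mathcal{L}_0$ is the linear operator from Section \ref{1:subsectionL0}. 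Splitting
$R=\bigl[DX_H(\mathbf{\Gamma}_{n+3})\mathbf{U}_N-\mathcal{D}\mathbf{U}_N\bigr]+\bigl[DX_H(\mathbf{\Gamma}_{n+3}+\xi)-DX_H(\mathbf{\Gamma}_{n+3})\bigr]\mathbf{U}_N$, Remark \ref{2:remarkUn} controls the first bracket column by column, while the second bracket is controlled by the mean value theorem together with $\xi\in\mathfrak{X}_{n+4}$ and the explicit decay of the columns of $\mathbf{U}_N$. Choosing $N$ large enough (concretely $N=n+1$ suffices after a direct bookkeeping of orders), each column of $R$ lies in $\mathfrak{X}_{n+2}(\mathbb{T}_h\times D_r^{-})$.

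\medskip

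Next, I would apply Theorem \ref{2:TheoremLminus} to $\mathcal{L}_0$: the matrix $\mathbf{U}_0$ from \eqref{2:U0} is a fundamental matrix of $\mathcal{L}_0$, and since $n\geq3$ gives $p=n+1\geq 4\geq 3$, the theorem supplies a bounded right inverse $\mathcal{L}_0^{-1}\colon\mathfrak{X}_{n+2}\to\mathfrak{X}_{n+1}$. This recasts the equation for $\mathbf{V}$ as the fixed point problem
\begin{equation*}
\mathbf{V}=\mathcal{F}(\mathbf{V}):=\mathcal{L}_0^{-1}\!\Bigl(\bigl[DX_H(\mathbf{\Gamma}_{n+3}+\xi)-A_0\bigr]\mathbf{V}\Bigr)+\mathcal{L}_0^{-1}R
\end{equation*}
on $\mathfrak{X}_{n+1}^4(\mathbb{T}_h\times D_r^{-})$. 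The crucial observation is that the multiplication operator $M:=DX_H(\mathbf{\Gamma}_{n+3}+\xi)-A_0$ is \emph{small} in operator norm for large $|\tau|$: writing $M=\bigl[DX_H(\mathbf{\Gamma}_{n+3}+\xi)-DX_H(\mathbf{\Gamma}_0)\bigr]+\bigl[DX_{F}(\mathbf{\Gamma}_0)\bigr]$ with $F=H-H_0$, the first term is $O(\tau^{-3})$ (because $\mathbf{\Gamma}_{n+3}+\xi-\mathbf{\Gamma}_0$ is of higher order than $\mathbf{\Gamma}_0$) and the second is $O(\tau^{-3})$ (because $F$ contains only monomials of weighted order $\geq 5$). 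Consequently $\mathcal{L}_0^{-1}\circ M$ is a bounded operator on $\mathfrak{X}_{n+1}^4$ whose norm is $O(r^{-1})$ as $r\to\infty$. For $r$ above a threshold $r_0$ the map $\mathcal{F}$ is thus a contraction on $\mathfrak{X}_{n+1}^4$ and produces a unique fixed point $\mathbf{V}$, hence a unique analytic solution $\mathbf{U}=\mathbf{U}_N+\mathbf{V}$ of \eqref{2:variationalequationGammanxi} with the claimed asymptotic behaviour.

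\medskip

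To finish, I would verify that $\mathbf{U}^TJ\mathbf{U}=J$. Because $H$ is Hamiltonian, $(DX_H)^TJ+JDX_H=0$, so a direct computation gives $\mathcal{D}(\mathbf{U}^TJ\mathbf{U})=0$. By Lemma \ref{2:LemmaDx0} the matrix $\mathbf{U}^TJ\mathbf{U}$ depends only on $\tau-\varphi$. Since $\hat{\mathbf{U}}^TJ\hat{\mathbf{U}}=J$ at the formal level (Proposition \ref{2:Thformalnormalizedfundmatrix}) and $\mathbf{U}-\mathbf{U}_N\in\mathfrak{X}_{n+1}^4$ decays as $\mathrm{Im}\,\tau\to-\infty$, the constant along characteristics must equal $J$, which in particular implies $\det\mathbf{U}=1$ so that $\mathbf{U}$ is indeed a fundamental matrix.

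\medskip

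\noindent\textbf{Main obstacle.} The principal technical difficulty lies not in the abstract contraction argument but in the bookkeeping of weighted orders: one must pick $N$ and verify that both $R$ and the operator $M$ act with enough $\tau^{-1}$-gain between the different $\mathfrak{X}_p$ spaces so that the composition $\mathcal{L}_0^{-1}\circ M$ maps $\mathfrak{X}_{n+1}^4$ to itself with small norm. This bookkeeping is non-uniform across the four columns (which live in spaces $\mathfrak{X}_{-2},\mathfrak{X}_{-3},\mathfrak{X}_{1},\mathfrak{X}_{2}$ at the unperturbed level) and requires using the single-order gain of Theorem \ref{2:TheoremLminus} judiciously together with the two $\tau^{-1}$-factors that $M$ produces.
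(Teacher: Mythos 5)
Your proof follows essentially the same route as the paper's: decompose $\mathbf{U}=\mathbf{U}_N+\mathbf{V}$, rewrite the equation for $\mathbf{V}$ as $\mathcal{L}_0(\mathbf{V})=\mathbf{B}\mathbf{V}+R$ with $\mathbf{B}=DX_H(\mathbf{\Gamma}_{n+3}+\xi)-A_0$, invert $\mathcal{L}_0$ via Theorem~\ref{2:TheoremLminus}, run a contraction in a weighted Banach space, and obtain symplecticity from $\mathcal{D}\Omega(\mathbf{u},\mathbf{v})=0$, Lemma~\ref{2:LemmaDx0} and the formal identity $\hat{\mathbf{U}}^TJ\hat{\mathbf{U}}=J$. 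The only structural difference is cosmetic: the paper contracts in $\mathfrak{X}_n^4$ from $\mathbf{U}_n$ and then upgrades to $\mathfrak{X}_{n+1}^4$ by re-running the argument one order higher and invoking uniqueness of the fixed point, whereas you take $N=n+1$ and target $\mathfrak{X}_{n+1}^4$ directly. Two details should be tightened. First, for $N>n$ Remark~\ref{2:remarkUn} controls $\mathcal{D}\mathbf{U}_N-DX_H(\mathbf{\Gamma}_{N+3})\mathbf{U}_N$, not $\mathcal{D}\mathbf{U}_N-DX_H(\mathbf{\Gamma}_{n+3})\mathbf{U}_N$; your first bracket therefore generates an extra term $\bigl[DX_H(\mathbf{\Gamma}_{N+3})-DX_H(\mathbf{\Gamma}_{n+3})\bigr]\mathbf{U}_N$ which, while small, must be absorbed into the bookkeeping. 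Second, the decay claim $\mathbf{B}=O(\tau^{-3})$ is not correct uniformly: as displayed in \eqref{2:matrixB}, the bottom-left block of $\mathbf{B}$ (coming from $\partial^2F/\partial q_i\partial q_j$, of weighted order $1$) is only $O(\tau^{-1})$ along the separatrix. What actually gives $\|\mathcal{L}_0^{-1}\circ\mathcal{B}\|=O(r^{-1})$ is the full block-weighted decay structure of $\mathbf{B}$, which maps $\mathfrak{X}_q$ into $\mathfrak{X}_{q+1}$ with a gain of $r^{-1}$; your final estimate is thus correct, but not for the stated pointwise reason.
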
 
\begin{proof}
We look for a solution of equation \eqref{2:variationalequationGammanxi} in the form,
\begin{equation}\label{2:eqUnV}
\mathbf{U}=\mathbf{U}_n+\mathbf{V},
\end{equation}
where $\mathbf{V}:\mathbb{T}_h\times D_{r}^{-}\rightarrow\mathbb{C}^{4\times 4}$ is a 4-by-4 matrix-valued function such that each column of $\mathbf{V}$ belongs to the space $\mathfrak{X}_{n}\left(\mathbb{T}_h\times D_{r}^{-}\right)$ for some $r>0$ (to be chosen later in the proof). Substituting \eqref{2:eqUnV} into the equation \eqref{2:variationalequationGammanxi} we obtain,
\begin{equation*}
\mathcal{D}\mathbf{V}=DX_{H}(\mathbf{\Gamma}_{n+3}+\xi)\mathbf{V}+DX_{H}(\mathbf{\Gamma}_{n+3}+\xi)\mathbf{U}_n-\mathcal{D}\mathbf{U}_n.
\end{equation*}
This last equation can be rewritten in the following form,
\begin{equation}\label{2:E24}
\mathcal{L}_0(\mathbf{V})=\mathbf{B}\mathbf{V}+\mathbf{R}_n,
\end{equation}
where $\mathcal{L}_0$ is defined by formula \eqref{2:LA0}. Moreover
\begin{equation*}
\mathbf{B}=DX_{H}(\mathbf{\Gamma}_{n+3}+\xi)-A_0\quad\mathrm{and}\quad\mathbf{R}_n=DX_{H}(\mathbf{\Gamma}_{n+3}+\xi)\mathbf{U}_n-\mathcal{D}\mathbf{U}_n.
\end{equation*}
Taking into account the definition of $A_0$ (see \eqref{2:A0}) we can write the entries of the matrix $\mathbf{B}$ as follows,
\begin{equation}\label{2:matrixB}
\mathbf{B}=\begin{pmatrix}
\tau^{-2}b_{1,1}&\tau^{-2}b_{1,2}&\tau^{-3}b_{1,3}&\tau^{-3}b_{1,4}\\
\tau^{-2}b_{2,1}&\tau^{-2}b_{2,2}&\tau^{-3}b_{2,3}&\tau^{-3}b_{2,4}\\
\tau^{-1}b_{3,1}&\tau^{-1}b_{3,2}&\tau^{-2}b_{3,3}&\tau^{-2}b_{3,4}\\
\tau^{-1}b_{4,1}&\tau^{-1}b_{4,2}&\tau^{-2}b_{4,3}&\tau^{-2}b_{4,4}
\end{pmatrix},
\end{equation}
where each function $b_{i,j}:\mathbb{T}_h\times D_r^{-}\rightarrow\mathbb{C}$ is analytic and bounded in $\mathbb{T}_h\times D_r^{-}$. Thus, each column of $\mathbf{B}\mathbf{V}$ belongs to $\mathfrak{X}_{n+1}$. On the other hand, Remark \ref{2:remarkUn} implies that each column of $\mathbf{R}_n$ also belongs to $\mathfrak{X}_{n+1}$. Thus, $\mathbf{B}\mathbf{V}+\mathbf{R}_n\in\mathfrak{X}^4_{n+1}$. Since $\mathcal{L}_0$ has a fundamental matrix $\mathbf{U}_0$ given by \eqref{2:U0} we can apply Theorem \ref{2:TheoremLminus} which guarantees the existence of an unique bounded right inverse $\mathcal{L}_0^{-1}:\mathfrak{X}_{n+1}\rightarrow\mathfrak{X}_{n}$ of $\mathcal{L}_0$ for $r>1$. Thus, in order to solve \eqref{2:E24} for $\mathbf{V}$, it is sufficient to find a fixed point of the following operator,
\begin{equation}\label{2:fixedpointeqV}
\mathbf{V}\mapsto \mathcal{L}_0^{-1}\left(\mathbf{B}\mathbf{V}\right)+\mathcal{L}_0^{-1}\left(\mathbf{R}_n\right),
\end{equation}
defined in $\mathfrak{X}^4_{n+1}\left(\mathbb{T}_h\times D_{r}^{-}\right)$ with $r>1$. Note that $\mathbf{B}$ induces a linear operator $\mathcal{B}:\mathfrak{X}_{n}\rightarrow\mathfrak{X}_{n+1}$ naturally defined by $\mathcal{B}(\mathbf{v})=\mathbf{B}\mathbf{v}$. Thus, in order to prove the existence of a fixed point for \eqref{2:fixedpointeqV} it is enough to show that,
\begin{equation}\label{2E22}
\left\|\mathcal{L}_0^{-1}\circ\mathcal{B}\right\|_{n,n}\leq\frac{1}{2},
\end{equation}
for $r>1$ sufficiently large. Indeed, using the previous upper bound one can show that the linear operator defined by \eqref{2:fixedpointeqV} is contracting and an application of the contraction mapping theorem yields the existence and uniqueness of a fixed point $\mathbf{V}\in\mathfrak{X}_{n}^4\left(\mathbb{T}_h\times D_r^{-}\right)$. 

\begin{figure}[t]
  \begin{center}
    \includegraphics[width=2in]{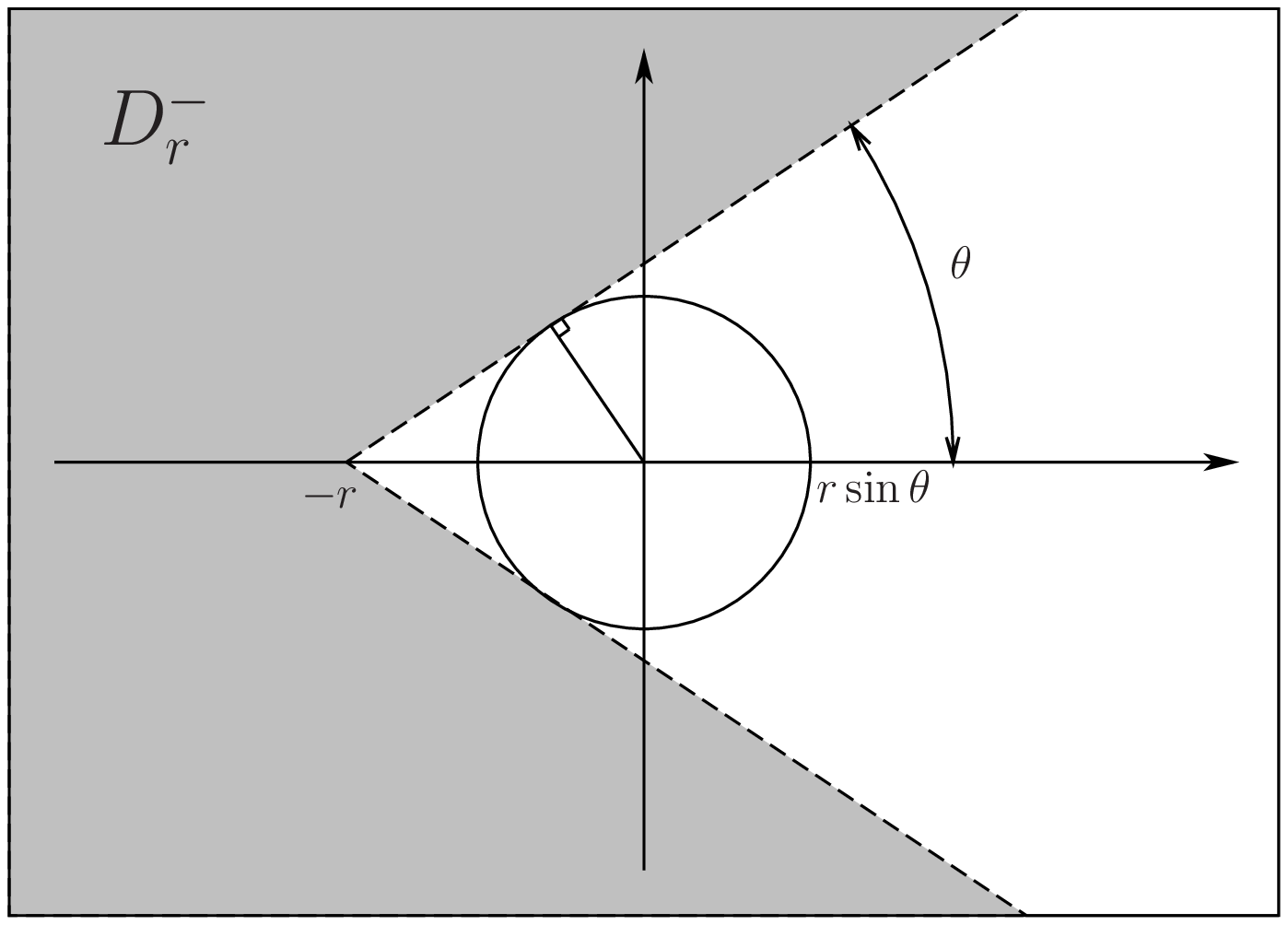}
  \end{center}
  \caption{Domain $D_{r}^{-}$.} 
  \label{2:Drcircle}
\end{figure}

Let us now prove inequality \eqref{2E22}. Given $\mathbf{v}\in\mathfrak{X}_{n}$ we want to bound $\left\|\mathbf{B}\mathbf{v}\right\|_{n+1}$ from above using $\left\|\mathbf{v}\right\|_{n}$. According to \eqref{2:matrixB} we have that,
\begin{equation}\label{2E23}
\mathbf{B}\mathbf{v}=\left(
\tau^{-n-3}\sum_{i=1}^4b_{1,i}v_i,
\tau^{-n-3}\sum_{i=1}^4b_{2,i}v_i,
\tau^{-n-2}\sum_{i=1}^4b_{3,i}v_i,
\tau^{-n-2}\sum_{i=1}^4b_{4,i}v_i\right),
\end{equation}
where $\mathbf{v}=(\tau^{-n-1}v_1,\tau^{-n-1}v_2,\tau^{-n}v_3,\tau^{-n}v_4)$. 
Note that given $r_0>\frac{1}{\sin\theta}$ for every $r>r_0$ we have that $\left|\tau\right|^{-k}\leq\left|\tau\right|^{-1}\leq\frac{1}{r_0\sin\theta}$ in $D_r^{-}$ for $k\in\mathbb{N}$ (see Figure \ref{2:Drcircle}). This observation together with \eqref{2E23} yields
\begin{equation*}
\left\|\mathbf{B}\mathbf{v}\right\|_{n+1}\leq\frac{K_\mathbf{B}}{r_0\sin\theta} \left\|\mathbf{v}\right\|_n,
\end{equation*}
where
\begin{equation*}
K_\mathbf{B}:=\max_{i,j=1,\ldots,4}\left\{\sup_{(\varphi,\tau)\in \mathbb{T}_h\times D_{r}^{-}}\left|b_{i,j}(\varphi,\tau)\right|\right\}<\infty.
\end{equation*}
This proves that the linear operator $\mathcal{B}$ is bounded, $\left\|\mathcal{B}\right\|_{n+1,n}\leq\frac{K_\mathbf{B}}{r_0\sin\theta}$. Now taking into account that $\mathcal{L}_0^{-1}$ is also bounded by Theorem \ref{2:TheoremLminus} we get,
\begin{equation*}
\left\|\mathcal{L}_0^{-1}\circ\mathcal{B}\right\|_{n,n}\leq\left\|\mathcal{L}_0^{-1}\right\|_{n,n+1}\left\|\mathcal{B}\right\|_{n+1,n}\leq\frac{K_\mathbf{B}\left\|\mathcal{L}_0^{-1}\right\|_{n,n+1}}{r_0\sin\theta}.
\end{equation*}
Therefore if $$
r_0>\max\left\{\frac{1}{\sin\theta},\frac{2K_\mathbf{B}\left\|\mathcal{L}_0^{-1}\right\|_{n,n+1}}{\sin\theta_0}\right\},
$$ 
then for every $r>r_0$ the inequality \eqref{2E22} holds. 
Finally, note that we can repeat the previous arguments with $n+1$ instead of $n$ and obtain a unique $\tilde{\mathbf{V}}\in\mathfrak{X}_{n+1}^4\left(\mathbb{T}_h\times D_{\tilde{r}}^{-}\right)$ for $\tilde{r}$ sufficiently large such that $\tilde{\mathbf{U}}=\mathbf{U}_{n+1}+\tilde{\mathbf{V}}$ solves equation \eqref{2:variationalequationGammanxi}. It follows that $\tilde{\mathbf{U}}-\mathbf{U}_{n}\in\mathfrak{X}_{n+1}^4\left(\mathbb{T}_h\times D_{\tilde{r}}^{-}\right)$ and due to the uniqueness of the fixed point we conclude that $\tilde{\mathbf{U}}-\mathbf{U}_{n}=\mathbf{V}$. Thus $\mathbf{V}\in\mathfrak{X}_{n+1}^4\left(\mathbb{T}_h\times D_{r}^{-}\right)$ for every $r$ sufficiently large. In order to conclude the proof of the theorem we just need to show that $\mathbf{U}$ is in fact symplectic. This is not difficult, as it follows from Proposition \ref{2:LemmaDx0}, $\hat{\mathbf{U}}^T J \hat{\mathbf{U}}=J$ and the fact that if $\mathbf{u}$ and $\mathbf{v}$ are columns of $\mathbf{U}$ then $\mathcal{D}\Omega(\mathbf{u},\mathbf{v})=0$.
\end{proof}
Now using the previous lemma we can proof Proposition \ref{2:normalizedFundamentalMatrix}.
\begin{proof}[Proof of Proposition \ref{2:normalizedFundamentalMatrix}]
According to Lemma \ref{2:FundamentalMatrix} we know that for every $n\geq3$ there exists $r_0>0$ such that for every $r>r_0$ there exists a unique fundamental solution $\mathbf{U}$ such that $\mathbf{U}-\mathbf{U}_n\in\mathfrak{X}_{n+1}(\mathbb{T}_h\times D_r^{-})$ and $\mathbf{U}^T J \mathbf{U}= J$. The uniqueness of the solution implies that the third and fourth columns of $\mathbf{U}$ are $\partial_\varphi\mathbf{\Gamma}^-$ and $\partial_\tau\mathbf{\Gamma}^-$ respectively. Thus $\mathbf{U}$ is a normalized fundamental solution. To complete the proof it remains to show that $\mathbf{U}$ is in fact independent of $n$. Indeed for every $n\geq3$, we can trace the proof of Lemma \ref{2:FundamentalMatrix} and see that, by increasing $r$ if necessary, we can make $\left\|\mathbf{U}-\mathbf{U}_3\right\|_3$ as small as we want in order to apply the contraction mapping theorem. Thus, the uniqueness of the fixed point implies that $\mathbf{U}$ is in fact independence of $n$. 
\end{proof}
\section{Proof of Theorem \ref{2:theoremunstableseparatrix}}\label{2:sectionunstableseparatrix}
Let $n\geq6$ and $r>0$ (to be chosen later in the proof). We look for a solution of equation \eqref{2:DH} of the form,
\begin{equation}\label{2:Gammaxi}
\mathbf{\Gamma}^{-}=\mathbf{\Gamma}_n+\xi,
\end{equation}
where $\xi\in\mathfrak{X}_{n}\left(\mathbb{T}_h\times D_r^{-}\right)$ and $\mathbf{\Gamma}_n$ is a partial sum of the formal separatrix as defined in Remark \ref{2:remarkGamman}. Substituting \eqref{2:Gammaxi} into equation \eqref{2:DH} we obtain,
\begin{equation*}
\mathcal{D}\xi=X_{H}(\mathbf{\Gamma}_n+\xi)-\mathcal{D}\mathbf{\Gamma}_n.
\end{equation*}
Now we rewrite the previous equation as follows,
\begin{equation}\label{2:eqLxi}
\mathcal{L}(\xi)=\mathbf{Q}(\xi)+\mathbf{R}_n,
\end{equation}
where $\mathcal{L}:\mathfrak{X}_{n}\rightarrow\mathfrak{X}_{n}$ is a linear operator acting by $\mathcal{L}(\xi)=\mathcal{D}\xi-DX_{H}(\mathbf{\Gamma}_n)\xi$ and
\begin{equation*}
\mathbf{Q}(\xi)=X_H(\mathbf{\Gamma}_n+\xi)-X_H(\mathbf{\Gamma}_n)-DX_H(\mathbf{\Gamma}_n)\xi,\qquad\mathbf{R}_n=X_H(\mathbf{\Gamma}_n)-\mathcal{D}\mathbf{\Gamma}_n.
\end{equation*}
Our goal is to solve equation \eqref{2:eqLxi} with respect to $\xi$. To that end we will invert the linear operator $\mathcal{L}$ and obtain a new equation from which we can apply a fixed point argument to get the desired solution. According to Theorem \ref{2:TheoremLminus} we can invert $\mathcal{L}$ as long as it has a fundamental matrix $\mathbf{U}$. Since $n\geq6$, the existence of a fundamental matrix follows from Theorem \ref{2:FundamentalMatrix}. Thus, there exists an $r_0>1$ such that for every $r>r_0$ the linear operator $\mathcal{L}$ has a fundamental matrix $\mathbf{U}$ such that $\mathbf{U}-\mathbf{U}_{n-3}\in\mathfrak{X}_{n-2}^4$. Hence, we can apply Theorem \ref{2:TheoremLminus} to get a unique bounded linear operator $\mathcal{L}^{-1}:\mathfrak{X}_{n+1}\rightarrow\mathfrak{X}_{n}$ such that $\mathcal{L}\mathcal{L}^{-1}=\mathrm{Id}$. 

Now let us prove that given $\xi\in\mathfrak{X}_{n}\left(\mathbb{T}_h\times D_r^{-}\right)$ the function $\mathbf{Q}(\xi)+\mathbf{R}_n$ belongs to $\mathfrak{X}_{n+1}\left(\mathbb{T}_h\times D_r^{-}\right)$ for $r$ sufficiently large.
First note that Remark \ref{2:remarkGamman} implies that $\mathbf{R}_n \in \mathfrak{X}_{n+1}\left(\mathbb{T}_h\times D_r^{-}\right)$ for any $r>0$. So it remains to show that $\mathbf{Q}(\xi) \in \mathfrak{X}_{n+1}\left(\mathbb{T}_h\times D_r^{-}\right)$ for $r>0$ sufficiently large. Denote the components of the vector field $X_H$ by $v_i$ and consider the following auxiliary functions,
\begin{equation*}
\gamma_i(t)=v_i(\mathbf{\Gamma}_n+t\xi)-v_i(\mathbf{\Gamma}_n)-t\nabla v_i(\mathbf{\Gamma}_n)\xi,\quad i=1,\ldots,4.
\end{equation*}
Note that $\gamma_i(0)=0$ for $i=1,\ldots,4$ and $\mathbf{Q}(\xi)=(\gamma_1(1),\gamma_2(1),\gamma_3(1),\gamma_4(1))^{T}$. We can integrate by parts each function $\gamma_i$ to obtain,
\begin{equation*}
\gamma_i(1)=\int_{0}^{1}(1-s)\gamma_i''(s)ds,\quad i=1,\ldots,4.
\end{equation*}
By the intermediate value theorem there exist $t_i \in [0,1]$ for $i=1,\ldots,4$ such that $\gamma_i(1)=(1-t_i)\gamma_i''(t_i)$ where the second derivative of $\gamma_i$ can be easily computed
\begin{equation}\label{2:gamma2dotestimate}
\gamma_i''(s)=\xi^{T}\left.\mathrm{Hess}\left(v_i\right)\right|_{\mathbf{\Gamma}_n+s\xi}\xi.
\end{equation}
Taking into account that $\xi\in\mathfrak{X}_{n}$ and the fact that $X_H$ is analytic we obtain the following estimate,
\begin{equation*}
\left|\gamma_i(1)\right|\leq 8\left\|H\right\|_{C^3}\left|\tau\right|^{-2n}\left\|\xi\right\|_n^2,
\end{equation*}
for $r>1$ where $\left\|\cdot\right\|_{C^3}$ is the standard $C^3$-norm. 
Using the previous upper bound and the fact that given $r_1>\max\left\{r_0,\frac{1}{\sin\theta}\right\}$ and every $r>r_1$ we have that $\left|\tau\right|^{-2}\leq\left|\tau\right|^{-1}$ for $\tau\in D_r^{-}$, then we can estimate $\left\|\mathbf{Q}(\xi)\right\|_{n+1}$ in the following way,
\begin{equation}\label{2:estimateQxi}
\left\|\mathbf{Q}(\xi)\right\|_{n+1}\leq2^5\left\|H\right\|_{C^3}\left\|\xi\right\|_n^2\sup_{\tau\in D^{-}_{r}}\left|\tau\right|^{-n+2}\leq \frac{2^5\left\|H\right\|_{C^3}\left\|\xi\right\|_n^2}{(r_1\sin\theta)^{n-2}},
\end{equation}
where this last estimate holds since $n\geq6$. Thus $\mathbf{Q}(\xi) \in \mathcal{X}_{n+1}\left(\mathbb{T}_h\times D_r^{-}\right)$ as we wanted to show. 
Now in order to solve equation \eqref{2:eqLxi}, it is sufficient to find a fixed point in $\mathfrak{X}_{n}\left(\mathbb{T}_h\times D_r^{-}\right)$ of the following non-linear operator,
\begin{equation*}
\xi\mapsto\mathcal{L}^{-1}(\mathbf{Q}(\xi))+\mathcal{L}^{-1}(\mathbf{R}_n).
\end{equation*}
Let us denote this operator by $\mathcal{G}$. So in order to apply the contraction mapping theorem we have to check that $\mathcal{G}$ is contracting in some invariant ball
$$
\mathfrak{B}_\rho=\left\{\xi \in \mathfrak{X}_{n}\:\colon\: \left\|\xi\right\|_n\leq\rho\right\},
$$
where $\rho>0$. First we prove that $\mathcal{G}(\mathfrak{B}_\rho)\subseteq\mathfrak{B}_\rho$ for some $\rho>0$. Let $\rho=2\left\|\mathcal{L}^{-1}\right\|_{n,n+1}\left\|\mathbf{R}_n\right\|_{n+1}$ and $\xi \in \mathfrak{B}_\rho$, then \eqref{2:estimateQxi} implies that,
\begin{equation*}\begin{split}
\left\|\mathcal{L}^{-1}(\mathbf{Q}(\xi))-\mathcal{L}^{-1}(\mathbf{R}_n)\right\|_n\leq& \left\|\mathcal{L}^{-1}\right\|_{n,n+1}\left(\frac{2^5\left\|H\right\|_{C^3}\left\|\xi\right\|_n^2}{(r_1\sin\theta)^{n-2}}+\left\|\mathbf{R}_n\right\|_{n+1}\right)\leq \rho,
\end{split}\end{equation*}
provided, 
\begin{equation}\label{2:choiceofr0Gamma}
r_1\geq\frac{(2^6\left\|H\right\|_{C^3}\left\|\mathcal{L}^{-1}\right\|_{n,n+1}\rho)^{\frac{1}{n-2}}}{\sin\theta}.
\end{equation}
Thus $\mathcal{G}$ leaves invariant a closed ball $\mathfrak{B}_\rho$. To check that $\mathcal{G}$ is contracting in $\mathfrak{B}_\rho$ we let $\xi_1,\xi_2 \in \mathfrak{B}_\rho$ and consider a segment connecting both points, i.e., $\gamma_t=(1-t)\xi_1+t\xi_2$. Clearly $\gamma_t \in \mathfrak{B}_\rho$ for all $t\in [0,1]$. Similar as before we define the following auxiliary functions,
\begin{equation*}
\psi_i(t)=v_i(\mathbf{\Gamma}_n+\gamma_t)-v_i(\mathbf{\Gamma}_n)-\nabla v_i(\mathbf{\Gamma}_n)\gamma_t,\quad i=1,\ldots,4.
\end{equation*}
Note that,
\begin{equation*}\begin{split}
\mathbf{Q}(\xi_1)&=(\psi_1(0),\psi_2(0),\psi_3(0),\psi_4(0))^{T},\\
\mathbf{Q}(\xi_2)&=(\psi_1(1),\psi_2(1),\psi_3(1),\psi_4(1))^{T}.
\end{split}\end{equation*}
By the mean value theorem there exist $t_i \in [0,1]$ for $i=1,\ldots,4$ such that $\psi_i(1)-\psi_i(0)=\psi_i'(t_i)$. Differentiating the functions $\psi_i$ we obtain,
\begin{equation}\label{2:differencexis}
\psi_i(1)-\psi_i(0)=\left(\nabla v_i\left(\mathbf{\Gamma}_n+\gamma_{t_i}\right)-\nabla v_i\left(\mathbf{\Gamma}_n\right)\right)\cdot\left(\xi_2-\xi_1\right).
\end{equation}
Thus, we can bound the differences \eqref{2:differencexis} as follows,
\begin{equation*}
\left|\psi_i(1)-\psi_i(0)\right|\leq 8\left\|H\right\|_{C^3}\rho \left|\tau\right|^{-2n}\left\|\xi_2-\xi_1\right\|_n,
\end{equation*}
which implies that,
\begin{equation*}
\left\|\mathbf{Q}(\xi_2)-\mathbf{Q}(\xi_1)\right\|_{n+1}\leq \frac{2^5 \rho \left\|H\right\|_{C^3}}{(r_1\sin \theta)^{n-2}}\left\|\xi_2-\xi_1\right\|_n.
\end{equation*}
Applying the linear operator $\mathcal{L}^{-1}$ and taking into account \eqref{2:choiceofr0Gamma} we get,
\begin{equation*}\begin{split}
\left\|\mathcal{L}^{-1}(\mathbf{Q}(\xi_2)-\mathbf{Q}(\xi_1))\right\|_{n}&\leq\left\|\mathcal{L}^{-1}\right\|_{n,n+1} \frac{2^5 \rho \left\|H\right\|_{C^3}}{(r_1\sin\theta)^{n-2}}\left\|\xi_2-\xi_1\right\|_n\\
&\leq\frac{1}{2}\left\|\xi_2-\xi_1\right\|_n,
\end{split}\end{equation*}
which proves that $\left\|\mathcal{G}(\xi_2)-\mathcal{G}(\xi_1)\right\|_n\leq\frac{1}{2}\left\|\xi_2-\xi_1\right\|_n$ in $\mathfrak{B}_\rho$. Thus $\mathcal{G}$ is contracting in the ball $\mathfrak{B}_\rho$ provided $r>r_1$ where,
\begin{equation*}
r_1> \max\left\{r_0,\frac{1}{\sin\theta},\frac{(2^6\left\|H\right\|_{C^3}\left\|\mathcal{L}^{-1}\right\|_{n,n+1}\rho)^{\frac{1}{n-2}}}{\sin\theta}\right\}.
\end{equation*} 
To conclude the proof of the theorem let us check that the unique function $\mathbf{\Gamma}^{-}$ obtained with $n\geq6$ is in fact independent of $n$. Increasing $r>0$ the distance $\left\|\mathbf{\Gamma}^{-}-\mathbf{\Gamma}_6\right\|_6$ can be made as small as we want in order to apply the contraction mapping theorem for $n=6$. Due to the uniqueness of the fixed point we conclude that the function $\mathbf{\Gamma}^{-}$ is in fact independent of $n$. Finally for every $n\geq0$ there exists $r>0$ sufficiently large such that, 
\begin{equation*}
\mathbf{\Gamma}^{-}-\mathbf{\Gamma}_n=\mathbf{\Gamma}^{-}-\mathbf{\Gamma}_{n+1}+\mathbf{\Gamma}_{n+1}-\mathbf{\Gamma}_n\in\mathfrak{X}_{n+1}(\mathbb{T}_h\times D_r^{-}).
\end{equation*} 
Consequently $\mathbf{\Gamma}^{-}\sim\hat{\mathbf{\Gamma}}$ and the proof is complete.
\qed

\section{Proof of Theorem \ref{2:theoremdeltaasymptoticformula}}\label{2:theoremasympdelta}

Let $\xi_{*}=\mathbf{\Gamma}^{+}-\mathbf{\Gamma}^{-}$. Note that since both $\mathbf{\Gamma}^{\pm}$ have the same asymptotic expansion $\hat{\mathbf{\Gamma}}$ then $\xi_{*}\in\mathfrak{X}_{n}(\mathbb{T}_h\times D_{r}^1)$ for every $n\in\mathbb{N}$ where,
$$
D_r^1=D_r^{+}\cap D_r^{-}\cap \left\{\tau\in\mathbb{C}\:|\: \mathrm{Im}\,\tau<-r\right\}.
$$ 
Let us outline the main steps of the proof. In the first step we write an integral equation for $\xi_*$ and derive, using a fixed point argument, a sequence of functions $\left\{\xi_k\right\}_{k\geq0}$ converging to $\xi_*$. In a second step we prove that the sequence $\left\{\xi_k\right\}_{k\geq0}$ is uniformly bounded (with respect to $k$) by a function that is exponentially small as $\tau\rightarrow\infty$ in $D_r^1$. This is proved by exploiting a recursive equation that is used to define the sequence of functions. In the third and final step of the proof we derive the constant $\Theta^{-}$  and obtain the desired asymptotic formula for $\mathbf{\Gamma}^{+}-\mathbf{\Gamma}^{-}$, thus completing the proof of the theorem. So let us start with,

\bigskip

\textbf{Step 1.} For definiteness let us henceforth suppose that $n=5$. We want to prove the following:

\begin{quotation}
	\textit{For $r>0$ sufficiently large there exists a sequence $\left\{\xi_k\right\}_{k\geq0}$ in $\mathfrak{X}_{5}(\mathbb{T}_h\times D_{r}^1)$ such that $\xi_k\rightarrow\xi_*$ as $k\rightarrow +\infty$.}
\end{quotation}

To prove this we write a fixed point equation for $\xi_*$ and use the contraction mapping theorem. Using the fact that both $\mathbf{\Gamma}^{-}$ and $\mathbf{\Gamma}^{+}$ are solutions of equation \eqref{2:DH} we can write, 
\begin{equation*}
\mathcal{D}\xi_{*}-DX_{H}(\mathbf{\Gamma}^{-})\xi_{*}=X_{H}(\mathbf{\Gamma}^{-}+\xi_{*})-X_H(\mathbf{\Gamma}^{-})-DX_H(\mathbf{\Gamma}^{-})\xi_{*}.
\end{equation*}
Or equivalently,
\begin{equation}\label{2:NonlinearoperatorLxistar}
\mathcal{L}(\xi_{*})=\mathbf{Q}(\xi_{*}),
\end{equation}
where $\mathcal{L}:\mathfrak{X}_{5}(\mathbb{T}_h\times D_r^1)\rightarrow\mathfrak{X}_{5}(\mathbb{T}_h\times D_r^1)$ is the linear operator defined by $\mathcal{L}(\xi)=\mathcal{D}\xi-DX_{H}(\mathbf{\Gamma}^{-})\xi$ and
\begin{equation*} \mathbf{Q}(\xi_{*})=X_{H}(\mathbf{\Gamma}^{-}+\xi_{*})-X_H(\mathbf{\Gamma}^{-})-DX_H(\mathbf{\Gamma}^{-})\xi_{*}.
\end{equation*}
Now we construct a right inverse of $\mathcal{L}$. According to Proposition \ref{2:normalizedFundamentalMatrix} there exists $r_1>0$ and a unique normalized fundamental solution $\mathbf{U}:\mathbb{T}_h\times D_{r_1}^1\rightarrow\mathbb{C}^{4\times4}$ such that $\mathbf{U}\sim\hat{\mathbf{U}}$.  Thus $\mathbf{U}$ is a fundamental matrix for $\mathcal{L}$ provided $r>r_1$. For $r>\max\left\{2,\frac{2\tan\theta}{1-\tan\theta},r_1\right\}$ we can apply Theorem \ref{2:InverseLD1p} which guarantees the existence of a bounded right inverse $\mathcal{L}^{-1}:\mathfrak{X}_{8}(\mathbb{T}_h\times D_r^1)\rightarrow\mathfrak{X}_{5}(\mathbb{T}_h\times D_r^1)$ of $\mathcal{L}$, i.e., $\mathcal{L}\mathcal{L}^{-1}=\mathrm{Id}$.
Moreover, similar estimates as in the proof of Theorem \ref{2:theoremunstableseparatrix} (see \eqref{2:estimateQxi}) show that for $r>0$ sufficiently large we have,
\begin{equation}\label{2:estimateQxin3}
\left\|\mathbf{Q}(\xi_{*})\right\|_{8}\leq \frac{2^5\left\|H\right\|_{C^3}\left\|\xi_{*}\right\|_5^2}{r}.
\end{equation}
Thus $\mathbf{Q}(\xi_{*}) \in \mathfrak{X}_{8}$.
Consequently,
\begin{equation}\label{2:xi0}
\xi_0:=\xi_{*}-\mathcal{L}^{-1}(\mathbf{Q}(\xi_{*})),
\end{equation}
belongs to the kernel of $\mathcal{L}$. According to Theorem \ref{2:InverseLD1p} there exists a $2\pi$-periodic analytic function $\mathbf{c}_0:\mathbb{H}_{r-h}\rightarrow \mathbb{C}^4$, continuous in the closure of its domain, such that $\xi_0(\varphi,\tau)=\mathbf{U}(\varphi,\tau)\mathbf{c}_0(\tau-\varphi)$. The domain of $\mathbf{c}_0$ is a half plane,
\begin{equation*}
\mathbb{H}_{r-h}=\left\{s\in\mathbb{C}\:\colon\:\mathrm{Im}(s)<-r+h\right\}. 
\end{equation*}
Thus \eqref{2:xi0} implies that,
\begin{equation*}
\xi_{*}=\mathcal{L}^{-1}(\mathbf{Q}(\xi_{*}))+\mathbf{U}\mathbf{c}_0,
\end{equation*}
and the function $\xi_{*}$ is a fixed point of the nonlinear operator,
\begin{equation}\label{2:Nonlinearoperatorxi}
\xi\mapsto \mathcal{L}^{-1}(\mathbf{Q}(\xi))+\mathbf{U}\mathbf{c}_0,
\end{equation}
which is defined in $\mathfrak{X}_{5}(\mathbb{T}_h\times D_{r}^1)$. Let $\rho:=2\left\|\mathbf{U}\mathbf{c}_0\right\|_5$. Similar estimates as in the proof of Theorem \ref{2:theoremunstableseparatrix} show that the nonlinear operator defined in \eqref{2:Nonlinearoperatorxi} is contracting in the ball $\mathfrak{B}_\rho=\left\{\xi\in\mathfrak{X}_{5}\:|\:\left\|\xi\right\|_5\leq\rho\right\}$ provided,
\begin{equation*}
r>2^6\left\|\mathcal{L}^{-1}\right\|_{5,8}\left\|H\right\|_{C^3}\rho.
\end{equation*}
Thus, by the contraction mapping theorem, the sequence $\left\{\xi_k\right\}$ defined by,
\begin{equation}\label{2:seqxik}
\xi_{k+1}=\mathcal{L}^{-1}(\mathbf{Q}(\xi_k))+\mathbf{U}\mathbf{c}_0,\qquad k\geq 0,
\end{equation}
converges to $\xi_{*}$, i.e., $\left\|\xi_k-\xi_{*}\right\|_5\rightarrow 0$ as $k\rightarrow\infty$. 

\bigskip

\textbf{Step 2.} It is convenient to estimate the functions $\xi_k$ using the following sup-norm: given a bounded analytic function $g=(g_1,\ldots,g_4):\mathbb{T}_h\times D_r^1\rightarrow\mathbb{C}^4$ let,
\begin{equation}\label{2:defnormg}
\left\|g\right\|=\sup_{(\varphi,\tau)\in \mathbb{T}_h\times D_r^1}\sum_{i=1}^4\left|g_i(\varphi,\tau)\right|.
\end{equation}
In the following we want to prove:

\begin{quotation}
	\textit{There exist $C_*>0$ and $r>0$ sufficiently large such that for every $k\geq0$ we have $\left\|e^{i(\tau-\varphi)}\mathbf{U}^{-1}\xi_k\right\|\leq C_*$.}
\end{quotation}

In order to prove this uniform estimate we define a new sequence of functions:
\begin{equation}\label{2:definitiontildexik}
\zeta_k(\varphi,\tau)=e^{i(\tau-\varphi)}\mathbf{U}^{-1}(\varphi,\tau)\xi_k(\varphi,\tau),\quad\forall k\geq0.
\end{equation}
Let $C_k:=\left\|\zeta_k\right\|$. We want to prove that there exists $C_{*}>0$ and $r>0$ sufficiently large such that $C_k\leq C_*$ for all $k\geq0$. 

To that end, we construct another right inverse of $\mathcal{L}$. Fix arbitrary small positive real numbers $\epsilon,\epsilon'\in\mathbb{R}^+$ such that $\epsilon<\epsilon'$ and define $\mu:=2-\epsilon$ and $\mu':=2-\epsilon'$. Since $0<\mu'<\mu$ we can apply Theorem \ref{2:InverseLD1p} which guarantees the existence of a bounded right inverse $\mathcal{L}_{\mu}^{-1}:\mathfrak{Y}_{\mu}(\mathbb{T}_h\times D_r^1)\rightarrow\mathfrak{Y}_{\mu'}(\mathbb{T}_h\times D_r^1)$ of $\mathcal{L}$. Using \eqref{2:definitiontildexik} and similar estimates as in the proof of the Theorem \ref{2:theoremunstableseparatrix} (see \eqref{2:gamma2dotestimate}) show that the components of $\mathbf{Q}(\xi_k)$ can be bounded by,
\begin{equation*}
2^7\left\|H\right\|_{C^3}K_{\mathbf{U}}^2\left|e^{-2i(\tau-\varphi)}\tau^6\right|C_k^2,
\end{equation*}
in $\mathbb{T}_h\times D_r^1$. Thus,
\begin{equation}\label{2:estimateQxikmu}
\left\|\mathbf{Q}(\xi_k)\right\|_{\mu}=\left\|e^{i(2-\epsilon)(\tau-\varphi)}\mathbf{Q}(\xi_k)\right\|\leq 2^9\left\|H\right\|_{C^3}K_{\mathbf{U}}^2 r^6e^{(h-r)\epsilon}C_k^2,
\end{equation}
for values of $r=O(\epsilon^{-1})$.
Hence $\mathcal{L}^{-1}(\mathbf{Q}(\xi_k))-\mathcal{L}_{\mu'}^{-1}(\mathbf{Q}(\xi_k))$ belongs to the kernel of $\mathcal{L}$ and by Theorem \ref{2:InverseLD1p} we known that there exists a $2\pi$-periodic analytic function $\mathbf{c}_k:\mathbb{H}_{r-h}\rightarrow \mathbb{C}^4$, continuous in the closure of its domain such that,
\begin{equation}\label{2:UckLL}
\mathbf{U}\mathbf{c}_k=\mathcal{L}^{-1}(\mathbf{Q}(\xi_k))-\mathcal{L}_{\mu}^{-1}(\mathbf{Q}(\xi_k)).
\end{equation}
Taking into account \eqref{2:definitiontildexik} and \eqref{2:UckLL} we can rewrite the recursive formula \eqref{2:seqxik} as follows,
\begin{equation}\label{2:seqxikmub}
\zeta_{k+1}=e^{i(\tau-\varphi)}\mathbf{U}^{-1}\mathcal{L}_{\mu}^{-1}(\mathbf{Q}(\xi_k))+e^{i(\tau-\varphi)}\mathbf{c}_k+e^{i(\tau-\varphi)}\mathbf{c}_0.
\end{equation}
In the following we estimate the norm of the functions in the right-hand-side of \eqref{2:seqxikmub}. We will also need the norm induced by \eqref{2:defnormg} on the space of 4-by-4 matrix-valued functions $G=(G_{i,j}):\mathbb{T}_h\times D_r^1\rightarrow\mathbb{C}^{4\times 4}$,
\begin{equation*}
\left\|G\right\|=\max_{j=1,\ldots,4}\sup_{(\varphi,\tau)\in \mathbb{T}_h\times D_r^1}\sum_{i=1}^4\left|G_{i,j}(\varphi,\tau)\right|.
\end{equation*}
Note that given an analytic function $\gamma:D_r^1\rightarrow\mathbb{C}$ such that $\gamma(\tau)=O(\tau^{-3})$ we have,
\begin{equation}\label{2:estimategammaU}
\left\|\gamma\mathbf{U}^{-1}\right\|\leq4K_{\mathbf{U}^{-1}}\sup_{\tau\in D_r^1}\left|\tau^{3}\gamma(\tau)\right|.
\end{equation}
Let us start estimating the norm of the first term in \eqref{2:seqxikmub}. 
Taking into account \eqref{2:estimategammaU} we obtain,
\begin{equation*}\begin{split}
\left\|e^{i(\tau-\varphi)}\mathbf{U}^{-1}\right.&\left.\mathcal{L}_{\mu}^{-1}(\mathbf{Q}(\xi_k))\right\|\leq\left\|e^{-(\mu'-1)i(\tau-\varphi)}\mathbf{U}^{-1}\right\|\left\|e^{\mu' i(\tau-\varphi)}\mathcal{L}_{\mu}^{-1}(\mathbf{Q}(\xi_k))\right\|\\
&\leq 4K_{\mathbf{U}^{-1}}\sup_{(\varphi,\tau)\in \mathbb{T}_h\times D_r^1}\left|\tau^{3}e^{-(\mu'-1)i(\tau-\varphi)}\right|\left\|\mathcal{L}_{\mu}^{-1}(\mathbf{Q}(\xi_k))\right\|_{\mu'}.
\end{split}
\end{equation*}
Thus, \eqref{2:estimateQxikmu} implies that
\begin{equation}\label{2:estimateeLmuprime}
\left\|e^{i(\tau-\varphi)}\mathbf{U}^{-1}\mathcal{L}_{\mu}^{-1}(\mathbf{Q}(\xi_k))\right\|\leq M_1(r) e^{-\frac{1}{2}(r-h)} C_k^2,
\end{equation}
where
\begin{equation}\label{2:constantKbar}
M_1(r)=2^{11}K_{\mathbf{U}^{-1}}K_{\mathbf{U}}^2\left\|\mathcal{L}_{\mu}^{-1}\right\|_{\mu',\mu}\left\|H\right\|_{C^3}r^9e^{-(\frac{1}{2}-(\epsilon'-\epsilon))(r-h)}.
\end{equation}
Clearly $M_1(r)=O(1)$ since $\epsilon'-\epsilon>0$ is arbitrarily small. Now we deal with the second term in equation \eqref{2:seqxikmub}. Taking into account \eqref{2:UckLL} we write,
\begin{equation}\label{2:equationck}
\mathbf{c}_k=\mathbf{U}^{-1}\mathcal{L}^{-1}(\mathbf{Q}(\xi_k))-\mathbf{U}^{-1}\mathcal{L}_{\mu}^{-1}(\mathbf{Q}(\xi_k)).
\end{equation}
Let us estimate each term of \eqref{2:equationck} separately. Using \eqref{2:estimategammaU} we have,
\begin{equation*}\begin{split}
\left\|\mathbf{U}^{-1}\mathcal{L}^{-1}(\mathbf{Q}(\xi_k))\right\|&\leq\left\|\tau^{-5}\mathbf{U}^{-1}\right\|\left\|\tau^5\mathcal{L}^{-1}(\mathbf{Q}(\xi_k))\right\|\\
&\leq4K_{\mathbf{U}^{-1}}\sup_{\tau\in D_r^1}\left|\tau^{-2}\right|\left\|\mathcal{L}^{-1}(\mathbf{Q}(\xi_k))\right\|_5\\
&\leq 4K_{\mathbf{U}^{-1}}\left\|\mathcal{L}^{-1}\right\|_{5,8}\left\|\mathbf{Q}(\xi_k)\right\|_8.
\end{split}
\end{equation*}
Moreover, by \eqref{2:definitiontildexik} we have that $\left\|\xi_k\right\|_5\leq 4K_{\mathbf{U}} r^9e^{-(r-h)}C_k$, which together with \eqref{2:estimateQxin3} imply that,
\begin{equation}\label{2:estimateQxik}
\left\|\mathbf{Q}(\xi_{k})\right\|_{8}\leq 2^9\left\|H\right\|_{C^3}K_{\mathbf{U}}^2 r^{17}e^{2h}e^{-2r}C_k^2.
\end{equation}
Thus,
\begin{equation*}
\left\|\mathbf{U}^{-1}\mathcal{L}^{-1}(\mathbf{Q}(\xi_k))\right\|\leq 2^{11}\left\|H\right\|_{C^3}K_{\mathbf{U}}^2K_{\mathbf{U}^{-1}}\left\|\mathcal{L}^{-1}\right\|_{5,8} r^{17}e^{-2(r-h)}C_k^2.
\end{equation*}
On the other hand, the second term of \eqref{2:equationck} can be estimate as follows,
\begin{equation*}\begin{split}
\left\|\mathbf{U}^{-1}\mathcal{L}_{\mu}^{-1}(\mathbf{Q}(\xi_k))\right\|&\leq\left\|e^{-\mu'i(\tau-\varphi)}\mathbf{U}^{-1}\right\|\left\|e^{\mu'i(\tau-\varphi)}\mathcal{L}^{-1}_{\mu}(\mathbf{Q}(\xi_k))\right\|\\
&\leq 4K_{\mathbf{U}^{-1}}\sup_{(\varphi,\tau)\in \mathbb{T}_h\times D_r^1}\left|\tau^3e^{-(2-\epsilon')i(\tau-\varphi)}\right|\left\|\mathcal{L}_{\mu}^{-1}(\mathbf{Q}(\xi_k))\right\|_{\mu'}\\
&\leq 4K_{\mathbf{U}^{-1}}r^3e^{-(2-\epsilon')(r-h)}\left\|\mathcal{L}_{\mu}^{-1}\right\|_{\mu',\mu}\left\|\mathbf{Q}(\xi_k)\right\|_{\mu}.
\end{split}
\end{equation*}
Taking into account \eqref{2:estimateQxikmu} we get,
\begin{equation*}
\left\|\mathbf{U}^{-1}\mathcal{L}_{\mu}^{-1}(\mathbf{Q}(\xi_k))\right\|\leq 2^{11}\left\|H\right\|_{C^3}K_{\mathbf{U}}^2K_{\mathbf{U}^{-1}}\left\|\mathcal{L}_{\mu}^{-1}\right\|_{\mu',\mu} r^9 e^{-(2-(\epsilon'-\epsilon))(r-h)} C_k^2.
\end{equation*}
Finally, putting all these estimates together we obtain,
\begin{equation*}
\left\|\mathbf{c}_k\right\|\leq M_2(r)e^{-\frac{3}{2}(r-h)}C_k^2,
\end{equation*}
where,
\begin{equation*}\begin{split}
M_2(r)= 2^{11}\left\|H\right\|_{C^3}K_{\mathbf{U}}^2K_{\mathbf{U}^{-1}}\left(\left\|\mathcal{L}_{\mu}^{-1}\right\|_{\mu',\mu}\right.&\left. r^9 e^{-(\frac{1}{2}-(\epsilon'-\epsilon))(r-h)}\right.\\
&\left.+\left\|\mathcal{L}^{-1}\right\|_{5,8} r^{17}e^{-\frac{1}{2}(r-h)}\right).
\end{split}
\end{equation*}
Similar to $M_1$ we conclude that $M_2(r)=O(1)$. In order to conclude the proof of the assertion of this step we need the following simple result.
\begin{lemma}\label{2:lemmaestimatecs}
Let $\sigma>0$ and $f:\mathbb{H}_\sigma\rightarrow\mathbb{C}$ an analytic function, $2\pi$-periodic, continuous in the closure of $\mathbb{H}_\sigma$ and $f(z)\rightarrow 0$ as $\mathrm{Im}\,z\rightarrow-\infty$. Then,
\begin{equation*}
\left|f(z)\right|\leq\sup_{\mathrm{Im}\,z=-\sigma}\left|f(z)\right|e^{\mathrm{Im}\,z+\sigma}.
\end{equation*}
\end{lemma}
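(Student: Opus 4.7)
The plan is to transform $f$ into a holomorphic function on a punctured disc via the standard change of variables $w=e^{-iz}$, use the hypotheses (periodicity and decay) to remove the singularity at $w=0$, and then conclude by a one-line application of the maximum modulus principle.

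More precisely, first I would define $F(w)=f(z)$ where $z=i\log w$; the $2\pi$-periodicity of $f$ in the real direction ensures $F$ is well-defined and holomorphic on the punctured disc $\{0<|w|<e^{-\sigma}\}$, continuous up to $|w|=e^{-\sigma}$, with the identification $|w|=e^{\mathrm{Im}\,z}$. Since $f(z)\to 0$ as $\mathrm{Im}\,z\to -\infty$, we have $F(w)\to 0$ as $w\to 0$, so Riemann's removable singularity theorem lets us extend $F$ to a holomorphic function on the whole closed disc $\{|w|\leq e^{-\sigma}\}$ with $F(0)=0$.

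Next, the quotient $G(w):=F(w)/w$ is holomorphic on $\{|w|<e^{-\sigma}\}$ and continuous up to the boundary $|w|=e^{-\sigma}$, where
$$
|G(w)|=\frac{|F(w)|}{|w|}\leq e^{\sigma}\sup_{|w'|=e^{-\sigma}}|F(w')|=e^{\sigma}\sup_{\mathrm{Im}\,z=-\sigma}|f(z)|=:Me^{\sigma}.
$$
By the maximum modulus principle this bound propagates to the whole closed disc, so $|F(w)|\leq Me^{\sigma}|w|$ for all $|w|\leq e^{-\sigma}$. Translating back via $|w|=e^{\mathrm{Im}\,z}$ yields
$$
|f(z)|\leq Me^{\sigma}e^{\mathrm{Im}\,z}=M\,e^{\mathrm{Im}\,z+\sigma}
$$
for every $z$ in the closure of $\mathbb{H}_\sigma$, which is the desired inequality.

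There is no real obstacle here: the only point worth verifying carefully is that the hypothesis $f(z)\to 0$ as $\mathrm{Im}\,z\to-\infty$ is indeed strong enough to apply the removable singularity theorem (boundedness in a punctured neighbourhood of $w=0$ would suffice, and decay to zero is stronger). All remaining steps are the standard Fourier/Laurent expansion philosophy for $2\pi$-periodic holomorphic functions on half-planes, packaged via the maximum principle.
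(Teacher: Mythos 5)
Your proof is correct and is exactly the standard way to make precise what the paper itself only summarizes as ``a simple application of the maximum modulus principle'': pass to $w=e^{-iz}$, use periodicity and decay to get a holomorphic $F$ on the disc with $F(0)=0$, and apply the maximum principle to $F(w)/w$ (a Schwarz-lemma-type step). No discrepancy with the paper's argument.
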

\begin{proof}
The proof is a simple application of the maximum modulus principle for analytic functions.
\end{proof}
Applying the previous result to each component of $\mathbf{c}_k$ we get,
\begin{equation*}
\left|c_{k,i}(z)\right|\leq\sup_{\mathrm{Im}\,z=-r+h}\left|c_{k,i}(z)\right|e^{\mathrm{Im}\,z+r-h},\quad i=1,\ldots,4.
\end{equation*}
Thus,
\begin{equation}\label{2:estimateck2}
\left\|e^{i(\tau-\varphi)}\mathbf{c}_k\right\|\leq \left\|\mathbf{c}_k\right\|e^{r-h}\leq M_2(r)e^{-\frac{1}{2}(r-h)}C_k^2
\end{equation}
Regarding the last term in the right-hand-side of equation \eqref{2:seqxikmub} we know that by definition $C_0=\left\|e^{i(\tau-\varphi)}\mathbf{c}_0\right\|$. Applying Lemma \ref{2:lemmaestimatecs} we get $C_0<\infty$. Thus, taking norms in both sides of equation \eqref{2:seqxikmub} and using the estimates \eqref{2:estimateeLmuprime} and \eqref{2:estimateck2} we obtain,
\begin{equation}\label{2:recursionCkplus1}
C_{k+1}\leq\left(M_1(r)+M_2(r)\right)e^{-\frac{1}{2}(r-h)}C_k^2+C_0.
\end{equation}
Since both $M_1$ and $M_2$ are bounded with respect to $r$ we can choose $r>0$ sufficiently large such that,
\begin{equation*}
\left(M_1+M_2\right)C_0 e^{-\frac{1}{2}(r-h)}\leq\frac{1}{4},
\end{equation*}
which implies that $C_k\leq C_*$ for all $k\geq0$ where $C_*:=2C_0$.

\bigskip

\textbf{Step 3.} 
In order to finish the proof of the theorem note that the uniform estimate obtained in the previous step implies that $\left\|e^{i(\tau-\varphi)}\mathbf{U}^{-1}\xi_{*}\right\|\leq C_*$. Thus, the estimate \eqref{2:estimateQxikmu} applied to $\xi_{*}$ gives that $\mathbf{Q}(\xi_*)\in\mathfrak{Y}_{\mu}(\mathbb{T}_h\times D_r^1)$. Moreover, as $\xi_{*}-\mathcal{L}_{\mu}^{-1}\left(\mathbf{Q}(\xi_{*})\right)\in \mathrm{Ker}(\mathcal{L})$ there exists an analytic $2\pi$-periodic vector-valued function $\mathbf{c}_*:\mathbb{H}_{r-h}\rightarrow\mathbb{C}^4$ such that $\xi_{*}=\mathbf{U}\mathbf{c}_{*}+\mathcal{L}_{\mu}^{-1}\left(\mathbf{Q}(\xi_{*})\right)$. Since $\mathbf{c}_*(z)\rightarrow 0$ as $\mathrm{Im}\,z\rightarrow -\infty$, we can write its Fourier series as follows,
\begin{equation*}
\mathbf{c}_{*}(z)=\sum_{m=1}^{\infty}\mathbf{c}_{*,m} e^{-im z},
\end{equation*}
where $\mathbf{c}_{*,m}\in\mathbb{C}^4$. Moreover, as $\mathcal{L}_{\mu}^{-1}\left(\mathbf{Q}(\xi_{*})\right)\in\mathfrak{Y}_{\mu'}(\mathbb{T}_h\times D_r^1)$ then,
\begin{equation*}
\xi_{*}(\varphi,\tau)=e^{-i(\tau-\varphi)}\mathbf{U}(\varphi,\tau)\Theta^-+O\left(e^{-(2-\epsilon')i(\tau-\varphi)}\right),
\end{equation*}
where $\Theta^-:=\mathbf{c}_{*,1}$. This completes the proof of the theorem.
\qed

\bigskip

\nocite{*}
\bibliographystyle{cdraifplain}

\end{document}